\newcolumntype{C}{>{\Centering\arraybackslash}m{0.14\linewidth}}
\newcommand*{\dt}[1]{%
   \accentset{\mbox{\large\bfseries .}}{#1}}
\newcommand{\etba}{\beta}
\newcommand{\mc}{\mathcal}
\newcommand{\mb}{\mathbb}
\newcommand{\veps}{\varepsilon}
\newcommand{\oRe}{\operatorname{Re}}
\newcommand{\oIm}{\operatorname{Im}}
\numberwithin{equation}{section}
\theoremstyle{plain}
\newtheorem{theorem}{Theorem}[section]
\newtheorem{lemma}[theorem]{Lemma}
\newtheorem{proposition}[theorem]{Proposition}
\newtheorem{corollary}[theorem]{Corollary}
\theoremstyle{definition}
\newtheorem*{defi*}{Definition} 
\theoremstyle{remark}
\newtheorem{remark}{Remark}
\theoremstyle{remark}
\let\alignts@preamble\align@preamble
\patchcmd{\alignts@preamble}{\displaystyle}{\textstyle}{}{}
\patchcmd{\alignts@preamble}{\displaystyle}{\textstyle}{}{}
\def\alignts{\let\align@preamble\alignts@preamble\start@align\@ne\st@rredfalse\m@ne}
\title[Linear Stability of the Euler-Poisson System]{Linear Stability of solitary waves for the isothermal Euler-Poisson system}
\author[J. Bae]{Junsik Bae}
\address[JB]{Mathematics Division, National Center for Theoretical Sciences, National Taiwan University, No. 1, Sec. 4, Roosevelt Rd., Taipei 10617, Taiwan}
\email{jsbae@ncts.ntu.edu.tw}
\author[B. Kwon]{Bongsuk Kwon}
\address[BK]{Department of Mathematical Sciences, Ulsan National Institute of Science and Technology, Ulsan, 44919, Korea}
\email{bkwon@unist.ac.kr}
\date{\today}
\subjclass{Primary: 	35Q35,  35Q53   Secondary:  	35Q31, 76B25}
\begin{document}
 
\maketitle 

\begin{abstract} 
We study the asymptotic linear stability of a two-parameter family of solitary waves for the isothermal Euler-Poisson system.
When the linearized equations about the solitary waves are considered, the associated eigenvalue problem in $L^2$ space has a zero eigenvalue embedded in the neutral spectrum, i.e., there is no spectral gap. To resolve this issue, use is made of an exponentially weighted $L^2$ norm so that the essential spectrum is strictly shifted into the left-half plane, and this is closely related to the fact that  solitary waves exist in the \emph{super-ion-sonic} regime. Furthermore, in a certain long-wavelength scaling, we show that the Evans function for the Euler-Poisson system converges to that for the Korteweg-de Vries (KdV) equation as an amplitude parameter tends to zero, from which we deduce that the origin is the only eigenvalue on its  \emph{natural domain} with algebraic multiplicity two. We also show that the solitary waves are spectrally stable in $L^2$ space. Moreover, we discuss (in)stability of \emph{large} amplitude solitary waves.\\

\noindent{\it Keywords}:
Euler-Poisson system; Korteweg-de Vries equation; Solitary wave; Stability
\end{abstract}

\section{Introduction}

 We consider the one-dimensional isothermal Euler-Poisson system  in a non-dimensional form:
\begin{equation}\label{EP}
\left\{
\begin{array}{l l}
\partial_{t} n + \partial_{s}((1+n)u) = 0, \\ 
\partial_{t} u  + u \partial_{s} u +  K \partial_{s}\log (1+n) = -\partial_{s}\phi, \\
-\partial_{s}^2\phi = (1+n ) - e^\phi, \ \ s\in\mathbb{R}, t\ge0,
\end{array} 
\right.
\end{equation}
where $1+n$, $u$ and $\phi$ represent   the ion density, the fluid velocity function for ions, and  the electric potential, respectively, and $K=T_i/T_e>0$ is a constant of the ratio of the ion temperature to the electron temperature. The Euler-Poisson system is a fundamental fluid model which describes the dynamics of ions in an electrostatic plasma, and it is often employed to study phenomena of plasma such as plasma sheaths and double layers  (see \cite{Ch,Dav} for more physical backgrounds). In the one-fluid model \eqref{EP}, the electron density $n_e$ is assumed to satisfy the Boltzmann relation $n_e=e^{\phi}$, which can be foramlly derived from the two-fluid model under the massless electron assumption.  A rigorous justification of this zero mass limit is discussed in \cite{GGPS}.

\newcommand{\ve}{\varepsilon}
Among others, the emergence of solitary waves is one of the most interesting phenomena in the dynamics of electrostatic plasma. Motivated by physicists' finding that the KdV equation, originally derived to describe the motion of water waves, can be also derived in the study of hydromagnetic waves in plasmas, see \cite{Gar}, plasma physicists have sought formal connections between the  Euler-Poisson system (pressureless case $K=0$ for the sake of simplicity of analysis) and the KdV equation, see \cite{Sag,Wa}. Later on, the phenomenon was also experimentally observed in \cite{ikezi}.

 Various analytical \cite{BK,Guo} and numerical \cite{HNS,Satt} studies indicate that in certain physical regime, the KdV equation is a good approximation of the Euler-Poisson system \eqref{EP}. Moreover, as solutions of the KdV equation are dominated by their solitary waves  \cite{Benjamin,Bona,GGKM1,GGKM2,Lax1,MM1,MM2,MMT,PW1,ZK}, this gives hope of a similar result for the Euler-Poisson system with more general initial data. 
This motivation naturally leads us to the study of stability of solitary waves for the Euler-Poisson system.

In fact, it is shown in \cite{Cor} that the Euler-Poisson system \eqref{EP}  with the far-field condition
\begin{equation}\label{bdCon x2}
(n, u, \phi) \to (0, 0, 0)  \quad \text{as} \quad s \to \pm\infty
\end{equation}
admits  a two-parameter family of traveling solitary wave solutions\footnote{By letting $c=-c'$ and $u=-u'$, we obtain the traveling waves moving to the left direction.}
 \[
(n, u,\phi)(s,t)=(n_c,u_c,\phi_c)(s-ct+\gamma), \quad   c\in(\sqrt{K+1}, \sqrt{K+1}+\veps_K)
\;  \text{and} \; \gamma \in \mathbb{R},
\]
where $\veps_K>0$ is a critical value (see \eqref{ep0}) and $\sqrt{1+K}$ is called the \emph{ion sound speed} in the context of plasma physics. Furthermore, the authors of this paper show in \cite{BK} that $(n_c, u_c, \phi_c)$ converges to the rescaled solitary wave solution of the associated KdV equation as the amplitude parameter $\veps>0$ tends to zero.
More specifically, in the Gardner-Morikawa scaling (also called as the KdV scaling)
\[
\xi :=\veps^{1/2}x = \veps^{1/2}(s-ct), \quad c=\sqrt{1+K} + \veps,
\]
it is shown that 
\begin{equation*}
\phi_c ( \veps^{-1/2}\xi ) - \ve \Psi_{KdV} (\xi)  = O(\veps^2) \quad \text{as } \veps \to 0,
\end{equation*}
where 
\begin{equation}\label{solutionKdV2}
\Psi_{KdV}(\xi) := \frac{3}{\sqrt{1+K}}\textnormal{sech}^2 \left(\sqrt{{\sqrt{1+K}}/{2}}\,\xi \right), 
\end{equation}
and it satisfies 
\begin{equation}\label{KdV}
- \partial_\xi \Psi_{KdV}  +\sqrt{1+K}\Psi_{KdV}\partial_\xi\Psi_{KdV}  +  \frac{1}{2\sqrt{1+K}} \partial_\xi^3 \Psi_{KdV} =0.
\end{equation}
In this paper we investigate the linear stability of this family of \emph{small} amplitude solitary waves.  



Due to the translation invariance 
and the fact that the traveling speed is a parameter, the linearized Euler-Poisson system around the solitary waves has two linearly independent solutions which do not decay in time (we call these solutions non-decaying modes).
In light of this,  we show that the family of solitary wave solutions to the Euler-Poisson system  \eqref{EP}--\eqref{bdCon x2} is \emph{linearly asymptotically stable modulo} the non-decaying modes. More precisely, for the initial data having no component of the non-decaying modes, the solution to the linearized Euler-Poisson system exponentially decays to zero as $t \to +\infty$. The aymptotic stability result is established in terms of strongly continuous semigroup on exponentially weighted $L^2$-spaces. Introduction of the weighted norm  is closely related to the fact that 
 traveling solitary wave solutions exists in the \emph{super-ion-sonic} regime, i.e.,
$c>\sqrt{1+K}$. 

We also prove that in the usual $L^2$-space, the spectrum of the operator associated with the linearized Euler-Poisson system is precisely the imaginary axis, and hence the family of solitary waves are \emph{spectrally stable}. Our stability results hold for small amplitude solitary waves.



\subsection{Main Results}

In the moving frame $x=s-ct$, 
the linearized system of \eqref{EP} around the solitary wave $(n_c,u_c,\phi_c)(x)$  is given by
\begin{subequations}\label{EP3}
\begin{align}[left = \empheqlbrace\,]
& \partial_t
\begin{pmatrix}
\dot{n} \\
\dot{u}
\end{pmatrix} 
+ L \partial_x \begin{pmatrix}
\dot{n} \\
\dot{u}
\end{pmatrix} + (\partial_xL) \begin{pmatrix}
\dot{n}\\
\dot{u}
\end{pmatrix}  = \begin{pmatrix}
0 \\
-\partial_x \dot{\phi}
\end{pmatrix}, \label{EP3Evol} \\ 
& -\partial_{x}^2\dot{\phi}= \dot{n}  - e^{\phi_c}\dot{\phi}, \label{EP3Poi}
\end{align}
\end{subequations}
where $L=L(x,\veps)$ is the matrix defined by
\begin{equation}\label{DefL}
L:=\begin{pmatrix}
-c+u_c & 1+n_c \\
\frac{K}{1+n_c} & -c + u_c 
\end{pmatrix}.
\end{equation}

We define the exponentially weighted $L^2$ space and the associated Sobolev spaces 
\begin{equation}\label{Def_WeightNorm}
\|f(x)\|_{L_\etba^2(\mathbb{R})}:=\|e^{{\etba} x}f(x)\|_{L^2(\mathbb{R})} \quad \text{and} \quad \|f(x)\|_{H_\etba^s(\mathbb{R})}:=\|e^{{\etba} x}f(x)\|_{H^s(\mathbb{R})},
\end{equation}
where $H^s(\mathbb{R})$ is the usual $L^2$-Sobolev norm, and  $\etba>0, s> 0$. We   sometimes use the notations $L_0^2$ and $H_0^s$ for the unweighted $L^2$ and $H^s$-spaces, respectively, with no confusion.

It can be easily checked that for given $\dot{n} \in L_\beta^2(\mathbb{R})$, where $\beta \in [0,1)$, there exists a unique solution $\dot{\phi}=\dot{\phi}(\dot{n})= (-\partial_x^2 +e^{\phi_c})^{-1} (\dot{n})$ in $H_\beta^2(\mathbb{R})$ to the linear Poisson equation \eqref{EP3Poi} since $0<\phi_c\in L^\infty(\mathbb{R})$. (One can consider the change of variable $(\widetilde{n},\widetilde{\phi}):=(\dot{n},\dot{\phi})e^{\beta x}$.) Hence, we may rewrite \eqref{EP3} in a more compact (nonlocal) form:
 \begin{equation}\label{EP-nonlocal}
(\partial_t -\mathcal{L})(\dot{n},\dot{u})^T =(0,0)^T,
\end{equation}
where  
\begin{equation}\label{Ch3_Def_mcL}
\mathcal{L}\begin{pmatrix}
\dot{n} \\
\dot{u}
\end{pmatrix} := -\partial_x \left[L \begin{pmatrix}
\dot{n} \\
\dot{u}
\end{pmatrix}
 + \begin{pmatrix}
 0 \\
 (-\partial_x^2 +e^{\phi_c})^{-1}(\dot{n})
 \end{pmatrix} \right].
\end{equation}
The eigenvalue problem associated  with \eqref{EP3} is given by
\begin{equation}\label{EigenEP}
(\lambda -\mathcal{L})(\dot{n},\dot{u})^T =(0,0)^T.
\end{equation}

Let us clearly define some terminologies. We say that $\lambda\in\mathbb{C}$ is in the \emph{resolvent set} if $\lambda  - \mathcal{L}$ is has the bounded inverse operator $(\lambda  - \mathcal{L})^{-1}$, which is called the \emph{resolvent}. We call the complement of the resolvent set the \emph{spectrum} of $\mathcal{L}$, denoted by $\sigma(\mathcal{L})$, which can be decomposed in terms of the Fredholm properties of the operator $\lambda  - \mathcal{L}$ as follows:
\begin{enumerate}
\item we say that $\lambda \in \sigma(\mathcal{L})$ is in the \emph{point spectrum}, $\sigma_{\text{pt}}(\mathcal{L})$, if $\lambda  - \mathcal{L}$ is Fredholm with index zero, but it is not invertible;
\item $\sigma_{\text{ess}}(\mathcal{L}):=\sigma(\mathcal{L})\setminus \sigma_{\text{pt}}(\mathcal{L})$ is called the \emph{essential spectrum} of $\mathcal{L}$.
\end{enumerate}
We say that $\lambda\in \sigma(\mathcal{L})$ is an \emph{eigenvalue} of $\mathcal
{L}$ if the kernel of $\lambda  -\mathcal{L}$ is a nontrivial subspace of the domain of $\mathcal{L}$.



Due to the translation invariance and the fact that the speed $c$ is a parameter, $\lambda=0$ is an $L^2$-eigenvalue of the operator $\mathcal{L}$ with algebraic multiplicity at least two.  Indeed, we will see that 
\begin{equation}\label{JordChainforL}
\mathcal{L} \partial_x(n_c, u_c)^T = (0,0)^T, \quad \mathcal{L}\partial_c(n_c,u_c)^T = -\partial_x(n_c,u_c)^T.
\end{equation}
Thus
\[
\partial_x(n_c,u_c)^T(x) \quad \text{and} \quad \partial_c(n_c,u_c)^T(x) - t\partial_x(n_c,u_c)^T(x)
\]
are non-decaying (in time) solutions to \eqref{EP3}.

Since the solitary waves exponentially decay to zero as $|x| \to +\infty$, the essential spectrum of $\mathcal{L}$ in $L^2$-space coincides with the imaginary axis in the complex plane, and the zero eigenvalue is embedded in the essential spectrum. 
Moreover, the point spectrum of $\mathcal{L}$ in $L^2$-space is empty. 

For a Hilbert space $\mc{H}$, we denote $\mathcal{H}\times \mathcal{H}$ by $(\mathcal{H})^2$. We present the result of \emph{spectral stability}. 
\begin{proposition}[Spectrum of $\mathcal{L}$ in  $L^2$-space]\label{SpecStabL2}
Consider the operator $\mathcal{L}: (L^2)^2 \to (L^2)^2$ with dense domain $(H^1)^2$. Then, for all sufficiently small $\veps>0$, there holds
\[
\sigma(\mathcal{L})=\sigma_{\mathrm{ess}}(\mathcal{L})=\{\lambda\in\mathbb{C}: \oRe \lambda=0\}.
\]
\end{proposition}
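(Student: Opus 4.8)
The plan is to establish the two asserted equalities separately: first that $\sigma_{\mathrm{ess}}(\mathcal{L})=i\mathbb{R}$, and then that there is no point spectrum, i.e.\ $\sigma(\mathcal{L})=\sigma_{\mathrm{ess}}(\mathcal{L})$. For the essential spectrum I would invoke the standard principle that, since the solitary wave profile $(n_c,u_c,\phi_c)$ decays exponentially to $0$ as $|x|\to\infty$, the operator $\mathcal{L}$ is a relatively compact (exponentially localized) perturbation of its constant-coefficient asymptotic operator $\mathcal{L}_\infty$ obtained by setting $n_c=u_c=\phi_c=0$ in \eqref{Ch3_Def_mcL}, namely $\mathcal{L}_\infty(\dot n,\dot u)^T=-\partial_x\big[L_\infty(\dot n,\dot u)^T+(0,(-\partial_x^2+1)^{-1}\dot n)^T\big]$ with $L_\infty=\begin{pmatrix}-c & 1\\ K & -c\end{pmatrix}$. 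By Weyl's theorem on essential spectrum, $\sigma_{\mathrm{ess}}(\mathcal{L})=\sigma_{\mathrm{ess}}(\mathcal{L}_\infty)$, and the latter is computed by Fourier transform: on $L^2$ one has $\lambda\in\sigma(\mathcal{L}_\infty)$ iff the dispersion matrix
\[
\mathcal{A}(ik):=-ik\begin{pmatrix}-c & 1\\ K & -c\end{pmatrix}-ik\begin{pmatrix}0 & 0\\ (k^2+1)^{-1} & 0\end{pmatrix}
\]
has $\lambda$ as an eigenvalue for some $k\in\mathbb{R}$. One checks $\mathcal{A}(ik)$ has purely imaginary eigenvalues for every real $k$ (its trace is $-2ikc$ and one verifies the discriminant $k^2(K+(k^2+1)^{-1})$ is real and positive, so both roots lie on $i\mathbb{R}$), and that the union of these eigenvalues over $k\in\mathbb{R}$ sweeps out the entire imaginary axis (at $k=0$ both are $0$; as $|k|\to\infty$ the real part stays $0$ while the imaginary part $\to\pm\infty$ continuously). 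Hence $\sigma_{\mathrm{ess}}(\mathcal{L})=i\mathbb{R}$.

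For the absence of point spectrum one uses that $i\mathbb{R}$ already exhausts $\sigma_{\mathrm{ess}}$, so $\sigma_{\mathrm{pt}}(\mathcal{L})$ can only consist of isolated eigenvalues of finite multiplicity in the open right and left half-planes (together with possibly points on $i\mathbb{R}$ where $\lambda-\mathcal{L}$ is Fredholm index zero). The key structural fact is that $\mathcal{L}$ is (up to a similarity transform by a smooth positive weight) skew-adjoint, or more precisely that the Euler--Poisson system has a Hamiltonian/energy structure making the $L^2$-spectrum symmetric about both axes; I would instead argue directly that $\mathcal{L}$ generates a $C_0$-group of isometries (or uniformly bounded operators) on $(L^2)^2$ in a suitable equivalent inner product, forcing $\sigma(\mathcal{L})\subset i\mathbb{R}$. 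Concretely: symmetrize the first-order system \eqref{EP3Evol} by the change of unknowns that diagonalizes the symmetric part, use that the nonlocal term $(-\partial_x^2+e^{\phi_c})^{-1}$ is self-adjoint and positive, and conclude that the quadratic form $\frac{d}{dt}\big(\text{energy}\big)=0$ along solutions; this yields an a priori bound $\|e^{t\mathcal{L}}\|\le C$ uniformly in $t\in\mathbb{R}$, whence by the Hille--Yosida / spectral mapping considerations $\mathrm{Re}\,\lambda=0$ for every $\lambda\in\sigma(\mathcal{L})$.

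Combining, $\sigma(\mathcal{L})\subseteq i\mathbb{R}=\sigma_{\mathrm{ess}}(\mathcal{L})\subseteq\sigma(\mathcal{L})$, which gives all three equalities; the restriction to small $\veps$ enters only through the existence of the profiles and the uniform-in-$\veps$ control of $\phi_c$ in $L^\infty$ used to make the Poisson solve and the energy estimate well defined.

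I expect the main obstacle to be the second part — ruling out eigenvalues off the imaginary axis. Weyl's theorem for the essential spectrum is routine given exponential localization, but the point spectrum requires either a genuine Hamiltonian (skew-symmetry) structure for \eqref{EP-nonlocal} that is not manifest from \eqref{Ch3_Def_mcL} as written, or an Evans-function / energy argument; making the ``group of bounded operators on an equivalent Hilbert space'' claim rigorous means identifying the correct conserved quadratic form for the linearized isothermal Euler--Poisson dynamics (the $K\log(1+n)$ pressure term and the electrostatic coupling must combine into a positive-definite energy modulo the nondecaying modes), and checking it controls the $(L^2)^2$ norm. That is the technical heart of the proof.
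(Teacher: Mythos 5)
Your first half (the essential spectrum) is essentially sound and runs parallel to the paper: where you invoke Weyl's theorem for the exponentially localized perturbation of the constant-coefficient operator, the paper uses Palmer's characterization of Fredholm properties via exponential dichotomies together with their roughness (Lemma \ref{Fredequiv} and the proof of Proposition \ref{Clas_Spec}), and both routes reduce the computation to the asymptotic dispersion relation $d_\pm(ik)$, whose range is all of $i\mathbb{R}$. That part is fine.

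The second half contains a genuine gap. The operator $\mathcal{L}$ is \emph{not} similar to a skew-adjoint operator, and there is no conserved quadratic form equivalent to the $(L^2)^2$ norm along the linearized flow: the linearization about a solitary wave has the Hamiltonian form $J\mathcal{H}$ with $\mathcal{H}$ self-adjoint but \emph{indefinite} (this is the standard situation for solitary waves; the Hessian of the energy--momentum functional has negative directions as well as the kernel generated by translation), so the conserved energy does not control $\|(\dot n,\dot u)\|_{(L^2)^2}$ and cannot force $\sigma(\mathcal{L})\subset i\mathbb{R}$. Indeed, if your bounded-group argument worked it would yield spectral stability for \emph{all} amplitudes with no smallness condition, whereas the statement is only claimed for small $\veps$, and Section \ref{unstable} of the paper derives an instability criterion ($\partial_c\int n_cu_c\,dx<0$ would produce a positive real eigenvalue), showing that ruling out unstable point spectrum is exactly the nontrivial, amplitude-dependent issue. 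The paper's actual argument is different in kind: absence of eigenvalues with $\oRe\lambda>0$ is proved by showing the Evans function $D(\lambda,\veps)$ has no zeros there, which rests on the convergence $D_\ast(\Lambda,\veps)\to D_{KdV}(\Lambda)$ in the Gardner--Morikawa scaling (Proposition \ref{ThmEvansConverg}), Rouch\'e's theorem, and separate small-$|\lambda|$ and large-$|\lambda|$ estimates (Lemmas \ref{Lem_Region2} and \ref{Lem_Region4}); eigenvalues with $\oRe\lambda<0$ are then excluded by the reflection symmetry $\lambda\mapsto-\lambda$ coming from the evenness of the wave (Remark \ref{Rem_Sym}); finally, Fredholm index zero off the imaginary axis upgrades ``trivial kernel'' to ``resolvent set.'' Note also that the smallness of $\veps$ enters essentially through this Evans-function/KdV-limit step, not merely through the existence of the profile and $L^\infty$ control of $\phi_c$ as you suggest. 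To repair your proposal you would either have to carry out a Grillakis--Shatah--Strauss/Krein-signature analysis verifying positivity of the constrained Hessian (which the paper does not do and which is itself a substantial task), or adopt the Evans-function route.
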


However, in terms of a standard semigroup approach, the spectral stability itself is not sufficient to conclude the asymptotic linear stability.  
We resolve this issue by employing the weighted space $L^2_\etba(\mathbb{R})$ defined in \eqref{Def_WeightNorm}. For appropriately chosen $\etba>0$, the essential spectrum of $\mathcal{L}$ is strictly shifted into the open left-half plane, while   $\lambda=0$   is still remains as the  eigenvalue of $\mathcal{L}$ (see Figure \ref{Fig2}). Furthermore, $\lambda=0$ is the only $L_\beta^2$-eigenvalue of $\mathcal{L}$ on some closed set containing the closed right-half plane, and its algebraic multiplicity is two. The corresponding eigenvector and the generalized eigenvector  are given by $\partial_x(n_c, u_c)^T$ and $\partial_c(n_c, u_c)^T$, respectively. This idea separating the essential spectrum and the embedded eigenvalue by employing the weighted space was first introduced by \cite{Sattinger} in the study of stability of traveling waves of parabolic system, and it is successfully adopted later to the study of  stability of the KdV solitary waves in \cite{PW1}.  

\begin{figure}[h]
\begin{tabular}{cc}
\resizebox{55mm}{!}{\includegraphics{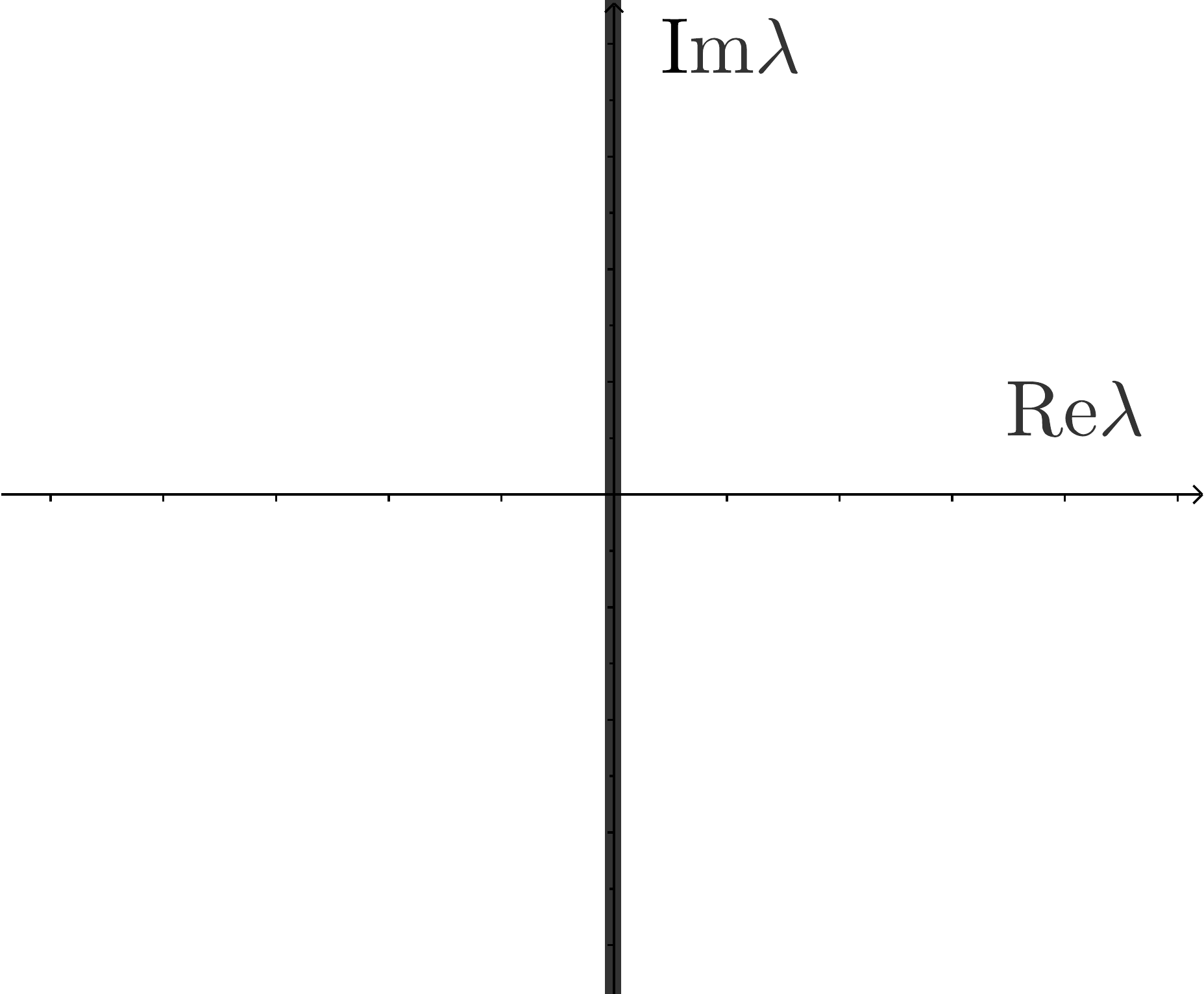}}  & \resizebox{50mm}{!}{\includegraphics{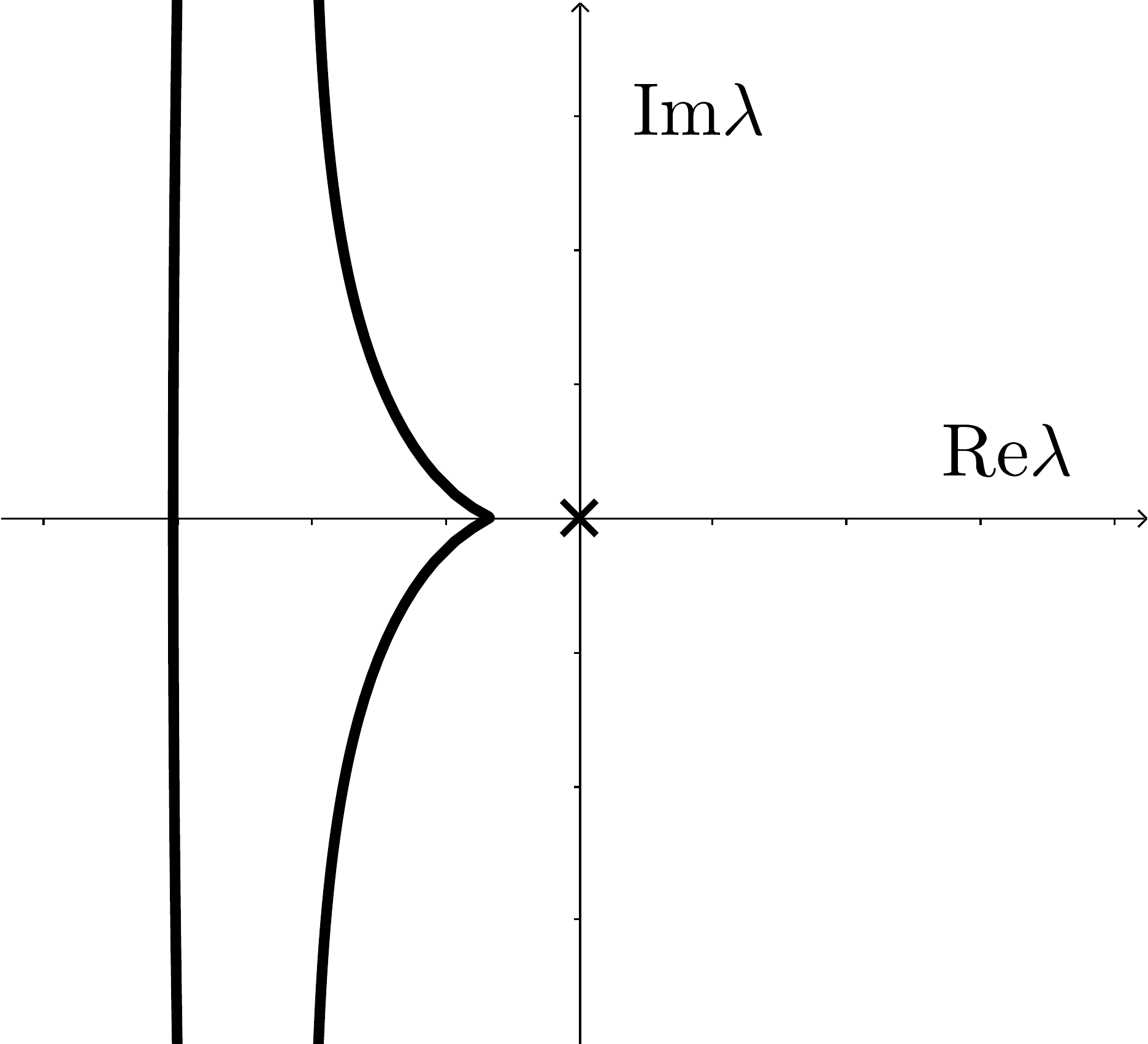}} \\
(a) Spectrum   in $L^2$  & (b) Spectrum in $L^2_\etba$  
\end{tabular}
\caption{The bold curves indicate the essential spectrums of $\mc{L}$. In (b), the zero eigenvalue of  $\mc{L}$ is isolated in   $L^2_\etba$ for sufficiently small $\etba>0$.}
\label{Fig2}
\end{figure}

We first present some preliminary results for the linear asymptotic stability.  
\begin{proposition}\label{MainIngred}
Consider the operator $\mathcal{L}: (L_\etba^2)^2 \to (L_\etba^2)^2$ with dense domain $(H_\etba^1)^2$. For any fixed $c_0\in(0, \sqrt{ 2{\sqrt{1+K}}/3} )$, let $\etba=c_0\veps^{1/2}$. Then there exists $\veps_0>0$ such that for all $\veps \in (0,\veps_0)$, the following holds:
\begin{enumerate}[{(i)}]
\item \label{Semig} $\mc{L}$ generates a $C_0$-semigroup, $e^{\mathcal{L}t}$.
\item \label{AlMul2} $\lambda=0$ is an isolated eigenvalue of $\mathcal{L}$ with algebraic multiplicity two.
\item \label{MainIngEss} $
\sigma_{\mathrm{ess}}(\mathcal{L})\subset \{\lambda: \oRe \lambda < -\veps^{3/2}\eta(c_0)\}$ and $ \sigma_{\mathrm{pt}}(\mathcal{L})\cap \{\lambda: \oRe \lambda \geq  -\veps^{3/2}\eta(c_0)\}=\{0\}$, where $\eta>0$ is a function of $c_0$ defined in  \eqref{ND-eps}. In particular, $\sigma_{\mathrm{ess}}(\mathcal{L})$  is a union of two curves paramterized by
\begin{equation*}
 d_{\pm}(ik-\etba)= (ik-\etba) \left(c \pm \sqrt{\frac{1}{1-(ik-\etba)^2}+ K} \right), \quad k\in \mathbb{R}.
\end{equation*}
\item \label{UnibdRes} $(\lambda-\mc{L})^{-1}$ is uniformly bounded on $\oRe \lambda \geq 0$, outside any small neighborhood of the origin.
\end{enumerate}
\end{proposition}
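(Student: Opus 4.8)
The plan is to separate the four assertions into those governed by the asymptotically‑constant‑coefficient structure of $\mathcal{L}$ --- generation of the semigroup (i), the shape and location of $\sigma_{\mathrm{ess}}$ in (iii), and the resolvent bound (iv) --- and those about the discrete spectrum --- the multiplicity‑two statement (ii) together with $\sigma_{\mathrm{pt}}(\mathcal{L})\cap\{\oRe\lambda\ge-\veps^{3/2}\eta(c_0)\}=\{0\}$ --- which I would reduce to the Evans‑function / long‑wavelength KdV comparison carried out in the body of the paper. Throughout it is convenient to conjugate by the weight and work with $\mathcal{L}_\beta:=e^{\beta x}\mathcal{L}e^{-\beta x}$ on the unweighted $(L^2)^2$ with domain $(H^1)^2$; a direct computation gives $\mathcal{L}_\beta=-L\partial_x+\beta L-(\partial_xL)+\mathcal{K}_\beta$, where $\mathcal{K}_\beta$ is the conjugate of the nonlocal term in \eqref{Ch3_Def_mcL}, built from $\bigl(-(\partial_x-\beta)^2+e^{\phi_c}\bigr)^{-1}$ composed with $\partial_x$, hence an operator of order $-1$ and bounded on $(L^2)^2$, while $\beta L$ and $\partial_xL$ are multiplication by bounded functions. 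For (i): for small $\veps$ the matrix $L(x,\veps)$ is real and strictly hyperbolic (its eigenvalues $-c+u_c\pm\sqrt{K}$ are real and simple), so it admits a smooth, uniformly bounded, uniformly positive‑definite symmetrizer $S(x)$ with $SL$ symmetric; the $S$‑weighted energy identity then shows that $-L\partial_x$ generates a $C_0$‑group on $(L^2)^2$, and the bounded‑perturbation theorem upgrades this to a $C_0$‑semigroup generated by $\mathcal{L}_\beta$, hence by $\mathcal{L}$.

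For the essential spectrum I would use the standard Fredholm / exponential‑dichotomy theory for operators with asymptotically constant coefficients, in the spirit of \cite{Sattinger,PW1}. Writing the eigenvalue equation $(\lambda-\mathcal{L})(\dot n,\dot u)^T=0$ together with $-\partial_x^2\dot\phi+e^{\phi_c}\dot\phi=\dot n$ as a first‑order linear ODE $Y'=A(x;\lambda)Y$ with $A(x;\lambda)\to A_\infty(\lambda)$ exponentially as $x\to\pm\infty$ (the two far‑field limits coincide), the operator $\lambda-\mathcal{L}$ on $L^2_\beta$ is Fredholm exactly when $A_\infty(\lambda)$ has no spatial eigenvalue $\nu$ with $\oRe\nu=-\beta$; the characteristic relation reduces to $(\lambda-c\nu)^2=\nu^2\bigl(K+(1-\nu^2)^{-1}\bigr)$, i.e. $\lambda=\nu\bigl(c\pm\sqrt{K+(1-\nu^2)^{-1}}\bigr)$, so the Fredholm borders are precisely the two curves $d_\pm(ik-\beta)$, $k\in\mathbb{R}$. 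On the component of $\mathbb{C}\setminus\sigma_{\mathrm{ess}}(\mathcal{L})$ lying to the right of these curves the Fredholm index is constant and equals its value as $\oRe\lambda\to+\infty$ (where $\lambda-\mathcal{L}$ is boundedly invertible by the energy estimate), namely $0$; hence $\sigma_{\mathrm{ess}}(\mathcal{L})$ is exactly those two curves. To locate them I would insert $c=\sqrt{1+K}+\veps$, $\beta=c_0\veps^{1/2}$, $k=\veps^{1/2}\kappa$ and expand: $\oRe d_+(ik-\beta)\sim-2c_0\sqrt{1+K}\,\veps^{1/2}$, while $d_-(ik-\beta)=\veps^{3/2}\bigl(w-(2\sqrt{1+K})^{-1}w^3\bigr)+O(\veps^{5/2})$ with $w=-c_0+i\kappa$, whose real part $-c_0\veps^{3/2}\bigl(1-c_0^2(2\sqrt{1+K})^{-1}+3\kappa^2(2\sqrt{1+K})^{-1}\bigr)$ is maximal at $\kappa=0$ and a strictly negative multiple of $\veps^{3/2}$ in the stated range of $c_0$ (the admissible interval being exactly the one on which the resulting gap $c_0-c_0^3/(2\sqrt{1+K})$ is increasing in $c_0$); together with the crude large‑$k$ bounds, which use $c>\sqrt{K}$, this identifies the function $\eta(c_0)>0$ and yields $\sigma_{\mathrm{ess}}(\mathcal{L})\subset\{\oRe\lambda<-\veps^{3/2}\eta(c_0)\}$.

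For (ii), the identities \eqref{JordChainforL} follow by differentiating the profile equations in $x$ and in $c$; since $(n_c,u_c)$ and its derivatives decay like $e^{-\delta|x|}$ with $\delta\sim\sqrt{2\sqrt{1+K}}\,\veps^{1/2}>\beta$ in the stated range, both $\partial_x(n_c,u_c)^T$ and $\partial_c(n_c,u_c)^T$ (a polynomial times a decaying exponential) lie in $(H^1_\beta)^2$, so $\lambda=0$ is an $L^2_\beta$‑eigenvalue carrying a Jordan chain of length at least two, and it is isolated because $\sigma_{\mathrm{ess}}(\mathcal{L})$ has been pushed into $\{\oRe\lambda<-\veps^{3/2}\eta(c_0)\}$. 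To upgrade ``at least two'' to ``exactly two'' and to exclude any further eigenvalue in $\{\oRe\lambda\ge-\veps^{3/2}\eta(c_0)\}$, I would pass to the Evans function $D(\lambda,\veps)$ built from the above first‑order system --- analytic on the component of $\mathbb{C}\setminus\sigma_{\mathrm{ess}}(\mathcal{L})$ to the right of the two curves, with zeros (counted with order) exactly the $L^2_\beta$‑eigenvalues there and the order at $0$ equal to the algebraic multiplicity. Under the Gardner--Morikawa rescaling $\lambda=\veps^{3/2}\Lambda$ the paper's long‑wavelength convergence result gives $D(\veps^{3/2}\Lambda,\veps)\to D_{\mathrm{KdV}}(\Lambda)$ locally uniformly on the corresponding $\Lambda$‑region; since $D_{\mathrm{KdV}}$ has a zero of order exactly two at $\Lambda=0$ (reflecting translation and scaling, with no higher degeneracy since the KdV solitary‑wave momentum is strictly monotone in the speed) and no other zero on $\{\oRe\Lambda\ge-\eta(c_0)\}$ --- this is where the admissible range of $c_0$, i.e. of the weight, enters --- Rouch\'e's theorem on a fixed small contour around the origin together with Hurwitz's theorem on the remainder transfers both facts to $D(\cdot,\veps)$ for all small $\veps$, giving (ii) and the $\sigma_{\mathrm{pt}}$ statement in (iii).

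Finally, for (iv) I would split $\{\oRe\lambda\ge0\}$. On the intersection with any bounded set at positive distance from $0$, $\lambda-\mathcal{L}$ is Fredholm of index $0$ (right of $\sigma_{\mathrm{ess}}$, by (iii)) and injective (no eigenvalue there, by (ii)), hence boundedly invertible, and $\lambda\mapsto\|(\lambda-\mathcal{L})^{-1}\|$ is continuous, hence bounded on that compact set. For $|\lambda|$ large with $\oRe\lambda\ge0$: the $S$‑weighted energy estimate gives $\oRe\langle\mathcal{L}_\beta w,w\rangle_S\le C_0\|w\|_S^2$ uniformly, which handles $\oRe\lambda>C_0$ with norm $\le(\oRe\lambda-C_0)^{-1}$; for the remaining strip $0\le\oRe\lambda\le C_0$, $|\oIm\lambda|$ large, a high‑frequency parametrix suffices, since the principal symbol $(\beta-ik)L_\infty$ of $\mathcal{L}_\beta$ has eigenvalues $(\beta-ik)(-c\pm\sqrt{K})$ with real part $\beta(-c\pm\sqrt{K})<0$ bounded away from $0$, so $\lambda-\mathcal{L}_\beta$ is invertible on the high‑frequency part for all such $\lambda$, the low‑frequency part being a compact correction absorbed into the bounded‑region estimate; patching these gives the claimed uniform bound. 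I expect the real obstacle to be the content just described for (ii)--(iii) --- ruling out a third generalized eigenvector at $0$ and any additional discrete spectrum in the strip $\{\oRe\lambda\ge-\veps^{3/2}\eta(c_0)\}$ --- which is exactly what forces one through the Evans‑function construction and its long‑wavelength convergence to the KdV Evans function, together with the non‑obvious facts that the latter has a double (not higher‑order) zero at the origin and no zero off it in the relevant region; everything else is technical but standard.
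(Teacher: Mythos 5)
Your treatment of (i)--(iii) is essentially the paper's: conjugation to $\mathcal{L}_\beta$ and symmetrization plus bounded perturbation for the semigroup, Palmer-type Fredholm/exponential-dichotomy theory identifying $\sigma_{\mathrm{ess}}$ with the curves $d_\pm(ik-\beta)$ (your expansion of $\oRe d_-(ik-\beta)$ reproduces \eqref{ND-eps}), and the Evans function with the Gardner--Morikawa rescaling, convergence to $D_{\mathrm{KdV}}$ and Rouch\'e to pin down the double zero at the origin; those parts are fine (one small point: the fact that $\sigma_{\mathrm{ess}}$ is \emph{exactly} the two curves uses that the two spatial limits of $A(x,\lambda,\veps)$ coincide, so the index is $0$ wherever the operator is Fredholm, not just on the component connected to $\oRe\lambda=+\infty$).

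The genuine gap is in (iv), precisely in the regime $0\le\oRe\lambda\le C_0$, $|\oIm\lambda|$ large, which is the only nontrivial part of the uniform resolvent bound. Your ``high-frequency parametrix'' freezes the principal symbol at $L_\infty$ and uses that $\oRe\bigl((\beta-ik)(-c\pm\sqrt K)\bigr)\simeq-\beta$ is bounded away from zero; but the gain you get from the weight is only $O(\beta)=O(\veps^{1/2})$, while the variable-coefficient first-order term $(L(x)-L_\infty)\partial_x$ acts on the resonant frequencies $|k|\sim|\oIm\lambda|/(c\mp\sqrt K)$ with size $O(\veps)\,|k|$. The composition of this error with the constant-coefficient resolvent is therefore of size $O(\veps\,|\oIm\lambda|/\beta)$, which is not small as $|\oIm\lambda|\to\infty$, so neither a Neumann series off the constant-coefficient operator nor a frozen-symbol parametrix closes; and the ``low-frequency part is a compact correction'' step does not help, since for large $|\oIm\lambda|$ the obstruction sits at high spatial frequency and compactness in any case never yields bounds uniform in $\lambda$. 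This is exactly the difficulty the paper flags (it is why the change-of-variables argument of \cite{Sche} for $K=0$ and the naive weighted energy estimate both fail for $K>0$). The paper's Proposition \ref{Uniresol} instead works at the level of the first-order ODE system on $\mathfrak{D}_4$: it builds the Green's function $G^\beta(x,x')$, diagonalizes with the compensating factors $m_j$ of \eqref{mj} so that the $O(|\lambda|)$ part of the variable coefficients collapses onto the symmetric positive semi-definite matrix $S_1$ of \eqref{S1sym}, discards that part by sign in the energy estimates for the fundamental matrix (Lemma \ref{Lem_BtildeEq}), and controls the remaining $O(\veps^{1/2})$-integrable perturbation by the roughness of exponential dichotomies \cite{Coppel2}, finally passing to the resolvent bound via the generalized Young inequality. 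Some argument of this type (or another mechanism exploiting the sign structure of the $\lambda$-dependent variable coefficients) is needed; as written, your step for the strip at large $|\oIm\lambda|$ would fail.
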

 
Our main result follows from Proposition \ref{MainIngred} and the result of Pr\"uss \cite{Pruss}.  Let  $\mathcal{P}_0$ be the spectral projection associated with the isolated eigenvalue $\lambda=0$. 
\begin{theorem} \label{LinStab} (Asymptotic linear stability in weighted $L^2$-spaces) Under  the same assumptions as in Proposition \ref{MainIngred}, the following statement holds: for any given $(n_0,u_0)^T \in (L_\etba^2)^2$ satisfying $\mathcal{P}_0(n_0,u_0)^T =0$, there holds
\begin{equation}\label{LinStabEq}
\|e^{\mathcal{L}t}(n_0,u_0)^T\|_{(L_\etba^2)^2} \leq C_1 e^{-C_2 t} \|(n_0,u_0)^T\|_{(L_\etba^2)^2}, \quad \forall t\geq 0,
\end{equation}
for some constants $C_1,C_2>0$ depending on $\veps$.
\end{theorem}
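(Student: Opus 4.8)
The plan is to deduce the exponential decay from the spectral information in Proposition~\ref{MainIngred} via Pr\"uss's theorem \cite{Pruss} (the Gearhart--Pr\"uss characterization of uniform exponential stability of a $C_0$-semigroup on a Hilbert space), applied not to $\mathcal{L}$ itself but to the restriction of $\mathcal{L}$ that ``forgets'' the generalized eigenspace at $\lambda=0$. First I would use Proposition~\ref{MainIngred}(ii): since $\lambda=0$ is an isolated eigenvalue of $\mathcal{L}$ with finite algebraic multiplicity, the associated Riesz spectral projection $\mathcal{P}_0$ is a bounded idempotent on $(L_\etba^2)^2$ that commutes with $\mathcal{L}$, hence with $e^{\mathcal{L}t}$. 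Setting $X_1:=\ker\mathcal{P}_0=\mathrm{Ran}(I-\mathcal{P}_0)$, the subspace $X_1$ is a closed, $e^{\mathcal{L}t}$-invariant Hilbert space; the restriction $\mathcal{L}_1:=\mathcal{L}|_{X_1}$ with domain $(H_\etba^1)^2\cap X_1$ generates the $C_0$-semigroup $e^{\mathcal{L}_1t}=e^{\mathcal{L}t}|_{X_1}$, and $\sigma(\mathcal{L}_1)=\sigma(\mathcal{L})\setminus\{0\}$. The hypothesis $\mathcal{P}_0(n_0,u_0)^T=0$ says precisely that $(n_0,u_0)^T\in X_1$, so the theorem reduces to showing that $e^{\mathcal{L}_1t}$ is uniformly exponentially stable on $X_1$.

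By Pr\"uss's theorem, this is equivalent to two facts: (a) $\{\lambda:\oRe\lambda\ge0\}\subset\rho(\mathcal{L}_1)$, and (b) $\sup_{\oRe\lambda\ge0}\|(\lambda-\mathcal{L}_1)^{-1}\|<\infty$. For (a) I would invoke Proposition~\ref{MainIngred}(iii): $\sigma_{\mathrm{ess}}(\mathcal{L})\subset\{\oRe\lambda<-\veps^{3/2}\eta(c_0)\}$ and $\sigma_{\mathrm{pt}}(\mathcal{L})\cap\{\oRe\lambda\ge-\veps^{3/2}\eta(c_0)\}=\{0\}$, so $\sigma(\mathcal{L})\cap\{\oRe\lambda\ge0\}=\{0\}$ and therefore $\sigma(\mathcal{L}_1)\cap\{\oRe\lambda\ge0\}=\emptyset$. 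For (b) I would split $\{\oRe\lambda\ge0\}$ into $\{|\lambda|>r_0\}$ and $\{|\lambda|\le r_0\}$ for a small $r_0>0$: on the former region Proposition~\ref{MainIngred}(iv) gives a uniform bound for $\|(\lambda-\mathcal{L})^{-1}\|$, and restricting to the invariant subspace $X_1$ preserves it; on the latter region the pole of $\lambda\mapsto(\lambda-\mathcal{L})^{-1}$ at the origin is carried entirely by the finite-dimensional range of $\mathcal{P}_0$, so $\lambda\mapsto(\lambda-\mathcal{L}_1)^{-1}$ is analytic, hence bounded, on a neighbourhood of $0$. Combining the two regions yields $\sup_{\oRe\lambda\ge0}\|(\lambda-\mathcal{L}_1)^{-1}\|<\infty$, and Pr\"uss's theorem then delivers $\|e^{\mathcal{L}_1t}\|_{X_1\to X_1}\le C_1 e^{-C_2 t}$ for all $t\ge0$, with $C_1,C_2>0$ depending on $\veps$ (through the spectral gap $\veps^{3/2}\eta(c_0)$ and the resolvent bound). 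Evaluating at $(n_0,u_0)^T\in X_1$ then gives \eqref{LinStabEq}.

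The only point requiring care beyond Proposition~\ref{MainIngred} is the precise meaning of ``uniformly bounded \dots outside any small neighbourhood of the origin'' in item (iv): one needs the bound to hold uniformly all the way to $|\lambda|=\infty$ along and near the imaginary axis, not merely locally uniformly. I expect this to be the main (and essentially the only) obstacle in turning the sketch into a complete proof. It should already be contained in Proposition~\ref{MainIngred} as stated, but if a self-contained estimate is wanted I would proceed as follows: apply Hille--Yosida to $\mathcal{L}$ (which generates a $C_0$-semigroup by Proposition~\ref{MainIngred}(i)) to get $\|(\lambda-\mathcal{L})^{-1}\|\lesssim1$ for $\oRe\lambda$ large; and for $\oRe\lambda$ in a bounded strip with $|\oIm\lambda|\to\infty$, compare $\mathcal{L}$ with its constant-coefficient far-field limit $\mathcal{L}_\infty$, whose Fourier symbol $\lambda-\widehat{\mathcal{L}_\infty}(k)$ stays invertible with controlled inverse on $\{\oRe\lambda\ge0\}$ precisely because the weight $\etba>0$ pushes the hyperbolic dispersion curves $d_\pm(ik-\etba)$ into $\{\oRe\lambda<0\}$, the $x$-dependent remainder $\mathcal{L}-\mathcal{L}_\infty$ being exponentially localized and of lower order relative to the transport part, so a Neumann/parametrix argument closes the estimate. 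Once the uniform resolvent bound on the closed right half-plane is secured, no further ingredient beyond Pr\"uss's theorem is needed.
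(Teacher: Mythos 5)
Your proposal is correct and follows essentially the same route as the paper: define the Riesz projection $\mathcal{P}_0$ at the isolated eigenvalue $\lambda=0$, restrict $\mathcal{L}$ and $e^{\mathcal{L}t}$ to the invariant subspace $\mathrm{Ran}(I-\mathcal{P}_0)$, combine the spectral localization of Proposition \ref{MainIngred}(ii)--(iii) with the uniform resolvent bound of Proposition \ref{MainIngred}(iv) (analyticity handling a neighborhood of the origin), and conclude by Pr\"uss's theorem. The uniformity issue you flag for large $|\oIm\lambda|$ is exactly what Proposition \ref{MainIngred}(iv) (proved as Proposition \ref{Uniresol} via Green's function and exponential dichotomy estimates) supplies, so no additional argument is needed.
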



The semigroup estimates \eqref{LinStabEq} holds  for any solution to the linearized Euler-Poisson system \eqref{EP3} with no component of the non-decaying modes. 


 The essential spectrum of $\mathcal{L}$ in exponentially weighted spaces is shifted into the left half plane since the end state of solitary wave solutions lies in a \emph{super-ion-sonic regime}, i.e., $c>\sqrt{1+K}$. More precisely,  
the dispersion relation is  
\[
\omega_\pm(k)= -k\left( c \pm \sqrt{\frac{1}{1+k^2} + K } \right),
\]
from which we find that the group velocities are strictly negative, i.e., $\partial_k\omega_\pm(k) \leq  -\veps$,
 and we have for small $\beta>0$,
\[
\oRe d_\pm(ik-\etba) \approx \partial_k\omega_\pm(k)\beta.
\]
In particular, the real part of the rightmost point of the essential spectrum of $\mathcal{L}$ is $-\veps\beta<0$. In light of this, we see that the system is dissipative at the end state, i.e., near the tail part of solitary waves. However, the linearized system throughout the wave does not have a definite sign, for which one cannot obtain the decay estimate by a standard energy method.

It is worthwhile to remark that the energy method, successfully used to study the stability and quasi-neutral limit of the solutions near the boundary layers representing plasma sheaths for \eqref{EP} in the weighted spaces under the supersonic condition $c>\sqrt{1+K}$ (see \cite{S12, JKS19}), does not work for our current problem since 
our solitary waves are \emph{not small}. More specifically, 
as the solitary wave amplitude $O(\veps)$ gets smaller, the traveling speed $c$ also gets closer to the ion sound speed in $O(\veps)$ order, and vice versa. This leads to a smaller \emph{damping effect} of order $O(\veps)$ due to a combination of the weighted space and transport effect. Hence, rather than a standard energy method, a more detailed spectral analysis is required to study the asymptotic stability of traveling solitary waves.

One of the main tasks in our analysis is to characterize the eigenvalues of $\mathcal{L}$, for which  we employ  the Evans function techniques. The Evans function is an analytic function of the spectral parameter $\lambda$, and on the \emph{natural domain}, the location and order of zeroes of the Evans function coincide with those of  eigenvalues of $\mathcal{L}$. For the associated eigenvalue problem \eqref{EigenEP}, the natural domain of the Evans function is $\{\lambda:\oRe \lambda>0\}$ in $L^2$ space, and the natural domain can be extended to contain $\{\lambda:\oRe \lambda \geq 0\}$ in the exponentially weighted space $L^2_\beta$. 
  The Evans function was first intoduced in \cite{Evans1}--\cite{Evans4}, and successfully developed  in the study of stability of traveling waves in a various contexts, see \cite{AGJ, Jones, PW, PW1, ZH} for their pineering works. We also refer to \cite{Sand,Kapi} and the referneces therein.

%
In general, an \emph{explicit} form of the Evans function is not available except for a few cases.
To overcome this issue, we make use of a specific scale, related to the Gardner-Morikawa transformation,
\begin{equation}\label{GMT}
\xi = \veps^{1/2}x, \quad \lambda = \veps^{3/2}\Lambda,
\end{equation}
and observe that as $\veps$ tends to zero, the rescaled eigenvalue problem  for the Euler-Poisson system can be formally reduced to the eigenvalue problem for the associated KdV equation (see Appendix \ref{Appendix1})
\begin{equation}\label{FormDerivEigEPtoKdV4}
\Lambda \dot{n}_\ast - \partial_\xi \dot{n}_\ast  + \sqrt{1+K}\partial_\xi(\Psi_{KdV} \dot{n}_\ast) + \frac{1}{2\sqrt{1+K}}\partial_\xi^3 \dot{n}_\ast = 0, \quad (\dot{n}_\ast(\xi):=\dot{n}(x)),
\end{equation}
for which an explicit form of the Evans function $D_{KdV}(\Lambda)$ is established in  \cite{PW1}; it vanishes only at $\Lambda=0$ with multiplicity of two.  
In light of this, we show that in the scaling \eqref{GMT},  the Evans function $D(\lambda,\veps)$ for the Euler-Poisson system  converges to that for the associated KdV equation  as $\veps$ tends to zero, and that the convergence is uniform on a domain containing the closed right-half plane.
By this together with Rouch\'e's theorem, we deduce that $\lambda=0$ is a zero of $D(\lambda,\veps)$ with multiplicity two, and there is no other zero satisfying $\oRe \lambda \geq -\veps^{3/2} \eta(c_0)$. This is discussed in Section~\ref{S2}. The relations between the Evans functions and the associated eigenvalue problems are summarized in the following diagram:
\[
\scriptsize{
\begin{array}{ccccccc}
\frac{d\mathbf{y}}{dx} = A(x,\lambda,\veps) \mathbf{y} & 
\Leftrightarrow &
\frac{d\mathbf{p}}{d\xi} = A_\ast(\xi,\Lambda,\veps) \mathbf{p},  & \rightarrow & \frac{d\mathbf{p}}{d\xi} = A_\ast(\xi,\Lambda,0) \mathbf{p} &  \Leftrightarrow & \text{Eq. }\eqref{FormDerivEigEPtoKdV4} \\[0.2cm]
\Big\vert\scriptstyle{\textrm{Prop.\ref{DefEvansFunctDomain}}} & \boxed{\!\begin{aligned}
  & \scriptstyle{\xi = \veps^{1/2}\,x,} \\
  & \scriptstyle{\lambda = \veps^{3/2}\Lambda}
  \end{aligned}} & \Big\vert \scriptstyle{\textrm{Prop.\ref{DefEvansFunctDomain2}}}   & \text{as }\veps \to 0 & \Big\vert\scriptstyle{\textrm{Prop.\ref{DefEvansFunctDomain2}}} &  & \Big\vert\scriptstyle{\textrm{Prop.\ref{KdVSummary}}} \\[0.2cm]
D(\lambda,\veps) &
\underset{\mathrm{Prop.}\ref{RelationDandDtilde}}{=} &
D_\ast(\Lambda,\veps) & \underset{\textrm{Prop.}\ref{ThmEvansConverg}}{\rightarrow}  & D_\ast(\Lambda,0) & \underset{\mathrm{Prop.}\ref{RelationDandDtilde}}{=} & D_{\text{KdV}}(\Lambda)
\end{array}
}
\]

Our strategy for studying the eigenvalue problem is borrowed from the work of \cite{PW2} concerning stability of solitary waves for some Boussinesq systems.
We also refer to \cite{HS1,Li,MW} for similar approaches for various water wave models and \cite{Sche} concerning the solitary waves for the pressureless  $(K=0)$ Euler-Poisson model.

Another key ingredient in the analysis of the linear asymptotic stability  is  establishing the \emph{uniform} (\emph{in} $\lambda$) boundedness of the resolvent operator  $(\lambda-\mathcal{L})^{-1}$, restricted to the complementary spectral projection $I-\mathcal{P}_0$\textcolor{blue}{,} on $\{\lambda:\oRe \lambda \geq 0\}$ (see \cite{Nagel}, Chapter V, p. 304 for a counterexample), and it is  a non-trivial task even in the case of a scalar equation in general (see \cite{Kapi}, Chapter 4.5 or \cite{KapiSand} for instance). 
%
%
%
To obtain the uniform resolvent bounds, we consider Green's function for the first-order ODE system associated with the eigenvalue problem for the Euler-Poisson system and apply a perturbation argument so-called \emph{the roughness of exponential dichotomies} (see \cite{Coppel2}).
We remark that the uniform resolvent bound for the case $K=0$ is obtained in \cite{Sche} by considering a certain change of variables in such a way that the transformed operator can be written as a sum of a constant coefficient first-order operator and a small bounded operator in $L_\etba^2\times H_\etba^1$ space, which is a suitable solution space for the case. However, such an approach is not available in the case $K>0$ since the small perturbation term is not bounded in $L_\etba^2\times L_\etba^2$ space.

In the context of compressible fluid models of \emph{cold plasmas} (also in the study of \emph{sticky} particles), the pressure term in the momentum equation is sometimes ignored for the sake of simplicity.
In a mathematical point of view, 
the absence of the pressure term makes the system weakly coupled,
so one can take an advantage of its simpler structure to analyze some properties of the system.  
For instance, by a simple ODE technique, the formation of singularity can be easily shown for \eqref{EP} with $K=0$, see e.g., \cite{Liu}, while no comparable result is known for \eqref{EP} with $K>0$.   
However, we remark that the pressureless Euler-Poisson system has  qualitatively different properties from those of \eqref{EP} with $K>0$.
 One of the significant differences is the emergence of \emph{delta shocks};
provided that the initial data $u_0$ has a \emph{steep} gradient at some point, one can easily show, by an ODE technique, that the gradient of velocity blows up at finite time $T_*$ and it occurs at non-integrable order in time, see \eqref{tstar} in Section \ref{Subsec8.2}. 
  From this together with the continuum equation, we see that 
 $L^\infty$ norm of density is unbounded as $t$ approaches $T_*$. This is not the case in the presence of the pressure, in general. 
 We refer to Section~\ref{RO} for a more detailed discussion and some numerical experiments exhibiting this density blow-up phenomena in terms of solitary waves for the case $K=0$. 

Another interesting point is that the appropriate  solution spaces for the pressureless system are different from those for \eqref{EP} with $K>0$; the latter is a symmetrizable system. For the well-posedness for the case $K=0$, the velocity function requires to have one better differentiability than that of density due to the structure of the equations. 
This causes different difficulties in the analysis  such as establishing the uniform resolvent bound as mentioned earlier.

Before we close the introduction, we remark on some issues related to the nonlinear stability. To tackle the asymptotic nonlinear stability, we encounter several obstacles to overcome. Among others, one of the most fundamental issues is to show global existence of smooth solutions. 
As discussed earlier, the linear system has a neutral spectrum, but the dispersion relation reveals the system is weakly dispersive. To our best knowledge, no global existence result has been proved for the 1D Euler-Poisson system \eqref{EP}. Since the system is weakly dispersive but strongly nonlinear, it is not expected that smooth solutions exist globally, in general. Regarding the global existence of the Euler-Poisson system, there are a few positive results. It is proved in \cite{GP} that \emph{curl-free} small smooth solutions of the 3D Euler-Poisson system near the constant state can exist globally in time.
When the initial-boundary value problem for the Euler-Poisson system is considered in the half space $\mathbb{R}_+^n$, $n\ge1$, global smooth solutions near the boundary layer solutions (called \emph{plasma sheaths}) is proved to exist, in which  no curl-free condition is required, see \cite{JKS19,S12}.
The authors establish the decay estimates by taking advantage of the exponentially weighted space and the assumption that the flow is \emph{super-ion-sonic},  referred to as Bohm's criterion, which is physically relevant to the formation of plasma sheath.  There are two reasons (even at the linear level) why similar estimates cannot be drawn for our present problem: 
 (i) the solitary waves we consider are \emph{not small} and (ii) the dissipative mechanism due to the boundary is absent. To our best knowledge, the study of global existence is widely open, and it seems a challenging problem. Some related issues specially for the one-dimensional pressureless Euler-Poisson system is discussed in Section~\ref{RO}.

This paper is organized as follows. In Section~\ref{S2-1}, we summarize some key properties of the solitary waves that are used for our stability analysis. Section~\ref{S2} is devoted to classifying  the spectrum of the linearized operator $\mathcal{L}$ by using the Evans function. Some crucial lemmas, such as the behaviors (in $\lambda)$ of the matrix eigenvalues of the asymptotic matrix for the associated first order ODE system, required to our analysis   will be  proved throughout Section~\ref{Sec_Split} and Section~\ref{Sec5}. Due to the presence of the pressure, the form of the \emph{generalized} dispersion relation \eqref{Charact1} of our model is different from the ones of \cite{Sche,Li,PW2}.   The splitting properties of  matrix eigenvalues (Lemma \ref{splitting mu lam} and Lemma \ref{splitting mu lam weight}) will be proved in Section~\ref{Sec_Split}. The behavior of the Evans function for the small and large eigenvalue parameter as well as the uniform resolvent estimate will be covered in Section~\ref{Sec5}. Summarizing all preliminary results, Theorem \ref{LinStab} will be proved in Section~\ref{Sec_LinConvec}. Some additional topics will be discussed in Section 7 and Section 8. In Section 7, we will present an $L^2$-instability criterion, which is applicable to the Euler-Poisson solitary waves with \emph{large amplitudes}, as well as its numerical experiments. In Section 8, we will discuss the question of global existence vs. finite time singularity formation of the pressureless Euler-Poisson system.

\section{Solitary waves for the Euler-Poisson system}\label{S2-1}
In this section, we briefly discuss some key properties of the solitary waves for \eqref{EP}. 
Plugging the traveling wave Ansatz $(n,u,\phi)(x)$, where $x=s-ct$, into  \eqref{EP}, we obtain
\begin{subequations}\label{EP2Travel2}
\begin{align}[left = \empheqlbrace\,]
& - c\,\partial_x n + \partial_{x}((1+n)u) = 0, \label{EP2Travel2mas} \\ 
& -c\,\partial_x u  + u\partial_{x} u + K \partial_{x} \log(1+n)  = -\partial_{x}\phi, \label{EP2Travel2mom} \\ 
& -\partial_{x}^2\phi = (1+n) - e^{\phi}. \label{EP2Travel2Poi}
\end{align}
\end{subequations}
Integrating the first two equations of \eqref{EP2Travel2} in $x$, one obtains from the far-field condition  \eqref{bdCon x2} that
\begin{subequations}\label{TravelEqB}
\begin{align}[left = \empheqlbrace\,]
& (1+n)(c-u)=c, \label{TravelEqB1} \\ 
& \phi = H(n,c):=\frac{c^2}{2}\left(1 - \frac{1}{(1+n)^2} \right)  - K\ln (1+n), \label{TravelEqB2}
\end{align}
\end{subequations}
where \eqref{TravelEqB1} is used to get \eqref{TravelEqB2}. Hence,   \eqref{EP2Travel2}  can be reduced as the first order ODE system for $n$ and $\partial_x \phi$:
\begin{subequations}\label{ODE_n_E}
\begin{align}[left = \empheqlbrace\,]
& \partial_n H(n,c) \partial_x n = \partial_x\phi, \label{ODE_n_E1} \\
& -\partial_x^2\phi = 1+n - e^{H(n,c)}.\label{ODE_n_E2}
\end{align}
\end{subequations} 
The first integral of \eqref{ODE_n_E} is given by 
\begin{equation}\label{gnc}
\frac{1}{2}(\partial_x\phi)^2=\frac{c^2}{1+n} + K(1+n) + e^{H(n)} - c^2-K-1=: g(n,c).
\end{equation}

The existence of the solitary wave solutions is shown in \cite{Cor} (see also \cite{BK}) through a phase plane analysis of the reduced system \eqref{ODE_n_E}. To be more precise, we define a small amplitude parameter $\veps>0$  by 
\begin{equation}\label{speed}
\veps:=c - \mathsf{V},
\end{equation}
where $\mathsf{V}:=\sqrt{1+K}$   is  the \emph{ion-sound speed}. We also define  
 \begin{equation}\label{ep0}
 \ve_K :=\sqrt{K} \zeta - \mathsf{V}>0,
 \end{equation}
where $\zeta>\sqrt{K+1}/\sqrt{K}$ is a root of the equation
$\zeta^{K}\left[ K (\zeta - 1)^2 + 1 \right]  - \exp \left(K \left(\zeta^2 - 1 \right) / 2\right)=0$. For each $\veps \in (0,\veps_K)$,\footnote{This condition is also the necessary condition for the existence of non-trivial smooth solutions. When $\veps=\veps_K$, the traveling wave solution is not differentiable at  the peak. When $\veps=0$, there is only a trivial solution. See \cite{Cor}.} 
the system of traveling wave equations  \eqref{EP2Travel2} admits a unique non-trivial smooth solution $(n,u,\phi)=(n_c,u_c,\phi_c)$ satisfying
\[
(n_c,u_c,\phi_c)(x)=(n_c,u_c,\phi_c)(-x) \text{ for } x\in\mathbb{R}, \text{ and } \quad \partial_xn_c,\partial_xu_c,\partial_x\phi_c > 0 \quad \text{ for } x\in(-\infty,0).
\]
Moreover, it satisfies the far-field condition \eqref{bdCon x2} and decays exponentially fast as $|x| \to \infty$. We also note that $n_c,u_c,\phi_c>0$ for $x\in\mathbb{R}$.

The peak value $n_c^\ast:=n_c(0)$ is a unique zero of $g$ defined in \eqref{gnc} on the interval $(0,n_s)$, where $n_s=c/\sqrt{K}-1$ is a unique positive zero of 
\begin{equation}\label{deriHnc}
\partial_n H(n,c) = \frac{1}{1+n}\left(\frac{c^2}{(1+n)^2} -K \right).
\end{equation}
Since $\partial_n H(n,c)$ is positive for $n\in(0,n_s)$, we have from \eqref{TravelEqB1} that for all $x\in\mathbb{R}$, 
\[
\frac{c^2}{(1+n_c)^2} - K =  (u_c-c)^2 - K > 0.
\]
 This positive function will be denoted by
\begin{equation}\label{Useful_Id3}
J=J(x,\veps):=(c-u_c(x))^2-K>0.
\end{equation}

One can check that $n_c(x)$ is differentiable in $c \in (\sqrt{1+K},\sqrt{K}\zeta)$, and $\partial_c n_c(x)$ decays to zero exponential as $|x| \to 0$, and thus so do $u_c$ and $\phi_c$. 
This will be discussed in Remark \ref{Rem11}.\\

For notational simplicity, we let
\begin{equation}\label{EigenFun4}
Y_c(x):=(n_c,u_c)^T(x), \quad Z_c(x):=(u_c,n_c)(x).
\end{equation}
Differentiating the traveling wave equations \eqref{EP2Travel2} in $x$ and $c$, respectively, one obtains that
\begin{subequations}\label{EigenFunc}
\begin{align}[left = \empheqlbrace\,]
& \mathcal{L}\partial_xY_c = 0,  \quad \mathcal{L}^\ast Z_c = 0, \label{EigenFunc1}\\
& \mathcal{L} \partial_cY_c = - \partial_x Y_c,
 \quad
 \mathcal{L}^\ast \left( \int_{-\infty}^x\partial_c Z_c(s)\,ds \right)
 = Z_c,\label{EigenFunc2}
\end{align}
\end{subequations}
where $\mathcal{L}^\ast$ is the adjoint operator of $\mathcal{L}$ with respect to the standard $L^2$ inner product. More precisely, this is given by
\begin{equation}\label{AdjL}
\mathcal{L}^{\ast} Z 
:=  (\partial_xZ) L  + \left( (-\partial_x^2+e^{\phi_c})^{-1}(\partial_x Z_2), 0 \right), \quad Z=(Z_1,Z_2),
\end{equation}
and $L$ is the matrix defined in \eqref{DefL}.
We note that  $Y_c$, $\partial_xY_c$, and $\partial_cY_c$ decay exponentially to zero as $|x| \to +\infty$. On the other hand, $\small\smallint^x \partial_c Z_c\,ds$ decays exponentially to zero as $x\to -\infty$, but  it is merely bounded as $x \to +\infty$.

Lastly, we briefly discuss the result of \cite{BK}
which will be  used throughout this paper. 
\begin{theorem}[\cite{BK}]\label{MainThm4}
For $K >0$, let $\mathsf{V}$ and $\veps$ be as in \eqref{speed}, and $\xi=\veps^{1/2}x$. Let $j$ be any non-negative integer. Then there exist positive constants $\veps_\ast$, $C_\ast$ and $C_{j}$ such that for all $\veps\in(0,\veps_\ast]$
\begin{equation}\label{pointesti2}
\left| \partial_\xi^j \left( \frac{n_c}{\veps},\frac{u_c}{\veps},\frac{\phi_c}{\veps} \right)(\veps^{-1/2}\xi) - \partial_\xi^j(\Psi_\textrm{KdV},\mathsf{V}\Psi_\textrm{KdV},\Psi_\textrm{KdV})(\xi) \right|  \leq \veps C_{j}e^{-C_\ast|\xi|},
\end{equation}
where $\Psi_\textrm{KdV}(\xi)$, defined in \eqref{solutionKdV2}, satisfies the KdV equation \eqref{KdV}.  Here, $C_\ast$ and $C_j$ are independent of $\veps\in(0,\veps_\ast]$ and $\xi\in\mathbb{R}$. Moreover, in the variable of $x$, there holds
\begin{equation}\label{pointestimateinX}
|\partial_x^j n_c(x)| + |\partial_x^j u_c(x)| + |\partial_x^j \phi_c(x)| \leq \veps^{1+ j/2}C_je^{-C_\ast\veps^{1/2}|x|}.
\end{equation}  
\end{theorem}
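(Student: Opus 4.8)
The plan is to reduce the traveling-wave problem to a scalar first-order ODE, rescale it by the Gardner--Morikawa transformation, and compare the resulting homoclinic orbit with that of the KdV soliton by a perturbative phase-plane argument; only the profile $n_c$ needs to be treated directly, since \eqref{TravelEqB1}--\eqref{TravelEqB2} express $u_c=cn_c/(1+n_c)$ and $\phi_c=H(n_c,c)$ as $\veps$-independent smooth functions of $n_c$, and the estimates for $u_c,\phi_c$ and their $\xi$-derivatives then follow by the chain rule using $H(0,c)=0$ and $\partial_nH(0,c)=c^2-K=1+O(\veps)$; this also explains the factor $\mathsf{V}$ in front of $\Psi_{KdV}$ for $u_c$ (since $c\,\partial_n\big(n/(1+n)\big)|_{n=0}=c=\mathsf V+O(\veps)$) and the factor $1$ for $\phi_c$.

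First I would combine \eqref{gnc} with \eqref{ODE_n_E1} into $\tfrac12\big(\partial_nH(n_c,c)\big)^2(\partial_xn_c)^2=g(n_c,c)$, set $\xi=\veps^{1/2}x$ and $n_c(x)=\veps N(\xi;\veps)$, and Taylor-expand in $n$. A direct computation gives $g(0,c)=\partial_ng(0,c)=0$, $\partial_n^2g(0,c)=(c^2-K)(c^2-K-1)$, and $\partial_n^3g(0,\mathsf V)=-2\mathsf V^2$; since $c^2-K-1=\veps(2\mathsf V+\veps)$ by \eqref{speed}, one obtains $g(\veps N,c)=\veps^3\big(\mathsf V N^2-\tfrac{\mathsf V^2}{3}N^3\big)+O(\veps^4)$ with all remainders smooth in $N$ and bounded together with their $N$-derivatives on compact sets, while $\partial_nH(\veps N,c)=1+O(\veps)$. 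Dividing by $\veps^3$ yields \[\tfrac12(\partial_\xi N)^2=\mathsf V N^2-\tfrac{\mathsf V^2}{3}N^3+\veps\,R(N;\veps).\] At $\veps=0$ this is precisely the once-integrated square of \eqref{KdV}: multiplying \eqref{KdV} by $\Psi_{KdV}$, integrating twice, and using the far-field condition gives $\tfrac12(\partial_\xi\Psi_{KdV})^2=\mathsf V\Psi_{KdV}^2-\tfrac{\mathsf V^2}{3}\Psi_{KdV}^3$, for which $\Psi_{KdV}$ from \eqref{solutionKdV2} is the unique positive even homoclinic solution, with $\Psi_{KdV}(0)=3/\mathsf V$. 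The peak $n_c^\ast$ is the smallest positive zero of $g(\cdot,c)$, and from $g(\veps m,c)=\veps^3m^2\big(\mathsf V-\tfrac{\mathsf V^2}{3}m+O(\veps)\big)$ the implicit function theorem yields $n_c^\ast/\veps=3/\mathsf V+O(\veps)$, i.e.\ $N(0;\veps)=\Psi_{KdV}(0)+O(\veps)$, while $\partial_\xi N(0;\veps)=0=\partial_\xi\Psi_{KdV}(0)$ by evenness.

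Next I would upgrade this to a global, exponentially weighted $C^0$ estimate. Because $\partial_xn_c>0$ on $(-\infty,0)$, on that half-line $N$ may be used as the independent variable: $\xi(N)=-\int_N^{N(0)}\big(2\mathsf V N'^2-\tfrac{2\mathsf V^2}{3}N'^3+2\veps R(N';\veps)\big)^{-1/2}\,dN'$, and the same formula (with $\veps=0$) holds for $\Psi_{KdV}$; the integrand is strictly positive for $N'\in(0,N(0)]$ and behaves like $\sqrt{2\mathsf V}\,N'$ near $0$, so subtracting the two representations gives $|N(\xi;\veps)-\Psi_{KdV}(\xi)|=O(\veps)$ uniformly on $\xi\le0$, the case $\xi\ge0$ being symmetric. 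The exponential weight is recovered from the fact that both profiles solve $\partial_\xi^2v=2\mathsf V v+O(v^2)+O(\veps)$ near the tails, so $w:=N-\Psi_{KdV}$ satisfies a linear ODE whose coefficients converge exponentially to the hyperbolic constant-coefficient operator $\partial_\xi^2-2\mathsf V$; a barrier/Gr\"onwall argument in the norm $\sup_\xi e^{C_\ast|\xi|}|\cdot|$ with any $0<C_\ast<\sqrt{2\mathsf V}$ then closes. Alternatively one may write $w=\mathcal M^{-1}\big(\veps\,\Phi(\Psi_{KdV}+w;\veps)\big)$, where $\mathcal M=\tfrac{1}{2\mathsf V}\partial_\xi^2-1+\mathsf V\Psi_{KdV}$ is the self-adjoint linearization of \eqref{KdV} about $\Psi_{KdV}$, invertible on even functions (its kernel $\operatorname{span}\{\partial_\xi\Psi_{KdV}\}$ being odd) with an exponentially localized resolvent kernel, and run a contraction in an exponentially weighted space; this variant delivers the derivative bounds simultaneously.

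The derivative estimates \eqref{pointesti2} for $j\ge1$ then follow by differentiating the ODE for $N$ and bootstrapping against the $C^0$ bound, and composing with the smooth maps $n\mapsto u_c,\phi_c$ of the first paragraph; finally \eqref{pointestimateinX} is obtained by undoing $\xi=\veps^{1/2}x$, since $\partial_x^j=\veps^{j/2}\partial_\xi^j$ and $n_c=\veps N$. The one genuinely delicate step is the uniform-in-$\xi$ (equivalently, exponentially weighted) control of the error: on any bounded $\xi$-interval this is elementary ODE perturbation theory, but preventing accumulation along the tails requires the hyperbolicity rate of the linearization at the end state to be bounded below uniformly as $\veps\to0$ after the rescaling, which is exactly where the super-ion-sonic condition $c^2-K-1=\veps(2\mathsf V+\veps)>0$ enters.
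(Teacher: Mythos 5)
You should be aware that this paper does not actually prove Theorem \ref{MainThm4}: the statement is imported verbatim from \cite{BK}, so there is no in-paper proof to measure your argument against; what can be said is that your route is exactly the one the paper attributes to \cite{Cor,BK}, namely reduction to the planar system \eqref{ODE_n_E}, its first integral \eqref{gnc}, and the Gardner--Morikawa rescaling, and that your key computations check out: $g(0,c)=\partial_n g(0,c)=0$, $\partial_n^2 g(0,c)=(c^2-K)(c^2-K-1)$, $\partial_n^3 g(0,\mathsf{V})=-2\mathsf{V}^2$, so with $c^2-K-1=\veps(2\mathsf{V}+\veps)$ the rescaled first integral is an $O(\veps)$ perturbation of the once-integrated form of \eqref{KdV}, the peak obeys $n_c^\ast/\veps=3/\mathsf{V}+O(\veps)$, and $u_c=cn_c/(1+n_c)$, $\phi_c=H(n_c,c)$ from \eqref{TravelEqB1}--\eqref{TravelEqB2} correctly produce the factors $\mathsf{V}$ and $1$. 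The places where the real work is hidden are the ones you partially gloss: uniformity of the quadrature comparison hinges on the structural fact that $g(n,c)=n^2\tilde g(n,c)$ with $\tilde g$ smooth (so the relative error of the integrand is $O(\veps)$ all the way into the tail), inverting $\xi(N)$ costs a factor $(1+|\xi|)$ that must be absorbed by choosing $C_\ast$ strictly below $\sqrt{2\mathsf{V}}$ (uniform in $\veps$ because the end-state rate $\sqrt{(c^2-1-K)/(c^2-K)}$ rescales to $\sqrt{2\mathsf{V}}+O(\veps)$, as you note), the comparison near the peak where the square root vanishes and Gr\"onwall degenerates needs a separate argument, and the $\mathcal{M}^{-1}$ fixed-point variant first requires recasting the profile equation in second-order form with an $\veps$-remainder controlled by lower-order quantities before the contraction closes; also, a small slip: the integrand in your quadrature behaves like $(\sqrt{2\mathsf{V}}\,N')^{-1}$ near $N'=0$, not like $\sqrt{2\mathsf{V}}\,N'$, which is precisely the source of the logarithmic divergence of $\xi$ and hence of the exponential tails. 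Modulo these standard-but-nontrivial details, your proposal is a sound strategy consistent with the cited reference.
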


\begin{remark}\label{Rem11}
We remark that $\partial_c n_c(x)$ decays exponentially fast as $|x|\to \infty$. To see this, 
we let $E_c:=-\partial_x \phi_c$ and $h(n,c):=\partial_cH(n,c)$. Then, $(n_c,E_c)$ satisfies the following ODE system (recall \eqref{ODE_n_E})
\[
\partial_x \begin{pmatrix}
n \\
E
\end{pmatrix}
=
\begin{pmatrix}
-E/ [h(n,c)] \\
1+n - e^{H(n,c)}
\end{pmatrix}
=:F(n,E,c),
\quad
(n,E)=(n_c,E_c).
\]
Since the peak value of $n_c(x)$, denoted by $n_c^\ast$,  is differentiable in $c$, it is standard (\cite{Hartman}) that  $(n_c,E_c)(x)$ is differentiable in $c$, and it satisfies the initial value problem of the linear inhomogeneous system
\begin{equation}
\begin{split}
\partial_x
\begin{pmatrix}
\partial_cn_c \\
\partial_cE_c
\end{pmatrix}
=
& 
[DF]_{(n,E)=(n_c,E_c)} \begin{pmatrix}
\partial_c n_c \\
\partial_c E_c
\end{pmatrix}
+
\partial_cF(n,E,c)|_{(n,E)=(n_c,E_c)}, \\
&
\left(\partial_cn_c(0), \partial_cE_c(0) \right) = \left(\partial_cn_c^\ast , 0 \right),
\end{split}
\end{equation}
where $[DF]$ is the Jacobian matrix of $F$ in $(n,E)$ variable, i.e., 
\[
[DF](n,E,c) = 
\begin{pmatrix}
\frac{E\partial_nh}{h^2} & -\frac{1}{h} \\
1-e^H h& 0
\end{pmatrix}.
\]
The eigenvalues of the matrix 
\[
B(c):=DF(0,0,c) = 
\begin{pmatrix}
0 & \frac{1}{K-c^2}\\
1 + K -c^2 & 0
\end{pmatrix}
\]
are $\lambda_c := \sqrt{\frac{c^2-1-K}{c^2-K}}>0$ and $-\lambda_c$, and the associated right eigenvectors are $\boldsymbol{v}_\pm:= \left(1, \pm\lambda_c (K-c^2) \right)^T$, respectively. Therefore, there is a projection $P:=\frac{\boldsymbol{v}_-\boldsymbol{v}_-^T}{\boldsymbol{v}_-^T\boldsymbol{v}_-}$
such that 
\begin{equation}\label{eB}
|e^{B(x-s)}P| \leq Ce^{-\lambda_c(x-s)} \text{ for } x>s, \quad |e^{B(x-s)}(I-P)| \leq Ce^{-\lambda_c(s-x)} \text{ for } s>x.
\end{equation}
Since $(n_c,E_c)$ exponentially decays to zero as $|x| \to 0$,  so does $\mathbf{f}$ where $\mathbf{f}$ is defined by
\[
\mathbf{f}(x,c):= \left( [DF]_{(n,E)=(n_c,E_c)} -[DF]_{(n,E)=(0,0)} \right) \begin{pmatrix}
\partial_c n_c \\
\partial_c E_c
\end{pmatrix}
+
\partial_cF(n,E,c)_{(n,E)=(n_c,E_c)}.
\]
By the variation of constants, we have that for $x>0$,
\[
\begin{split}
\begin{pmatrix}
\partial_cn_c \\
\partial_cE_c
\end{pmatrix}
(x)
& = \underbrace{e^{Bx}\left( \begin{pmatrix}
\partial_cn_c^\ast \\
0
\end{pmatrix} + \int_0^\infty e^{-Bs}(I-P)\mathbf{f}(s)\,ds \right) }_{=:I_1}\\
&   + \underbrace{\int_0^xe^{B(x-s)}P\mathbf{f}(s)\,ds - \int_x^\infty e^{B(x-s)}(I-P)\mathbf{f}(s)\,ds}_{=:I_2}.
\end{split}
\]
Using \eqref{eB} and the fact that $\mathbf{f}$ exponentially decays to zero, we see that $I_2$ exponentially decays to zero as $x \to +\infty$. Moreover, $(\partial_cn_c,\partial_cE_c)(x)$ is bounded in $x$ since $(n_c,E_c)(x) \to (0,0)$ as $|x| \to \infty$ for all $c\in (\sqrt{1+K},\sqrt{K}\zeta)$. Therefore, $I_1$ must be bounded in $x>0$, and thus $I_1$ exponentially decays to zero as $x \to +\infty$.  Since $I_1$ and $I_2$ both exponentially decay to zero as $x \to +\infty$, so does 
$\partial_cn_c$. By the symmetry $\partial_cn_c(x)= \partial_cn_c(-x)$, $\partial_cn_c$ decays exponentially fast as $x\to -\infty$ as well. 
\end{remark}

\section{Spectral analysis of the linearized equations}\label{S2}
In this section, we study the spectrum of the linear operator $\mathcal{L}$ defined in \eqref{Ch3_Def_mcL} whose main results are summarized in subsection \ref{SubS_Spec}. To this end, throughout subsection \ref{SecDefEva}--\ref{SubS_EvEp}, we first define the Evans function for the associated eigenvalue problem of the operator $\mathcal{L}$ (this function will be  called as the Evans function for the Euler-Poisson system),
 and discuss its properties. The remaining subsections are devoted to studying the location and order of zeros of the associated Evans function (see Proposition \ref{Pro_Ev_Ch}). 

\subsection{Definition and Properties of the Evans Function}\label{SecDefEva}
Before we define the Evans function for the Euler-Poisson system, we briefly review the definition of Evans function and its properties in a general setting by mainly following \cite{PW}. 

We consider a first-order ODE system 
\begin{equation}\label{ODE_LinEP}
\frac{d\mathbf{y}}{dx} = A(x,\lambda,\veps)\mathbf{y}.
\end{equation}
In the next subsection, we will see that the eigenvalue problem \eqref{EigenEP} can be written  in the form of  \eqref{ODE_LinEP} with $\mathbf{y}=(\dt{n},\dt{u},\dt{\phi},\partial_x\dt{\phi})^T$ for which the coefficient matrix $A$ is defined in \eqref{ODE_LinEP1}.

For each $\veps>0$, the Evans function for \eqref{ODE_LinEP} is defined on a simply connected domain $\Omega^\veps\subset \mathbb{C}$,  where the following conditions hold true:
\begin{enumerate}
\item[\textbf{H1}] $A(x,\lambda,\veps)$ is continuous in $(x,\lambda)\in\mathbb{R}\times \Omega^\veps$ and is analytic in $\lambda \in \Omega^\veps$ for fixed $x \in \mathbb{R}$.
\item[\textbf{H2}] $A(x,\lambda,\veps)$ converges to $A^\infty(\lambda,\veps)$ as $|x| \to \infty$, uniformly in $\lambda$ on any compact subset of $\Omega^\veps$. 
\item[\textbf{H3}] The integral $\int_{-\infty}^\infty |A(x,\lambda,\veps) - A^\infty(\lambda,\veps) |\,dx$ converges for all $\lambda$, uniformly on any compact subset of $\Omega^\veps$.
\item[\textbf{H4}] For every $\lambda \in \Omega^\veps$,   the  matrix eigenvalues $\mu_j=\mu_j(\lambda,\veps)$ of $A^\infty=A^\infty(\lambda,\veps)$  can be labelled so that 
\begin{equation*}
\oRe \mu_1 < \mu_\ast:=\min\{\oRe \mu_j: j=2,3,4\}.
\end{equation*}
\end{enumerate} 
Under the assumption $\mathbf{H4}$, one can choose the right and left eigenvectors of $A^\infty(\lambda,\veps)$ associated with a simple eigenvalue $\mu_1$, denoted by $\mathbf{v}_1=\mathbf{v}_1(\lambda,\veps)$ and $\mathbf{w}_1=\mathbf{w}_1(\lambda,\veps)$ respectively, such that they are analytic in $\lambda \in \Omega^\veps$ and the normalization $\mathbf{w}_1\mathbf{v}_1=1$ holds (see Chapter 2, Section 4.1 of \cite{Kato}).
Under the assumptions \textbf{H1}--\textbf{H4},  \eqref{ODE_LinEP} has a unique solution $\mathbf{y}^+=\mathbf{y}^+(x,\lambda,\veps)$ satisfying 
\begin{equation}\label{ODEsol1}
\lim_{x \to +\infty}e^{-\mu_1 x}\mathbf{y}^+ = \mathbf{v}_1,
\end{equation}
and the transposed ODE system 
\begin{equation}\label{TranspoODE_DefEvans}
\frac{d\mathbf{z}}{dx} = -\mathbf{z} A(x,\lambda,\veps),
\end{equation}
where $\mathbf{z}$ is a row vector, has a unique solution $\mathbf{z}^-=\mathbf{z}^-(x,\lambda,\veps)$ satisfying 
\begin{equation}\label{TranspoODE_sol_DefEvans}
\lim_{x \to -\infty}e^{\mu_1 x}\mathbf{z}^-= \mathbf{w}_1.
\end{equation}
Here the solutions $\mathbf{y}^+$ and $\mathbf{z}^-$ can be constructed so that they are analytic in $\lambda \in \Omega^\veps$ for fixed $x\in\mathbb{R}$. 

Now the Evans function $D(\lambda,\veps)$ for \eqref{ODE_LinEP}  is defined by
\begin{equation}\label{Evans-Def}
D(\lambda,\veps):= \mathbf{z}^-\mathbf{y}^+(x,\lambda,\veps).
\end{equation}
Then $D(\lambda,\veps)$  is analytic in $\lambda$ and is independent of $x$. Moreover, it is characterized by
\begin{equation}\label{CharacEvansD}
\lim_{x \to -\infty}e^{-\mu_1 x}\mathbf{y}^+ = D(\lambda,\veps)\mathbf{v}_1.
\end{equation}

We will see that $\oRe \mu_1 < 0 < \mu_\ast$ holds on the domain $\oRe \lambda >0$. On this domain, \eqref{ODEsol1} and \eqref{CharacEvansD} imply that $D(\lambda,\veps)=0$ if and only if $\mathbf{y}^+(x,\lambda,\veps)$ is an $(L^2)^4$-solution of  \eqref{ODE_LinEP}. For the same reason, $D(\lambda,\veps)=0$ if and only if $\mathbf{y}^+(x,\lambda,\veps)$ is an $(L_\beta^2)^4$-solution to \eqref{ODE_LinEP}, provided that $\beta>0$ satisfies 
\begin{equation}\label{splitting weighted 2}
\oRe \mu_1 +{\etba} < 0 < \mu_\ast + \etba.
\end{equation}
The domain $\Omega^\veps$ will be defined (see \eqref{ND-eps}) in such a way that it contains the \emph{closed} right-half plane and \eqref{splitting weighted 2} holds for $\lambda \in \Omega^\veps$. We also remark that the zeros of $D(\lambda,\veps)$ are isolated since it is an analytic function in $\lambda$.

We summarize  the characterizations of asymptotic behaviors of solutions of \eqref{ODE_LinEP} and \eqref{TranspoODE_DefEvans}, which will be crucially used in the following analysis.
\begin{proposition}[\cite{PW}, Proposition 1.6]\label{Prop_ResultPW}
Suppose \textbf{H1}--\textbf{H4} hold on $\Omega^\veps$ for each $\veps$.  Let $\mathbf{y}^+(x,\lambda,\veps)$ and $\mathbf{z}^-(x,\lambda,\veps)$ be the solutions to \eqref{ODE_LinEP} $($resp.\,\eqref{TranspoODE_DefEvans}$)$ satisfying \eqref{ODEsol1} $($resp.\,\eqref{TranspoODE_sol_DefEvans}$)$. For $\lambda \in \Omega^\veps$,  the following statements hold true.
\begin{enumerate}[{(a)}]
\item \label{Dy+} The following are equivalent:
	\begin{enumerate}[{(i)}]
	\item $D(\lambda,\veps)=0$;
	\item $\mathbf{y}^+(x,\lambda,\veps)=O(e^{(\mu_\ast-\theta) x})$ as $x \to -\infty$ for  $0< \theta < \mu_\ast - \oRe \mu_1$.
	\end{enumerate}
\item \label{Dy+1} For any solution $\mathbf{y}(x,\lambda,\veps)$ of \eqref{ODE_LinEP}, the following are equivalent:
	\begin{enumerate}[{(i)}]
	\item $\mathbf{y}(x,\lambda,\veps) = O(e^{\mu_1 x})$ as $x \to +\infty$;
	\item $\mathbf{y}(x,\lambda,\veps) = \alpha \mathbf{y}^+(x,\lambda,\veps)$ for some constant $\alpha\in \mathbb{C}$;  
	\item $\mathbf{y}(x,\lambda,\veps) = o(e^{(\mu_\ast -\theta)x})$ as $x \to +\infty$ for  $0< \theta < \mu_\ast - \oRe \mu_1$.  
	\end{enumerate}
	
\item  \label{Dy+2} For any solution $\mathbf{z}(x,\lambda,\veps)$ of \eqref{TranspoODE_DefEvans}, the following are equivalent:
	\begin{enumerate}[{(i)}]
	\item $\mathbf{z}(x,\lambda,\veps) = \alpha \mathbf{z}^-(x,\lambda,\veps)$ for some constant $\alpha\in \mathbb{C}$
	\item $\mathbf{z}(x,\lambda,\veps) = o(e^{(-\mu_\ast +\theta)x})$ as $x \to -\infty$ for  $0< \theta < \mu_\ast - \oRe \mu_1$.
	\end{enumerate} 
\end{enumerate}
\end{proposition}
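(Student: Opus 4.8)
\noindent\emph{Proof proposal.} Everything reduces to the asymptotic structure of \eqref{ODE_LinEP} (resp.\ \eqref{TranspoODE_DefEvans}) near $x=\pm\infty$, where by \textbf{H2}--\textbf{H3} the system is an $L^1$-perturbation of the constant-coefficient system $\mathbf{y}'=A^\infty\mathbf{y}$, and by \textbf{H4} the latter has a spectral gap separating the simple eigenvalue $\mu_1$ from $\mu_2,\mu_3,\mu_4$. Fix $\lambda\in\Omega^\veps$ and $\theta\in(0,\mu_\ast-\oRe\mu_1)$. The plan is to first record the dichotomy/asymptotic-integration statement that follows from \textbf{H2}--\textbf{H4}, and then deduce (a), (b), (c) by elementary linear algebra using \eqref{ODEsol1}, \eqref{TranspoODE_sol_DefEvans} and \eqref{CharacEvansD}.

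\emph{Step 1 (dichotomies on half-lines).} Write $\mathbb{C}^4=E_1\oplus E_\ast$, where $E_1=\mathrm{span}\,\mathbf{v}_1$ is the $\mu_1$-eigenspace of $A^\infty$ and $E_\ast$ is the $A^\infty$-invariant complement, on which the spectral abscissa is bounded below by $\mu_\ast$. The constant system has an exponential dichotomy on $\mathbb{R}$ with the spectral projection onto $E_1$ and growth rates $\oRe\mu_1<\mu_\ast-\theta$, the possible polynomial (Jordan-block) corrections from $E_\ast$ being absorbed into the loss $\theta$. Since $\int_{-\infty}^{0}|A-A^\infty|\,dx<\infty$ and $A(x)\to A^\infty$, by the roughness of exponential dichotomies under $L^1$-perturbations (\cite{Coppel2}) there is $M>0$ such that \eqref{ODE_LinEP} restricted to $(-\infty,-M]$ (resp.\ $[M,\infty)$) admits a dichotomy with projection close to the above, hence a splitting of its solution space into a line $S_1$ (resp.\ $T_1$) and a hyperplane $S_\ast$ (resp.\ $T_\ast$) with the following properties: for $0\ne\mathbf{s}\in S_1$ (resp.\ $\mathbf{s}\in T_1$) the limit $\lim e^{-\mu_1 x}\mathbf{s}(x)$ is a nonzero multiple of $\mathbf{v}_1$ and $\mathbf{s}\mapsto\lim e^{-\mu_1 x}\mathbf{s}(x)$ is an isomorphism onto $E_1$; every $\mathbf{r}\in S_\ast$ satisfies $|\mathbf{r}(x)|\le Ce^{(\mu_\ast-\theta)x}$ as $x\to-\infty$; and every nonzero $\mathbf{r}\in T_\ast$ satisfies $|\mathbf{r}(x)|\ge c\,e^{(\mu_\ast-\theta)x}$ as $x\to+\infty$, so (using $\oRe\mu_1<\mu_\ast-\theta$) is neither $o(e^{(\mu_\ast-\theta)x})$ nor $O(e^{\mu_1 x})$ there.

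\emph{Step 2 (the three equivalences).} For (a), decompose $\mathbf{y}^+|_{(-\infty,-M]}=\mathbf{s}_1+\mathbf{s}_\ast$ with $\mathbf{s}_1\in S_1$, $\mathbf{s}_\ast\in S_\ast$. Then $e^{-\mu_1 x}\mathbf{s}_\ast\to0$, so $\lim_{x\to-\infty}e^{-\mu_1 x}\mathbf{y}^+=\lim_{x\to-\infty}e^{-\mu_1 x}\mathbf{s}_1$, which by \eqref{CharacEvansD} equals $D(\lambda,\veps)\mathbf{v}_1$; by the isomorphism of Step 1, $D(\lambda,\veps)=0\iff\mathbf{s}_1=0\iff\mathbf{y}^+\in S_\ast\iff\mathbf{y}^+=O(e^{(\mu_\ast-\theta)x})$ as $x\to-\infty$ (for the last ``$\Leftarrow$'', the bound forces $e^{-\mu_1 x}\mathbf{y}^+\to0$, hence $\mathbf{s}_1=0$). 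For (b), by \eqref{ODEsol1} $\mathbf{y}^+$ is a nonzero element of $T_1$, so $T_1=\mathrm{span}\,\mathbf{y}^+$; writing an arbitrary solution as $\mathbf{y}=\alpha\mathbf{y}^++\mathbf{r}$ with $\mathbf{r}\in T_\ast$, each of the conditions (i) $\mathbf{y}=O(e^{\mu_1 x})$ and (iii) $\mathbf{y}=o(e^{(\mu_\ast-\theta)x})$ as $x\to+\infty$ forces $\mathbf{r}=\mathbf{y}-\alpha\mathbf{y}^+=0$ (since a nonzero $\mathbf{r}\in T_\ast$ violates both, while $\mathbf{y}^+\sim e^{\mu_1 x}\mathbf{v}_1$ satisfies both), giving (ii); and (ii)$\Rightarrow$(i),(iii) because $\mathbf{y}^+\sim e^{\mu_1 x}\mathbf{v}_1=o(e^{(\mu_\ast-\theta)x})$. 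For (c), transpose \eqref{TranspoODE_DefEvans} to $(\mathbf{z}^T)'=-A^T\mathbf{z}^T$, whose asymptotic matrix $-(A^\infty)^T$ has eigenvalues $-\mu_j$; by \textbf{H4}, $-\mu_1$ is simple and the unique one of largest real part, so $e^{-\mu_1 x}$ is the fastest-decaying mode at $x=-\infty$. Running Steps 1--2 for this system on $(-\infty,-M]$ and using \eqref{TranspoODE_sol_DefEvans} (so that $\mathbf{z}^-$ spans the resulting line), the decomposition argument of (b), now at $-\infty$, yields $\mathbf{z}=\alpha\mathbf{z}^-\iff\mathbf{z}=o(e^{(-\mu_\ast+\theta)x})$ as $x\to-\infty$.

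\emph{Main obstacle.} The only non-formal input is Step 1: producing the splitting with the sharp rate $O(e^{(\mu_\ast-\theta)x})$ from just the integrability hypothesis \textbf{H3} (no pointwise decay rate is assumed for $A-A^\infty$) while allowing $\mu_2,\mu_3,\mu_4$ to coincide or to be non-semisimple. This is exactly where the arbitrarily small loss $\theta>0$ is needed: it absorbs simultaneously the roughness perturbation of the dichotomy growth rates and the polynomial Jordan-block corrections coming from $E_\ast$. Once Step 1 is in place, all three equivalences are bookkeeping with \eqref{ODEsol1}, \eqref{TranspoODE_sol_DefEvans}, \eqref{CharacEvansD} and the fact that $D(\lambda,\veps)=\mathbf{z}^-\mathbf{y}^+$ is independent of $x$.
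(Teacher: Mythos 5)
Your proposal is correct in substance, but note that the paper itself does not prove this proposition: it is imported verbatim from Pego--Weinstein (\cite{PW}, Proposition 1.6), where it is established by constructing $\mathbf{y}^+$ and $\mathbf{z}^-$ through integral equations against the constant-coefficient flow of $A^\infty$ (their Proposition 1.2) and reading the equivalences off those integral representations. Your route --- exponential dichotomies on half-lines for the limiting system, transferred to \eqref{ODE_LinEP} and \eqref{TranspoODE_DefEvans} by Coppel's roughness theorem \cite{Coppel2} (using \textbf{H2} to make the perturbation sup-norm small on the far half-lines), followed by linear-algebra bookkeeping with \eqref{ODEsol1}, \eqref{TranspoODE_sol_DefEvans} and \eqref{CharacEvansD} --- is a legitimate alternative packaging of the same content, and it isolates cleanly why the arbitrarily small loss $\theta$ suffices to absorb both the roughness correction and possible Jordan blocks among $\mu_2,\mu_3,\mu_4$. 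One caveat: roughness by itself only yields two-sided exponential bounds on the two invariant families of solutions; the existence of the limits $\lim_{x\to-\infty}e^{-\mu_1 x}\mathbf{s}(x)$ and the isomorphism onto $E_1$ asserted in your Step 1 is a Levinson-type asymptotic-integration statement that genuinely uses \textbf{H3}, i.e.\ precisely the content of \cite{PW}, Proposition 1.2, not of the dichotomy alone. This does not create an essential gap, because the setup of the proposition already grants \eqref{ODEsol1}, \eqref{TranspoODE_sol_DefEvans} and the characterization \eqref{CharacEvansD}, and each equivalence can be closed using only the dichotomy upper bounds on the distinguished one-dimensional pieces and the lower bounds on the complementary subspaces (as you do for $T_\ast$); but you should either cite the asymptotic-integration step separately or rearrange Step 1 so the limit map is invoked only where the given normalizations supply it. What \cite{PW}'s approach buys is that the construction of $\mathbf{y}^+,\mathbf{z}^-$, their analyticity in $\lambda$, and these equivalences all come out of a single fixed-point argument; what yours buys is modularity, reusing the dichotomy/roughness machinery that this paper invokes anyway in Proposition \ref{Clas_Spec} and Section \ref{Sec5}.
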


\begin{proposition}[\cite{PW}, Proposition 1.21]\label{Prop_DeYZ}
Under the same assumptions as in Proposition \ref{Prop_ResultPW}, the following statements hold. 
\begin{enumerate}[{(a)}]
\item For all nonnegative integers $j=0,1,2,\cdots$, there holds that 
\begin{equation}\label{Prop_DeYZ1}
\partial_\lambda^j \mathbf{y}^+(x,\lambda,\veps) = O(e^{(\mu_1+\theta) x})  \text{ as } x \to +\infty  \; \text{ and } \;
\partial_\lambda^j \mathbf{z}^- = O(e^{-(\mu_1 + \theta) x}) \text{ as } x \to -\infty
\end{equation}
for any small $\theta>0$. 
\item If $0=D(\lambda,\veps)=\cdots = \partial_\lambda^{k-1} D(\lambda,\veps) \neq \partial_\lambda^k D(\lambda,\veps)$, then we have that for $ j=0,1,\cdots,k-1$,
\begin{equation}\label{Prop_DeYZ2}
\partial_\lambda^j \mathbf{y}^+(x,\lambda,\veps) = O(e^{\mu_\ast x - \theta x})  \text{ as }  x \to -\infty 
\end{equation}
for any small $\theta>0$, and that
\begin{equation}\label{Prop_DeYZ3}
\lim_{x \to -\infty}e^{-\mu_1x}\partial_\lambda^k \mathbf{y}^+(x,\lambda,\veps) = \partial_\lambda^k D(\lambda,\veps) \mathbf{v}_1.
\end{equation}
\end{enumerate}
\end{proposition}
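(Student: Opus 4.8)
The plan is to obtain the decay estimates for the derivatives $\partial_\lambda^j\mathbf{y}^+$ and $\partial_\lambda^j\mathbf{z}^-$ by differentiating the integral-equation characterization of $\mathbf{y}^+$ and $\mathbf{z}^-$ in $\lambda$ and bootstrapping on $j$. First I would recall the standard construction behind \eqref{ODEsol1}: writing $\mathbf{y}^+ = e^{\mu_1 x}(\mathbf{v}_1 + \mathbf{r}(x,\lambda,\veps))$, the remainder $\mathbf{r}$ solves a linear Volterra integral equation of the form $\mathbf{r}(x) = \int_{\infty}^x G(x,y,\lambda,\veps)(A(y,\lambda,\veps)-A^\infty(\lambda,\veps))(\mathbf{v}_1+\mathbf{r}(y))\,dy$, where the Green's kernel $G$ is built from $e^{A^\infty(x-y)}$ split along the spectral gap $\oRe\mu_1<\mu_\ast$ of $\mathbf{H4}$; by \textbf{H2}--\textbf{H3} this contraction gives $\mathbf{r}(x)\to 0$ as $x\to+\infty$ with a rate governed by the gap, which is exactly the statement $\mathbf{y}^+ = O(e^{(\mu_1+\theta)x})$ for the case $j=0$. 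The same construction applied to \eqref{TranspoODE_DefEvans} at $-\infty$ gives the $\mathbf{z}^-$ estimate for $j=0$.

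Next, for part (a) with $j\ge 1$: since $A$ is analytic in $\lambda$ on $\Omega^\veps$ (\textbf{H1}) and the eigenvalue data $\mu_1,\mathbf{v}_1,\mathbf{w}_1$ are analytic (under \textbf{H4}), the solutions $\mathbf{y}^+,\mathbf{z}^-$ are analytic in $\lambda$, so $\partial_\lambda^j\mathbf{y}^+$ makes sense and can be computed by differentiating the Volterra equation $j$ times under the integral sign. This produces, for $\mathbf{q}_j:=\partial_\lambda^j\mathbf{r}$, an inhomogeneous Volterra equation of the same type whose forcing term is a finite sum of products of $\partial_\lambda^i$ of the data ($i\le j$) against $\partial_\lambda^{j-i}\mathbf{r}$ with $j-i<j$; by the induction hypothesis each such term decays like $e^{\theta' x}$ (after stripping the $e^{\mu_1 x}$ prefactor) for any small $\theta'>0$, uniformly on compact $\lambda$-sets, because differentiating $A-A^\infty$ in $\lambda$ preserves the integrability in \textbf{H3} (again by Cauchy's estimates/analyticity on a slightly larger domain). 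Running the contraction on $\mathbf{q}_j$ with the same gap then yields $\partial_\lambda^j\mathbf{y}^+=O(e^{(\mu_1+\theta)x})$ as $x\to+\infty$; the $\mathbf{z}^-$ bound is symmetric.

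For part (b): assume $0=D=\cdots=\partial_\lambda^{k-1}D\ne\partial_\lambda^k D$ at the point $\lambda$. The estimate \eqref{Prop_DeYZ2} for $j=0$ is precisely the implication $D(\lambda,\veps)=0\Rightarrow\mathbf{y}^+=O(e^{(\mu_\ast-\theta)x})$ as $x\to-\infty$ from Proposition \ref{Prop_ResultPW}\eqref{Dy+}. For $1\le j\le k-1$ I would differentiate the characterization \eqref{CharacEvansD}, $\lim_{x\to-\infty}e^{-\mu_1 x}\mathbf{y}^+ = D(\lambda,\veps)\mathbf{v}_1$, made quantitative: by the backward Volterra representation of $\mathbf{y}^+$ near $-\infty$ one writes $\mathbf{y}^+(x) = D(\lambda,\veps)e^{\mu_1 x}\mathbf{v}_1 + (\text{faster-decaying remainder in the }\mu_\ast\text{-direction})$, and differentiating $j$ times in $\lambda$ kills the leading $e^{\mu_1 x}$ coefficient whenever $\partial_\lambda^i D=0$ for all $i\le j$, i.e. for $j\le k-1$, leaving only terms that decay like $e^{(\mu_\ast-\theta)x}$ — this gives \eqref{Prop_DeYZ2}. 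Taking $j=k$, only the single surviving term $\partial_\lambda^k D(\lambda,\veps)e^{\mu_1 x}\mathbf{v}_1$ dominates (all lower $\partial_\lambda^i D$ vanish), which is exactly \eqref{Prop_DeYZ3}.

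I expect the main obstacle to be the uniform control of the $\lambda$-derivatives of the remainder in the Volterra equation: one must verify that differentiating $A(x,\lambda,\veps)-A^\infty(\lambda,\veps)$ in $\lambda$ (and the eigenprojections feeding the Green's kernel) keeps the forcing terms integrable with the right exponential weights and keeps the contraction constant below $1$ uniformly on compact subsets of $\Omega^\veps$; this is handled by invoking analyticity together with Cauchy's integral formula on a slightly enlarged domain to bound $\partial_\lambda^i(A-A^\infty)$ by $\sup|A-A^\infty|$ on a nearby contour, but it needs to be stated carefully. Since this is exactly Proposition 1.21 of \cite{PW}, I would in practice cite that reference and only indicate the bootstrap structure above, rather than redo all the estimates.
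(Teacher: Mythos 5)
The paper offers no proof of this proposition at all---it is quoted verbatim as Proposition 1.21 of \cite{PW}---and your sketch correctly reconstructs the standard Pego--Weinstein argument (the Volterra equation at $+\infty$, differentiation under the integral with the $e^{\theta x}$ slack absorbing the polynomial-in-$x$ factors that $\partial_\lambda^j$ produces from $e^{\mu_1 x}$, and the decomposition $\mathbf{y}^+ = D(\lambda,\veps)e^{\mu_1 x}\mathbf{v}_1 + O(e^{(\mu_\ast-\theta)x})$ at $-\infty$ whose leading coefficient is killed by the vanishing derivatives of $D$). Since you conclude by citing \cite{PW} rather than redoing the estimates, your treatment coincides with what the paper itself does.
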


\subsection{Reformulation of the Eigenvalue Problem}\label{SS_RefEP}
We rewrite the eigenvalue problem \eqref{EigenEP} as a first-order  linear ODE system \eqref{ODE_LinEP}. The matrix $L$ defined in \eqref{DefL} is invertible 
since $\mathrm{det} L= J>0$ from \eqref{Useful_Id3}. Hence  from \eqref{Ch3_Def_mcL} and \eqref{EigenEP}, we have
\begin{equation}\label{Transpor_ODE}
\begin{split}
\begin{pmatrix}
0 \\
0
\end{pmatrix}
= L^{-1}\left(\lambda -\mathcal{L}\right) \begin{pmatrix}
\dt{n} \\
\dt{u}
\end{pmatrix} 
=
 \partial_x \begin{pmatrix}
\dt{n} \\
\dt{u}
\end{pmatrix} + L^{-1}\left[ \left(\lambda  + (\partial_x L)\right)\begin{pmatrix}
\dt{n} \\
\dt{u}
\end{pmatrix}  +  \begin{pmatrix}
0 \\
\partial_x \dt{\phi}
\end{pmatrix} \right].
\end{split}
\end{equation}
We rewrite the Poisson equation \eqref{EP3Poi}  as
\begin{equation}\label{Poisson_ODE}
\partial_x\dt{\phi} =: \dt{\psi} , \quad \partial_x\dt{\psi}  =  e^{\phi_c}\dt{\phi} - \dt{n}.
\end{equation}
By letting $\mathbf{y}:=(\dt{n},\dt{u},\dt{\phi},\dt{\psi})^T$, we obtain the system \eqref{ODE_LinEP} with the coefficient matrix
\begin{equation}\label{ODE_LinEP1}
A=A(x,\lambda,\veps):= 
\scriptsize\left(
\begin{array}{c|c}
L^{-1} & \begin{array}{cc}
						0 & 0 \\
						0 & 0 
					\end{array} \\
		\hline 
\begin{array}{cc}
0 & 0 \\
0 & 0
\end{array}
& 
\begin{array}{cc}
1 & 0 \\
0 & 1
\end{array}
\end{array}
\right)
\left(
\begin{array}{c|c}
-\lambda I_2 - \partial_xL & \begin{array}{cc}
						0 & 0 \\
						0 & -1 
					\end{array} \\
		\hline 
\begin{array}{cc}
0 & 0 \\
-1 & 0
\end{array}
& 
\begin{array}{cc}
0 & 1 \\
e^{\phi_c} & 0
\end{array}
\end{array}
\right).
\end{equation}
Here $A$ can be decomposed as $A(x,\lambda,\veps) =A_1(x,\veps) + \lambda A_2(x,\veps)$, where 
\begin{subequations}\label{A_Decompose}
\begin{align}
 A_1 & :=  \scriptsize\begin{pmatrix}
\frac{(c-u_c)\partial_xu_c}{J} - \frac{K\partial_x n_c}{J(1+n_c)} & \frac{(c-u_c)\partial_xn_c}{J} + \frac{(1+n_c)\partial_xu_c}{J} & 0 & \frac{1+n_c}{J} \\
\frac{K\partial_x u_c}{J(1+n_c)} -\frac{K(c-u_c)\partial_xn_c}{J(1+n_c)^2} & \frac{K\partial_xn_c}{J(1+n_c)} + \frac{(c-u_c)\partial_xu_c}{J} & 0 & \frac{c-u_c}{J} \\
0 & 0 & 0 & 1 \\
-1 & 0 & e^{\phi_c} & 0
\end{pmatrix}, 
\\ 
A_2& := 
\left( 
\begin{array}{c|c}
-L^{-1} & \mathbf{0}_2 \\
\hline
\mathbf{0}_2 & \mathbf{0}_2
\end{array}
\right) = \scriptsize\frac{1}{J}\left( 
\begin{array}{c|c}
\begin{array}{cc}
c-u_c & 1+n_c \\ 
\dfrac{K}{1+n_c} & c-u_c
\end{array} & \mathbf{0}_2 \\
\hline
\mathbf{0}_2 & \mathbf{0}_2
\end{array}
\right)
\end{align}
\end{subequations}  
and $\mathbf{0}_2$ is a $2\times 2$ zero matrix.

Now it is clear that for each $\beta \in [0,1)$, the mapping 
\[
(\dt{n},\dt{u})^T   \mapsto (\dt{n},\dt{u},\dt{\phi},\dt{\psi})^T, 
\]
where $(\dt{\phi},\dt{\psi})$ is uniquely determined  by the Poisson equation \eqref{Poisson_ODE} for given $\dt{n} \in L_\beta^2$, is an isomorphism between $\mathrm{ker}(\lambda -\mathcal{L}) \subset (L_\beta^2)^2$ and $\mathrm{ker} \big( d/dx - A(x,\lambda,\veps) \big) \subset (L_\beta^2)^4$.

For any solution $Z \in (L_{-\beta}^2)^2$, $\beta \in [0,1)$, to the eigenvalue problem for the adjoint operator 
\[
(\lambda-\mathcal{L}^\ast)Z = 0
\]
(see \eqref{AdjL}), it is straightforward to check that 
\[
Z \mapsto (ZL,\tilde{\phi},\tilde{\psi}),
\]
where $(\tilde{\phi},\tilde{\psi})$ is a unique solution to the equations
\begin{equation}\label{Adj1}
(-\partial_x^2 + e^{\phi_c})\tilde{\psi} = -\partial_x \left(Z  (0,1)^T\right), \quad  \tilde{\phi} := -\partial_x \tilde{\psi} 
+ Z (0,1)^T,
\end{equation}
is an isomorphism between $\mathrm{ker}(\lambda -\mathcal{L}^\ast) \subset (L_{-\beta}^2)^2$ and $\mathrm{ker} \big( d/dx + A(x,\lambda,\veps) \big) \subset (L_{-\beta}^2)^4$.

We denote the first two components of $\mathbf{y}^+$ and $\mathbf{z}^-$ by $Y^+$ and $Z^-$, respectively. Then, $Y^+$ and $Z^-$ satisfy
\begin{equation}\label{LargeAmpl1}
(\lambda - \mathcal{L})Y^+ = 0, \quad 
(\lambda - \mathcal{L}^*)\left(Z^-L^{-1} \right) = 0.
\end{equation}

\subsection{The Evans function for the Euler-Poisson system}\label{SubS_EvEp}
In this subsection, we specify the domain of the Evans function $D(\lambda,\veps)$ for the Euler-Poisson system, and show that  the zeros of $D(\lambda,\veps)$ coincide with the  eigenvalues of the linearized operator $\mathcal{L}$.

By \eqref{pointestimateinX}, the coefficient matrix $A(x,\lambda,\veps)$ in \eqref{ODE_LinEP1} converges to the asymptotic matrix
\begin{equation}\label{A_Asymptotic}
A^\infty(\lambda,\veps)
:=  
\scriptsize\begin{pmatrix} 
\dfrac{c\lambda}{c^2-K} & \dfrac{\lambda}{c^2-K} & 0 & \dfrac{1}{c^2-K} \\  
\dfrac{K \lambda}{c^2-K} & \dfrac{c\lambda}{c^2-K} & 0 & \dfrac{c}{c^2-K} \\
0 & 0 & 0 & 1 \\
-1 & 0 & 1 & 0
\end{pmatrix}
\end{equation}
exponentially fast as $|x| \to \infty$. The matrix eigenvalues $\mu$ of $A^\infty$ are the zeros of the characteristic polynomial
\begin{equation}\label{dispersEP}
\begin{split}
d(\mu) = d(\mu,\lambda,\veps)
& := \textnormal{det}\,\left(\mu I-A^\infty(\lambda,\veps) \right) \\
& =(c^2-K)^{-1}\left( (\mu^2-1)\left[(\lambda - c\mu)^2 - K \mu^2 \right] + \mu^2 \right).
\end{split}
\end{equation}
Since $d(\pm 1) \neq 0$ and $c^2-K \neq 0$ for all $\veps \geq 0$ and $\lambda\in\mathbb{C}$, $d(\mu)=0$ is equivalent to that $\mu$ satisfies one of the equations
\begin{equation}\label{Charact1}
d_{\pm}(\mu)=d_{\pm}(\mu,\veps):= \mu \left(c \pm \sqrt{\frac{1}{1-\mu^2}+ K}\, \right) = \lambda.
\end{equation}

For each $\veps\geq 0$ and $\lambda \in \mathbb{C}$, $d(\mu)$ has four zeros $\mu_j$ $(j=1,2,3,4)$ counted with multiplicities. For a non-zero simple eigenvalue $\mu_j$ of $A^\infty$, the corresponding  right and left eigenvectors, denoted by $\mathbf{v}_j$ and $\mathbf{w}_j$, satisfying the normalization $\mathbf{w}_j\mathbf{v}_j=1$ can be chosen as follows:
\begin{equation}\label{LRVec_A}
\mathbf{v}_j  := \left(1,\; \frac{c\mu_j - \lambda}{\mu_j} , \; \frac{1}{1-\mu_j^2}, \;\frac{\mu_j}{1-\mu_j^2}  \right)^T, \quad \mathbf{w}_j  := \frac{\boldsymbol{\pi}_j}{\boldsymbol{\pi}_j  \mathbf{v}_j },
\end{equation}
where 
\begin{subequations}\label{eigenvec_A}
\begin{align}[left = \empheqlbrace\,]
 \boldsymbol{\pi}_j & :=  \left(\left( \frac{c\lambda}{\mu_j} - (c^2-K)\right)(1-\mu_j^2), \; -\lambda\frac{1-\mu_j^2}{\mu_j}, \; 1, \;\mu_j \right), \label{eigenvec_A1} \\
 \boldsymbol{\pi}_j \mathbf{v}_j &  =  \frac{\lambda^2(1-\mu_j^2)}{\mu_j^2} - (c^2-K)(1-\mu_j^2) +\frac{1+\mu_j^2}{1-\mu_j^2}. \label{eigenvec_A2}
\end{align}
\end{subequations}
We note that $\boldsymbol{\pi}_j \mathbf{v}_j  \neq 0$ when $\mu_j$ is semi-simple.

The splitting properties of the matrix eigenvalues are summarized in the following two lemmas whose proofs will be given in Section \ref{Sec_Split}. 
\begin{lemma}\label{splitting mu lam} 
The zeros of $d(\mu)=d(\mu,\lambda,\veps)$  can be labelled so that the following  properties hold.
\begin{enumerate}
\item For $\veps>0$, 
\begin{subequations} \label{EigenSpliting1} 
\begin{align}
& \oRe\mu_1 < 0 = \oRe\mu_2 = \oRe\mu_3 < \oRe \mu_4, & \text{when} \;  \oRe\lambda =0, \label{EigenSpliting1_1} \\
& \oRe\mu_1 < 0 < \oRe\mu_j, \quad  (j=2,3,4), & \text{when} \; \oRe\lambda >0. \label{EigenSpliting1_2}
\end{align}
\end{subequations}
\item  For $\veps=0$,
\begin{subequations}\label{EigenSpliting2}
\begin{align} 
& \oRe\mu_1 < 0 = \oRe\mu_2 = \oRe\mu_3 < \oRe \mu_4, & \text{when} \; & \oRe\lambda =0 \; \text{and} \; \lambda \neq 0, \label{EigenSpliting2_1} \\
& \oRe\mu_1 < 0 < \oRe\mu_j, \quad  (j=2,3,4), & \text{when} \; & \oRe\lambda >0. \label{EigenSpliting2_2}
\end{align}
\end{subequations}
\end{enumerate}
\end{lemma}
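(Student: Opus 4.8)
The statement concerns the four matrix eigenvalues $\mu_j=\mu_j(\lambda,\veps)$ of $A^\infty(\lambda,\veps)$, equivalently the four roots of $d(\mu)=0$ collected into the pair of branch equations $d_\pm(\mu)=\lambda$ in \eqref{Charact1}. The plan is to analyze the signs of $\oRe\mu_j$ on the imaginary axis $\oRe\lambda=0$ and on the open right half-plane $\oRe\lambda>0$, and in both cases for $\veps>0$ and $\veps=0$ separately. The main organizing principle is that the map $\mu\mapsto d_\pm(\mu)$ restricted to the imaginary axis $\mu=ik$ takes values precisely on the dispersion curves $d_\pm(ik)=-ik\big(c\pm\sqrt{1/(1+k^2)+K}\big)$, which are purely imaginary (since $c,K$ real and $1/(1+k^2)+K>0$). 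Hence a root $\mu$ of $d_\pm(\mu)=\lambda$ is purely imaginary only if $\lambda$ itself is purely imaginary, which already tells us that for $\oRe\lambda>0$ no $\mu_j$ lies on the imaginary axis; combined with a counting/continuity argument this will give \eqref{EigenSpliting1_2} and \eqref{EigenSpliting2_2}.

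First I would establish the count ``one root with negative real part, three with positive real part'' for $\oRe\lambda>0$. I would do this by a homotopy/degree argument: start from a reference point, say $\lambda$ large real and positive (or track $\lambda=r$, $r\to+\infty$), where the roots of $d_\pm(\mu)=\lambda$ can be located by asymptotics — from \eqref{Charact1}, as $|\lambda|\to\infty$ one branch forces $\mu\to\pm1$ (the zeros of $1-\mu^2$ in the square root), giving roots with $\mu^2\to1$, and the behavior near $\mu=\pm1$ must be examined to see on which side of the imaginary axis they fall; the remaining roots escape to infinity along $\mu\sim\lambda/(2c)$ or similar. Since no root crosses the imaginary axis as $\lambda$ ranges over the open right half-plane (by the purely-imaginary observation above), the signature $(\oRe\mu_j)_{j=1}^4$ is constant there, and it suffices to compute it at one convenient $\lambda$. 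This also handles $\veps=0$ since $c=\sqrt{1+K}$ is just a particular admissible value and the argument is identical.

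Next, for $\oRe\lambda=0$ I would treat $\lambda=i\omega$ with $\omega\neq0$ (and separately note $\lambda=0$ is excluded in the $\veps=0$ case, or requires care in the $\veps>0$ case). On the imaginary axis the real branch equations $d_\pm(ik)=i\omega$ are solved by real $k$ — these produce exactly two purely imaginary roots $\mu_2=ik_+$, $\mu_3=ik_-$ (one from each branch $d_+$, $d_-$), accounting for the equalities $\oRe\mu_2=\oRe\mu_3=0$; the remaining two roots $\mu_1,\mu_4$ are off-axis, and by continuity from the $\oRe\lambda>0$ regime (taking $\lambda=i\omega+\delta$, $\delta\downarrow0$) they retain $\oRe\mu_1<0<\oRe\mu_4$. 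The only subtlety is to confirm that $d_\pm(ik)=i\omega$ indeed has a real solution $k$ for every real $\omega$ (i.e. the dispersion curves $k\mapsto -k(c\pm\sqrt{1/(1+k^2)+K})$ are onto $\mathbb{R}$), which follows since each is continuous, odd, and unbounded — monotonicity is where the super-ion-sonic condition $c>\sqrt{1+K}$ enters, ensuring the $d_-$ branch has no spurious critical points.

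\textbf{Main obstacle.} The hardest part is the bookkeeping near the ``collision'' values $\mu=\pm1$ and near $\lambda=0$: the square root $\sqrt{1/(1-\mu^2)+K}$ has branch points at $\mu=\pm1$, so one must choose branches consistently and verify that the four roots of the quartic $d(\mu)=0$ are correctly distributed — in particular that exactly one sits in $\{\oRe\mu<0\}$ rather than two. I expect this to require a careful local expansion of $d_\pm$ near $\mu=\pm1$ (writing $\mu=\pm1+\zeta$ and solving $d_\pm\approx\lambda$ to leading order in $\zeta$) together with an argument-principle count of zeros of $d(\mu,\lambda,\veps)$ in the left half-plane, uniform in $\lambda$ over the relevant region; the continuity in $\veps$ down to $\veps=0$ is then automatic since $d$ depends analytically on $(\mu,\lambda,\veps)$ and no root touches the imaginary axis under the stated hypotheses on $\lambda$.
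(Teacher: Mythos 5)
Your treatment of the half-plane case $\oRe\lambda>0$ is sound and is essentially the paper's own argument: since $d_\pm(ik)$ is purely imaginary for real $k$, no root of $d(\mu)=0$ can touch the imaginary axis while $\oRe\lambda>0$, so the signature of $(\oRe\mu_j)_{j=1}^4$ is constant on the connected set $\{\oRe\lambda>0\}$ and may be computed at one reference point. The paper takes the reference point at small $\lambda>0$, using the explicit off-axis roots at $\lambda=0$ together with the expansions $d_+(\mu_2)=(c+\sqrt{1+K})\mu_2(1+O(|\mu_2|^2))$ and $d_-(\mu_3)=\veps\mu_3(1+O(|\mu_3|^2))$, which is where $\veps=c-\sqrt{1+K}>0$ (resp.\ the cubic term when $\veps=0$) decides the sign of $\oRe\mu_3$; you instead take $\lambda$ large and real, which also works but requires the large-$\lambda$ root asymptotics (the correct behavior is $\mu_{2,3}\sim\lambda/(c\mp\sqrt{K})$ and $\mu_{1,4}\to\mp1$, not $\mu\sim\lambda/(2c)$), i.e.\ essentially the Rouch\'e analysis the paper carries out separately in Proposition \ref{EigenvalueLargelambda}. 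Either reference point is acceptable.

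The genuine gap is in the case $\oRe\lambda=0$. You correctly get exactly two on-axis roots (one from each branch, by strict monotonicity and surjectivity of $\omega_\pm$), but you then conclude $\oRe\mu_1<0<\oRe\mu_4$ for the remaining pair ``by continuity from the $\oRe\lambda>0$ regime.'' Plain continuity does not give this: as $\oRe\lambda\downarrow0$ the unique root with negative real part could a priori converge to one of the two purely imaginary roots (its real part tending to $0$), in which case both off-axis roots of the limiting problem could lie in the right half-plane and \eqref{EigenSpliting1_1} would fail; nothing in the boundary-limit argument as stated excludes this. To close it you must invoke the nondegeneracy of the dispersion relation once more, in one of two equivalent ways: (i) as the paper does, anchor the labels at $\lambda=0$, where $\mu_1=-\sqrt{(c^2-K-1)/(c^2-K)}<0<\mu_4$, and show that $\mu_1,\mu_4$ cannot cross the imaginary axis as $\lambda$ runs over $i\mathbb{R}$, since a crossing would force $\mu_1$ to coincide with $\mu_2$ or $\mu_3$ at some $ik_0$ and hence $\partial_\mu d_\pm(ik_0)=0$, contradicting $\partial_k\omega_\pm(k_0)\neq0$; or (ii) expand the on-axis roots to first order, $\mu(\lambda)\approx ik_\pm+(\lambda-\lambda_0)/\partial_\mu d_\pm(ik_\pm)$ with $\partial_\mu d_\pm(ik_\pm)=-\partial_k\omega_\pm(k_\pm)>0$, to see that both on-axis roots move into $\{\oRe\mu>0\}$ when $\oRe\lambda>0$, so the left root for $\oRe\lambda>0$ must limit onto an off-axis root, which then necessarily has negative real part. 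You already hold the needed ingredient (monotonicity of $\omega_\pm$, i.e.\ absence of critical points of the dispersion curves), but you use it only for solvability of $d_\pm(ik)=i\omega$; this extra step is precisely where the super-ion-sonic condition enters at $k=0$ for the $d_-$ branch, and why $\lambda=0$ must be excluded when $\veps=0$.
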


For $0 < c_0 < \sqrt{2\mathsf{V}}$, we let ${\etba} := c_0\veps^{1/2}$, where $\veps>0$, and   define a domain 
\begin{equation}\label{ND-eps}
 \Omega^\veps:= \left\{\lambda : \oRe \lambda \geq -\veps^{3/2}\eta(c_0) \right\}, \text{ where } \eta(c_0):= \frac{c_0}{2}\left(1 - \frac{c_0^2}{2\sqrt{1+K}} \right).
\end{equation}
We note that $\Omega^\veps$ contains the closed right-half plane $\{\lambda:  \oRe \lambda \geq 0\}$ for any $\veps>0$ since $\eta(c_0)>0$ on $(0,\sqrt{2\mathsf{V}})$.

\begin{lemma}\label{splitting mu lam weight}
For $0 < c_0 < \sqrt{2\mathsf{V}}$, let ${\etba} = c_0\veps^{1/2}$, where $\veps>0$. Then there exists $\veps_K' < \veps_K$ such that for all $\veps \in(0, \veps_K']$, the following hold.
\begin{enumerate}
\item The two parametrized curves $\{d_\pm(ik-\etba): k\in\mathbb{R}\}$ (see \eqref{Charact1}) lie on $\mathbb{C}\setminus \Omega^\veps$. 
\item For all $\lambda \in \Omega^\veps$, 
 the zeros of $d(\mu)=d(\mu,\lambda,\veps)$ can be labeled such that they satisfy
\begin{equation}\label{split mu weight}
\oRe \mu_1 + \etba <0  < \oRe \mu_j+\etba, \quad (j=2,3,4).
\end{equation}
\end{enumerate}
\end{lemma}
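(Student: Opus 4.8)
\textbf{Proof proposal for Lemma \ref{splitting mu lam weight}.}

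The plan is to reduce everything to a careful study of the two branch functions $d_\pm(\mu)$ restricted to the vertical line $\mu = ik - \etba$, exploiting the smallness of $\etba = c_0\veps^{1/2}$ via a Taylor expansion in $\etba$. First, for part (1), I would parametrize the essential-spectrum curves by $\lambda = d_\pm(ik - \etba)$ and compute $\oRe\, d_\pm(ik - \etba)$ to leading order in $\etba$. Writing $d_\pm(\mu) = \mu\big(c \pm \sqrt{(1-\mu^2)^{-1} + K}\big)$ and expanding around $\mu = ik$ (the unweighted case, which gives the purely imaginary dispersion relation $\omega_\pm(k) = -k(c \pm \sqrt{(1+k^2)^{-1}+K})$), one gets
\begin{equation*}
\oRe\, d_\pm(ik - \etba) = -\etba\, \partial_k\omega_\pm(k) + O(\etba^2),
\end{equation*}
uniformly for $k$ in bounded sets, with the $O(\etba^2)$ term controlled uniformly in $k \in \mathbb{R}$ after checking the decay of the relevant derivatives as $|k|\to\infty$. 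The key structural input is that the group velocities are strictly negative and bounded away from zero in the super-ion-sonic regime: a direct computation gives $\partial_k\omega_\pm(k) \le -(c - \mathsf{V}) = -\veps$ for $c \le \mathsf{V} + \veps_K$ (this is exactly the ``super-ion-sonic'' mechanism advertised in the introduction). Hence $\oRe\, d_\pm(ik-\etba) \le -\etba\veps + O(\etba^2) = -c_0\veps^{3/2} + O(\veps^2)$. Comparing with the cutoff $-\veps^{3/2}\eta(c_0)$ where $\eta(c_0) = \tfrac{c_0}{2}(1 - \tfrac{c_0^2}{2\mathsf{V}}) < c_0$, one sees that for $\veps$ small enough the curves satisfy $\oRe\lambda < -\veps^{3/2}\eta(c_0)$, i.e.\ they lie in $\mathbb{C}\setminus\Omega^\veps$. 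I would need to be slightly careful near $k=0$, where $\partial_k\omega_\pm$ is not $\le -\veps$ but rather $\partial_k\omega_\pm(0) = -(c\pm\sqrt{1+K}) = -(c\pm\mathsf{V})$; the $+$ branch still gives something $\le -2\mathsf{V} < 0$, and the $-$ branch gives $-(c-\mathsf{V}) = -\veps$, so the bound holds there too, and the large-$k$ regime is handled by noting $\omega_\pm(k) \sim -ck$ so $\partial_k\omega_\pm \to -c$.

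For part (2), the idea is a continuity/connectedness argument in $\lambda$ combined with part (1). Fix $\veps \in (0,\veps_K']$. By Lemma \ref{splitting mu lam}, on the imaginary axis the unweighted roots satisfy $\oRe\mu_1 < 0 = \oRe\mu_2 = \oRe\mu_3 < \oRe\mu_4$, and strict splitting $\oRe\mu_1 < 0 < \oRe\mu_j$ ($j=2,3,4$) for $\oRe\lambda > 0$. Translating by $\etba$: a zero $\mu$ of $d(\cdot,\lambda,\veps)$ with $\oRe\mu + \etba = 0$ means $\mu = ik - \etba$ for some real $k$, which forces $\lambda = d_\pm(ik-\etba)$, i.e.\ $\lambda$ lies on one of the two curves from part (1) — but those curves avoid $\Omega^\veps$. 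Therefore \emph{no} root $\mu$ of $d(\cdot,\lambda,\veps)$ has $\oRe\mu = -\etba$ when $\lambda \in \Omega^\veps$; equivalently, the quantities $\oRe\mu_j + \etba$ never vanish on $\Omega^\veps$. Since $\Omega^\veps$ is connected and the roots depend continuously on $\lambda$, the \emph{number} of roots with $\oRe\mu + \etba < 0$ is constant on $\Omega^\veps$. Evaluating at a convenient point with large positive real part — where the roots of the quartic $d(\mu,\lambda,\veps)=0$ can be read off asymptotically (three roots near $\mu \approx \lambda/c$ with large positive real part up to lower-order corrections, from $d_+$ and the large branch, and one root $\mu_1$ with $\oRe\mu_1 \to -\infty$, or more precisely one can track this via $(\lambda - c\mu)^2 \approx K\mu^2$ and the $(\mu^2-1)$ factor) — one finds exactly one root with real part $\le -\etba$ and three with real part $\ge \etba$. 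Hence \eqref{split mu weight} holds throughout $\Omega^\veps$, and the labelling is inherited consistently.

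The main obstacle I anticipate is twofold. First, making the $\etba$-expansion in part (1) genuinely uniform in $k\in\mathbb{R}$: one must verify that $\sqrt{(1-(ik-\etba)^2)^{-1}+K}$ and its $\etba$-derivative stay bounded and that the remainder is $O(\etba^2)$ with a constant independent of $k$, which requires tracking the behavior of the radicand $1-(ik-\etba)^2 = 1+k^2-\etba^2 + 2ik\etba$ — this stays bounded away from $0$ for $\etba$ small, so the square root branch is well-defined and smooth, but the bookkeeping near $k = 0$ (where the two branches of $\sqrt{\,\cdot\,}$ must be chosen consistently and the radicand $\approx 1 - \etba^2$ is close to, but not equal to, $1$) and as $|k|\to\infty$ needs care. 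Second, in part (2), rigorously justifying the root-count at a base point and the claim that roots do not ``escape to infinity'' and re-enter across the line $\oRe\mu = -\etba$: since $d$ is a fixed polynomial of degree $4$ in $\mu$ for each $\lambda$ with leading coefficient bounded away from $0$, the roots are uniformly bounded on compact $\lambda$-sets and move continuously, and one can reach the whole unbounded domain $\Omega^\veps$ by a connectedness argument once the base-point count and the ``no root on $\oRe\mu=-\etba$'' property are in hand — but I would want to double-check the asymptotics of the roots as $|\lambda|\to\infty$ within $\Omega^\veps$ to be sure the count is stable there as well.
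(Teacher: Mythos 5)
Your part (1) has a genuine gap in the regime $|k| \lesssim \veps^{1/2}$. The estimate you rely on, $\oRe d_\pm(ik-\etba) = \etba\,\partial_k\omega_\pm(k) + O(\etba^2)$ (your displayed formula also has the sign of the leading term reversed), cannot give the conclusion there: near $k=0$ the only available bound is $\partial_k\omega_-(k)\le -\veps$, so the main term is $-\veps\etba = -c_0\veps^{3/2}$, while a remainder that is merely $O(\etba^2)=O(\veps)$ uniformly in $k$ is far \emph{larger} than both the main term and the threshold $-\veps^{3/2}\eta(c_0)$, so the comparison collapses. Even sharpening the remainder via $\partial_\mu^2 d_-(\mu)=O(|\mu|)$ near the origin, at $|k|\sim\etba$ the correction has the same order $\veps^{3/2}$ as the main term, and its sign matters: expanding $d_-(\mu)=\veps\mu-\mu^3/(2\mathsf{V})+O(\mu^5)$ and taking real parts at $\mu=ik-\etba$ gives $\oRe d_-(ik-\etba) = -\veps\etba + \etba^3/(2\mathsf{V}) - 3k^2\etba/(2\mathsf{V}) + \mathcal{R}_1$, so the cubic term contributes $+c_0^3\veps^{3/2}/(2\mathsf{V})$, and the curve stays left of $-\veps^{3/2}\eta(c_0)$ only because $1-c_0^2/(2\mathsf{V})>0$, i.e. $c_0<\sqrt{2\mathsf{V}}$. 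This is exactly where the hypothesis on $c_0$ and the precise form $\eta(c_0)=\tfrac{c_0}{2}\bigl(1-\tfrac{c_0^2}{2\mathsf{V}}\bigr)$ enter, and your argument never uses either (noting $\eta(c_0)<c_0$ is not enough). The paper's proof is forced into a two-regime split for precisely this reason: for $|k|\le\delta$ it expands $d_-$ to cubic order about $\mu=0$, using that all Taylor coefficients $\partial_\mu^n d_\pm(0)$ are real to bound the remainder by $C\etba|ik-\etba|^4$; only for $|k|\ge\delta$ does it use the first-order-in-$\etba$ expansion, which works there because $\partial_k\omega_-(k)\le -C(\delta)$ with $C(\delta)$ independent of $\veps$, so the uniform $O(\etba^2)$ remainder is genuinely lower order.

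Your part (2) is structurally the paper's argument (part (1) excludes roots on the line $\oRe\mu=-\etba$ for $\lambda\in\Omega^\veps$, then connectedness and continuity of the roots fix the half-plane count, which is evaluated at one base point), so it goes through once part (1) is repaired; but your base-point asymptotics are incorrect: as $|\lambda|\to\infty$ one has $\mu_1\to-1$, $\mu_4\to 1$ and $\mu_{2,3}\sim\lambda/(c\pm\sqrt{K})$ (cf. Proposition \ref{EigenvalueLargelambda}), not three roots near $\lambda/c$ and one with $\oRe\mu_1\to-\infty$. The paper instead counts at $\lambda=0$, where the shifted roots are explicit, namely $\etba\pm\sqrt{(c^2-1-K)/(c^2-K)}$ together with a double root at $\etba$; since $\sqrt{(c^2-1-K)/(c^2-K)}\approx\sqrt{2\mathsf{V}\veps}$, the required ordering again uses $c_0<\sqrt{2\mathsf{V}}$ — a second place where the constraint on $c_0$ is indispensable and absent from your write-up.
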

 In each statement involving $\Omega^\veps$ and $\veps_K'$, the same setting as that of Lemma \ref{splitting mu lam weight} is assumed unless otherwise stated.

Now we show that the Evans function for the Euler-Poisson system is defined on the region $\Omega^\veps$.
\begin{proposition}\label{DefEvansFunctDomain}
Let $\veps_K$ be as in \eqref{ep0}. Then for each $\veps\in(0,\veps_K)$, the Evans function $D(\lambda,\veps)$ for the system \eqref{ODE_LinEP} associated with the eigenvalue problem \eqref{EigenEP} is defined on the domain $\oRe \lambda\geq 0$. Furthermore, for each $\veps\in (0,\veps_K']$,  $D(\lambda,\veps)$ is analytically extended to $\Omega^\veps$.
\end{proposition}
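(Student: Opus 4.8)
The plan is to verify that hypotheses \textbf{H1}--\textbf{H4} hold for the coefficient matrix $A(x,\lambda,\veps)$ in \eqref{ODE_LinEP1} on the asserted domains, since this is precisely what is needed to define the Evans function $D(\lambda,\veps)$ via \eqref{Evans-Def}. First I would dispatch \textbf{H1}: by \eqref{A_Decompose}, $A = A_1(x,\veps) + \lambda A_2(x,\veps)$ is affine (hence entire) in $\lambda$ for each fixed $x$, and all entries involve $n_c, u_c, \phi_c$ and their $x$-derivatives, which are smooth in $x$ by the results recalled in Section~\ref{S2-1}; continuity in $(x,\lambda)$ is then immediate. For \textbf{H2} and \textbf{H3} I would invoke the exponential decay estimate \eqref{pointestimateinX}: it gives $|\partial_x^j n_c| + |\partial_x^j u_c| + |\partial_x^j \phi_c| \le \veps^{1+j/2} C_j e^{-C_\ast \veps^{1/2}|x|}$, so each entry of $A(x,\lambda,\veps) - A^\infty(\lambda,\veps)$ decays like $e^{-C_\ast\veps^{1/2}|x|}$ (the $\lambda$-dependence enters only through $A_2$, whose convergence is likewise exponential and, on a compact $\lambda$-set, uniform). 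Thus $A \to A^\infty$ uniformly on compact subsets of the $\lambda$-domain and $\int_{\mathbb{R}} |A - A^\infty|\,dx < \infty$ with the same uniformity, giving \textbf{H2} and \textbf{H3}. One should also note $c^2 - K = 1 + 2\veps\mathsf{V} + \veps^2 > 0$ for $\veps \in (0,\veps_K)$, so $A^\infty$ in \eqref{A_Asymptotic} is well-defined.

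The substantive point is \textbf{H4}, the spectral splitting of $A^\infty(\lambda,\veps)$, and here the two ranges of $\lambda$ are handled by the two splitting lemmas already stated. For the domain $\oRe\lambda \ge 0$ and $\veps \in (0,\veps_K)$: on $\oRe\lambda > 0$, Lemma~\ref{splitting mu lam}, part (1), equation \eqref{EigenSpliting1_2}, gives $\oRe\mu_1 < 0 < \oRe\mu_j$ for $j=2,3,4$, so in particular $\mu_1$ is simple and $\oRe\mu_1 < \mu_\ast$, which is exactly \textbf{H4}; on the imaginary axis $\oRe\lambda = 0$, \eqref{EigenSpliting1_1} gives $\oRe\mu_1 < 0 = \oRe\mu_2 = \oRe\mu_3 < \oRe\mu_4$, and again $\mu_1$ is the unique eigenvalue of smallest real part and is simple, so \textbf{H4} holds there too. (The case $\lambda = 0$ is covered by \eqref{EigenSpliting1_1} as well, with $\mu_2 = \mu_3 = 0$.) Hence \textbf{H1}--\textbf{H4} hold on all of $\{\oRe\lambda \ge 0\}$, the eigenvector analyticity follows from the simplicity of $\mu_1$ together with Kato's perturbation theory (as cited after \textbf{H4}), and the Evans function $D(\lambda,\veps)$ is well-defined and analytic there. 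For the analytic extension to $\Omega^\veps$ when $\veps \in (0,\veps_K']$ and $\etba = c_0\veps^{1/2}$: this is precisely the content of Lemma~\ref{splitting mu lam weight}(2), which asserts $\oRe\mu_1 + \etba < 0 < \oRe\mu_j + \etba$ for $j=2,3,4$ and all $\lambda \in \Omega^\veps$; applying the construction of Section~\ref{SecDefEva} to the conjugated system (equivalently, working in $L^2_\etba$, for which \eqref{splitting weighted 2} is exactly this inequality) shows that \textbf{H1}--\textbf{H4} persist on $\Omega^\veps$, so $D(\lambda,\veps)$ extends analytically there. I would also remark that this extension agrees with $D(\lambda,\veps)$ on the overlap $\{\oRe\lambda \ge 0\}$ by uniqueness of analytic continuation, so the notation is unambiguous.

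The main obstacle is not in this proposition per se — granting the two splitting lemmas, the argument is essentially a verification — but one should be careful about the \emph{uniformity} clauses in \textbf{H2} and \textbf{H3}: because $A_2$ carries the factor $\lambda$, one must confirm that $\sup_{\lambda \in \mathcal{K}} \int_{\mathbb{R}} |A - A^\infty|\,dx < \infty$ for every compact $\mathcal{K} \subset \Omega^\veps$, which follows since $|A_2(x,\veps) - A_2^\infty(\veps)| \le C(\veps) e^{-C_\ast\veps^{1/2}|x|}$ with a bound independent of $\lambda$, and $\sup_{\lambda\in\mathcal{K}}|\lambda| < \infty$. A second, more delicate point worth flagging: \textbf{H4} requires only that $\mu_1$ be simple (so that analytic right/left eigenvectors exist), and on $\oRe\lambda = 0$ the other three eigenvalues $\mu_2,\mu_3,\mu_4$ may collide — but this is harmless for the definition of the Evans function, which depends only on the splitting of $\mu_1$ from the rest; the finer structure of $\mu_2,\mu_3,\mu_4$ is dealt with later (in the curves description of $\sigma_{\mathrm{ess}}$). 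Finally, it should be recorded that the solutions $\mathbf{y}^+$ and $\mathbf{z}^-$ normalized by \eqref{ODEsol1} and \eqref{TranspoODE_sol_DefEvans} can be taken analytic in $\lambda$ on these domains — this is standard under \textbf{H1}--\textbf{H4} (conjugate by $e^{-\mu_1 x}$ and solve a fixed-point problem whose data are analytic and whose contraction constant is uniform on compacts), and it is what makes $D(\lambda,\veps)$ analytic rather than merely continuous.
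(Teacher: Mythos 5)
Your proposal is correct and follows essentially the same route as the paper: verify \textbf{H1}--\textbf{H3} from the affine $\lambda$-dependence of $A$ and the exponential decay \eqref{pointestimateinX}, obtain \textbf{H4} on $\oRe\lambda\geq 0$ from \eqref{EigenSpliting1}, and get the extension to $\Omega^\veps$ from Lemma \ref{splitting mu lam weight}. The extra remarks on uniformity over compact $\lambda$-sets and on the analyticity of $\mathbf{y}^+$, $\mathbf{z}^-$ are consistent with the framework of Section \ref{SecDefEva} and do not change the argument.
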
 

\begin{proof}
We verify \textbf{H1}--\textbf{H4} in Section \ref{SecDefEva}. From the form of $A(x,\lambda,\veps)$ and that $A$ converges to $A^\infty$ exponentially fast as $|x| \to \infty$, it is easy to see that \textbf{H1}-\textbf{H3} hold on the domain $\mathbb{C}$ as long as the solitary wave $(n_c,u_c,\phi_c)$ exists, that is, for each $\veps \in (0,\veps_K)$. From \eqref{EigenSpliting1}, \textbf{H4} holds on $\oRe \geq 0$ for each $\veps >0$.  This proves the first assertion.  The second assertion  follows from Lemma \ref{splitting mu lam weight}.
\end{proof}

\begin{remark}\label{Rem_Dom}
 For $\veps=0$ and $\lambda=0$, we have $\mu_j=0$ for all $j=1,2,3,4$. Hence, when $\veps=0$, $D(\lambda)$ is not defined for $\lambda=0$ since \textbf{H4} is not satisfied. In this case, $D(\lambda)$ is defined on the domain $\Omega^0:=\{\lambda: \oRe\lambda \geq 0, \lambda \neq 0\}$, and we have that $D(\lambda)=1$ for $\lambda\in\Omega^0$ by  \eqref{CharacEvansD}  since the coefficient matrix of the system \eqref{ODE_LinEP} is independent of $x$ when $\veps=0$.
\end{remark}

We characterize the eigenvalues of $\mathcal{L}$ as the  zeros the Evans function $D(\lambda,\veps)$.
\begin{proposition}\label{Prop_EvaEquiUnW}
\begin{enumerate}[{(a)}]
\item For each $\veps \in (0,\veps_K)$, the following holds: for $\lambda \in \mathbb{C}$ with $\oRe \lambda>0$, the eigenvalue problem \eqref{EigenEP} has a non-trivial solution in $(L^2)^2$ if and only if $D(\lambda,\veps)=0$.
\item  For $0 < c_0 < \sqrt{2\mathsf{V}}$, let $\etba = c_0\veps^{1/2}$, where $\veps>0$. Then there exists $\veps_K'> 0$ such that for each $ \veps \in(0,\veps_K']$, the following holds: for $\lambda \in \Omega^\veps$, the eigenvalue problem \eqref{EigenEP} has a non-trivial solution in $(L_\etba^2)^2$ if and only if $D(\lambda,\veps)=0$.
\end{enumerate}

\end{proposition}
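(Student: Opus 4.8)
The plan is to transport the statement to the first-order system \eqref{ODE_LinEP} through the isomorphism set up in Subsection~\ref{SS_RefEP}, and then extract the equivalence from the asymptotic characterizations of Proposition~\ref{Prop_ResultPW} combined with the two splitting lemmas: Lemma~\ref{splitting mu lam} for part (a) (where $\etba=0$ and the domain is $\oRe\lambda>0$) and Lemma~\ref{splitting mu lam weight} for part (b) (with $\veps_K'$ the constant it furnishes). Recall that for $\beta\in[0,1)$ the map sending $(\dt{n},\dt{u})^T$ to $(\dt{n},\dt{u},\dt{\phi},\dt{\psi})^T$, with $(\dt{\phi},\dt{\psi})$ reconstructed from the Poisson equation \eqref{Poisson_ODE}, is an isomorphism between $\ker(\lambda-\mathcal{L})\subset(L_\beta^2)^2$ and $\ker(d/dx-A(\cdot,\lambda,\veps))\subset(L_\beta^2)^4$. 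Hence it is enough to show that, under the relevant splitting, the system \eqref{ODE_LinEP} has a nontrivial $(L_\beta^2)^4$-solution if and only if $D(\lambda,\veps)=0$.

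For the implication $D(\lambda,\veps)=0\Rightarrow$ eigenvalue, I would use the distinguished solution $\mathbf{y}^+$ of \eqref{ODEsol1}, which is nontrivial since $e^{-\mu_1 x}\mathbf{y}^+\to\mathbf{v}_1\neq0$. As $x\to+\infty$ it behaves like $e^{\mu_1 x}\mathbf{v}_1$, so $e^{\etba x}\mathbf{y}^+$ is square-integrable near $+\infty$ because $\oRe\mu_1+\etba<0$ by the splitting. When $D(\lambda,\veps)=0$, Proposition~\ref{Prop_ResultPW}(a) gives $\mathbf{y}^+=O(e^{(\mu_\ast-\theta)x})$ as $x\to-\infty$ for every $0<\theta<\mu_\ast-\oRe\mu_1$; picking $\theta$ so small that $\mu_\ast-\theta+\etba>0$ (possible since $\mu_\ast+\etba>0$) shows $e^{\etba x}\mathbf{y}^+$ is square-integrable near $-\infty$ too. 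Thus $\mathbf{y}^+\in(L_\beta^2)^4$, and its image under the isomorphism is a nontrivial $(L_\beta^2)^2$-solution of \eqref{EigenEP}. For part (a) the same argument applies with $\etba=0$, using $\oRe\mu_1<0<\mu_\ast$ from Lemma~\ref{splitting mu lam}.

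For the converse, given a nontrivial $(L_\beta^2)^2$-solution of \eqref{EigenEP}, the isomorphism produces a nontrivial $\mathbf{y}\in(L_\beta^2)^4$ solving \eqref{ODE_LinEP}. Decomposing $\mathbf{y}$ into the asymptotic modes of $A^\infty$ as $x\to+\infty$ and using $\oRe\mu_j+\etba>0$ for $j=2,3,4$, square-integrability of $e^{\etba x}\mathbf{y}$ near $+\infty$ forces the $\mu_2,\mu_3,\mu_4$-components to vanish, so $\mathbf{y}=O(e^{\mu_1 x})$ as $x\to+\infty$; by Proposition~\ref{Prop_ResultPW}(b) this means $\mathbf{y}=\alpha\mathbf{y}^+$ with $\alpha\neq0$. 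But then $e^{\etba x}\mathbf{y}^+\in L^2$ near $-\infty$ as well, whereas \eqref{CharacEvansD} gives $e^{-\mu_1 x}\mathbf{y}^+\to D(\lambda,\veps)\mathbf{v}_1$ as $x\to-\infty$; if $D(\lambda,\veps)\neq0$ this means $|e^{\etba x}\mathbf{y}^+|$ grows like $e^{(\oRe\mu_1+\etba)x}$, which tends to $\infty$ as $x\to-\infty$ because $\oRe\mu_1+\etba<0$, contradicting square-integrability. Hence $D(\lambda,\veps)=0$.

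The step I expect to require the most care is the passage, in the converse direction, from membership of $\mathbf{y}$ in $L_\beta^2$ near $+\infty$ to decay at the $\mu_1$-rate. This rests on the asymptotic decomposition of solutions of the variable-coefficient system \eqref{ODE_LinEP} along the eigenspaces of $A^\infty$ --- which is exactly where \textbf{H2}, \textbf{H3} and the spectral gap $\oRe\mu_1<\mu_\ast$ are used, via the machinery behind Proposition~\ref{Prop_ResultPW} --- and one also has to rule out cancellations among modes $\mu_j$ with coinciding real parts (here $\mu_2$ and $\mu_3$ as $\oRe\lambda\to0$), which follows from the almost-periodicity of $|c_2e^{\mu_2 x}+c_3e^{\mu_3 x}|$. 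Everything else reduces to bookkeeping with the weight $e^{\etba x}$ and the inequalities supplied by the splitting lemmas.
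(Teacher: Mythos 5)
Your forward direction ($D(\lambda,\veps)=0\Rightarrow$ eigenvalue) and your final contradiction via \eqref{CharacEvansD} match the paper's argument. The gap is in the converse, at the step ``square-integrability of $e^{\etba x}\mathbf{y}$ near $+\infty$ forces the $\mu_2,\mu_3,\mu_4$-components to vanish, so $\mathbf{y}=O(e^{\mu_1 x})$.'' This presupposes that \emph{every} solution of the variable-coefficient system \eqref{ODE_LinEP} admits an asymptotic decomposition along the modes of $A^\infty$ as $x\to+\infty$. That is a Levinson-type theorem which is not contained in Proposition~\ref{Prop_ResultPW}: hypothesis \textbf{H4} and the cited machinery only separate the simple eigenvalue $\mu_1$ from the cluster $\{\mu_2,\mu_3,\mu_4\}$; they say nothing about resolving that cluster into individual modes. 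Moreover, on $\Omega^\veps$ the cluster does degenerate --- at $\lambda=0$ (which lies in $\Omega^\veps$ and is exactly the point one cares about) one has $\mu_2=\mu_3$, so your ``almost-periodicity of $|c_2e^{\mu_2x}+c_3e^{\mu_3x}|$'' remedy does not apply as stated, and even away from coalescence one would have to control the $o(1)$ errors coming from $A-A^\infty$ before reading off which coefficients must vanish. You also never convert $L^2_\beta$-membership into pointwise information; a purely $L^2$ hypothesis does not by itself give the pointwise asymptotics your mode argument needs.

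The paper avoids all of this by exploiting the third characterization in Proposition~\ref{Prop_ResultPW}(\ref{Dy+1}): an eigenfunction lies in the domain $(H^1_\beta)^2$ (and its Poisson components in $H^2_\beta$), so $e^{\etba x}\mathbf{y}$ is bounded by Sobolev embedding, i.e. $\mathbf{y}=O(e^{-\etba x})$; since $-\etba<\mu_\ast-\theta$ by \eqref{split mu weight} (resp. $\mu_\ast>0$ in part (a) by Lemma~\ref{splitting mu lam}), this gives $\mathbf{y}=o(e^{(\mu_\ast-\theta)x})$ as $x\to+\infty$, which is precisely condition (iii) and yields $\mathbf{y}=\alpha\mathbf{y}^+$ with no mode decomposition at all; then $\mathbf{y}^+=O(e^{-\etba x})=o(e^{\mu_1 x})$ as $x\to-\infty$ forces $D(\lambda,\veps)=0$ by \eqref{CharacEvansD}. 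If you replace your decomposition step by this boundedness-plus-(iii) argument, the rest of your proof goes through.
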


\begin{proof}
We first prove the first assertion. If $D(\lambda,\veps)=0$ for $\lambda$ with $\oRe \lambda >0$, then $\mathbf{y}^+(x,\lambda,\veps)$ satisfying \eqref{ODEsol1} is a non-trivial $(L^2)^4$-solution to the system \eqref{ODE_LinEP} by \eqref{EigenSpliting1_2} and Proposition \ref{Prop_ResultPW}(\ref{Dy+}). Hence, the first two components of $\mathbf{y}^+$ is a $(L^2)^2$-solution to the eigenvalue problem \eqref{EigenEP}. Conversely, if \eqref{EigenEP} has a non-trivial $(L^2)^2$-solution for $\lambda$ with $\oRe \lambda >0$, then \eqref{ODE_LinEP} has a non-trivial solution $\mathbf{y}(x,\lambda,\veps)$, which is  bounded in $x\in\mathbb{R}$  since $\mathbf{y} \in H^1$. From \eqref{EigenSpliting1_2} and Proposition \ref{Prop_ResultPW}(\ref{Dy+1}), we see that $\mathbf{y}=\alpha \mathbf{y}^+$ for some constant $\alpha \neq 0$. Since $\mathbf{y}^+$ is bounded in $x$, we have that $\mathbf{y}^+=o(e^{\mu_1 x})$ as $x \to -\infty$, equivalently, $D(\lambda,\veps)=0$ by \eqref{CharacEvansD}.

We show the second assertion. Suppose that $D(\lambda,\veps)=0$ for $\lambda \in \Omega^\veps$. By Proposition \ref{Prop_ResultPW}(\ref{Dy+}), $e^{\etba x}\mathbf{y}^+ = O(e^{\etba x}e^{(\mu_\ast - \theta) x})$ as $x\to -\infty$, and thus  $e^{\etba x}\mathbf{y}^+ $ exponentially decays to zero as $x \to \pm\infty$ by \eqref{split mu weight}. Hence, the first two components of $\mathbf{y}^+$ is a solution to the eigenvalue problem \eqref{EigenEP} in $(L_\etba^2)^2$.  Conversely, suppose that \eqref{EigenEP} has a nontrivial $(L_\etba^2)^2$ solution for $\lambda \in \Omega^\veps$. Then \eqref{ODE_LinEP} has a solution $\mathbf{y}$ such that $e^{\etba x}\mathbf{y}\in (L^2)^4$. Since $e^{\etba x}\mathbf{y} \in (H^1)^4$, $e^{\etba x}\mathbf{y}$ is bounded in $x$, and thus we have  
\begin{equation}\label{Ch3_DecayEq1}
\mathbf{y} = O(e^{-\etba x}) \quad \text{as} \quad |x| \to \infty.
\end{equation}
Multiplying \eqref{Ch3_DecayEq1} by $e^{-(\mu_\ast-\theta) x }$ for sufficiently small $\theta>0$, we see that  $\mathbf{y} = o(e^{(\mu_\ast  -\theta) x})$ as $x \to +\infty$ by \eqref{split mu weight}.
By Proposition \ref{Prop_ResultPW}(\ref{Dy+1}) and \eqref{Ch3_DecayEq1}, this implies that  $\mathbf{y}^+ = O(e^{-\etba x}) \quad \text{as} \quad x \to -\infty$, which yields that $\mathbf{y}^+ = o(e^{\mu_1 x})$ as $x \to -\infty$ by \eqref{split mu weight}, equivalently, $D(\lambda,\veps)=0$.
\end{proof}

\begin{remark}\label{Rem_Sym}
If $\lambda$ is an $(L^2)^2$-eigenvalue of $\mathcal{L}$, then so are $\overline{\lambda}$ and $-\lambda$ by the symmetry of the solitary wave, $(n_c,u_c,\phi_c)(x)=(n_c,u_c,\phi_c)(-x)$. Indeed, if $\mathbf{y}(x,\lambda)$ is a solution to  \eqref{ODE_LinEP}, then we have
\begin{equation}\label{EigenSymmetry}
\frac{d}{dx}\overline{\mathbf{y}(x,\lambda)} =( A_1 + \overline{\lambda}A_2 ) \overline{\mathbf{y}(x,\lambda)}, \quad \frac{d}{dx}\widetilde{\mathbf{y}} =( A_1 - \lambda A_2 ) \widetilde{\mathbf{y}},
\end{equation}
where $\widetilde{\mathbf{y}}:=\left(y_1(-x,\lambda),y_2(-x,\lambda),y_3(-x,\lambda),-y_4(-x,\lambda)\right)^T$.  We remark that on the domain $\oRe \lambda \leq  0$, where $\oRe\mu_1 < \mu_\ast \leq 0$ holds, the zeros of the Evans function is not related to the $(L^2)^2$- eigenvalues, in principle. For $\oRe\lambda=0$, for instance, we have $\oRe \mu_1 < 0=\mu_\ast$, and hence $\mathbf{y}^+$ (an analytic continuation of $\mathbf{y}^+$ defined on the domain $\oRe\lambda >0$) may oscillate without decaying as $x \to -\infty$. The eigenfunction $\widetilde{\mathbf{y}}$ corresponding to the eigenvalue $- \lambda$ is not an analytic continuation of $\mathbf{y}^+$. The zeros of the Evans function on $\oRe \lambda \leq  0$  correspond to the so-called resonance poles (\cite{PW},\cite{PW2},\cite{Kapi}).
\end{remark}

We have the following relation between the algebraic multiplicity of eigenvalues of $\mathcal{L}$ and  the order of zeros of $D(\lambda,\veps)$.
\begin{proposition}\label{Ord2}
For each $\veps \in (0,\veps_K')$, the following holds: for $\lambda \in \Omega^\veps$ with $D(\lambda,\veps)=0$, the order of $\lambda$ as a zero of $D(\lambda,\veps)$ coincides with the algebraic multiplicity of $\lambda$ as an $(L_\beta^2)^2$-eigenvalue of $\mathcal{L}$.
\end{proposition}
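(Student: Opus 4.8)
The plan is to establish the standard correspondence between the order of a zero of the Evans function and the algebraic multiplicity of the associated eigenvalue, adapting the general framework (as in \cite{PW}, \cite{PW1}, \cite{Kapi}) to our nonlocal operator $\mathcal{L}$. Throughout, fix $\lambda_0 \in \Omega^\veps$ with $D(\lambda_0,\veps)=0$, and let $k$ be the order of the zero, i.e.\ $0 = D(\lambda_0,\veps) = \cdots = \partial_\lambda^{k-1}D(\lambda_0,\veps) \ne \partial_\lambda^k D(\lambda_0,\veps)$. We must show that the algebraic multiplicity of $\lambda_0$ as an $(L_\beta^2)^2$-eigenvalue of $\mathcal{L}$ — that is, the dimension of the generalized eigenspace $\bigcup_m \ker(\lambda_0 - \mathcal{L})^m$ — equals $k$.

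First I would show the geometric multiplicity is one. By Lemma~\ref{splitting mu lam weight}\,(2), for $\lambda \in \Omega^\veps$ the asymptotic matrix $A^\infty(\lambda,\veps)$ has exactly one matrix eigenvalue $\mu_1$ with $\oRe \mu_1 + \beta < 0$, and the remaining three satisfy $\oRe\mu_j + \beta > 0$. Hence the space of solutions to $d\mathbf{y}/dx = A(x,\lambda_0,\veps)\mathbf{y}$ that lie in $(L_\beta^2)^4$ (equivalently, decay like $e^{(\mu_1+\theta)x}$ as $x\to+\infty$ after weighting) is at most one-dimensional: it is spanned by $\mathbf{y}^+$ when $D(\lambda_0,\veps)=0$, by Proposition~\ref{Prop_ResultPW}\,(a)--(b). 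Via the isomorphism between $\ker(\lambda_0 - \mathcal{L}) \subset (L_\beta^2)^2$ and the $(L_\beta^2)^4$-kernel of $d/dx - A$ recorded in Section~\ref{SS_RefEP}, $\ker(\lambda_0-\mathcal{L})$ is one-dimensional, spanned by $Y^+ := (\dot n^+, \dot u^+)^T$, the first two components of $\mathbf{y}^+(\cdot,\lambda_0,\veps)$.

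Next, the heart of the argument is to produce a Jordan chain of length exactly $k$ and show no longer chain exists. For the lower bound, I would differentiate the ODE $d\mathbf{y}^+/dx = A(x,\lambda,\veps)\mathbf{y}^+$ in $\lambda$ at $\lambda_0$ up to order $k-1$. Writing $\mathbf{y}^{+}_{(j)} := \partial_\lambda^j \mathbf{y}^+|_{\lambda=\lambda_0}$ and using $\partial_\lambda A = A_2$ from \eqref{A_Decompose}, one gets $d\mathbf{y}^{+}_{(j)}/dx = A(x,\lambda_0,\veps)\mathbf{y}^{+}_{(j)} + j A_2 \mathbf{y}^{+}_{(j-1)}$. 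By Proposition~\ref{Prop_DeYZ}, since the zero has order $k$, each $\mathbf{y}^{+}_{(j)}$ for $j=0,\dots,k-1$ decays like $e^{(\mu_\ast - \theta)x}$ as $x\to-\infty$, hence lies in $(L_\beta^2)^4$ (it already decays at $+\infty$ by Proposition~\ref{Prop_DeYZ}\,(a)). Translating back through the reformulation of Section~\ref{SS_RefEP}, the first two components $Y^{+}_{(j)}$ satisfy $(\lambda_0 - \mathcal{L})Y^{+}_{(j)} = j\,(\text{something proportional to } Y^{+}_{(j-1)})$; after normalizing constants this gives a genuine Jordan chain $\{Y^{+}_{(0)}, Y^{+}_{(1)}, \dots, Y^{+}_{(k-1)}\}$ of length $k$, so the algebraic multiplicity is $\ge k$. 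For the upper bound, suppose for contradiction that there is a chain of length $k+1$, i.e.\ $w \in (L_\beta^2)^2$ with $(\lambda_0-\mathcal{L})^{k+1}w = 0$ but $(\lambda_0-\mathcal{L})^k w \ne 0$. Lifting the chain to the ODE side produces a solution of a $(k+1)$-fold iterated inhomogeneous system with all iterates in $(L_\beta^2)^4$. Using Proposition~\ref{Prop_DeYZ}\,(b), in particular \eqref{Prop_DeYZ3} which identifies $\lim_{x\to-\infty} e^{-\mu_1 x}\partial_\lambda^k \mathbf{y}^+ = \partial_\lambda^k D(\lambda_0,\veps)\mathbf{v}_1 \ne 0$, one shows that the would-be top generalized eigenvector cannot decay at the required rate as $x \to -\infty$ — it must pick up a nonzero multiple of the growing mode $e^{\mu_1 x}$ (recall $\mu_1 + \beta < 0$ means $e^{\mu_1 x}$ grows as $x\to -\infty$ after weighting) — contradicting membership in $(L_\beta^2)^4$. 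This forces the algebraic multiplicity to be exactly $k$.

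The main obstacle I anticipate is the careful bookkeeping in the last step: making precise the passage between Jordan chains for the nonlocal operator $\mathcal{L}$ in $(L_\beta^2)^2$ and the iterated inhomogeneous ODE systems for $d/dx - A$, keeping track of where the combinatorial factors $j$ enter, and — crucially — verifying that the adjoint-side analogue (the chain for $\mathcal{L}^\ast$ built from $\mathbf{z}^-$ and its $\lambda$-derivatives, governed by the second equation in \eqref{LargeAmpl1}) pairs nondegenerately with the primal chain precisely when $\partial_\lambda^k D(\lambda_0,\veps)\ne 0$. This nondegeneracy of the pairing $\langle (\mathcal{L}^\ast)\text{-chain top}, (\mathcal{L})\text{-chain top}\rangle$, which by Proposition~\ref{Prop_DeYZ} is essentially $\partial_\lambda^k D(\lambda_0,\veps)$ up to a nonzero factor involving $\mathbf{w}_1\mathbf{v}_1 = 1$, is exactly what simultaneously gives the upper bound on multiplicity and certifies that the chain we built is maximal. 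The nonlocal term $(-\partial_x^2 + e^{\phi_c})^{-1}$ does not cause trouble here because it was already absorbed into the first-order system \eqref{ODE_LinEP} with the bookkeeping done in Section~\ref{SS_RefEP}; one only needs to note that the isomorphisms there are $\lambda$-independent on the relevant components, so they commute with $\partial_\lambda$.
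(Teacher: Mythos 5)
Your proposal is correct and follows essentially the same route as the paper: geometric multiplicity one via Proposition \ref{Prop_ResultPW}, the length-$k$ chain built from $\partial_\lambda^j \mathbf{y}^+$ ($j\le k-1$) using Proposition \ref{Prop_DeYZ} together with \eqref{split mu weight}, and the upper bound by contradiction from \eqref{Prop_DeYZ3}, with the paper carrying out exactly the bookkeeping you anticipate by reducing any longer chain to the single non-solvable equation $(\lambda-\mathcal{L})\widetilde{Y}=-k\partial_\lambda^{k-1}Y^+$. The adjoint-side pairing you flag as the main obstacle is not actually needed: Proposition \ref{Prop_DeYZ}(b) alone yields the contradiction, and the paper's proof never invokes $\mathbf{z}^-$ at this point.
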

\begin{proof}
 By taking derivatives of \eqref{LargeAmpl1} in $\lambda$, we have  
\begin{equation}\label{lambdaDeri_LS}
(\lambda-\mathcal{L})Y^+=0,  \quad (\lambda-\mathcal{L})\partial_\lambda^{j+1}Y^+=-(j+1)\partial_\lambda^jY^+ \quad \text{for } j=0,1,2,\cdots.
\end{equation}
Suppose that $D(\lambda)=\cdots = \partial_\lambda^{k-1} D(\lambda)=0$ and $\partial_\lambda^k D(\lambda)\neq 0$.  From \eqref{Prop_DeYZ1}--\eqref{Prop_DeYZ2} and \eqref{split mu weight}, we have that $e^{{\etba} x}\partial_\lambda^j Y^+ \in (H^1)^2$ for $j=0,1,\cdots,k-1$, that is, the algebraic multiplicity of $\lambda$ is at least $k$. We show the following assertions:
\begin{enumerate}[{(i)}]
\item \label{geo} (the geometric multiplicity is one) every non-trivial $(L_\etba^2)^2$-solution to the eigenvalue problem \eqref{EigenEP} is a constant multiple of $Y^+(x,\lambda)$,
\item \label{alge} (the algebraic multiplicity is $k$) there is no $(L_\etba^2)^2$-function $\widetilde{Y}(x,\lambda)$ satisfying
\begin{equation}\label{lambdaDeri1_LS}
(\lambda-\mathcal{L})\widetilde{Y}=-k\partial_\lambda^{k-1} Y^+.
\end{equation}
\end{enumerate} 
We already showed the assertion \eqref{geo} in the proof of Proposition \ref{Prop_EvaEquiUnW}. To check \eqref{alge}, we suppose that there is a function $\widetilde{Y} \in (H_\etba^1)^2 $ satisfying \eqref{lambdaDeri1_LS}. Then $e^{{\etba} x}\widetilde{Y}$ is bounded in $x \in \mathbb{R}$, and thus we have
\[
\widetilde{Y} = O(e^{-\beta x}) \text{ as } x \to \pm \infty.
\]
From \eqref{lambdaDeri_LS} and \eqref{lambdaDeri1_LS}, $Y_0(x,\lambda) := \widetilde{Y} - \partial_\lambda^k Y^+$ satisfies
\[
(\lambda-\mathcal{L})Y_0=(0,0)^T, \quad  Y_0 = O(e^{-{\etba} x}) \text{ as } x \to +\infty
\] 
since  $e^{{\etba} x}\partial_\lambda^kY^+$ is bounded in $x >0$ from \eqref{Prop_DeYZ1}.   Hence, from \eqref{split mu weight}, $Y_0 = o(e^{(\mu_\ast - \theta)x})$ as $x \to +\infty$ for sufficiently small $\theta>0$, and this implies that $Y_0=\alpha_0 Y^+$ for  some constant $\alpha_0\neq 0$  by Proposition \ref{Prop_ResultPW}(\ref{Dy+1}). Now we obtain that 
\[
\partial_\lambda^k Y^+ = \widetilde{Y} - \alpha_0Y^+ = O(e^{-{\etba} x}) \quad \text{as} \; x \to -\infty.
\]
This is a contradiction since $\oRe \mu_1<-\etba$, but  $\lim_{x \to -\infty}e^{-\mu_1 x}\partial_\lambda^k Y^+(x,\lambda) \neq 0$ by \eqref{Prop_DeYZ3}.

This result also implies that  if the algebraic multiplicity of $\lambda$ is $k$, then the order of $\lambda$ as a zero of $D(\lambda)$ must be $k$.
\end{proof}

We now present the characterization of the zeros of the Evans function for the Euler-Poisson system.
\begin{proposition}\label{Pro_Ev_Ch}
There exists $\veps_0<\veps_K'$ such that for $\veps \in (0,\veps_0]$, $\lambda=0$ is the only zero of $D(\lambda,\veps)$ on the region $\Omega^\veps$. Moreover, the order of $\lambda=0$ is exactly two.
\end{proposition}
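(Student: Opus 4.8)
The plan is to prove Proposition~\ref{Pro_Ev_Ch} by transplanting the known zero count for the KdV Evans function $D_{\textrm{KdV}}(\Lambda)$ to the Euler-Poisson Evans function $D(\lambda,\veps)$ via the Gardner-Morikawa rescaling \eqref{GMT} and a uniform-convergence-plus-Rouch\'e argument. The starting point is the string of identifications displayed in the diagram right after \eqref{GMT}: under $\xi=\veps^{1/2}x$, $\lambda=\veps^{3/2}\Lambda$ the rescaled first-order system has coefficient matrix $A_\ast(\xi,\Lambda,\veps)$, whose Evans function $D_\ast(\Lambda,\veps)$ equals $D(\lambda,\veps)$ (Proposition~\ref{RelationDandDtilde}), and as $\veps\to 0$ the matrix $A_\ast(\xi,\Lambda,\veps)\to A_\ast(\xi,\Lambda,0)$, the latter being the first-order form of the KdV eigenvalue problem \eqref{FormDerivEigEPtoKdV4}, with $D_\ast(\Lambda,0)=D_{\textrm{KdV}}(\Lambda)$. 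The content of \cite{PW1} (summarized in Proposition~\ref{KdVSummary}) is that $D_{\textrm{KdV}}(\Lambda)$ is analytic on a domain containing the closed right-half plane and vanishes there only at $\Lambda=0$, with multiplicity exactly two; moreover $D_{\textrm{KdV}}$ is bounded away from zero on the boundary of any fixed neighborhood of the origin inside that domain.

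The main step is to upgrade the pointwise convergence $A_\ast(\xi,\Lambda,\veps)\to A_\ast(\xi,\Lambda,0)$ to convergence of the Evans functions that is \emph{uniform} on a fixed compact neighborhood of $\Lambda=0$ — this is exactly Proposition~\ref{ThmEvansConverg}, which I would invoke: $D_\ast(\Lambda,\veps)\to D_\ast(\Lambda,0)=D_{\textrm{KdV}}(\Lambda)$ uniformly on a set containing the closed right-half plane (in the rescaled $\Lambda$ variable, after a further uniform-in-$\veps$ rescaling of the weight $\etba=c_0\veps^{1/2}$, which translates to a fixed weight in $\xi$). Granting this, fix a small circle $\Gamma$ around $\Lambda=0$ (contained in the natural KdV domain and in the image of $\Omega^\veps$ under $\lambda\mapsto\Lambda=\veps^{-3/2}\lambda$ for all small $\veps$, using that $\Omega^\veps$ in $\lambda$ is $\{\oRe\lambda\ge -\veps^{3/2}\eta(c_0)\}$, which rescales to the \emph{fixed} half-plane $\{\oRe\Lambda\ge -\eta(c_0)\}$ — this is the whole point of the $\veps^{3/2}$ scaling of $\lambda$). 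On $\Gamma$, $|D_{\textrm{KdV}}(\Lambda)|\ge\delta>0$ for some $\delta$, so for $\veps$ small enough $|D_\ast(\Lambda,\veps)-D_{\textrm{KdV}}(\Lambda)|<\delta\le|D_{\textrm{KdV}}(\Lambda)|$ on $\Gamma$; Rouch\'e's theorem then gives that $D_\ast(\cdot,\veps)$ has exactly two zeros (with multiplicity) inside $\Gamma$. Since $\lambda=0$, equivalently $\Lambda=0$, is always a zero of order at least two by \eqref{JordChainforL} / \eqref{EigenFunc} and Proposition~\ref{Ord2} (the eigenvector $\partial_xY_c$ and generalized eigenvector $\partial_cY_c$ lie in the weighted space), the two zeros inside $\Gamma$ must both be at the origin, so the order of $\lambda=0$ is exactly two and there is no other zero in $\Gamma$.

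It remains to rule out zeros of $D(\lambda,\veps)$ in $\Omega^\veps\setminus\Gamma$, i.e. away from the origin. Here I would split $\Omega^\veps$ (in the $\Lambda$ variable, the fixed region $\{\oRe\Lambda\ge-\eta(c_0)\}$) into the bounded part $\{|\Lambda|\le R\}\setminus\Gamma$ and the unbounded part $\{|\Lambda|>R\}$. On the bounded part, uniform convergence $D_\ast(\Lambda,\veps)\to D_{\textrm{KdV}}(\Lambda)$ together with $\inf_{|\Lambda|\le R,\ \Lambda\notin\textrm{int}\,\Gamma}|D_{\textrm{KdV}}(\Lambda)|>0$ (again from \cite{PW1}, Proposition~\ref{KdVSummary}) forces $D_\ast(\Lambda,\veps)\ne0$ there for $\veps$ small. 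The unbounded part corresponds, back in the $\lambda$ variable, to $|\lambda|\ge R\veps^{3/2}$ with $\oRe\lambda\ge0$, and is handled by the large-$|\lambda|$ behavior of the Evans function established in Section~\ref{Sec5} (the behavior of the matrix eigenvalues $\mu_j$ of $A^\infty$ for large $\lambda$, which forces $D(\lambda,\veps)\to1$, or at any rate stays bounded away from $0$, as $|\lambda|\to\infty$ in the closed right-half plane uniformly in $\veps$); this is precisely the non-vanishing for the large eigenvalue parameter promised in the roadmap, and combined with the uniform resolvent/Evans estimates it yields $D(\lambda,\veps)\ne0$ for $|\lambda|$ large. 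Patching the three regions together gives the claim, with $\veps_0$ chosen smaller than $\veps_K'$ and than the thresholds produced in each region.

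The hardest part is not this assembly but the input it relies on, namely the \emph{uniform} convergence of the Evans functions (Proposition~\ref{ThmEvansConverg}): one must control the solutions $\mathbf{y}^+$, $\mathbf{z}^-$ of the rescaled system and their dependence on $\veps$ near $\veps=0$, where the limiting problem \eqref{FormDerivEigEPtoKdV4} is \emph{singular} in the sense that all four matrix eigenvalues $\mu_j$ of $A^\infty$ collapse to $0$ as $(\veps,\lambda)\to(0,0)$ (cf.\ Remark~\ref{Rem_Dom}); the splitting \textbf{H4} degenerates in the limit, so the construction of the analytic solutions $\mathbf{y}^+,\mathbf{z}^-$ and the gap condition \eqref{split mu weight} must be tracked with rates in $\veps$. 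That delicate analysis is carried out in Section~\ref{Sec5} (and the splitting Lemmas of Section~\ref{Sec_Split}); granting those results, the proof of Proposition~\ref{Pro_Ev_Ch} itself is the short Rouch\'e argument sketched above.
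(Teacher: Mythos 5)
Your proposal is correct and follows essentially the same route as the paper: the paper also deduces Proposition~\ref{Pro_Ev_Ch} from Proposition~\ref{RelationDandDtilde} together with the uniform convergence $D_\ast(\Lambda,\veps)\to D_{KdV}(\Lambda)$ on $\Omega_\ast$ (Proposition~\ref{ThmEvansConverg}, proved via the region decomposition and the small/large-$\lambda$ estimates of Section~\ref{Sec5}), the properties of $D_{KdV}$ from Proposition~\ref{KdVSummary}, the fact that $\lambda=0$ is a zero of order at least two, and Rouch\'e's theorem. The only cosmetic differences are that the paper applies Rouch\'e on the exhausting contours $\Gamma_\delta=\partial(\Omega_\ast\cap\{|\Lambda|\le\delta^{-1}\})$ rather than on a small circle plus a separate exclusion argument, and it obtains the order-at-least-two input from the derivative formulas (Proposition~\ref{EvansZero}) rather than from the Jordan chain plus Proposition~\ref{Ord2}; both variants are available within the paper's framework.
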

Proposition \ref{Pro_Ev_Ch} directly follows from Proposition \ref{RelationDandDtilde} and Corollary \ref{cor_orderEvans}. The proofs of these results will be given throughout subsection 3.5--3.9.

\subsection{Spectrum of the linearized operator}\label{SubS_Spec}
Using the results in the previous subsections, we classify the spectrum of the  operator $\mathcal{L}$ on $(L^2)^2$ and $(L_\beta^2)^2$. The proof of the following Lemma is given in Appendix \ref{Lem39}.
\begin{lemma}\label{Fredequiv}
For each $\beta \in [0,1)$ and $\veps \in (0,\veps_K)$, consider the operators  $\lambda -\mathcal{L}: (H_\etba^1)^2\subset (L_\etba^2)^2 \to (L_\etba^2)^2$ and $d/dx-A(x,\lambda,\veps): (H_\etba^1)^4\subset (L_\etba^2)^4 \to  (L_\etba^2)^4$, where $\mathcal{L}$ and $A$ are defined in \eqref{Ch3_Def_mcL} and \eqref{ODE_LinEP1} respectively. For any non-negative interger $k$,  $\lambda -\mathcal{L}$ is Fredholm with index $k$ if and only if $d/dx-A(x,\lambda,\veps)$ is Fredholm with index $k$.
\end{lemma}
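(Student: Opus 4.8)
The plan is to exploit the fact that the passage from the second-order scalar-to-vector operator $\lambda-\mathcal{L}$ to the first-order ODE operator $d/dx - A(x,\lambda,\veps)$ is, up to bounded invertible maps on the relevant $L^2_\beta$-spaces, a reformulation in which the auxiliary variables $(\dot\phi,\dot\psi)$ are solved \emph{explicitly} from the Poisson equation. Concretely, I would introduce the bounded linear map $T_\beta\colon (L_\beta^2)^2\to (L_\beta^2)^4$ sending $(\dot n,\dot u)^T$ to $(\dot n,\dot u,\dot\phi,\dot\psi)^T$, where $(\dot\phi,\dot\psi)$ is the unique $H^2_\beta\times H^1_\beta$ solution of \eqref{Poisson_ODE} for the given $\dot n\in L_\beta^2$; this is well defined and bounded because $(-\partial_x^2+e^{\phi_c})^{-1}$ is bounded $L_\beta^2\to H_\beta^2$ for $\beta\in[0,1)$ (as noted right after \eqref{Def_WeightNorm}). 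The map $T_\beta$ is not surjective, but its range is a closed complemented subspace: indeed $T_\beta$ has a bounded left inverse, namely the projection $\pi_{12}$ onto the first two components, and $\pi_{12}\circ T_\beta=\mathrm{Id}$. The key algebraic identity, already recorded in \eqref{Transpor_ODE}--\eqref{Poisson_ODE}, is that for $\mathbf{y}=T_\beta(\dot n,\dot u)^T$ one has
\[
\Big(\tfrac{d}{dx}-A(x,\lambda,\veps)\Big)\mathbf{y}
= M\,L^{-1}(\lambda-\mathcal{L})(\dot n,\dot u)^T,
\]
where $M$ is the bounded invertible $4\times 2$-to-$4$ inclusion-type operator coming from the block structure of \eqref{ODE_LinEP1}; more precisely, the last two components of $(d/dx-A)\mathbf{y}$ vanish identically by construction of $(\dot\phi,\dot\psi)$, and the first two components are $L^{-1}(\lambda-\mathcal{L})(\dot n,\dot u)^T$.

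From this identity one reads off the comparison of kernels and cokernels directly. First, $T_\beta$ restricts to a linear isomorphism $\ker(\lambda-\mathcal{L})\subset (H^1_\beta)^2 \to \ker(d/dx-A)\subset (H^1_\beta)^4$; this is exactly the isomorphism statement made in subsection \ref{SS_RefEP}, and its inverse is $\pi_{12}$. Hence $\dim\ker(\lambda-\mathcal{L})=\dim\ker(d/dx-A)$, finite in one case iff finite in the other. Second, for the cokernels I would argue that $\mathrm{ran}(d/dx-A)$ is the direct sum of $M L^{-1}\big(\mathrm{ran}(\lambda-\mathcal{L})\big)$-type piece sitting in the first two slots and the "automatically zero" last two slots — more carefully, decompose $(L_\beta^2)^4=(L_\beta^2)^2\oplus(L_\beta^2)^2$ and observe that solving $(d/dx-A)\mathbf{y}=(f_1,f_2,f_3,f_4)^T$ is equivalent, after using the third and fourth rows to eliminate $(\dot\phi,\dot\psi)$ in terms of $(\dot n,\dot u)$ and the data $(f_3,f_4)$, to solving $(\lambda-\mathcal{L})(\dot n,\dot u)^T = \widetilde f$ for a $\widetilde f$ depending boundedly and surjectively on $(f_1,\dots,f_4)$. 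This exhibits a bounded isomorphism between $\mathrm{coker}(d/dx-A)$ and $\mathrm{coker}(\lambda-\mathcal{L})$, so one is finite-dimensional iff the other is, with equal dimension. Combining, $\lambda-\mathcal{L}$ is Fredholm iff $d/dx-A$ is, and when Fredholm the indices agree: $\mathrm{ind}=\dim\ker-\dim\mathrm{coker}$ is the same integer $k$ on both sides.

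The main obstacle I anticipate is making the elimination of $(\dot\phi,\dot\psi)$ from an \emph{inhomogeneous} first-order system fully rigorous at the level of $L_\beta^2$-cokernels, rather than just for the homogeneous kernels where subsection \ref{SS_RefEP} already does the bookkeeping. One must check that the substitution $\dot\phi=(-\partial_x^2+e^{\phi_c})^{-1}(\dot n+\text{lower order data})$ genuinely produces a \emph{closed-range-preserving, index-preserving} correspondence — i.e., that the "change of unknowns" operator on $(L_\beta^2)^4$ implementing this elimination is bounded with bounded inverse. This is where the hypothesis $\beta\in[0,1)$ is used again: it guarantees $e^{\phi_c}$ does not interfere with the weight (since $\phi_c$ decays and $\beta<1$ keeps the conjugated Schrödinger operator $-\partial_x^2+2\beta\partial_x+(\beta^2\!+\!e^{\phi_c})$ invertible, cf.\ the parenthetical remark after \eqref{Ch3_Def_mcL}). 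Once the change-of-unknowns operator is shown to be a bounded automorphism of $(L_\beta^2)^4$ carrying $(H_\beta^1)^4$ to $(H_\beta^1)^4$, Fredholmness and index are invariant under composition with it and with $T_\beta$/$\pi_{12}$, and the lemma follows. The details are routine but bookkeeping-heavy, which is presumably why the authors defer the proof to Appendix \ref{Lem39}.
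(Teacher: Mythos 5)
Your proposal is correct in substance and rests on the same core mechanism as the paper's argument in Appendix \ref{Lem39}: eliminate the auxiliary variables $(\dot\phi,\dot\psi)$ by solving the (inhomogeneous) Poisson subsystem $\partial_x\phi-\psi=f^3$, $\partial_x\psi-e^{\phi_c}\phi=f^4$ with a bounded solution operator on the weighted space, thereby reducing solvability of the $4\times4$ first-order system to solvability of $\lambda-\mathcal{L}$ with a modified right-hand side (this is exactly the decomposition \eqref{CH_3PoissonDecomposition} in the paper). Where you differ is the bookkeeping: the paper verifies three claims — equivalence of closed range (proved by a sequential argument using that decomposition), isomorphism of kernels, and isomorphism of kernels of the Hermitian adjoints (the latter two imported from the reformulation in subsection \ref{SS_RefEP} and its adjoint version) — and then reads off the index from $\ker$ and $\ker(\cdot)^\ast$; you instead compare kernels and \emph{algebraic} cokernels directly, by exhibiting a bounded surjection $\Phi:\mathbf{f}\mapsto L\bigl[(f^1,f^2)^T-L^{-1}(0,\psi^f)^T\bigr]$ with $\mathrm{Ran}(d/dx-A)=\Phi^{-1}\bigl(\mathrm{Ran}(\lambda-\mathcal{L})\bigr)$, which induces an isomorphism of the quotients and so bypasses both the adjoint computation and a separate closed-range step (finite-dimensional algebraic cokernel of a closed operator gives closed range for free). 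That is a legitimate and arguably slightly leaner organization, at the cost of having to state the "iff" equivalence of ranges carefully; the paper's route gets the adjoint kernel identification essentially for free from work already done in subsection \ref{SS_RefEP}, but must prove closedness of the range by hand. Two small corrections to your write-up: the operator you call $M$ is an injection of the first two components into $\mathbb{C}^4$, not an invertible map, so phrase the identity as "the first two components of $(d/dx-A)T_\beta(\dot n,\dot u)^T$ equal $L^{-1}(\lambda-\mathcal{L})(\dot n,\dot u)^T$ and the last two vanish"; and the conjugated Poisson operator is $e^{\beta x}(-\partial_x^2+e^{\phi_c})e^{-\beta x}=-\partial_x^2+2\beta\partial_x+(e^{\phi_c}-\beta^2)$, with a \emph{minus} $\beta^2$ — this sign is precisely why the restriction $\beta\in[0,1)$ (together with $e^{\phi_c}\ge 1$) is needed for coercivity, which your version with $+\beta^2$ would obscure.
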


\begin{proposition}\label{Clas_Spec}
There exists $\veps_0>0$  such that the following holds.
\begin{enumerate}[(a)]
\item  For each $\veps \in (0,\veps_0]$, consider the operator $\mathcal{L}: (H^1)^2\subset (L^2)^2 \to (L^2)^2$. Then, $\sigma_{\mathrm{ess}}(\mathcal{L})$ is the imaginary axis and the resolvent set of $\mathcal{L}$ is $\mathbb{C}\setminus \sigma_{\mathrm{ess}}(\mathcal{L})$.
\item  For each $\veps \in (0,\veps_0]$, consider the operator $\mathcal{L}: (H_\beta^1)^2\subset (L_\beta^2)^2 \to (L_\beta^2)^2$. Then, 
	\begin{enumerate}[{(i)}]
	\item $\sigma_{\mathrm{ess}}(\mathcal{L})$ is the union of two parametrized curves $\{d_\pm(ik-\beta):k\in\mathbb{R}\}$, which lie in $\mathbb{C}\setminus \Omega^\veps$, 
	\item $\sigma_{\mathrm{pt}}(\mathcal{L}) \cap \Omega^\veps = \{0\}$, in particular, $\lambda=0$ is an isolated $(L_\beta^2)^2$-eigenvalue with the algebraic multiplicity two,
	\item the set $\Omega^\veps\setminus\{0\}$ is a subset of the resolvent set of $\mathcal{L}$.
\end{enumerate}
\end{enumerate}

\end{proposition}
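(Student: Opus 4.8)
The plan is to transfer all the spectral information from the first-order ODE system $d/dx - A(x,\lambda,\veps)$ to the operator $\mathcal{L}$ via Lemma \ref{Fredequiv}, and then combine with the Evans-function characterization from Propositions \ref{Prop_EvaEquiUnW}, \ref{Ord2}, and \ref{Pro_Ev_Ch}. The key structural fact is that for a first-order system with asymptotically constant, $x$-independent-at-infinity coefficient matrix $A^\infty(\lambda,\veps)$, the operator $d/dx - A(x,\lambda,\veps)$ is Fredholm on the (weighted) $L^2$ space precisely when $A^\infty(\lambda,\veps)$ is \emph{hyperbolic} relative to the weight, i.e., none of its matrix eigenvalues $\mu_j$ satisfies $\oRe\mu_j + \beta = 0$; and in that case the Fredholm index equals the difference between the dimension of the unstable subspace (relative to the weight) at $+\infty$ and at $-\infty$, which here is zero because the matrix is the same at both ends. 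So the essential spectrum is exactly the set of $\lambda$ for which $A^\infty(\lambda,\veps)$ fails to be hyperbolic relative to $\beta$.

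For part (a) ($\beta = 0$), I would argue as follows. By Lemma \ref{splitting mu lam}, when $\oRe\lambda > 0$ we have $\oRe\mu_1 < 0 < \oRe\mu_j$ for $j=2,3,4$, so $A^\infty$ is hyperbolic and the index is zero; hence $d/dx - A$, and therefore $\lambda - \mathcal{L}$ by Lemma \ref{Fredequiv}, is Fredholm of index zero, so such $\lambda$ lies in $\sigma_{\mathrm{pt}}(\mathcal{L})$ or the resolvent set. By Proposition \ref{Prop_EvaEquiUnW}(a) and Proposition \ref{Pro_Ev_Ch}, there is no $(L^2)^2$-eigenvalue with $\oRe\lambda > 0$ (the only zero of $D$ on $\Omega^\veps$ is $\lambda=0$, which is not in the open right half-plane), so the open right half-plane is in the resolvent set; by the symmetry of Remark \ref{Rem_Sym} (spectrum invariant under $\lambda \mapsto -\lambda$) the open left half-plane is too. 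On the imaginary axis, \eqref{EigenSpliting1_1} gives $\oRe\mu_2 = \oRe\mu_3 = 0$, so $A^\infty$ is not hyperbolic, $d/dx - A$ is not Fredholm, hence neither is $\lambda - \mathcal{L}$, so $i\mathbb{R} \subset \sigma_{\mathrm{ess}}(\mathcal{L})$; combined with the previous sentence this forces $\sigma_{\mathrm{ess}}(\mathcal{L}) = i\mathbb{R}$ and the resolvent set to be its complement. I should also note that the dispersion curves $\{d_\pm(ik):k\in\mathbb{R}\}$ cover all of $i\mathbb{R}$ (since $d_\pm(ik)$ is purely imaginary and ranges over $\mathbb{R}$ as $k$ varies), which identifies the essential spectrum concretely.

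For part (b), with $\beta = c_0\veps^{1/2}$: Lemma \ref{splitting mu lam weight}(1) tells us the curves $\{d_\pm(ik-\beta):k\in\mathbb{R}\}$ lie in $\mathbb{C}\setminus\Omega^\veps$, and these curves are exactly the set where $A^\infty(\lambda,\veps)$ has an eigenvalue with $\oRe\mu_j + \beta = 0$ — i.e., the set where $d/dx - A$, hence $\lambda - \mathcal{L}$, fails to be Fredholm; so $\sigma_{\mathrm{ess}}(\mathcal{L})$ is contained in that union of curves, and a standard argument (each $\lambda$ on a curve $d_\pm(ik-\beta)$ genuinely fails Fredholmness) gives equality, proving (i). For (ii) and (iii): Lemma \ref{splitting mu lam weight}(2) gives $\oRe\mu_1 + \beta < 0 < \oRe\mu_j + \beta$ throughout $\Omega^\veps$, so $\lambda - \mathcal{L}$ is Fredholm of index zero on all of $\Omega^\veps$; by Proposition \ref{Prop_EvaEquiUnW}(b), $\lambda$ is an $(L_\beta^2)^2$-eigenvalue iff $D(\lambda,\veps)=0$, and by Proposition \ref{Pro_Ev_Ch} the only such $\lambda$ in $\Omega^\veps$ is $\lambda = 0$, with order exactly two, which by Proposition \ref{Ord2} means algebraic multiplicity exactly two; every other $\lambda\in\Omega^\veps$ then has $\lambda-\mathcal{L}$ Fredholm of index zero and injective, hence invertible, so $\Omega^\veps\setminus\{0\}$ is in the resolvent set and $\lambda=0$ is isolated in the spectrum.

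The main obstacle is the Fredholm-theory input: carefully justifying that $d/dx - A(x,\lambda,\veps)$ on $(L_\beta^2)^4$ is Fredholm of index zero exactly when $A^\infty(\lambda,\veps)$ has no eigenvalue on the line $\oRe\mu = -\beta$, and fails to be Fredholm otherwise. This is the content that feeds through Lemma \ref{Fredequiv}; the cleanest route is to invoke the standard exponential-dichotomy/Palmer-type theory (the operator is Fredholm iff the asymptotic systems at $\pm\infty$ admit exponential dichotomies with the chosen weight, and the index is the difference of the dimensions of the unstable bundles, which vanishes since $A^\infty$ is the common limit at both ends). Everything else is bookkeeping: matching the labelling of the $\mu_j$ from Lemmas \ref{splitting mu lam} and \ref{splitting mu lam weight} against the hyperbolicity condition, invoking the symmetry of Remark \ref{Rem_Sym} to handle $\oRe\lambda < 0$ in part (a), and citing Propositions \ref{Prop_EvaEquiUnW}, \ref{Ord2}, and \ref{Pro_Ev_Ch} for the point-spectrum count.
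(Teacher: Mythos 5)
Your proposal is correct and follows essentially the same route as the paper: Fredholm properties of $\lambda-\mathcal{L}$ are read off, via Lemma \ref{Fredequiv} and the Palmer/exponential-dichotomy characterization, from the hyperbolicity of $A^\infty(\lambda,\veps)$ relative to the weight (index zero since the limits at $\pm\infty$ coincide), the non-Fredholm set is identified with $i\mathbb{R}$ (resp. the curves $d_\pm(ik-\beta)$) by Lemmas \ref{splitting mu lam} and \ref{splitting mu lam weight}, and the point-spectrum count on $\Omega^\veps$ plus the left-half-plane symmetry of Remark \ref{Rem_Sym} come from Propositions \ref{Prop_EvaEquiUnW}, \ref{Ord2}, \ref{Pro_Ev_Ch}. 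This matches the paper's proof, including the conjugation $e^{\beta x}(d/dx - A)e^{-\beta x}$ viewpoint for the weighted case.
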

\begin{proof}[Proof of Proposition \ref{Clas_Spec}]
For notational simplicity, we let $\mathcal{A}(\lambda)=d/dx - A(x,\lambda,\veps)$. From the characterization of the Fredholm properties of the operator $\mathcal{A}(\lambda)$ in terms of the existence of exponential dichotomies for the ODE \eqref{ODE_LinEP} (\cite{Palmer, Palmer2}) and the roughness of exponential dichotomies (\cite{Coppel2}), the Fredholm properties of $\mc{A}(\lambda)$ are determined by the hyperbolicity of the asymptotic matrices $\lim_{x\to\pm\infty}A(x,\lambda,\veps)$ (see also \cite{Sand}). First of all, either $\mathcal{A}(\lambda)$ is not Fredholm or $\mathcal{A}(\lambda)$ is Fredholm with index zero since the asymptotic matrices are equal to $A^\infty(\lambda)$. Hence, by Lemma \ref{Fredequiv}, the essential spectrum of $\mathcal{L}$ consists of $\lambda$ for which $\lambda-\mathcal{L}$ is not Fredholm.

Now we prove the first assertion. By the results of \cite{Palmer,Palmer2,Coppel2} mentioned above,  $\lambda-\mathcal{L}$ is not Fredholm on $(L^2)^2$ if and only if $\mathrm{det}(\mu- A^\infty(\lambda)) = 0$ for some $\mu \in i\mathbb{R}$, which is precisely the case that $\lambda$ lies in the imaginary axis by \eqref{EigenSpliting1_1}. Since the Evans function $D(\lambda,\veps)$ does not vanish on $\oRe \lambda >0$ by Proposition \ref{Pro_Ev_Ch}, the kernel of $\lambda-\mathcal{L}$ is trivial on $\oRe \lambda >0$ by Proposition \ref{Prop_EvaEquiUnW}. By the symmetry of the solitary waves, the kernel of $\lambda-\mathcal{L}$ is also trivial on $\oRe \lambda<0$ (see Remark \ref{Rem_Sym}). Since $\lambda-\mathcal{L}$ is Fredholm with index zero on the regions $\oRe \lambda<0$ and $\oRe \lambda>0$, the union of these regions are precisely the resolvent set.    This finishes the proof of the first assertion.

To prove the second assertion, we observe that studying the spectrum of the operator $\mc{A}(\lambda)$ on $(L_\etba^2)^4$ is equivalent to studying the spectrum of the operator 
\[
e^{\beta x}\mc{A}(\lambda) e^{-\beta x} := \frac{d}{dx} - \beta - A(x,\lambda,\veps)
\]
on $(L^2)^4$. Hence, $\lambda-\mathcal{L}$ is not Fredholm on $(L_\etba^2)^2$ if and only if $\mathrm{det}(\mu-\beta - A^\infty(\lambda)) = 0$ for some $\mu \in i\mathbb{R}$.  The set of such $\lambda$ is precisely the union of two parametrized curves $\{d_\pm(ik-\beta): k \in \mathbb{R}\}\subset \mathbb{C}\setminus \Omega^\veps$ by Lemma \ref{splitting mu lam weight}. Combining Proposition \ref{Prop_EvaEquiUnW} and Proposition \ref{Pro_Ev_Ch}, we see that  $\lambda-\mathcal{L}$ has a non-trivial $(L_\beta^2)^2$-solution only at $\lambda=0$ on the region $\Omega^\veps$. Moreover, the algebraic multiplicity of $\lambda=0$ is two by Proposition \ref{Ord2} and Proposition \ref{Pro_Ev_Ch}. Since $\lambda-\mathcal{L}$ is Fredholm with index zero on the region $\Omega^\veps$, the proof of the second assertion is completed.
\end{proof}

\subsection{Derivatives of $D(\lambda)$ at $\lambda=0$}
 We will show that $D(\lambda,\veps)|_{\lambda=0}=\partial_\lambda D(\lambda,\veps)|_{\lambda=0}=0$ for $\veps\in(0,\veps_K)$. In other words,  $\lambda=0$ is a zero of the Evans function for the Euler-Poisson system with the order at least two. For  later purposes, we also calculate $\partial_\lambda^2 D(\lambda,\veps)|_{\lambda=0}$. We remark that these results are valid as long as the non-trivial smooth solitary wave $(n_c,u_c,\phi_c)$ exists. For notational simplicity, we suppress the $\veps$-dependence in this subsection.
 
When $D(\lambda_0)=0$, we have a simple derivative formula of the Evans function (Theorem 1.11,  \cite{PW}): 
\begin{equation}\label{EvansDerivativeFormu}
\begin{split}
\partial_\lambda D(\lambda)|_{\lambda=\lambda_0}
& = -\int_{-\infty}^\infty  (\mathbf{z}^-\partial_\lambda A \mathbf{y}^+)(x,\lambda) |_{\lambda=\lambda_0}\,dx
\end{split}
\end{equation}
in the sense of an improper integral. Furthermore, when $D(\lambda_0) = \partial_\lambda D(\lambda)|_{\lambda=\lambda_0} = 0$, we have
\begin{equation}\label{EvansDerivativeFormu2}
\partial_\lambda^2 D(\lambda)|_{\lambda=\lambda_0}
= -\int_{-\infty}^\infty \left( \partial_\lambda\mathbf{z}^-\partial_\lambda A \mathbf{y}^+ + \mathbf{z}^-\partial_\lambda^2 A \mathbf{y}^+ + \mathbf{z}^-\partial_\lambda A \partial_\lambda\mathbf{y}^+ \right)|_{\lambda=\lambda_0}\,dx.
\end{equation}

\begin{lemma}\label{EvansOrder}
For each $\veps\in(0,\veps_K)$, there hold
\begin{enumerate}[{(a)}]
\item\label{EvansOrder1} There exists a constant $\alpha_1< 0$ such that $\mathbf{y}^+(x,\lambda,\veps)|_{\lambda=0}=\alpha_1\mathbf{y}_c(x)$, where
\begin{equation*}
\mathbf{y}_c(x):=(\partial_x n_c, \partial_x u_c, \partial_x \phi_c, \partial_x^2\phi_c)^T.
\end{equation*}
\item\label{EvansOrder2} There exists a constant $\alpha_2 > 0$ such that $\mathbf{z}^-(x,\lambda,\veps)|_{\lambda=0}=\alpha_2\mathbf{z}_c(x)$, where
\begin{equation*}
\mathbf{z}_c(x) := \big((u_c,n_c)L,\partial_x^2\phi_c + n_c,-\partial_x\phi_c\big).
\end{equation*}
\end{enumerate}
\end{lemma}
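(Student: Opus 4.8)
The plan is to identify the zero-$\lambda$ solutions $\mathbf{y}^+$ and $\mathbf{z}^-$ explicitly, matching them against the known Jordan-chain structure from \eqref{EigenFunc}. Recall that $\mathbf{y}^+(x,0,\veps)$ is, up to scalar multiple, the unique solution of $\frac{d}{dx}\mathbf{y}=A(x,0,\veps)\mathbf{y}$ that decays like $e^{\mu_1 x}$ as $x\to+\infty$; by \eqref{EigenSpliting1} this is the only (up to scalar) solution in $(L^2)^4$. Now observe that, by the isomorphism of Section \ref{SS_RefEP} between $\ker(\lambda-\mathcal L)$ and $\ker(d/dx-A)$, and by the first identity of \eqref{EigenFunc1}, namely $\mathcal{L}\partial_x Y_c=0$, the vector $\mathbf{y}_c:=(\partial_x n_c,\partial_x u_c,\partial_x\phi_c,\partial_x^2\phi_c)^T$ is a solution of the first-order system at $\lambda=0$ (one needs to check that $(\dot\phi,\dot\psi)=(\partial_x\phi_c,\partial_x^2\phi_c)$ is exactly the pair produced by the Poisson equation \eqref{Poisson_ODE} with $\dot n=\partial_x n_c$, which follows by differentiating the traveling-wave Poisson equation \eqref{EP2Travel2Poi} in $x$). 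Since $\partial_x n_c,\partial_x u_c,\partial_x\phi_c$ and $\partial_x^2\phi_c$ all decay exponentially as $|x|\to\infty$ by \eqref{pointestimateinX}, $\mathbf{y}_c$ lies in $(L^2)^4$, hence must be a scalar multiple of $\mathbf{y}^+(x,0,\veps)$: this gives $\mathbf{y}^+(x,0,\veps)=\alpha_1\mathbf{y}_c(x)$ for some $\alpha_1\neq 0$.

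Analogously, for part (b), $\mathbf{z}^-(x,0,\veps)$ is the unique (up to scalar) solution of the transposed system $\frac{d\mathbf{z}}{dx}=-\mathbf{z}A(x,0,\veps)$ decaying like $e^{-\mu_1 x}$ as $x\to-\infty$, equivalently the unique such solution in $(L^2)^4$ at $x\to-\infty$. I would use the isomorphism between $\ker(\lambda-\mathcal{L}^\ast)\subset (L_{-\beta}^2)^2$ and $\ker(d/dx+A)\subset(L_{-\beta}^2)^4$ recorded after \eqref{Adj1}, together with the identity $\mathcal{L}^\ast Z_c=0$ from \eqref{EigenFunc1}, where $Z_c=(u_c,n_c)$. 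The row vector $\mathbf{z}_c=\big((u_c,n_c)L,\ \partial_x^2\phi_c+n_c,\ -\partial_x\phi_c\big)$ should be exactly the image of $Z_c$ under that isomorphism: one checks that $(\tilde\phi,\tilde\psi):=(\partial_x^2\phi_c+n_c,-\partial_x\phi_c)$ solves \eqref{Adj1} with $Z=Z_c$, i.e.\ $\tilde\psi=-\partial_x\phi_c$ and $(-\partial_x^2+e^{\phi_c})(-\partial_x\phi_c)=-\partial_x\big(Z_c(0,1)^T\big)=-\partial_x n_c$, which is exactly the $x$-derivative of \eqref{EP2Travel2Poi}; and $\tilde\phi=-\partial_x\tilde\psi+Z_c(0,1)^T=\partial_x^2\phi_c+n_c$, as claimed. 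Since $u_c,n_c,\partial_x\phi_c,\partial_x^2\phi_c$ decay exponentially as $|x|\to\infty$ by \eqref{pointestimateinX} and $L$ is bounded, $\mathbf{z}_c\in(L^2)^4$; in particular it decays as $x\to-\infty$, so it must be a scalar multiple of $\mathbf{z}^-(x,0,\veps)$, giving $\mathbf{z}^-(x,0,\veps)=\alpha_2\mathbf{z}_c(x)$ with $\alpha_2\neq 0$.

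It remains to pin down the \emph{signs} of $\alpha_1$ and $\alpha_2$. For $\alpha_1$: the normalization \eqref{ODEsol1} says $e^{-\mu_1 x}\mathbf{y}^+\to\mathbf{v}_1$ as $x\to+\infty$, where $\mathbf{v}_1$ is the right eigenvector of $A^\infty$ for the decaying eigenvalue $\mu_1<0$; at $\lambda=0$ the eigenvalues $\mu_j$ and eigenvectors are computed from \eqref{Charact1}--\eqref{LRVec_A}, and $\mu_1$ is the negative root of $d_-$ (the branch with $\oRe\mu_1<0$). On the other side, as $x\to+\infty$ the solitary wave tails are governed by the linearization at the rest state, and by the monotonicity $\partial_x n_c<0$ for $x>0$ (equivalently $\partial_x n_c>0$ for $x<0$, from Section \ref{S2-1}) together with the sign of the corresponding component of $\mathbf{v}_1$, one reads off the sign of $\alpha_1$. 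Concretely, I would compare the first component: $e^{-\mu_1 x}\partial_x n_c(x)\to$ (const)$\cdot \alpha_1^{-1}$ and the first component of $\mathbf{v}_1$ is normalized to $1$, while $\partial_x n_c<0$ for large $x>0$ and $e^{-\mu_1 x}>0$, forcing $\alpha_1<0$. For $\alpha_2$: the normalization \eqref{TranspoODE_sol_DefEvans} gives $e^{\mu_1 x}\mathbf{z}^-\to\mathbf{w}_1$ as $x\to-\infty$, and one compares with the behavior of $\mathbf{z}_c$ as $x\to-\infty$; using $u_c>0$ and the explicit $\mathbf{w}_1=\boldsymbol\pi_1/(\boldsymbol\pi_1\mathbf{v}_1)$ from \eqref{LRVec_A}--\eqref{eigenvec_A}, one checks the relevant product is positive, giving $\alpha_2>0$. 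I expect the sign determination to be the main obstacle: the structural (scalar-multiple) part is a direct consequence of the kernel identities \eqref{EigenFunc} and the uniqueness of decaying solutions, but getting the signs right requires carefully tracking the asymptotic eigenvector normalizations at $\lambda=0$ against the established monotonicity and positivity properties of the solitary wave profile, and in particular making sure the $\sqrt{1/(1-\mu^2)+K}$ branch in \eqref{Charact1} is evaluated consistently with the labeling in Lemma \ref{splitting mu lam}.
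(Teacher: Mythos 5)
Your proposal is correct and follows essentially the same route as the paper's proof: check via \eqref{EigenFunc1} and the Poisson reduction that $\mathbf{y}_c$ and $\mathbf{z}_c$ solve \eqref{ODE_LinEP} and \eqref{TranspoODE_DefEvans} at $\lambda=0$, use their exponential decay together with the characterization of decaying solutions (Proposition \ref{Prop_ResultPW}, noting $\oRe\mu_1<0=\mu_\ast$ at $\lambda=0$) to get $\mathbf{y}^+=\alpha_1\mathbf{y}_c$ and $\mathbf{z}^-=\alpha_2\mathbf{z}_c$, and then read off the signs from the normalizations \eqref{ODEsol1} and \eqref{TranspoODE_sol_DefEvans}. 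The only (minor) difference is in fixing the sign of $\alpha_2$: the paper compares third components, using $\boldsymbol{\pi}_1\mathbf{v}_1|_{\lambda=0}=c^2-1-K+\mu_1^2(c^2-K)>0$ and $\partial_x^2\phi_c+n_c>0$ in the left tail, whereas your first-component suggestion also works but needs $J=(c-u_c)^2-K>0$ (not merely $u_c>0$) to see that both the first component of $(u_c,n_c)L$ and that of $\mathbf{w}_1$ are negative.
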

\begin{proof}
Recalling the reformulation of the eigenvalue problems to the associated ODE systems (see subsection \ref{SS_RefEP}) and that $(\partial_x n_c,\partial_x u_c)^T$ and $(u_c,n_c)$ are the eigenfunctions of $\mathcal{L}$ and $\mathcal{L}^\ast$ associated with $\lambda=0$, respectively (see \eqref{EigenFunc1}), it is easy to check that $\mathbf{y}_c$ and $\mathbf{z}_c$  satisfy the system \eqref{ODE_LinEP} and the transposed system \eqref{TranspoODE_DefEvans} with $\lambda=0$, respectively.  

 Since $\mathbf{y}_c$ and $\mathbf{z}_c$ exponentially decay to zero as $|x| \to \infty$, we have that for sufficiently small $\theta>0$,
\[
\mathbf{y}_c(x) = o(e^{-\theta x}) \textrm{ as } x \to +\infty \quad \textrm{and} \quad  \mathbf{z}_c(x) = o(e^{\theta x})\textrm{ as } x \to -\infty.
\]
By Proposition \ref{Prop_ResultPW}.\eqref{Dy+1} and \eqref{Dy+2}, we have
\begin{equation}\label{ProOrd3}
\mathbf{y}^+|_{\lambda=0}= \alpha_1\mathbf{y}_c \quad \textrm{and} \quad \mathbf{z}^-|_{\lambda=0}=\alpha_2\mathbf{z}_c
\end{equation}
for some constants $\alpha_1, \alpha_2 \neq 0$ since we have $\oRe \mu_1 < 0=\mu_\ast$ when $\lambda=0$ by \eqref{EigenSpliting1_1}.

To complete the proof, it remains to check the signs of $\alpha_1$ and $\alpha_2$. From the first equality of \eqref{ProOrd3}, we have that at $\lambda=0$,
\[
\lim_{x \to +\infty}e^{-\oRe \mu_1 x}\alpha_1\partial_x n_c = 1,
\]
where we have used \eqref{ODEsol1} and \eqref{LRVec_A}, and the first component of $\mathbf{y}_c$ is considered. This implies that $\alpha_1 <0$ since $\partial_x n_c(x) < 0$ for $x>0$. In a similar fashion, from the second equality of \eqref{ProOrd3},  we obtain  that at $\lambda=0$,
\[
\begin{split}
\lim_{x \to -\infty}e^{\oRe \mu_1 x}\alpha_2(\partial_x^2\phi_c + n_c) 
& = \left.\frac{1}{\boldsymbol{\pi}_1\mathbf{v}_1}\right|_{\lambda=0}  = c^2-1-K + \mu_1^2(c^2-K) > 0,
\end{split}
\]
where \eqref{TranspoODE_sol_DefEvans}, \eqref{LRVec_A} and \eqref{eigenvec_A} are used, and  the third component of $\mathbf{z}_c$ is considered in the first equality.  It is straightforward to check the second equality. This together with the fact that $\partial_x^2\phi_c(x) + n_c(x) >0$ for all $x<0$ with  sufficiently large $|x|$ implies that $\alpha_2>0$. We are done.
\end{proof}

Lemma \ref{EvansOrder} together with the derivative formulas \eqref{EvansDerivativeFormu} and \eqref{EvansDerivativeFormu2} yields the following proposition.
\begin{proposition}\label{EvansZero}
For each $\veps\in(0,\veps_K)$, there hold
\begin{enumerate}[{(a)}]
\item\label{EvansZero1st} $ D(\lambda,\veps)|_{\lambda=0} = \partial_\lambda D(\lambda,\veps)|_{\lambda=0} = 0,$
\item \label{EvansZero2nd}$\mathrm{sign}\,\partial_\lambda^2 D(\lambda,\veps)|_{\lambda=0} = \mathrm{sign}\, \partial_c \int_{-\infty}^\infty (n_cu_c)(x)\,dx.$
\end{enumerate}
\end{proposition}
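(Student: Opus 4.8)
The plan is to combine Lemma~\ref{EvansOrder} with the Evans-function derivative formulas \eqref{EvansDerivativeFormu} and \eqref{EvansDerivativeFormu2}, specialized to $\lambda_0=0$. The only structural facts used are that $A=A_1+\lambda A_2$ is affine in $\lambda$ (so $\partial_\lambda A=A_2$ and $\partial_\lambda^2 A=0$), that $A_2=\operatorname{diag}(-L^{-1},\mathbf{0}_2)$ is supported in the upper-left block (hence only the first two components of $\mathbf{y}^+$ and $\mathbf{z}^-$ ever enter the contractions $\mathbf{z}^- A_2\mathbf{y}^+$), and the Jordan-chain identities \eqref{EigenFunc2}. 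Since $D(\lambda,\veps)=\mathbf{z}^-\mathbf{y}^+$ is independent of $x$, evaluating at $\lambda=0$ with $\mathbf{y}^+|_{0}=\alpha_1\mathbf{y}_c$, $\mathbf{z}^-|_{0}=\alpha_2\mathbf{z}_c$ from Lemma~\ref{EvansOrder} and letting $x\to+\infty$ (where $\mathbf{y}_c,\mathbf{z}_c$ decay exponentially by Theorem~\ref{MainThm4}) gives $D(0,\veps)=0$. For $\partial_\lambda D(0,\veps)$ apply \eqref{EvansDerivativeFormu} at $\lambda_0=0$: substituting $\mathbf{y}^+|_0,\mathbf{z}^-|_0$ and contracting the upper-left block gives $\mathbf{z}_c A_2\mathbf{y}_c=-(u_c,n_c)\cdot(\partial_x n_c,\partial_x u_c)^T=-\partial_x(n_c u_c)$, whose integral over $\mathbb{R}$ vanishes because $n_c u_c\to0$ at $\pm\infty$; hence $\partial_\lambda D(0,\veps)=0$ and $\lambda=0$ is a zero of order at least two. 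This proves part (a).

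For part (b) I apply \eqref{EvansDerivativeFormu2} at $\lambda_0=0$; the $\partial_\lambda^2 A$ term drops. The heart of the argument is to express $\partial_\lambda\mathbf{y}^+|_0$ and $\partial_\lambda\mathbf{z}^-|_0$ (more precisely, their first two components) through $\partial_c$ of the profile. Differentiating $(\lambda-\mathcal{L})Y^+=0$ in $\lambda$ (cf.\ \eqref{lambdaDeri_LS}) and using $Y^+|_0=\alpha_1\partial_x Y_c$ gives $\mathcal{L}(\partial_\lambda Y^+|_0)=\alpha_1\partial_x Y_c$; comparing with $\mathcal{L}\partial_c Y_c=-\partial_x Y_c$ from \eqref{EigenFunc2} forces $\partial_\lambda Y^+|_0=-\alpha_1\partial_c Y_c+\kappa_1\partial_x Y_c$ for some constant $\kappa_1$. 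Running the same computation on the adjoint side for $W^-:=Z^-L^{-1}$, which solves $(\lambda-\mathcal{L}^\ast)W^-=0$ by \eqref{LargeAmpl1}, gives $\mathcal{L}^\ast(\partial_\lambda W^-|_0)=\alpha_2 Z_c$, and the \emph{other} Jordan-chain relation $\mathcal{L}^\ast\big(\int_{-\infty}^x\partial_c Z_c\,ds\big)=Z_c$, which carries the opposite sign, gives $\partial_\lambda W^-|_0=\alpha_2\int_{-\infty}^x\partial_c Z_c\,ds+\kappa_2 Z_c$.

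Feeding all of this into \eqref{EvansDerivativeFormu2}, the $\kappa_j$-contributions again reduce to $\int_{\mathbb{R}}\partial_x(n_c u_c)\,dx=0$; the $\partial_c Y_c$-piece contributes $-\alpha_1\alpha_2\,\partial_c\int_{\mathbb{R}}n_c u_c\,dx$ via $(u_c,n_c)\cdot(\partial_c n_c,\partial_c u_c)^T=\partial_c(n_c u_c)$; and the $\int_{-\infty}^x\partial_c Z_c\,ds$-piece, after one integration by parts in which the boundary terms vanish because $n_c,u_c\to0$ at $\pm\infty$, contributes \emph{another} $-\alpha_1\alpha_2\,\partial_c\int_{\mathbb{R}}n_c u_c\,dx$ rather than cancelling the first. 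Collecting constants yields $\partial_\lambda^2 D(0,\veps)=-2\alpha_1\alpha_2\,\partial_c\int_{\mathbb{R}}(n_c u_c)(x)\,dx$, and since $\alpha_1<0<\alpha_2$ by Lemma~\ref{EvansOrder}, the stated sign identity follows.

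I expect the main obstacle to be the bookkeeping in part (b): pinning down the two ``constant-of-the-kernel'' relations with the correct signs — in particular the asymmetry between $\mathcal{L}\partial_c Y_c=-\partial_x Y_c$ and $\mathcal{L}^\ast\int_{-\infty}^x\partial_c Z_c\,ds=+Z_c$, which is exactly what makes the two $\partial_c\int n_c u_c$ contributions add instead of cancel — and justifying the computation as a genuine (absolutely convergent) improper integral despite the fact that $\int_{-\infty}^x\partial_c Z_c\,ds$ merely stays bounded as $x\to+\infty$ (this is harmless because that factor is always paired against $A_2\mathbf{y}_c$, which decays exponentially by Theorem~\ref{MainThm4}). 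The remaining checks — that $\ker\mathcal{L}$ and $\ker\mathcal{L}^\ast$ on the relevant one-sided-decaying subspaces are spanned by $\partial_x Y_c$ and $Z_c$, so that the constants $\kappa_j$ are well defined — are already essentially contained in the proofs of Proposition~\ref{Prop_EvaEquiUnW} and Proposition~\ref{Ord2}.
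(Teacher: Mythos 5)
Your proposal is correct and takes essentially the same route as the paper's own proof: Lemma~\ref{EvansOrder} plus the derivative formulas \eqref{EvansDerivativeFormu}--\eqref{EvansDerivativeFormu2}, the Jordan-chain identities \eqref{EigenFunc2}, and the one-sided kernel uniqueness (Proposition~\ref{Prop_ResultPW}) to pin down $\partial_\lambda Y^+|_{\lambda=0}$ and $\partial_\lambda Z^-L^{-1}|_{\lambda=0}$ up to multiples of $\partial_x Y_c$ and $Z_c$, yielding $\partial_\lambda^2 D(0,\veps)=-2\alpha_1\alpha_2\,\partial_c\int_{-\infty}^{\infty}n_cu_c\,dx$ with $\alpha_1<0<\alpha_2$. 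The only cosmetic difference is that you get $D(0,\veps)=0$ by sending $x\to+\infty$ in the $x$-independent product $\mathbf{z}^-\mathbf{y}^+$, while the paper invokes the characterization \eqref{CharacEvansD} as $x\to-\infty$.
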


\begin{proof} 
We first show the first assertion. From Lemma \ref{EvansOrder}.\eqref{EvansOrder1}, we see that $\mathbf{y}^+|_{\lambda=0}$ decays to zero as $x \to -\infty$. Hence it follows that $D(\lambda)|_{\lambda=0}=0$ from  the characterization of the Evans function \eqref{CharacEvansD} and that $\oRe \mu_1 < 0$ at $\lambda=0$. 

We let $Z^-$ and $Y^+$ be the first two components of $\mathbf{z}^-$ and $\mathbf{y}^+$, respectively.  From the form of the coefficient matrix $A$ (see \eqref{ODE_LinEP1}), the derivative formula \eqref{EvansDerivativeFormu} is reduced to 
 \[
\partial_\lambda D(\lambda)|_{\lambda=0} = \int_{-\infty}^\infty (Z^- L^{-1} Y^+)|_{\lambda=0}\,dx.
 \]
 Using Lemma \ref{EvansOrder} and that $n_cu_c$ tends to $0$ as $|x| \to \infty$, we obtain that 
\[
\begin{split}
\partial_\lambda D(\lambda)|_{\lambda=0}
&  = \alpha_1\alpha_2\int_{-\infty}^\infty \big((u_c,n_c)L\big)L^{-1}
(\partial_xn_c, \partial_xu_c)^T\,dx \\
& = \alpha_1\alpha_2\int_{-\infty}^\infty \partial_x(n_cu_c)\,dx = 0.
\end{split}
\]

Now we prove the second assertion. We note that the derivative formula \eqref{EvansDerivativeFormu2} is reduced to 
\begin{equation}\label{EvSecDeri}
\begin{split}
\partial_\lambda^2D(\lambda)|_{\lambda=0} 
& = \int_{-\infty}^\infty \left(\partial_\lambda Z^-L^{-1}Y^+  + Z^-L^{-1}\partial_\lambda
Y^+\right)|_{\lambda=0}\,dx.
\end{split}
\end{equation}
By taking $\partial_\lambda$ of \eqref{LargeAmpl1},   we obtain that at $\lambda=0$,
\begin{subequations}\label{EigenFun3}
\begin{align}[left = \empheqlbrace\,]
& \mathcal{L}\partial_\lambda Y^+
= Y^+
= \alpha_1 \partial_xY_c
= -\alpha_1\mathcal{L}\partial_c Y_c, \\
& \mathcal{L}^*\left(\partial_\lambda Z^-L^{-1} \right) = Z^-L^{-1}= \alpha_2Z_c
=
\alpha_2
\mathcal{L}^\ast \left( \int_{-\infty}^x \partial_c Z_c\,ds \right),
\end{align}
\end{subequations}  
where we have used Lemma \ref{EvansOrder} in the second equality, and \eqref{EigenFunc2} in the last equality (see \eqref{EigenFun4} for the notations $Y_c$ and $Z_c$). Since $\mathcal{L}$ and $\mathcal{L}^\ast$ are linear, we have from \eqref{EigenFun3} that at $\lambda=0$,
\begin{subequations}\label{EigenFun31}
\begin{align}[left = \empheqlbrace\,]
& \mathcal{L}(\partial_\lambda Y^+ + \alpha_1 \partial_c Y_c) = 0, \\
& \mathcal{L}^*\left(\partial_\lambda Z^-L^{-1} - \alpha_2 \int_{-\infty}^x \partial_c Z_c\,ds  \right) = 0.
\end{align}
\end{subequations}

From \eqref{Prop_DeYZ1},  $\partial_\lambda Y^+|_{\lambda=0}$ and $\partial_\lambda Z^-|_{\lambda=0}$ exponentially decay to zero as $x\to+\infty$ and $x \to -\infty$, respectively. We also recall that $\partial_cY_c$ and $\textstyle{\int^x \partial_cZ_c\,ds}$ exponentially decay to zero as $x\to+\infty$ and $x \to -\infty$, respectively.  Hence, \eqref{EigenFun31} and Proposition \ref{Prop_ResultPW} imply that for $\lambda=0$,
\begin{subequations}
\begin{align}[left = \empheqlbrace\,]
& \partial_\lambda Y^+  + \alpha_1 \partial_cY_c = \alpha_1' Y^+
 = 
\alpha_1\alpha_1' \partial_x Y_c, \label{EvansOrder3}\\
& \partial_\lambda Z^- - \alpha_2 \left(\int_{-\infty}^x\partial_c Z_c \,ds\right) \,L  = \alpha_2' Z^- = \alpha_2\alpha_2' Z_c L\label{EvansOrder4}
\end{align}
\end{subequations}  
for some constants $\alpha_1',\alpha_2'\neq 0$, where we have used Lemma \ref{EvansOrder} in the second equality.

Now applying Lemma \ref{EvansOrder}.\eqref{EvansOrder2}, \eqref{EvansOrder3} and by integration by parts, we have that for  $\lambda=0$,
\begin{equation}\label{EvansOrder5}
\begin{split}
\int_{-\infty}^\infty Z^-L^{-1}\partial_\lambda Y^+ \,dx
& = \alpha_2\int_{-\infty}^\infty Z_c \partial_\lambda Y^+ \,dx \\
& = \alpha_1\alpha_2\int_{-\infty}^\infty Z_c  \left( - \partial_c Y_c +\alpha_1' \partial_x Y_c,\right) \,dx \\
& = -\alpha_1\alpha_2\int_{-\infty}^\infty \partial_c(n_cu_c)\,dx.
\end{split}
\end{equation}
Applying Lemma \ref{EvansOrder}.\eqref{EvansOrder1} and \eqref{EvansOrder4} in a similar fashion, we obtain that for $\lambda=0$,
\begin{equation}\label{EvansOrder6}
\begin{split}
\int_{-\infty}^\infty \partial_\lambda Z^- L^{-1} Y^+  \,dx 
& = \alpha_1\int_{-\infty}^\infty \partial_\lambda Z^- L^{-1}\partial_x Y_c \,dx \\
& = \alpha_1\alpha_2 \int_{-\infty}^\infty \left( \int_{-\infty}^x \partial_c Z_c(s)\,ds 
-\alpha_2' Z_c \right) \partial_x Y_c \,dx \\
& = -\alpha_1\alpha_2 \int_{-\infty}^\infty \partial_c (n_cu_c)\,dx.
\end{split}
\end{equation}
Combining \eqref{EvansOrder5}, \eqref{EvansOrder6}, and \eqref{EvSecDeri}, we finally arrive at
\[
\partial_\lambda^2D(\lambda)|_{\lambda=0} = -2\alpha_1\alpha_2\partial_c\int_{-\infty}^\infty n_cu_c\,dx.
\]
Recalling that $\alpha_1<0$ and $\alpha_2>0$ (see Lemma \ref{EvansOrder}), we finish the proof of the second assertion.
\end{proof}

\subsection{The Evans Function for the KdV Equation}
In the KdV scailng, we formally obtain (by letting  $\dot{n}_\ast =p_2$ in  \eqref{FormDerivEigEPtoKdV4}) the eigenvalue problem of the KdV equation
\begin{equation}\label{KdVEigen}
\Lambda p_2 - \partial_\xi p_2 + \mathsf{V}\partial_\xi(\Psi_{\textrm{KdV}}p_2) + (2\mathsf{V})^{-1}\partial_\xi^3p_2=0.
\end{equation}
By  the change of variables 
\begin{equation}\label{KdVchangevari}
\Lambda =(2\mathsf{V})^{1/2}\tilde{\Lambda}, \quad \tilde{\xi}=(2\mathsf{V})^{1/2}\xi, \quad \tilde{p}(\tilde{\xi},\tilde{\Lambda})=p_2(\xi,\Lambda),
\end{equation}
\eqref{KdVEigen} becomes (see \eqref{solutionKdV2} for the form of $\Psi_{\textrm{KdV}}$)
\begin{equation}\label{KdVEigen2}
\tilde{\Lambda} \tilde{p} - \partial_{\tilde{\xi}}\tilde{p} + \partial_{\tilde{\xi}}(\tilde{\Psi}\tilde{p}) + \partial_{\tilde{\xi}}^3 \tilde{p}=0, \quad \text{where} \; \tilde{\Psi}(\tilde{\xi})  := 3\,\text{sech}^2(\tilde{\xi}/2) = \mathsf{V}\Psi_{\textrm{KdV}}(\xi).
\end{equation}

The Evans function for \eqref{KdVEigen2} is studied in \cite{PW,PW1}. We briefly summarize the results, and apply those directly to the equation  \eqref{KdVEigen}. The characteristic polynomial associated with \eqref{KdVEigen2} is 
\begin{equation}\label{dispersKdV2}
\tilde{d}_{KdV}(\tilde{\kappa}) = \tilde{d}_{KdV}(\tilde{\kappa},\tilde{\Lambda}) := \tilde{\Lambda} - \tilde{\kappa} + \tilde{\kappa}^3.
\end{equation}
For $\tilde{\Lambda} \in \tilde{\Omega}_{KdV}:=\mathbb{C}\setminus (-\infty,-2/(3\sqrt{3})]$, the zeros $\tilde{\kappa}_j$ of $\tilde{d}_{KdV}(\tilde{\kappa})$ can be labelled so that 
\begin{equation}\label{kappa bar splitting}
\oRe \tilde{\kappa}_1 <  \oRe \tilde{\kappa}_j \quad (j=2,3),
\end{equation}
and in particular, $\oRe \tilde{\kappa}_1<0$ (see Proposition 2.3 of \cite{PW1}). The Evans function $\tilde{D}_{KdV}(\tilde{\Lambda})$ for  \eqref{KdVEigen2} is defined on $\tilde{\Omega}_{KdV}$, and $\tilde{D}_{KdV}(\tilde{\Lambda})$ is characterized by the property 
\begin{equation}\label{EvansKdV-}
\lim_{\tilde{\xi} \to -\infty}e^{-\tilde{\kappa}_1\tilde{\xi}}\tilde{p}^+(\tilde{\xi},\tilde{\Lambda}) =   \tilde{D}_{KdV}(\tilde{\Lambda}) ,
\end{equation}
where $\tilde{p}^+$ is a unique solution to \eqref{KdVEigen2} satisfying
\begin{equation}\label{EvansKdV+}
\lim_{\tilde{\xi} \to +\infty}e^{-\tilde{\kappa}_1\tilde{\xi}}\tilde{p}^+(\tilde{\xi},\tilde{\Lambda}) = 1.
\end{equation}
In particular, $\tilde{D}_{KdV}(\tilde{\Lambda})$ is explicitly given by (Theorem 3.1 of \cite{PW1}) 
\begin{equation}\label{EvansKdVExplicit}
\tilde{D}_{KdV}(\tilde{\Lambda}) = \left(\frac{\tilde{\kappa}_1+1}{\tilde{\kappa}_1-1}\right)^2.
\end{equation}
From \eqref{dispersKdV2} and \eqref{EvansKdVExplicit}, one can check that $\tilde{D}_{KdV}(\tilde{\Lambda})$ vanishes only at $\tilde{\Lambda}=0$ and that the order of $\tilde{\Lambda}=0$ is two as a zero of $\tilde{D}_{KdV}(\tilde{\Lambda})$. Moreover, $\tilde{D}_{KdV}(\tilde{\Lambda}) \to 1$ as $|\tilde{\Lambda}| \to \infty$.

Now we apply the above results to construct the Evans function for the KdV equation \eqref{KdVEigen}.
The characteristic polynomial associated with \eqref{KdVEigen} is 
\begin{equation}\label{dispersKdV}
d_{KdV}(\kappa) = d_{KdV}(\kappa,\Lambda):=2\mathsf{V}(\Lambda - \kappa + (2\mathsf{V})^{-1}\kappa^3).
\end{equation}
We note that the zeros $\kappa_j$ of $d_{KdV}(\kappa)$ are related to $\tilde{\kappa}_j$ as  
\begin{equation}\label{kappa barkap rel}
\kappa_j = (2\mathsf{V})^{1/2}\tilde{\kappa}_j.
\end{equation}
Hence, from the relations \eqref{KdVchangevari} and \eqref{kappa barkap rel}, we have for 
\begin{equation}
\Lambda \in \Omega_{KdV}:=\mathbb{C}\setminus \left( -\infty, - (2\sqrt{2\mathsf{V}})/(3\sqrt{3}) \right],
\end{equation}
  $\kappa_j$ can be labelled  so that 
\begin{equation}\label{kappa splitting}
\oRe \kappa_1 <  \oRe \kappa_j \quad (j=2,3), \quad \oRe \kappa_1 <0.
\end{equation}

\begin{proposition}\label{KdVSummary} 
\begin{enumerate}
\item The Evans function $D_{KdV}(\Lambda)$ for the KdV equation \eqref{KdVEigen} is defined on the domain $\Omega_{KdV}$ and satisfies 
\begin{equation}\label{EvansKdV2-}
\lim_{\xi \to -\infty}e^{-\kappa_1 \xi}p_2^+(\xi,\Lambda) = D_{KdV}(\Lambda),
\end{equation}
where $\kappa_1$ is a unique zero of $d_{KdV}(\kappa)$ satisfying \eqref{kappa splitting}, and $p_2^+$ is a unique solution to \eqref{KdVEigen} satisfying
\begin{equation}\label{EvansKdV2+}
\lim_{\xi \to +\infty}e^{-\kappa_1 \xi}p_2^+(\xi,\Lambda) = 1.
\end{equation}
\item $D_{KdV}(\Lambda) = \left(\frac{\kappa_1+\sqrt{2\mathsf{V}}}{\kappa_1-\sqrt{2\mathsf{V}}}\right)^2$. 
\item $\Lambda=0$ is the only zero of $D_{KdV}(\Lambda)$, and its order is two.
\item $D_{KdV}(\Lambda) \to 1$ as $|\Lambda| \to \infty$ with $\Lambda \in \Omega_{KdV}$.
\end{enumerate}
\end{proposition}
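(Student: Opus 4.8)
The plan is to deduce Proposition \ref{KdVSummary} directly from the results of \cite{PW,PW1} for the normalized equation \eqref{KdVEigen2}, transported through the linear rescaling \eqref{KdVchangevari}, \eqref{kappa barkap rel}. First I would record the effect of the change of variables on the ODE itself: if $\tilde p(\tilde\xi,\tilde\Lambda)$ solves \eqref{KdVEigen2}, then $p_2(\xi,\Lambda):=\tilde p\big((2\mathsf{V})^{1/2}\xi,\,(2\mathsf{V})^{-1/2}\Lambda\big)$ solves \eqref{KdVEigen}. This is a routine chain-rule verification: writing $\partial_\xi=(2\mathsf{V})^{1/2}\partial_{\tilde\xi}$, using $\Lambda=(2\mathsf{V})^{1/2}\tilde\Lambda$ and $\tilde\Psi(\tilde\xi)=\mathsf{V}\Psi_{\mathrm{KdV}}(\xi)$, every term of \eqref{KdVEigen} acquires a common factor $(2\mathsf{V})^{1/2}$ and one recovers \eqref{KdVEigen2}; this mirrors, at the level of symbols, the identity $d_{KdV}(\kappa)=(2\mathsf{V})^{3/2}\,\tilde d_{KdV}\big((2\mathsf{V})^{-1/2}\kappa\big)$ relating \eqref{dispersKdV} and \eqref{dispersKdV2}. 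In particular the roots satisfy $\kappa_j=(2\mathsf{V})^{1/2}\tilde\kappa_j$, so the splitting \eqref{kappa bar splitting} transfers to \eqref{kappa splitting} since $\oRe$ of each root is merely multiplied by the positive constant $(2\mathsf{V})^{1/2}$, and $\tilde\Lambda\in\tilde\Omega_{KdV}$ holds precisely when $\Lambda=(2\mathsf{V})^{1/2}\tilde\Lambda\in\Omega_{KdV}$.

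Next I would define the Evans function for \eqref{KdVEigen} by pullback. Let $\tilde p^+$ be the unique solution of \eqref{KdVEigen2} normalized by \eqref{EvansKdV+}, and set $p_2^+(\xi,\Lambda):=\tilde p^+\big((2\mathsf{V})^{1/2}\xi,\,(2\mathsf{V})^{-1/2}\Lambda\big)$. Since $\kappa_1\xi=\tilde\kappa_1\tilde\xi$, the normalization \eqref{EvansKdV+} is exactly \eqref{EvansKdV2+}, and uniqueness of $p_2^+$ follows from that of $\tilde p^+$. Defining $D_{KdV}(\Lambda):=\tilde D_{KdV}\big((2\mathsf{V})^{-1/2}\Lambda\big)$, which is analytic on $\Omega_{KdV}$ because $\Lambda\mapsto(2\mathsf{V})^{-1/2}\Lambda$ is a linear biholomorphism of $\mathbb{C}$ carrying $\Omega_{KdV}$ onto $\tilde\Omega_{KdV}$, the characterization \eqref{EvansKdV-} turns into \eqref{EvansKdV2-}. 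This gives item (1). Item (2) follows from the explicit formula \eqref{EvansKdVExplicit} together with $\tilde\kappa_1=(2\mathsf{V})^{-1/2}\kappa_1$, since then $(\tilde\kappa_1+1)/(\tilde\kappa_1-1)=(\kappa_1+\sqrt{2\mathsf{V}})/(\kappa_1-\sqrt{2\mathsf{V}})$.

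Items (3) and (4) are then immediate: because the biholomorphism above fixes the origin, it preserves the location and order of zeros and the behavior as $|\Lambda|\to\infty$, so the corresponding assertions for $\tilde D_{KdV}$ carry over verbatim. Alternatively, for item (3) I would argue directly from item (2): at $\Lambda=0$ the polynomial $d_{KdV}$ reduces to $\kappa(\kappa^2-2\mathsf{V})$, whose root of least real part is $\kappa_1=-\sqrt{2\mathsf{V}}$, so $D_{KdV}(0)=0$; conversely $d_{KdV}\big(-\sqrt{2\mathsf{V}}\big)=2\mathsf{V}\Lambda$, so $\kappa_1=-\sqrt{2\mathsf{V}}$ forces $\Lambda=0$, whence $D_{KdV}$ vanishes only there, and the order two is inherited from $\tilde D_{KdV}$.

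There is no genuine obstacle in this argument — the entire analytic content lives in \cite{PW1}. The only point demanding care is bookkeeping of the rescaling constants: confirming that the domain $\Omega_{KdV}$ in the statement coincides with $(2\mathsf{V})^{1/2}\tilde\Omega_{KdV}$, keeping track of which of the three roots of $d_{KdV}$ is $\kappa_1$ at $\Lambda=0$, and checking that the normalizations at $\pm\infty$ match under $\kappa_1\xi=\tilde\kappa_1\tilde\xi$.
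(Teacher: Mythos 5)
Your proposal is correct and follows essentially the same route as the paper: transfer the results of \cite{PW1} for the normalized equation \eqref{KdVEigen2} through the rescaling \eqref{KdVchangevari}, using $\kappa_1\xi=\tilde{\kappa}_1\tilde{\xi}$ to match the normalizations at $\pm\infty$ and conclude $D_{KdV}(\Lambda)=\tilde{D}_{KdV}(\tilde{\Lambda})$, from which items (2)--(4) follow from the explicit formula. The only (immaterial) difference is organizational: the paper first sets up the Evans function for \eqref{KdVEigen} via its first-order reformulation and then identifies $p_2^+$ with the pulled-back $\tilde{p}^+$ by uniqueness, whereas you define $D_{KdV}$ and $p_2^+$ directly by pullback; your extra direct verification of item (3) from the roots of $d_{KdV}$ at $\Lambda=0$ is a harmless addition.
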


\begin{proof}
The first statement can be checked by reformulating the eigenvalue problem \eqref{KdVEigen}  to the associated first-order ODE system (let $\mathbf{p}_2:=(p_2,\partial_\xi p_2,\partial_\xi^2 p_2 )^T$). We omit the details.

 To prove the last two assertions, it is enough to check that $D_{KdV}(\Lambda)=\tilde{D}_{KdV}(\tilde{\Lambda})$ for $\Lambda =(2\mathsf{V})^{1/2}\tilde{\Lambda}$. From the relations \eqref{KdVchangevari} and \eqref{kappa barkap rel}, we see that 
\begin{equation}\label{kappa barkap rel2}
\tilde{\kappa}_1\tilde{\xi} = \kappa_1 \xi,
\end{equation}
and that $\tilde{p}^+((2\mathsf{V})^{1/2}\xi,(2\mathsf{V})^{-1/2}\Lambda)$ satisfies  \eqref{KdVEigen}. From  \eqref{EvansKdV+}, \eqref{EvansKdV2+} and \eqref{kappa barkap rel2}, we have
\[
\tilde{p}^+((2\mathsf{V})^{1/2}\xi,(2\mathsf{V})^{-1/2}\Lambda) \sim e^{\kappa_1\xi} \quad \text{as} \quad \xi \to +\infty.
\]
Hence, we must have $\tilde{p}^+(\tilde{\xi},\tilde{\Lambda}) =  p_2^+(\xi,\Lambda)$, and we conclude that $D_{KdV}(\Lambda)=\tilde{D}_{KdV}(\tilde{\Lambda})$ from \eqref{EvansKdV-}, \eqref{EvansKdV2-} and \eqref{kappa barkap rel2}.
\end{proof}

\subsection{The Evans Function for the Euler-Poisson System in the KdV Scaling}\label{Subsec3.7}
Motivated by the formal derivation of the linearized KdV equation, we consider the transformation
\begin{subequations}\label{KdVscaling}
\begin{align}
& \xi = \veps^{1/2}\,x, \quad \lambda = \veps^{3/2}\Lambda, \label{KdVscaling1} \\
 \empheqlbrace \begin{split} 
   & \dot{n}(x) = p_2(\xi), \quad  \dot{u}(x) = \veps p_1(\xi) + \mathsf{V}p_2(\xi), \\
   & \dot{\phi}(x)  = p_2(\xi) +   \veps p_4(\xi), \quad  \dot{\psi}(x) = \veps^{1/2}\,p_3(\xi).
   \end{split} \label{KdVscaling2}
\end{align}
\end{subequations}
It is easy to see that $S\mathbf{p} = \mathbf{y}$, where $\mathbf{p}:=(p_1,p_2,p_3,p_4)^T$ and $S$ is given by 
\begin{equation}\label{Def_SSinverse}
S := \begin{pmatrix}
0 & 1 & 0 & 0 \\
\veps & \mathsf{V} & 0 & 0 \\
0 & 1 & 0 & \veps \\
0 & 0 & \veps^{1/2} & 0
\end{pmatrix}
\quad
\text{ with its inverse }
S^{-1} = \begin{pmatrix}
-\mathsf{V}\veps^{-1} & \veps^{-1} & 0 & 0 \\
1 & 0 & 0 & 0 \\
0 & 0 & 0 & \veps^{-1/2} \\
-\veps^{-1} & 0 & \veps^{-1} & 0
\end{pmatrix},
\end{equation}
 is a matrix for the transformation \eqref{KdVscaling2}.
 Then \eqref{ODE_LinEP} becomes
\begin{equation}\label{ODEscaled}
\frac{d\mathbf{p}}{d\xi} = A_\ast(\xi,\Lambda,\veps) \mathbf{p}, \quad \textrm{where } A_\ast(\xi,\Lambda,\veps):=\frac{1}{\sqrt{\veps}}S^{-1}A \left(\frac{\xi}{\sqrt{\veps}},\veps^{3/2}\Lambda,\veps\right)S.
\end{equation} 
(see Appendix \ref{SFA} for the specific form of $A_\ast$.) Using \eqref{pointesti2}, a straightforward calculation shows that there is a uniform constant $C>0$ such that for all $\veps\in[0,\veps_\ast]$,
\begin{equation}\label{ODEscaled1}
|A_\ast(\xi,\Lambda,\veps) - A_\ast^\infty(\Lambda,\veps)| \leq Ce^{-C|\xi|}, \quad \xi \in \mathbb{R},
\end{equation}
where $\textstyle A_\ast^\infty(\Lambda,\veps):=\lim_{|\xi| \to \infty}A_\ast(\xi,\Lambda,\veps)$, and for $\veps=0$,
\begin{equation}\label{ODEscaled2}
A_\ast(\xi,\Lambda,0) := 
\begin{pmatrix}
0 & -2\mathsf{V}\partial_\xi\Psi_{\textrm{KdV}} -\Lambda  & 1-2\mathsf{V}\Psi_{\textrm{KdV}} & 0 \\
0 & 0 & 1 & 0 \\
0 & \Psi_{\textrm{KdV}} & 0 & 1 \\
0 & -(2\mathsf{V}^2 + 1)\partial_\xi\Psi_{\textrm{KdV}} - 2\mathsf{V}\Lambda & 2\mathsf{V} - (2\mathsf{V}^2 + 1)\Psi_{\textrm{KdV}} & 0
\end{pmatrix}.
\end{equation}
For $\veps=0$,  a set of the last three equations of \eqref{ODEscaled} reduces to the KdV equation \eqref{KdVEigen}.

The  characteristic polynomial of the asymptotic matrix $\textstyle A_\ast^\infty(\Lambda,\veps)$ is given by
\begin{equation}\label{dispersScaled}
\begin{split}
d_\ast(\nu)= d_\ast(\nu,\Lambda,\veps)
& :=\text{det}\left(A_\ast^\infty(\Lambda,\veps) - \nu I \right)  = \veps^{-2}\text{det}\big(A^\infty(\veps^{3/2}\Lambda,\veps) - \sqrt{\veps}\,\nu I \big) \\
&  =\veps^{-2} d(\sqrt{\veps}\,\nu,\veps^{3/2}\Lambda,\veps) \\
& = \frac{\nu \,d_{\text{KdV}}(\nu) - \veps(\Lambda-\nu)^2 + \veps \nu^2\left[\veps\Lambda^2 - 2c\Lambda \nu + (2\mathsf{V}+\veps)\nu^2 \right]}{c^2-K},
\end{split}
\end{equation}
where $d$ and $d_{\text{KdV}}$ are defined in \eqref{dispersEP} and \eqref{dispersKdV}, respectively.

In order to define the Evans function for the Euler-Poisson system in the KdV scaling, we first shall inspect the zeros of the characteristic polynomial $d_\ast(\nu)$ and check \textbf{H4}. 
From the scaling \eqref{KdVscaling1} and the second line of \eqref{dispersScaled}, we see that for $\veps>0$, the zeros $\nu_j$ of $d_\ast(\nu)$ are related to the zeros $\mu_j$ of $d(\mu)$ by
\begin{equation}\label{Relation nu mu}
\nu_j = \veps^{-1/2} \mu_j.
\end{equation}
When $\veps=0$, the zeros of $d_\ast(\nu)$ are comprised of $0$ and the three zeros $\kappa_j$ of $d_{KdV}(\kappa)$. Together with \eqref{split mu weight} and \eqref{kappa splitting}, these yield that the zeros $\nu_j$ of $d_\ast(\nu)$ can be labelled so that they satisfy
\begin{equation}\label{nu split}
\oRe \nu_1 < \oRe \nu_j \quad (j=2,3,4), \quad \oRe \nu_1 < 0
\end{equation}
for all $\Lambda$ such that $\veps^{3/2}\Lambda = \lambda \in \Omega^\veps$ when $\veps \in (0,\veps_K')$ and for all $\Lambda \in \Omega_{KdV}$ when $\veps=0$.

For any fixed $0 < c_0 < \sqrt{2\mathsf{V}}$, we define the domain 
\begin{equation}\label{DomOmegatilde}
\Omega_\ast:=\left\{\Lambda : \oRe \Lambda \geq  -\eta(c_0)\right\},
\end{equation}
where $\eta(c_0)$ is a positive function of $c_0$ defined in \eqref{ND-eps}. 
Recalling the definition of $\Omega^\veps$ (see  \eqref{ND-eps}), we see that $\Lambda \in \Omega_\ast$ if and only if $\lambda \in \Omega^\veps$ as long as  $\veps^{3/2}\Lambda=\lambda $ for $\veps>0$. One may check that  $\{\Lambda:\oRe \Lambda \geq 0\}\subset \Omega_\ast \subset \Omega_{KdV}$ for all $0 < c_0 < \sqrt{2\mathsf{V}}$. Hence, we conclude that \textbf{H4} holds on the domain $\Omega_\ast$.

\begin{proposition}\label{DefEvansFunctDomain2}
For fixed $0<c_0<\sqrt{2\mathsf{V}}$, there exists $\veps_K'>0$ such that for each $\veps \in [0,\veps_K']$, the Evans function $D_\ast(\Lambda,\veps)$ for \eqref{ODEscaled} is defined on the domain $\Omega_\ast$, and it is analytic in $\Lambda\in \Omega_\ast$.  Moreover, $D_\ast(\Lambda,\veps)$ is characterized by
\begin{equation}\label{CharacEvansDtild}
\lim_{\xi \to -\infty} e^{-\nu_1 \xi}\mathbf{p}^+(\xi,\Lambda) = D_\ast(\Lambda,\veps) \mathbf{v}_1^\ast,
\end{equation}
where   $\mathbf{p}^+(\xi,\Lambda)$ is a unique solution  to \eqref{ODEscaled} satisfying 
\begin{equation}\label{ODEsol2}
\lim_{\xi \to +\infty} e^{-\nu_1 \xi}\mathbf{p}^+(\xi,\Lambda) = \mathbf{v}_1^\ast
\end{equation} 
and $\mathbf{v}_1^\ast$, a right eigenvector  of 
$A_\ast^\infty$ associated with a nonzero simple eigenvalue $\nu_1$, is given by
\begin{equation}\label{righteigenvec_tildeA}
\begin{split}
\mathbf{v}_1^\ast :=
\left(1-\frac{\Lambda}{\nu_1},\; 1,\; \frac{\nu_1}{1-\veps \nu_1^2},\;\frac{\nu_1^2}{1-\veps \nu_1^2}  \right)^T.
\end{split}
\end{equation}
\end{proposition}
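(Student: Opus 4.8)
The plan is to verify the hypotheses \textbf{H1}--\textbf{H4} (in the form stated in subsection \ref{SecDefEva}) for the rescaled system \eqref{ODEscaled} on the domain $\Omega_\ast$, uniformly in $\veps \in [0,\veps_K']$, so that the abstract construction of the Evans function applies and yields an analytic $D_\ast(\Lambda,\veps)$ together with the characterization \eqref{CharacEvansDtild}. First I would record that \textbf{H1} is immediate from the explicit form of $A_\ast$ (see Appendix \ref{SFA} and \eqref{ODEscaled2}): the entries are polynomial in $\Lambda$ with coefficients that are smooth bounded functions of $\xi$ built from $n_c,u_c,\phi_c$ and their derivatives (rescaled), so $A_\ast$ is continuous in $(\xi,\Lambda)$ and entire in $\Lambda$. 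For \textbf{H2} and \textbf{H3} I would invoke the exponential bound \eqref{ODEscaled1}, which was derived from the pointwise estimates \eqref{pointesti2} of Theorem \ref{MainThm4}: since $|A_\ast(\xi,\Lambda,\veps)-A_\ast^\infty(\Lambda,\veps)| \le C e^{-C|\xi|}$ with $C$ independent of $\veps\in[0,\veps_\ast]$ and locally uniform in $\Lambda$, both the convergence as $|\xi|\to\infty$ and the integrability of the difference hold uniformly on compact subsets of $\Omega_\ast$.

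The substantive point is \textbf{H4}: I must show that the matrix eigenvalues $\nu_j$ of $A_\ast^\infty(\Lambda,\veps)$ admit a labelling with $\oRe\nu_1 < \min\{\oRe\nu_2,\oRe\nu_3,\oRe\nu_4\}$ for all $\Lambda\in\Omega_\ast$ and all $\veps\in[0,\veps_K']$, and that $\nu_1$ is simple with the stated right eigenvector. This is exactly the content of the splitting \eqref{nu split}, which was assembled just before the proposition: for $\veps>0$ the relation $\nu_j=\veps^{-1/2}\mu_j$ from \eqref{Relation nu mu} transfers the splitting \eqref{split mu weight} of Lemma \ref{splitting mu lam weight} (valid because $\Lambda\in\Omega_\ast$ iff $\lambda=\veps^{3/2}\Lambda\in\Omega^\veps$, and $\Omega_\ast\subset\Omega_{KdV}$), while for $\veps=0$ the zeros of $d_\ast(\nu)$ are $0$ together with the three roots $\kappa_j$ of $d_{KdV}$, whose splitting \eqref{kappa splitting} holds on $\Omega_{KdV}\supset\Omega_\ast$; in that case $\nu_1=\kappa_1$ with $\oRe\kappa_1<0$ and the remaining three eigenvalues have real parts $\ge 0$ with at least one equal to $0$, so the strict inequality $\oRe\nu_1<\oRe\nu_j$ still holds. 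To get simplicity of $\nu_1$ and analyticity of the eigenprojection in $\Lambda$, I would note that $\nu_1$ is the unique root of $d_\ast$ in the left half-plane $\{\oRe\nu < \oRe\nu_1 + \delta\}$ for a suitable $\delta>0$ depending only on $c_0$; since $d_\ast$ is a polynomial in $\nu$ with coefficients analytic in $\Lambda$, a contour-integral (Riesz projection) argument gives an eigenvalue $\nu_1(\Lambda,\veps)$ and eigenvector $\mathbf{v}_1^\ast$ analytic in $\Lambda$ on $\Omega_\ast$, as in Chapter 2, Section 4.1 of \cite{Kato}. A direct computation from $A_\ast^\infty = \veps^{-1/2}S^{-1}A^\infty S$ (or, at $\veps=0$, from \eqref{ODEscaled2}), using the known right eigenvector $\mathbf{v}_1$ of $A^\infty$ in \eqref{LRVec_A} pulled back through $S^{-1}$ and the identity $\nu_1 = \veps^{-1/2}\mu_1$, verifies that $\mathbf{v}_1^\ast$ has the form \eqref{righteigenvec_tildeA}; the component $1-\Lambda/\nu_1$ comes from $(c\mu_1-\lambda)/\mu_1$ after substituting $\lambda=\veps^{3/2}\Lambda$, $\mu_1=\veps^{1/2}\nu_1$ and expanding, noting that the $O(\veps)$ correction $c=\mathsf{V}+\veps$ produces no obstruction because the eigenvector is only defined up to scaling and one fixes the first nontrivial normalization.

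Once \textbf{H1}--\textbf{H4} are in hand, the general theory of subsection \ref{SecDefEva} directly produces the unique solution $\mathbf{p}^+(\xi,\Lambda)$ of \eqref{ODEscaled} with $e^{-\nu_1\xi}\mathbf{p}^+\to\mathbf{v}_1^\ast$ as $\xi\to+\infty$ (equation \eqref{ODEsol1} specialized to this system, giving \eqref{ODEsol2}), analytic in $\Lambda$ for fixed $\xi$; the corresponding solution $\mathbf{z}^-$ of the transposed system; and the Evans function $D_\ast(\Lambda,\veps)=\mathbf{z}^-\mathbf{p}^+$, which is analytic in $\Lambda$ on $\Omega_\ast$, independent of $\xi$, and characterized by \eqref{CharacEvansD}, i.e.\ \eqref{CharacEvansDtild}. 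The only care needed is that all of this is required for the full closed range $[0,\veps_K']$ including $\veps=0$: the uniform exponential bound \eqref{ODEscaled1} and the $\veps$-uniform spectral gap in \textbf{H4} (with gap size bounded below in terms of $\eta(c_0)$ and $c_0$, independent of $\veps$) are precisely what make the construction go through at $\veps=0$ as a genuine limit rather than a degenerate case. The main obstacle in writing this out carefully is the verification of \textbf{H4} at and near $\veps=0$ — checking that the gap $\oRe\nu_1<\oRe\nu_j$ does not close up as $\veps\to 0$, which hinges on the fact that $\Omega_\ast$ was chosen (via $\eta(c_0)$ and the constraint $c_0<\sqrt{2\mathsf{V}}$) to sit strictly inside $\Omega_{KdV}$ away from the branch point $-2\sqrt{2\mathsf{V}}/(3\sqrt 3)$ of the KdV dispersion relation — and in matching the eigenvector normalizations across the $S$-conjugation so that \eqref{righteigenvec_tildeA} holds on the nose.
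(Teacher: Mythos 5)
Your proposal is correct and follows essentially the same route as the paper: verify \textbf{H1}--\textbf{H3} from the structure of $A_\ast$ and the exponential bound coming from \eqref{pointesti2}, obtain \textbf{H4} on $\Omega_\ast$ from the splitting $\nu_j=\veps^{-1/2}\mu_j$ (Lemma \ref{splitting mu lam weight}) for $\veps>0$ and from the KdV splitting \eqref{kappa splitting} at $\veps=0$, and identify $\mathbf{v}_1^\ast=S^{-1}\mathbf{v}_1$ by direct substitution of $\lambda=\veps^{3/2}\Lambda$, $\mu_1=\veps^{1/2}\nu_1$, $c=\mathsf{V}+\veps$. The paper's computation in fact gives $S^{-1}\mathbf{v}_1=\mathbf{v}_1^\ast$ exactly, so no appeal to rescaling of the eigenvector is needed.
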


\begin{proof} 
\textbf{H4} is verified from the above discussion. \textbf{H1}--\textbf{H3} can be easily checked from the structure of $A_\ast$ (see \eqref{Def_SSinverse}--\eqref{ODEscaled2} and Appendix \ref{SFA}) and  \eqref{pointesti2}. It is clear that $S^{-1}\mathbf{v}_1$ is a right eigenvector of the asymptotic matrix $A_\ast^\infty$ associated with $\nu_1$. From \eqref{KdVscaling1}, \eqref{Relation nu mu}, and that  $c=\mathsf{V}+\veps$, we obtain that
\[
 S^{-1}\mathbf{v}_1 
 = \left( -\frac{\mathsf{V}}{\veps} + \frac{c}{\veps} - \frac{\lambda}{\veps\mu_j}, 1,  \frac{1}{\sqrt{\veps}}\frac{\mu_j}{1-\mu_j^2},  -\frac{1}{\veps} + \frac{1}{\veps(1-\mu_j^2)} \right)^T = \mathbf{v}_1^\ast.
\]
This completes the proof. 
\end{proof}

\subsection{Relations among $D(\lambda,\veps)$, $D_\ast(\Lambda,\veps)$ and $D_{KdV}(\Lambda)$ and their continuities}
\begin{proposition}\label{RelationDandDtilde}
For each $\veps\in(0,\veps_K']$, $D_\ast(\Lambda,\veps)=D(\lambda,\veps)$ for $\veps^{-3/2}\lambda = \Lambda \in \Omega_\ast$. For $\veps=0$, $D_\ast(\Lambda,0) = D_{KdV}(\Lambda)$ for $\Lambda \in \Omega_\ast$.
\end{proposition}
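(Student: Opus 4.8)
The plan is to establish both identities by relating the distinguished solutions $\mathbf{y}^+$ and $\mathbf{p}^+$ through the change of variables $\mathbf{y} = S\mathbf{p}$, and then tracking how the normalizations at $+\infty$ propagate to the value at $-\infty$. For the first identity ($\veps>0$), I would start from the solution $\mathbf{p}^+(\xi,\Lambda)$ of \eqref{ODEscaled} characterized by \eqref{ODEsol2}, and set $\mathbf{y}(x) := S\mathbf{p}(\veps^{1/2}x)$. By construction $\mathbf{y}$ solves \eqref{ODE_LinEP} with $\lambda = \veps^{3/2}\Lambda$, and using $\nu_1 = \veps^{-1/2}\mu_1$ (from \eqref{Relation nu mu}) together with $S^{-1}\mathbf{v}_1 = \mathbf{v}_1^\ast$ (established in the proof of Proposition \ref{DefEvansFunctDomain2}), the normalization \eqref{ODEsol2} transforms into
\[
\lim_{x\to +\infty} e^{-\mu_1 x}\mathbf{y}(x) = S\mathbf{v}_1^\ast = \mathbf{v}_1,
\]
which is exactly \eqref{ODEsol1}. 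By uniqueness of the solution satisfying \eqref{ODEsol1}, we get $\mathbf{y} = \mathbf{y}^+$. Then applying \eqref{CharacEvansDtild} and $S\mathbf{v}_1^\ast = \mathbf{v}_1$ at the $-\infty$ end,
\[
\lim_{x\to-\infty} e^{-\mu_1 x}\mathbf{y}^+(x) = \lim_{\xi\to-\infty} S\big(e^{-\nu_1\xi}\mathbf{p}^+(\xi,\Lambda)\big) = D_\ast(\Lambda,\veps)\,S\mathbf{v}_1^\ast = D_\ast(\Lambda,\veps)\,\mathbf{v}_1,
\]
and comparing with the characterization \eqref{CharacEvansD} of $D(\lambda,\veps)$ yields $D(\lambda,\veps) = D_\ast(\Lambda,\veps)$. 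The equivalence $\Lambda\in\Omega_\ast \Leftrightarrow \lambda\in\Omega^\veps$ (noted after \eqref{DomOmegatilde}) guarantees the identity holds on the full stated domain.

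For the second identity ($\veps=0$), I would compare $D_\ast(\Lambda,0)$ with $D_{KdV}(\Lambda)$ from Proposition \ref{KdVSummary}. When $\veps=0$, the matrix $A_\ast(\xi,\Lambda,0)$ in \eqref{ODEscaled2} has block structure: the first column is zero, and the lower-right $3\times3$ block governs the variables $(p_2,p_3,p_4)$, whose equations reduce precisely to the KdV eigenvalue problem \eqref{KdVEigen} (as already observed after \eqref{ODEscaled2}, with $\dot{n}_\ast = p_2$). The asymptotic matrix $A_\ast^\infty(\Lambda,0)$ has eigenvalues $\{0,\kappa_1,\kappa_2,\kappa_3\}$, and by \eqref{nu split} the distinguished eigenvalue is $\nu_1 = \kappa_1$ with $\oRe\kappa_1 < 0$; its right eigenvector \eqref{righteigenvec_tildeA} specializes at $\veps=0$ to $\mathbf{v}_1^\ast = (1-\Lambda/\kappa_1,\,1,\,\kappa_1,\,\kappa_1^2)^T$, whose last three components $(1,\kappa_1,\kappa_1^2)^T$ are exactly the eigenvector of the KdV asymptotic matrix (in the $\mathbf{p}_2 = (p_2,\partial_\xi p_2,\partial_\xi^2 p_2)^T$ coordinates) used to define $D_{KdV}$ via \eqref{EvansKdV2+}–\eqref{EvansKdV2-}. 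I would then argue that the solution $\mathbf{p}^+(\xi,\Lambda)$ of \eqref{ODEscaled} at $\veps=0$ normalized by \eqref{ODEsol2} has last three components equal to $p_2^+(\xi,\Lambda)$ (and its derivatives) from Proposition \ref{KdVSummary}, because the $(p_2,p_3,p_4)$-subsystem decouples from $p_1$ and the normalization of the last three components of $\mathbf{v}_1^\ast$ matches that of \eqref{EvansKdV2+}. Taking the $\xi\to-\infty$ limit of the appropriate component in \eqref{CharacEvansDtild} and comparing with \eqref{EvansKdV2-} gives $D_\ast(\Lambda,0) = D_{KdV}(\Lambda)$ on $\Omega_\ast \subset \Omega_{KdV}$.

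The main obstacle I anticipate is a bookkeeping subtlety rather than a deep one: in the $\veps=0$ case one must be careful that reading off $D_\ast(\Lambda,0)$ from \eqref{CharacEvansDtild} via a single (well-chosen) component is legitimate — i.e., that the $p_1$-component, which is slaved to the KdV block through the first row of \eqref{ODEscaled2}, does not contribute spurious growth or alter the limit. Since the first column of $A_\ast(\xi,\Lambda,0)$ vanishes, $p_1$ does not feed back into $(p_2,p_3,p_4)$, so the subsystem is genuinely triangular and $p_1$ is recovered by integrating a source built from the KdV solution; one checks this integral inherits the $e^{\kappa_1\xi}$ decay at $+\infty$ and the growth rate $e^{\kappa_1\xi}$ at $-\infty$ consistent with $\mathbf{v}_1^\ast$, so that comparing the $p_2$-component (say) in \eqref{CharacEvansDtild} with \eqref{EvansKdV2-} suffices. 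For the $\veps>0$ case there is essentially no obstacle beyond verifying that $S$ (being invertible and $x$-independent) maps $L^2$-type solutions to $L^2$-type solutions so that uniqueness transfers cleanly, which is immediate from \eqref{Def_SSinverse}.
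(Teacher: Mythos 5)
Your proposal is correct and follows essentially the same route as the paper: transport the distinguished solution through $S$ using $\nu_1=\veps^{-1/2}\mu_1$ and $S\mathbf{v}_1^\ast=\mathbf{v}_1$, identify it with $\mathbf{y}^+$ (the paper does this via Proposition \ref{Prop_ResultPW}(\ref{Dy+1}) and matching the components equal to $1$, you via direct uniqueness of the normalized solution), and compare the characterizations at $-\infty$. Your treatment of the $\veps=0$ case simply supplies the details the paper omits ("in a similar fashion"), using the decoupled $(p_2,p_3,p_4)$-block and the $p_2$-component of \eqref{CharacEvansDtild}, which is consistent with the paper's intended argument.
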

\begin{proof}
Recalling the transform \eqref{KdVscaling}, we observe that $ S\mathbf{p}^{+}(\sqrt{\veps}x,\veps^{-3/2}\lambda)$ is a solution to \eqref{ODE_LinEP}, and that $S\mathbf{p}^{+} = O(e^{\mu_1 x})$ as $x \to +\infty$ from  \eqref{Relation nu mu} and \eqref{ODEsol2}.  Hence, by Proposition \ref{Prop_ResultPW}(\ref{Dy+1}), $S\mathbf{p}^+(\xi,\Lambda)=\alpha\mathbf{y}^+(x,\lambda)$ for some constant $\alpha \neq 0$, and in particular, $p_2^+=\alpha y_1^+$. On the other hand, the first component of $\mathbf{v}_1$ and the second component of $\mathbf{v}_1^\ast$ are equal to 1. Thus, we see that $p_2^+=y_1^+$ by \eqref{ODEsol1} and \eqref{ODEsol2}, and  $D(\lambda,\veps) = D_\ast(\Lambda,\veps)$ by \eqref{CharacEvansD} and \eqref{CharacEvansDtild}. In a similar fashion, one can obtain $D_\ast(\Lambda,0) = D_{KdV}(\Lambda)$, for which we omit the details here.
\end{proof}

\begin{proposition}\label{Prop_JointConti}
$D(\lambda,\veps)$ and $D_\ast(\Lambda,\veps)$ are jointly continuous on the sets $\{(\lambda,\veps):  \lambda \in \Omega^\veps ,\; \veps \in[0,\veps_K) \}$ and $\{(\Lambda,\veps):  \Lambda \in \Omega_\ast,  \; \veps\in[0,\veps_K'] \}$, respectively.
\end{proposition}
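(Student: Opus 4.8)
The plan is to establish both statements by a single scheme. Fix a compact set $\mathcal{K}$ in the relevant parameter domain; since continuity is local it suffices to prove joint continuity on $\mathcal{K}\times[0,\veps_K']$ (resp.\ on $\mathcal{K}\times[0,\veps_K)$ for $D$). The idea is to construct the Evans-function-defining solution $\mathbf{p}^+$ (resp.\ $\mathbf{y}^+$) of \eqref{ODEscaled} and the corresponding solution $\mathbf{z}^-$ of the transposed system \eqref{TranspoODE_DefEvans} by a parametrized contraction mapping, so that both are jointly continuous in $(\xi,\Lambda,\veps)$; then, since $D_\ast=\mathbf{z}^-\mathbf{p}^+$ is independent of $\xi$ (\cite{PW}), evaluating at $\xi=0$ exhibits $D_\ast(\Lambda,\veps)=(\mathbf{z}^-\mathbf{p}^+)(0,\Lambda,\veps)$ as a jointly continuous function. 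I describe this for $D_\ast$; the argument for $D$ is identical with $A$, $A^\infty$, $\mu_1$, $\mathbf{v}_1$, $\mathbf{w}_1$ in place of the starred objects, apart from the behavior at $\veps=0$ addressed at the end.

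First I would record uniform versions of \textbf{H1}--\textbf{H4} on $\mathcal{K}\times[0,\veps_K']$. Joint continuity of $A_\ast(\xi,\Lambda,\veps)$ and of $A_\ast^\infty(\Lambda,\veps)$ is immediate from the explicit forms (Appendix \ref{SFA}, \eqref{ODEscaled2}) and Theorem \ref{MainThm4}; crucially, \eqref{ODEscaled1} gives $|A_\ast(\xi,\Lambda,\veps)-A_\ast^\infty(\Lambda,\veps)|\le Ce^{-C|\xi|}$ with $C$ independent of $\veps\in[0,\veps_K']$ and (enlarging $C$, since $A_\ast$ is affine in $\Lambda$) of $\Lambda\in\mathcal{K}$, so that \textbf{H3} holds uniformly down to $\veps=0$. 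The roots $\nu_j$ of $d_\ast(\nu,\Lambda,\veps)$ depend continuously on $(\Lambda,\veps)$, and by \eqref{nu split} and compactness the splitting $\oRe\nu_1+\delta_0\le\min_{j\ge2}\oRe\nu_j$ holds on $\mathcal{K}\times[0,\veps_K']$ for some $\delta_0>0$; hence $\nu_1$ is simple and jointly continuous, and so is the spectral projection of $A_\ast^\infty$ onto its eigenspace. Finally $\mathbf{v}_1^\ast$ in \eqref{righteigenvec_tildeA} is jointly continuous on $\mathcal{K}\times[0,\veps_K']$ because $\nu_1\ne0$ (as $\oRe\nu_1<0$) and $1-\veps\nu_1^2\ne0$ (for $\veps>0$ this is $1-\mu_1^2\ne0$, and it equals $1$ at $\veps=0$); a jointly continuous $\mathbf{w}_1^\ast$ with $\mathbf{w}_1^\ast\mathbf{v}_1^\ast=1$ follows likewise.

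Next I would construct $\mathbf{p}^+$ following \cite{PW}: writing $\mathbf{p}^+(\xi)=e^{\nu_1\xi}(\mathbf{v}_1^\ast+\mathbf{q}(\xi))$, the remainder $\mathbf{q}$ solves a Volterra integral equation built from the Green's function of $d/d\xi-(A_\ast^\infty-\nu_1 I)$ adapted to the splitting of $A_\ast^\infty$. The uniform gap $\delta_0$ and the uniform bound \eqref{ODEscaled1} make the associated integral operator a contraction, with contraction constant and all bounds independent of $(\Lambda,\veps)\in\mathcal{K}\times[0,\veps_K']$, on the weighted space $\{\mathbf{q}:\sup_{\xi\ge\xi_0}|e^{-\theta\xi}\mathbf{q}(\xi)|<\infty\}$ for small $\theta>0$ and $\xi_0$ depending only on $C$ and $\delta_0$. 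By the contraction mapping theorem with continuous dependence on parameters, $\mathbf{q}$, hence $\mathbf{p}^+$, is jointly continuous in $(\xi,\Lambda,\veps)$, and $e^{-\nu_1\xi}\mathbf{p}^+\to\mathbf{v}_1^\ast$ uniformly on $\mathcal{K}\times[0,\veps_K']$ as $\xi\to+\infty$. The same construction on the transposed system gives a jointly continuous $\mathbf{z}^-$; evaluating $D_\ast=\mathbf{z}^-\mathbf{p}^+$ at $\xi=0$ finishes the claim for $D_\ast$. For $D$ with $\veps$ in a compact subset of $(0,\veps_K)$ the argument is unchanged, using $|A-A^\infty|\le C(1+|\lambda|)e^{-c|x|}$ from \eqref{pointestimateinX}. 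To reach $\veps=0$ I would use that $\|A_1(\cdot,\veps)\|_{L^1}+\|A_2(\cdot,\veps)-A_2^\infty(\veps)\|_{L^1}=O(\veps^{1/2})\to0$ by Theorem \ref{MainThm4} (width $O(\veps^{-1/2})$ times amplitude $O(\veps)$), while the splitting $\oRe\mu_1<\min_{j\ge2}\oRe\mu_j$ is uniform near any $(\lambda_0,0)$ with $\lambda_0\in\Omega^0$ by \eqref{EigenSpliting2} and continuity of the roots; the perturbation series for $\mathbf{y}^+$ then converges uniformly to $e^{\mu_1(\lambda_0,0)x}\mathbf{v}_1(\lambda_0,0)$, so $D(\lambda,\veps)\to1=D(\lambda_0,0)$ (Remark \ref{Rem_Dom}), consistently with Proposition \ref{RelationDandDtilde}.

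I expect the main obstacle to be the uniformity of the contraction estimate as $\veps\downarrow0$: one must verify that the Green's-function bounds depend only on $\delta_0$ and the constant $C$ in \eqref{ODEscaled1}, and that $\nu_1$, $\mathbf{v}_1^\ast$, and the spectral projection stay bounded and nondegenerate uniformly in $(\Lambda,\veps)$ — in particular that no $\nu_j$ collides with $\nu_1$ and that $1-\veps\nu_1^2$ stays away from $0$. These are precisely what \eqref{ODEscaled1}, \eqref{nu split}, and the explicit formula \eqref{righteigenvec_tildeA} provide, so the difficulty is bookkeeping rather than a new idea; the one genuinely delicate point is the matching at $\veps=0$ for the unscaled $D$, where the $O(\veps^{1/2})$ smallness of the $L^1$-norm of the $x$-dependent part of $A$ — a consequence of the KdV scaling of the solitary wave in Theorem \ref{MainThm4} — is what the argument hinges on.
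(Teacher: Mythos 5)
Your proposal is correct and takes essentially the same route as the paper: the paper's (sketched) proof also constructs $\mathbf{y}^+$ and $\mathbf{z}^-$ as fixed points of a parametrized integral operator in the style of \cite{PW,PW2}, using joint continuity of $A$, $A^\infty$, $\mu_1$, $\mathbf{v}_1$, the uniform spectral gap, and the fact that $\int_{x_0}^\infty|A-A^\infty|\,ds\to 0$ uniformly in $(\lambda,\veps)$ on compact sets (exactly the $O(\veps^{1/2})$ $L^1$-smallness from Theorem \ref{MainThm4} that you invoke to reach $\veps=0$). One cosmetic correction: it is $\|A_1(\cdot,\veps)-A_1^\infty(\veps)\|_{L^1}$, not $\|A_1(\cdot,\veps)\|_{L^1}$, that is $O(\veps^{1/2})$, since $A_1^\infty\neq 0$.
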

See Remark \ref{Rem_Dom} for the discussion on the domain $\Omega^0$. 
\begin{proof}
We briefly sketch the proof omitting the details. It can be proved by a fixed point iteration argument following Section 8 of \cite{PW2} (see also  Proposition 1.2 of \cite{PW}).

By the definition of $D(\lambda,\veps)$, it is enough to show that $\mathbf{y}^+(x,\lambda,\veps)$ and $\mathbf{z}^-(x,\lambda,\veps)$, the solutions to \eqref{ODE_LinEP} and  \eqref{TranspoODE_DefEvans} respectively, are jointly continuous in $(\lambda,\veps)$ for each $x$. Indeed, $\mathbf{y}^+$  is constructed as a fixed point $\boldsymbol{\theta}^+ =e^{-\mu_1 x}\mathbf{y}^+$ of the operator
\[
(\mathcal{F}\boldsymbol{\theta})(x;\lambda,\veps) = \mathbf{v}_1(\lambda,\veps) -\int_x^\infty e^{B(\lambda,\veps)(x-s)}\big( A(s;\lambda,\veps) - A^\infty(\lambda,\veps) \big)\boldsymbol{\theta}(s;\lambda,\veps)\,ds,
\]
where $B(\lambda,\veps)=A^\infty(\lambda,\veps) - \mu_1(\lambda,\veps)I$, on $C_b([x_0,\infty))$ for sufficiently large $x_0>0$. On any fixed compact subset of  $\{(\lambda,\veps): \lambda \in \Omega^\veps ,\; \veps \in[0,\veps_K) \}$, we see that  $A$, $A^\infty$, $\mu_1$, and $\mathbf{v}_1$ are all jointly continuous in $(\lambda,\veps)$, $|e^{Bx}| <C$ for all $x  < 0$ since $\oRe \mu_1<\mu_\ast$ (see \eqref{split mu weight}) , and $\textstyle \lim_{x_0\to+\infty}\int_{x_0}^\infty|A - A^\infty|\,ds = 0$ uniformly in $(\lambda,\veps)$. Hence, by a standard iteration argument, the fixed point $\boldsymbol{\theta}^+$ (and thus $\mathbf{y}^+$) is jointly continuous in $(\lambda,\veps)$ on any compact subset. The joint continuity of $D_\ast(\Lambda,\veps)$ can be proved in a similar manner.
\end{proof}

\subsection{Absence of Nonzero Eigenvalues}

This subsection is devoted to prove that $D_\ast(\Lambda,\veps)$ only vanishes at $\Lambda=0$ in the domain $\Omega_*$. To this end, we show
 that $D_\ast(\Lambda,\veps)$ converges to $D_{KdV}(\Lambda)$ as $\veps\to 0$ uniformly in $\Lambda\in \Omega_*$.
\begin{proposition}\label{ThmEvansConverg}
There holds
\[
\sup_{\Lambda \in \Omega_\ast}|D_\ast(\Lambda,\veps) - D_{KdV}(\Lambda) | \to 0  \quad \text{as} \;\;  \veps \to 0.
\]
\end{proposition}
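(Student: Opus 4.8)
The plan is to prove the convergence by realizing both Evans functions as values of solutions of the first-order ODE systems obtained from their respective characterizations \eqref{CharacEvansDtild} and \eqref{EvansKdV2-}, and then to transfer the $\veps\to0$ limit at the level of these solutions, uniformly in $\Lambda\in\Omega_\ast$. First I would recall from Proposition \ref{RelationDandDtilde} that $D_\ast(\Lambda,0)=D_{KdV}(\Lambda)$, so it suffices to show $D_\ast(\Lambda,\veps)\to D_\ast(\Lambda,0)$ uniformly on $\Omega_\ast$. The rescaled system \eqref{ODEscaled} has coefficient matrix $A_\ast(\xi,\Lambda,\veps)$ which, by \eqref{ODEscaled1}, converges to its asymptotic value $A_\ast^\infty(\Lambda,\veps)$ at a rate $Ce^{-C|\xi|}$ \emph{uniformly} in $\veps\in[0,\veps_\ast]$; moreover the matrix eigenvalue $\nu_1=\nu_1(\Lambda,\veps)$, the spectral gap $\oRe\nu_1<\mu_\ast^\veps:=\min_{j\ge2}\oRe\nu_j$ with a gap bounded below uniformly on $\Omega_\ast$ (this uses \eqref{nu split} together with the splitting estimates of Lemma \ref{splitting mu lam weight} and \eqref{kappa splitting}, and should be checked near $\Lambda=0$ and for $|\Lambda|\to\infty$ separately), and the right eigenvector $\mathbf{v}_1^\ast$ in \eqref{righteigenvec_tildeA} are all jointly continuous in $(\Lambda,\veps)$ and converge as $\veps\to0$.

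The main mechanism is the same fixed-point construction already invoked in Proposition \ref{Prop_JointConti}: $\boldsymbol\theta^+(\xi;\Lambda,\veps):=e^{-\nu_1\xi}\mathbf{p}^+(\xi,\Lambda)$ is the unique fixed point in $C_b([\xi_0,\infty))$ of
\[
(\mathcal{F}_\veps\boldsymbol\theta)(\xi;\Lambda,\veps)=\mathbf{v}_1^\ast(\Lambda,\veps)-\int_\xi^\infty e^{(A_\ast^\infty-\nu_1 I)(\xi-s)}\bigl(A_\ast(s;\Lambda,\veps)-A_\ast^\infty(\Lambda,\veps)\bigr)\boldsymbol\theta(s;\Lambda,\veps)\,ds,
\]
and symmetrically for the adjoint solution $\mathbf{z}^-$ on $(-\infty,\xi_0]$. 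The point is that, because of the uniform-in-$\veps$ exponential decay in \eqref{ODEscaled1}, one can choose a single $\xi_0$ large enough that $\|\mathcal{F}_\veps\|$ is a contraction with constant $\le 1/2$ simultaneously for all $\veps\in[0,\veps_\ast]$ and all $\Lambda\in\Omega_\ast$, and the kernel $|e^{(A_\ast^\infty-\nu_1 I)\tau}|\le C$ for $\tau\le0$ is uniformly bounded thanks to the uniform spectral gap. Then I would estimate
\[
\|\boldsymbol\theta^+(\cdot;\Lambda,\veps)-\boldsymbol\theta^+(\cdot;\Lambda,0)\|_\infty\le \|\mathbf{v}_1^\ast(\Lambda,\veps)-\mathbf{v}_1^\ast(\Lambda,0)\|+\|(\mathcal{F}_\veps-\mathcal{F}_0)\boldsymbol\theta^+(\cdot;\Lambda,0)\|_\infty+\tfrac12\|\boldsymbol\theta^+(\cdot;\Lambda,\veps)-\boldsymbol\theta^+(\cdot;\Lambda,0)\|_\infty,
\]
so the difference is controlled by $\sup_{\Lambda}\|\mathbf{v}_1^\ast(\Lambda,\veps)-\mathbf{v}_1^\ast(\Lambda,0)\|$ plus $\sup_{\Lambda}\|(\mathcal{F}_\veps-\mathcal{F}_0)\boldsymbol\theta^+(\cdot;\Lambda,0)\|_\infty$, and the latter is bounded using $|A_\ast(\xi;\Lambda,\veps)-A_\ast(\xi;\Lambda,0)|\le C\veps e^{-C|\xi|}$ (from \eqref{pointesti2}) together with the uniform bound on the resolvent kernel and on $\|\boldsymbol\theta^+(\cdot;\Lambda,0)\|_\infty$. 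Both terms go to $0$ as $\veps\to0$ uniformly in $\Lambda$. Propagating the fixed point back from $\xi_0$ to a fixed reference point via Gronwall on the compact interval, and doing the same for $\mathbf{z}^-$, and finally evaluating $D_\ast=\mathbf{z}^-\mathbf{y}^+$ (equivalently taking the limit \eqref{CharacEvansDtild}), yields $\sup_{\Lambda\in\Omega_\ast}|D_\ast(\Lambda,\veps)-D_\ast(\Lambda,0)|\to0$.

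The delicate point is the \emph{uniformity in $\Lambda$ over the unbounded domain} $\Omega_\ast$: one must show the spectral gap $\mu_\ast^\veps-\oRe\nu_1$ stays bounded below and $\nu_1$, $\mathbf{v}_1^\ast$ stay bounded (and equicontinuous in $\veps$) as $|\Lambda|\to\infty$. For large $|\Lambda|$ the zeros of $d_\ast(\nu,\Lambda,\veps)$ in \eqref{dispersScaled} behave like those of $\nu d_{KdV}(\nu)$ perturbed by the $O(\veps)$ terms, and one expects $\nu_1\sim -|\Lambda|^{1/3}$-type growth with the gap also growing, so that $|e^{(A_\ast^\infty-\nu_1 I)\tau}|$ for $\tau\le 0$ stays bounded and the $O(\veps e^{-C|\xi|})$ perturbation estimate survives uniformly; this asymptotic analysis of the roots, and checking that $\mathbf{v}_1^\ast$ in \eqref{righteigenvec_tildeA} does not degenerate, is where most of the real work lies and may require treating a large-$|\Lambda|$ regime, a neighborhood of $\Lambda=0$, and a compact intermediate regime separately. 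I expect this root-location/uniform-gap bookkeeping, rather than the fixed-point argument itself, to be the main obstacle.
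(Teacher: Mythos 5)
Your scheme works, and essentially reproduces the paper's argument, only on compact subsets of $\Omega_\ast$: there, joint continuity of $D_\ast(\Lambda,\veps)$ (Proposition \ref{Prop_JointConti}) together with $D_\ast(\Lambda,0)=D_{KdV}(\Lambda)$ (Proposition \ref{RelationDandDtilde}) gives exactly the paper's Lemma \ref{LemEstiD1} on $\mathfrak{D}_1$. The genuine gap is the unbounded part of $\Omega_\ast$, and the mechanism you propose there does not close. Two of the uniform estimates you assume are false. First, the perturbation bound $|A_\ast(\xi,\Lambda,\veps)-A_\ast(\xi,\Lambda,0)|\le C\veps e^{-C|\xi|}$ cannot hold uniformly in $\Lambda$: the coefficient matrix contains the block proportional to $\Lambda$ (the rescaled $\lambda A_2$ term), so the $\xi$-decaying part of the difference is of size $O(\veps\,|\Lambda|\,e^{-C|\xi|})$, which is no longer small once $|\Lambda|\gtrsim \veps^{-1}$; hence $\sup_{\Lambda\in\Omega_\ast}\|(\mathcal{F}_\veps-\mathcal{F}_0)\boldsymbol{\theta}^+(\cdot;\Lambda,0)\|_\infty$ does not tend to zero. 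Second, for $\veps^{3/2}|\Lambda|\gtrsim 1$ (i.e.\ $|\lambda|\gtrsim 1$) the $\veps>0$ and $\veps=0$ problems are not close in any sense: by Proposition \ref{EigenvalueLargelambda} the Euler--Poisson roots satisfy $\mu_1\to-1$, $\mu_{2,3}\sim\lambda/(c\pm\sqrt{K})$, so $\nu_1=\veps^{-1/2}\mu_1\approx-\veps^{-1/2}$, whereas the limiting KdV roots grow like $|\Lambda|^{1/3}$; your expectation that $\nu_1\sim-|\Lambda|^{1/3}$ with a comparably growing gap is valid only in the intermediate window corresponding to $\mathfrak{D}_2$ (Proposition \ref{prop_EigenApprox}), not for genuinely large $\lambda$. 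A direct comparison of the two fixed-point problems uniformly over all of $\Omega_\ast$ therefore cannot work.

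What is missing is the structural idea the paper uses for the unbounded region: there one does not compare the two Evans functions with each other at all, but shows that each is individually close to $1$, using $D_{KdV}(\Lambda)\to1$ as $|\Lambda|\to\infty$ (Proposition \ref{KdVSummary}) together with $\sup_{\mathfrak{D}_2}|D(\lambda,\veps)-1|\lesssim\delta^{1/3}$ and $\sup_{\mathfrak{D}_4}|D(\lambda,\veps)-1|\lesssim\veps^{1/2}$ (Lemmas \ref{Lem_Region2} and \ref{Lem_Region4}, plus \eqref{EstiD_D30} on the intermediate annulus), and only then invoking $D(\lambda,\veps)=D_\ast(\Lambda,\veps)$. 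These estimates are not obtainable by the naive contraction you describe: on $\mathfrak{D}_2$ the roots coalesce like $(-2\sqrt{1+K}\,\lambda)^{1/3}$ and the normalization $\boldsymbol{\pi}_j\mathbf{v}_j$ degenerates like $|\lambda|^{2/3}$, and on $\mathfrak{D}_4$ the conjugated perturbation $WRV$ of Lemma \ref{Ch3PropProofRegD_2} has entries growing like $|\lambda|$ and $|\lambda|^2$, which is precisely why the paper introduces the compensating factors $m_j$ in \eqref{mj}, extracts the symmetric positive semi-definite matrix $S_1$ in \eqref{DecompW0R1V0}, and proves the energy-type bound for the fundamental solution in Lemma \ref{Lem_BtildeEq}. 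You correctly flag the large-$|\Lambda|$ uniformity as the crux, but the proposal supplies no device that overcomes the $O(\veps|\Lambda|)$ growth of the perturbation or the degeneration of the eigenvector frame, so the claimed uniform convergence on $\Omega_\ast$ does not follow from the argument as written.
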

 
From the convergence of $D_\ast(\Lambda,\veps)$, combined with the properties of $D_{KdV}(\Lambda)$ (Proposition  \ref{KdVSummary}) and some results of previous subsections, we have the following:
\begin{corollary}\label{cor_orderEvans}
There exists $\veps_0>0$ such that for $\veps \in (0,\veps_0]$, $\Lambda=0$ is the only zero of $D_\ast(\Lambda,\veps)$ on the region $\Omega_\ast$. Moreover, the order of $\Lambda=0$ is exactly two.
\end{corollary}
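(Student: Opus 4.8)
The plan is to derive the corollary from the uniform convergence $D_\ast(\Lambda,\veps)\to D_{KdV}(\Lambda)$ on $\Omega_\ast$ (Proposition \ref{ThmEvansConverg}) together with the three facts about the limit recorded in Proposition \ref{KdVSummary}: $\Lambda=0$ is the only zero of $D_{KdV}$, it has order two, and $D_{KdV}(\Lambda)\to 1$ as $|\Lambda|\to\infty$ within $\Omega_{KdV}\supset\Omega_\ast$. Both $D_\ast(\cdot,\veps)$ and $D_{KdV}$ are analytic on (a neighborhood of the relevant part of) $\Omega_\ast$ by Proposition \ref{DefEvansFunctDomain2} and Proposition \ref{KdVSummary}, so Rouch\'e's theorem is available; the only obstruction to applying it globally is that $\Omega_\ast$ is unbounded. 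Accordingly I would split $\Omega_\ast$ into a small disk about the origin, on which Rouch\'e is applied, and its complement, on which $D_\ast(\cdot,\veps)$ is shown to be bounded away from zero directly.

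The first step is a quantitative lower bound for $|D_{KdV}|$ away from the origin. Fix $\delta\in(0,\eta(c_0))$ so small that the closed disk $\overline{B(0,\delta)}$ lies in the interior $\{\oRe\Lambda>-\eta(c_0)\}$ of $\Omega_\ast$ and so that $\Lambda=0$ is the only zero of $D_{KdV}$ in $\overline{B(0,\delta)}$. Since $D_{KdV}(\Lambda)\to 1$ as $|\Lambda|\to\infty$, there is $R>\delta$ with $|D_{KdV}(\Lambda)|\geq \tfrac12$ on $\Omega_\ast\cap\{|\Lambda|\geq R\}$; on the compact set $\Omega_\ast\cap\{\delta\leq|\Lambda|\leq R\}$ the continuous function $D_{KdV}$ is nonvanishing, hence bounded below by some $m_1>0$. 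With $m_0:=\min\{\tfrac12,m_1\}>0$ we obtain $|D_{KdV}(\Lambda)|\geq m_0$ for all $\Lambda\in\Omega_\ast$ with $|\Lambda|\geq\delta$.

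Next I would invoke Proposition \ref{ThmEvansConverg} to choose $\veps_0\in(0,\veps_K']$ so that $\sup_{\Lambda\in\Omega_\ast}|D_\ast(\Lambda,\veps)-D_{KdV}(\Lambda)|<m_0$ for every $\veps\in(0,\veps_0]$. Then on $\Omega_\ast\cap\{|\Lambda|\geq\delta\}$ one has $|D_\ast(\Lambda,\veps)|\geq |D_{KdV}(\Lambda)|-|D_\ast-D_{KdV}|>m_0-m_0=0$, so $D_\ast(\cdot,\veps)$ has no zero there. On the circle $\partial B(0,\delta)\subset\Omega_\ast$ the strict inequality $|D_\ast-D_{KdV}|<m_0\leq|D_{KdV}|$ is precisely the hypothesis of Rouch\'e's theorem, so $D_\ast(\cdot,\veps)$ and $D_{KdV}$ have the same number of zeros, counted with multiplicity, in $B(0,\delta)$, namely two. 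Finally, by Proposition \ref{EvansZero}(a) and Proposition \ref{RelationDandDtilde} (and the chain rule applied to $\lambda=\veps^{3/2}\Lambda$), both $D_\ast(0,\veps)$ and $\partial_\Lambda D_\ast(0,\veps)$ vanish, so $\Lambda=0$ is a zero of $D_\ast(\cdot,\veps)$ of order at least two; since the total multiplicity inside $B(0,\delta)$ is exactly two, it follows that $\Lambda=0$ is the only zero in $B(0,\delta)$ and its order is exactly two. Combining with the previous step, $\Lambda=0$ is the unique zero of $D_\ast(\cdot,\veps)$ in all of $\Omega_\ast$, which is the assertion; shrinking $\veps_0$ if necessary to stay within $(0,\veps_K']$ completes the proof.

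The main obstacle is exactly the non-compactness of $\Omega_\ast$: one cannot apply Rouch\'e on the whole domain, so the local contour argument near the origin must be supplemented by the uniform lower bound on $|D_{KdV}|$ near and at infinity, for which the decay statement $D_{KdV}(\Lambda)\to1$ and the \emph{uniform} convergence in Proposition \ref{ThmEvansConverg} are essential. The remaining ingredients — the choice of $\delta$, the compactness bound $m_1$, and the order-$\geq2$ information at $\Lambda=0$ coming from Proposition \ref{EvansZero} — are routine.
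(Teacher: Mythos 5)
Your proof is correct and uses essentially the same ingredients as the paper's: the uniform convergence of Proposition \ref{ThmEvansConverg}, the properties of $D_{KdV}$ from Proposition \ref{KdVSummary}, the order-at-least-two information at the origin from Propositions \ref{EvansZero} and \ref{RelationDandDtilde}, and Rouch\'e's theorem. The only (harmless) difference is geometric: the paper applies Rouch\'e on the boundaries $\Gamma_\delta$ of the truncated regions $\Omega_\ast\cap\{\Lambda:|\Lambda|\le\delta^{-1}\}$ for all small $\delta$, whereas you apply it on a small circle about the origin and exclude zeros on $\Omega_\ast\cap\{\Lambda:|\Lambda|\ge\delta\}$ directly via the triangle-inequality lower bound on $|D_{KdV}|$, which handles the non-compactness of $\Omega_\ast$ equally well.
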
 

We note that the characteriztion of the zeros of $D(\lambda,\veps)$ (Proposition \ref{Pro_Ev_Ch})  directly follows from Corollary \ref{cor_orderEvans} and Proposition \ref{RelationDandDtilde}. Since the proof of Proposition~\ref{ThmEvansConverg} consists of several steps, we prove it after we present  the proof of Corollary \ref{cor_orderEvans}.

\begin{proof}[Proof of Corollary \ref{cor_orderEvans}]
 For $\delta>0$, let $\Gamma_\delta$ be the boundary of the region $\Omega_\ast \cap \{\Lambda : |\Lambda|\leq \delta^{-1}\}$.  Since $D_{KdV}(\Lambda)$ tends to 1 as $|\Lambda|\to \infty$ (see Proposition~\ref{KdVSummary}), there exists $\delta_0>0$ such that 
\[
\inf_{\Lambda\in \Omega_\ast, |\Lambda|\geq \delta^{-1}} |D_{KdV}(\Lambda)| > \frac{1}{2} \quad \text{for all } \delta \in (0,\delta_0].
\]
Moreover, since $D_{KdV}(\Lambda)$  vanishes only at $\Lambda=0$, there exists $\gamma_0=\gamma_0(\delta_0)>0$ such that 
\[
\inf_{\Lambda\in \Gamma_{\delta_0}} |D_{KdV}(\Lambda)| > \gamma_0.
\]
We recall that $\Omega_\ast$ is closed. Hence, the above two inequalities imply that for all $\delta\in(0,\delta_0]$, 
\[
\inf_{\Lambda\in \Gamma_\delta} |D_{KdV}(\Lambda)| >  \min\{1/2,\gamma_0\} =: \gamma_0'(\delta_0). 
\]
Combined with Proposition \ref{ThmEvansConverg}, this implies that  there is $\veps_0>0$ such that for all $\veps\in(0,\veps_0]$ and $\delta \in (0,\delta_0]$,
\begin{equation}\label{ConKdV1}
|D_{KdV}(\Lambda)| > \gamma_0'(\delta_0) > |D_\ast(\Lambda,\veps) - D_{KdV}(\Lambda)| \quad \text{for }\Lambda \in \Gamma_\delta.
\end{equation}
We recall that  the order of $\Lambda=0$ as a zero of $D_{KdV}(\Lambda)$ is two, and there is no other zero. Combining Proposition \ref{EvansZero} and Proposition \ref{RelationDandDtilde}, $\Lambda=0$ is a zero of $D_\ast(\Lambda,\veps)$ with the order at least two. Now the proof is finished by applying Rouch\'e's theorem since \eqref{ConKdV1} is true for all $\delta\in (0,\delta_0]$.
\end{proof}

In order to prove Proposition \ref{ThmEvansConverg}, we divide the region $\Omega_\ast$ as follows (see Figure \ref{DecomOmega} and recall that $\Lambda \in \Omega_\ast$ if and only if $\lambda \in \Omega^\veps$ as long as  $\veps^{3/2}\Lambda=\lambda $ for $\veps>0$):
\begin{subequations}
\begin{align*}
& \mathfrak{D}_1:=\Omega_\ast \cap \{\Lambda:|\Lambda| \leq \delta^{-1}\},   \\ 
& \mathfrak{D}_2:= \Omega^\veps \cap \{\lambda: \veps^{3/2}\delta^{-1} < |\lambda| < \delta\},\\
& \mathfrak{D}_{31}:= \{\lambda : \oRe \lambda \geq 0\}  \cap \{\lambda: \delta \leq |\lambda| \leq \delta^{-1}\}, \\
& \mathfrak{D}_{32}:= \{\lambda : -\veps^{3/2}\eta(c_0) \leq  \oRe \lambda \leq  0, \, \delta/2 \leq |\oIm \lambda| \leq \delta^{-1}\},  \\
& \mathfrak{D}_4:= \Omega^\veps \cap \{\lambda: \delta^{-1} < |\lambda|\},
\end{align*}
\end{subequations}
where $-\veps^{3/2}\eta(c_0)$ is the  boundary of the domain $\Omega^\veps$ (see \eqref{ND-eps}). It is clear that for any fixed $\delta>0$,  $\Omega_\ast$ is a union of these regions for all sufficiently small $\veps>0$. In fact, $\delta>0$ will be determined later.\\

\begin{figure}[h]
    \centering
        \includegraphics[scale=0.4]{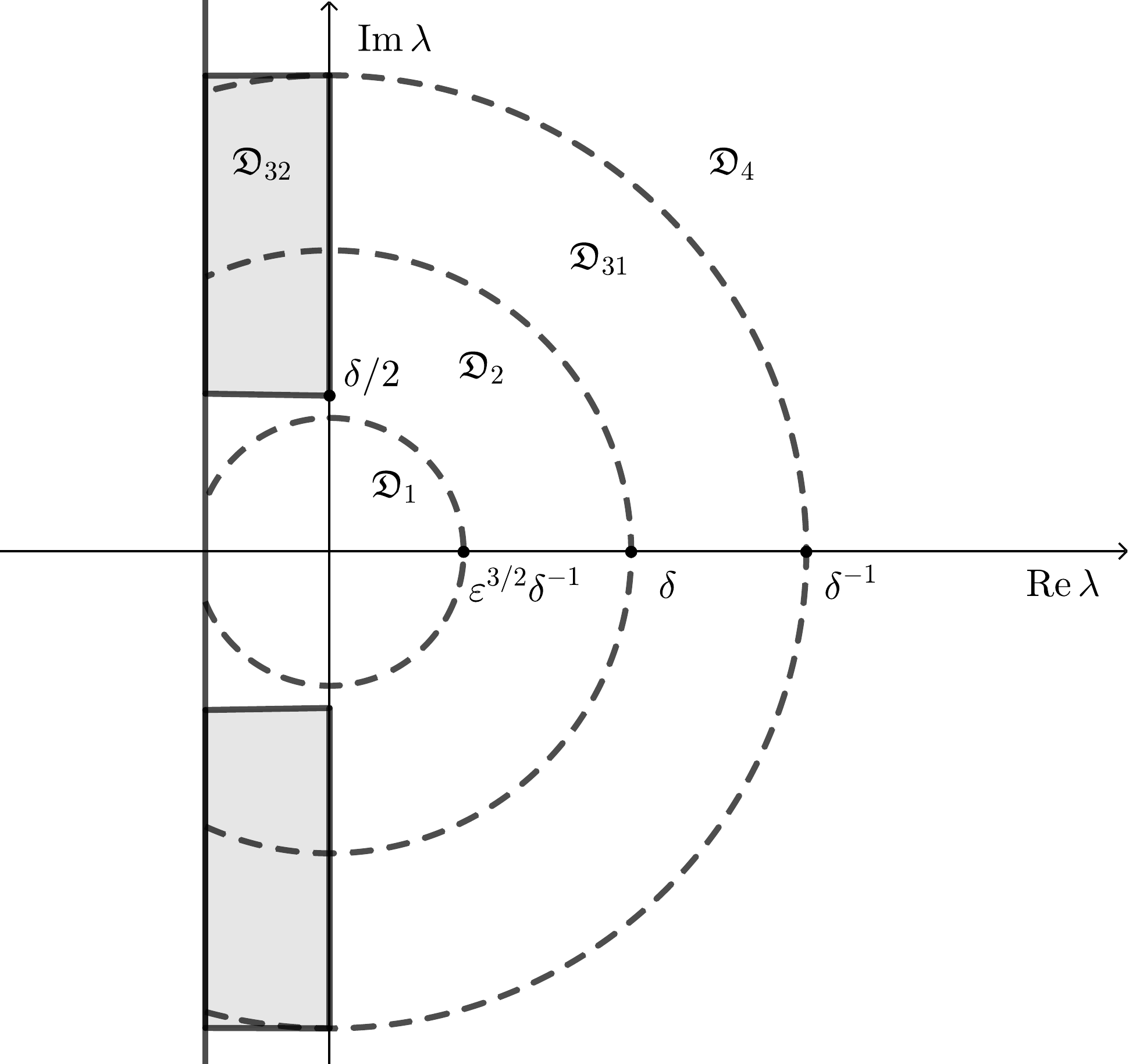}
        \caption{Decomposition of $\Omega^\veps$}
        \label{DecomOmega}
\end{figure}

\textit{On $\mathfrak{D}_1$}: We note that $D(\lambda,\veps)$ is \emph{singular} at $\lambda=0$ in the limit $\veps \to 0$. First of all, $D(\lambda,\veps)$ is not defined at $(\lambda,\veps)=(0,0)$. Moreover, we have $D(\lambda,\veps)|_{\veps=0}=1$ for all $\lambda\in\Omega^0$  (see Remark \ref{Rem_Dom}), but $D(\lambda,\veps)|_{\lambda=0}=0$ for $\veps\in(0,\veps_K]$ by Proposition \ref{EvansZero}.\eqref{EvansZero1st}. However, $D_\ast(\Lambda,\veps)$ is \emph{regular} at $\Lambda=0$ in the  sense of Lemma \ref{LemEstiD1}.
\begin{lemma}\label{LemEstiD1}
For any fixed constant $\delta>0$, 
\begin{equation}\label{EstiD_D1}
\sup_{\Lambda \in \mathfrak{D}_1}|D_\ast(\Lambda,\veps) -D_{KdV}(\Lambda)| \to 0 \quad \text{as} \quad \veps \to 0.
\end{equation}
\end{lemma}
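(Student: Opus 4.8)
The plan is to exploit the fact that $\mathfrak{D}_1$ is a \emph{fixed} compact set, independent of $\veps$, together with the joint continuity of the Evans function down to $\veps=0$ that has already been secured. Concretely, for fixed $\delta>0$ the set $\mathfrak{D}_1=\Omega_\ast\cap\{\Lambda:|\Lambda|\le\delta^{-1}\}$ is closed (as $\Omega_\ast=\{\Lambda:\oRe\Lambda\ge-\eta(c_0)\}$ is closed) and bounded, hence compact. By Proposition~\ref{Prop_JointConti} the map $(\Lambda,\veps)\mapsto D_\ast(\Lambda,\veps)$ is jointly continuous on $\{(\Lambda,\veps):\Lambda\in\Omega_\ast,\ \veps\in[0,\veps_K']\}$, and $\mathfrak{D}_1\times[0,\veps_K']$ is a compact subset of this set. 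A continuous function on a compact set is uniformly continuous, so the restriction of $D_\ast$ to $\mathfrak{D}_1\times[0,\veps_K']$ is uniformly continuous.

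From uniform continuity, given any $\eta>0$ there is $\veps_0>0$ such that $|D_\ast(\Lambda,\veps)-D_\ast(\Lambda,0)|<\eta$ for every $\Lambda\in\mathfrak{D}_1$ and every $\veps\in(0,\veps_0)$; equivalently
\[
\sup_{\Lambda\in\mathfrak{D}_1}\bigl|D_\ast(\Lambda,\veps)-D_\ast(\Lambda,0)\bigr|\to 0\qquad\text{as }\veps\to0 .
\]
It then remains only to identify the limit: by Proposition~\ref{RelationDandDtilde} one has $D_\ast(\Lambda,0)=D_{KdV}(\Lambda)$ for all $\Lambda\in\Omega_\ast$, and substituting this into the display above yields exactly \eqref{EstiD_D1}.

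The point worth emphasizing is that, unlike $D(\lambda,\veps)$ itself — which genuinely degenerates near $\lambda=0$ in the limit $\veps\to0$, being identically $1$ on $\Omega^0\setminus\{0\}$ while vanishing at $\lambda=0$ for every $\veps>0$ — the rescaled function $D_\ast(\Lambda,\veps)$ is well behaved at $\Lambda=0$ uniformly down to $\veps=0$. Thus all the genuine analytic content of this step was already dispensed with in establishing joint continuity at $\veps=0$ (Proposition~\ref{Prop_JointConti}), which in turn relied on the uniform exponential decay estimate \eqref{pointesti2} for the solitary-wave profile and on the contraction-mapping construction of $\mathbf{p}^+$; the present lemma is a soft corollary of that via compactness. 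Consequently I do not anticipate any real obstacle here — the delicate estimates are all confined to the \emph{noncompact} regions $\mathfrak{D}_2,\mathfrak{D}_{31},\mathfrak{D}_{32},\mathfrak{D}_4$, which must be handled by separate arguments (matching the asymptotic matrix against that of the constant-coefficient limit for large $|\lambda|$, and controlling the resolvent near the shifted essential spectrum for the near-axis pieces).
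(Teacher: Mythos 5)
Your argument is correct and is essentially identical to the paper's proof: both use the compactness of $\mathfrak{D}_1\times[0,\veps_K']$ together with the joint continuity from Proposition \ref{Prop_JointConti} to get uniform continuity, and then identify the limit via $D_\ast(\Lambda,0)=D_{KdV}(\Lambda)$ from Proposition \ref{RelationDandDtilde}. No gaps.
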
 
\begin{proof}
It is easy to see that $\sup_{\Lambda \in \mathfrak{D}_1}|D_\ast(\Lambda,\veps) -D_{KdV}(\Lambda)|$ is continuous on $[0,\veps_K']$. Indeed,
$D_\ast(\Lambda,\veps)$ is uniformly continuous on a fixed compact set $\{ (\Lambda,\veps): \veps \in[0,\veps_K'], \Lambda \in \mathfrak{D}_1\}$ since it is jointly continuous on the set (see Proposition \ref{Prop_JointConti}).  Now \eqref{EstiD_D1} follows from  that $D_\ast(\Lambda,0)=D_{KdV}(\Lambda)$ by Proposition \ref{RelationDandDtilde}.
\end{proof}

\textit{On $\mathfrak{D}_2$ and $\mathfrak{D}_4$}: On the region $\mathfrak{D}_2$, $\lambda$ is small, but not zero.  The region $\mathfrak{D}_4$ is where $\lambda$ is large. Since the proofs of Lemma \ref{Lem_Region2} and Lemma \ref{Lem_Region4} rather lengthy, they will be proved in Section \ref{Sec5}.  \\

\begin{lemma}\label{Lem_Region2}
There exist constants $C_2,\delta_2,\veps_2>0$ such that for all $\veps \in [0,\veps_2]$ and $\delta \in (0,\delta_2]$, there holds
\begin{equation}\label{EstiD_D2}
\sup_{\lambda\in \mathfrak{D}_2}|D(\lambda,\veps)-1| < C_2 \delta^{1/3}.
\end{equation}
Here $C_2$ is independent of $\veps$ and $\delta$.
\end{lemma}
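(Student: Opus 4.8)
The plan is to exploit the fact that on $\mathfrak{D}_2$ the eigenvalue parameter $\lambda$ is small (of size at most $\delta$), so the coefficient matrix $A(x,\lambda,\veps)$ is a small perturbation of $A(x,0,\veps)$, whose Evans function at $\lambda=0$ we already understand from Lemma \ref{EvansOrder} and Proposition \ref{EvansZero}. However, since $D(\lambda,\veps)$ is singular at $(\lambda,\veps)=(0,0)$, a naive comparison at $\lambda=0$ fails, and the right comparison object is $D_\ast(\Lambda,\veps)=D(\lambda,\veps)$ in the rescaled variables, with $\Lambda=\veps^{-3/2}\lambda$ now \emph{large} (of size between $\delta^{-1}$ and $\veps^{-3/2}\delta$). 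So the real content is a large-$\Lambda$ estimate for $D_\ast$ that is uniform in $\veps\in[0,\veps_2]$. First I would record, using \eqref{dispersScaled} and the splitting \eqref{nu split}, the asymptotic behavior of the matrix eigenvalue $\nu_1=\nu_1(\Lambda,\veps)$ of $A_\ast^\infty$: as $|\Lambda|\to\infty$ one has, from $\nu\, d_{KdV}(\nu)\approx 0$ balanced against the lower-order terms, that $\nu_1$ behaves like a root of $d_{KdV}$, i.e. $\oRe\nu_1$ stays bounded away from $0$ uniformly, and the corresponding right eigenvector $\mathbf{v}_1^\ast$ in \eqref{righteigenvec_tildeA} stays bounded. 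The key smallness is that in the block structure of $A_\ast$ (Appendix \ref{SFA}), the $\xi$-dependent part decays like $e^{-C|\xi|}$ by \eqref{ODEscaled1}, with a constant \emph{uniform} in $\veps$, so the perturbation of the constant-coefficient system is integrably small uniformly.

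The main step is then a fixed-point / variation-of-constants estimate for $\mathbf{p}^+$. Writing $\boldsymbol{\theta}^+(\xi)=e^{-\nu_1\xi}\mathbf{p}^+(\xi,\Lambda)$, this solves
\[
\boldsymbol{\theta}^+(\xi)=\mathbf{v}_1^\ast-\int_\xi^\infty e^{B_\ast(\xi-s)}\bigl(A_\ast(s,\Lambda,\veps)-A_\ast^\infty(\Lambda,\veps)\bigr)\boldsymbol{\theta}^+(s)\,ds,
\]
with $B_\ast=A_\ast^\infty-\nu_1 I$, exactly as in the proof of Proposition \ref{Prop_JointConti}. Since $\oRe\nu_1<\oRe\nu_j$ uniformly, $|e^{B_\ast\xi}|$ is bounded for $\xi\le 0$ with a bound uniform in $(\Lambda,\veps)$ on $\mathfrak{D}_2$, and $\int_{-\infty}^\infty|A_\ast-A_\ast^\infty|\,ds\le C$ uniformly by \eqref{ODEscaled1}. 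But a bound of this crude form only gives $|\boldsymbol{\theta}^+-\mathbf{v}_1^\ast|\le C$, not smallness. To get the $\delta^{1/3}$ gain I would instead split the integral at a cutoff $|\xi|\le \rho$ versus $|\xi|>\rho$ and track the genuinely small quantity: the difference $A_\ast(s,\Lambda,\veps)-A_\ast^\infty(\Lambda,\veps)$ involves only the \emph{solitary-wave} profile terms $n_c,u_c,\phi_c$ rescaled, which by Theorem \ref{MainThm4} are $O(e^{-C|\xi|})$ \emph{and} of size $O(1)$ only on a bounded $\xi$-window; meanwhile the "large-$\Lambda$" entries of $A_\ast^\infty$ itself are absorbed into $B_\ast$ and the eigenvector. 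Tracking where the factor $\delta^{1/3}$ enters: recalling $|\lambda|<\delta$ and the relation $\nu_1=\veps^{-1/2}\mu_1$, the quantity $\veps(\Lambda-\nu_1)^2/(\nu_1 d_{KdV}(\nu_1))$ and similar ratios appearing in the exact expression for $D_\ast$ (from the $2\times2$ block that survives to leading order) can be bounded by a power of $|\lambda|^{1/3}\sim\delta^{1/3}$, using that $|\mu_1|\gtrsim|\lambda|^{1/3}$ for small $\lambda$ — this cube root is inherited from the cubic dispersion relation $d_{KdV}(\kappa)=2\mathsf V(\Lambda-\kappa)+\kappa^3$. So the strategy is: (i) show $|\mathbf{p}^+(\xi)-\mathbf{v}_1^\ast e^{\nu_1\xi}|$ and hence $|D_\ast(\Lambda,\veps)-(\text{value of the relevant eigenvector component ratio})|$ is controlled by $\int|A_\ast-A_\ast^\infty|$, and (ii) show this constant-coefficient "defect", together with the eigenvector-ratio, differs from $1$ by $O(\delta^{1/3})$ because $\nu_1$ is close to a root $\kappa_j$ of $d_{KdV}$ and, for $|\lambda|$ small, $d_{KdV}(\kappa)\approx 0$ forces $\kappa\approx$ a root of $\kappa^3-2\mathsf V\kappa$ perturbed by $O(\Lambda)=O(\veps^{-3/2}\lambda)$, whence the ratio $(\kappa_1+\sqrt{2\mathsf V})/(\kappa_1-\sqrt{2\mathsf V})\to\pm1$ — squared, this is $1+O(|\lambda|^{1/3})$.

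The hard part will be making the perturbation argument genuinely uniform across the \emph{entire} range $\veps^{3/2}\delta^{-1}<|\lambda|<\delta$, since $\Lambda$ ranges over an unbounded set shrinking with $\veps$, and one must verify that none of the implied constants (the dichotomy bound for $e^{B_\ast\xi}$, the lower bound on the spectral gap $\oRe\nu_j-\oRe\nu_1$, the bound on $|\mathbf{v}_1^\ast|$ and its normalization) degenerates as $\veps\to0$ or as $|\Lambda|\to\infty$ along this family. This requires the careful matrix-eigenvalue asymptotics of $d_\ast$, which is precisely the kind of "behavior in $\lambda$ of the matrix eigenvalues of the asymptotic matrix" the introduction flags as being developed in Sections \ref{Sec_Split} and \ref{Sec5}; I would invoke the splitting Lemmas \ref{splitting mu lam}, \ref{splitting mu lam weight} and their quantitative refinements there. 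A secondary technical nuisance is the non-analytic-looking $\delta^{1/3}$: one must be careful that the relevant branch of $\mu_1(\lambda)\sim|\lambda|^{1/3}$ is the one selected by the splitting condition $\oRe\mu_1<\mu_\ast$, so that the estimate is on the correct solution $\mathbf{y}^+$ and not a spurious one.
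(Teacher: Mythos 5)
Your overall orientation is reasonable—on $\mathfrak{D}_2$ the statement is indeed a perturbation-from-the-constant-coefficient-system estimate, and the cube root ultimately does come from the cubic-type behavior of the dispersion relation near $\mu=0$—but the step that actually produces the factor $\delta^{1/3}$ is missing, and the substitutes you offer do not supply it. The paper's proof stays in the unscaled variables and applies the Pego--Weinstein-type bound of Lemma \ref{Ch3PropProofRegD_2}: $|D(\lambda,\veps)-1|\leq C_0\int_{-\infty}^\infty |WRV|\,dx$, where $R=A-A^\infty$ and $W,V$ are the \emph{normalized} left/right eigenvector matrices of $A^\infty$. The raw perturbation already satisfies $\int|R|\,dx=O(\veps^{1/2})$ by \eqref{pointestimateinX}; the whole difficulty is that the conjugation by $W,V$ costs negative powers of $|\lambda|$, because the normalizations $\boldsymbol{\pi}_j\mathbf{v}_j$ in \eqref{eigenvec_A2} degenerate as $\lambda\to 0$. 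Using Proposition \ref{prop_EigenApprox} ($\mu_j\sim(2\mathsf{V}|\lambda|)^{1/3}$ for $j=1,2,3$) one shows, after exploiting the cancellation $c^2-K-1=2\veps\sqrt{1+K}+\veps^2$ and $\veps/|\mu_j|^2=o(1)$ on $\mathfrak{D}_2$, that $|\boldsymbol{\pi}_j\mathbf{v}_j|\sim|\lambda|^{2/3}$, whence the conjugated perturbation obeys $\int|WRV|\,dx\lesssim \veps^{1/2}|\lambda|^{-1/3}<\delta^{1/3}$, the last inequality being exactly the lower constraint $|\lambda|>\veps^{3/2}\delta^{-1}$ defining $\mathfrak{D}_2$. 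Your proposal never sets up this conjugated $L^1$ estimate: the fixed-point bound you write down (without the $W,V$ normalization) gives only $O(1)$, as you concede, and a cutoff in $\xi$ cannot help, since the profile is $O(1)$ on an $O(1)$ window of $\xi$ no matter where you cut.

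The two concrete repairs you then propose both fail. First, invoking the explicit formula $D_{KdV}(\Lambda)=\bigl((\kappa_1+\sqrt{2\mathsf{V}})/(\kappa_1-\sqrt{2\mathsf{V}})\bigr)^2$ and its large-$\Lambda$ limit only controls the $\veps=0$ object; for $\veps>0$ there is no explicit formula, and uniform closeness of $D_\ast(\Lambda,\veps)$ to $D_{KdV}(\Lambda)$ on precisely this intermediate region is what Lemma \ref{Lem_Region2} is needed to establish as an ingredient of Proposition \ref{ThmEvansConverg}—so appealing to that closeness here is circular. Second, your preliminary claim that $\mathbf{v}_1^\ast$ in \eqref{righteigenvec_tildeA} ``stays bounded'' as $|\Lambda|\to\infty$ is false on $\mathfrak{D}_2$: its third and fourth components grow like $|\nu_1|\sim|\Lambda|^{1/3}$ and $|\nu_1|^2\sim|\Lambda|^{2/3}$ (since $\veps\nu_1^2=\mu_1^2=o(1)$ there), which is exactly the degeneration that forces the careful normalization estimate above; relatedly, the exponent in your final ``$1+O(|\lambda|^{1/3})$'' should be $O(|\Lambda|^{-1/3})=O(\veps^{1/2}|\lambda|^{-1/3})$, and it is only the constraint $|\lambda|>\veps^{3/2}\delta^{-1}$, not $|\lambda|<\delta$, that converts this into $O(\delta^{1/3})$. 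In short, the architecture (constant-coefficient comparison plus cube-root eigenvalue asymptotics plus the two-sided definition of $\mathfrak{D}_2$) matches the paper, but the decisive estimate—$\veps^{1/2}G_{jk}\leq C\delta^{1/3}$ via the lower bound $|\boldsymbol{\pi}_j\mathbf{v}_j|\gtrsim|\lambda|^{2/3}$—is absent, and without it the proposal does not close.
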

\begin{lemma}\label{Lem_Region4}
There exist constants $C_4,\delta_4,\veps_4>0$ such that for all $\veps \in [0,\veps_4]$ and $\delta \in (0,\delta_4]$,
\begin{equation}\label{EstiD_D4}
\sup_{\lambda\in \mathfrak{D}_4}|D(\lambda,\veps)-1| < C_4 \veps^{1/2}.
\end{equation}
Here $C_4$ is independent of $\veps$ and $\delta$.
\end{lemma}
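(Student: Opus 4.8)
The plan is to show that for $\lambda$ large (hence $\Lambda=\veps^{-3/2}\lambda$ large) the Evans function $D(\lambda,\veps)=D_\ast(\Lambda,\veps)$ is a small perturbation of its constant-coefficient value $1$. The natural framework is to work in the scaled variables $(\xi,\Lambda)$, and further rescale so that the asymptotic matrix eigenvalue $\nu_1$ is normalized; since $\lambda\in\mathfrak{D}_4$ means $|\lambda|>\delta^{-1}$, one has $|\lambda|$ bounded below but possibly unbounded, so a second rescaling adapted to $|\lambda|$ is needed. Concretely, I would introduce a variable $\tau$ with $\tau = |\mu_1|\,x$ (equivalently rescale $\xi$ by the modulus of the slow spatial eigenvalue), under which the coefficient matrix splits as $A = A^\infty(\lambda,\veps) + (\text{exponentially localized remainder})$, and the remainder, measured against the new unit of length, is integrable with $L^1$-norm of order $|\mu_1|^{-1}\lesssim |\lambda|^{-1/2}$ (using \eqref{pointestimateinX} and the asymptotics of the characteristic polynomial \eqref{dispersEP}: for large $|\lambda|$ the four roots $\mu_j$ of $d(\mu)$ behave like $\pm\sqrt{\lambda}$ and $\pm$(bounded), so the spectral gap $\mu_\ast-\oRe\mu_1$ grows like $|\lambda|^{1/2}$). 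This is exactly the estimate one needs to make the fixed-point iteration for $\mathbf{y}^+$ (the operator $\mathcal F$ from the proof of Proposition \ref{Prop_JointConti}) a contraction with the identity map $\boldsymbol\theta\equiv\mathbf v_1$ as approximate fixed point.

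First I would record the needed behavior of the matrix eigenvalues and eigenvectors for $\lambda\in\mathfrak{D}_4$: using Lemma \ref{splitting mu lam} / Lemma \ref{splitting mu lam weight} together with a direct Rouché or Newton-polygon argument on \eqref{dispersEP}, show that one root $\mu_1$ satisfies $\oRe\mu_1 \lesssim -|\lambda|^{1/2}$ while $\mu_\ast = \min_{j\ge2}\oRe\mu_j \gtrsim +|\lambda|^{1/2}$ (possibly after noting the two "fast" roots are $\approx\pm\sqrt{\lambda/(c^2-K)}$ and the two "slow" roots stay bounded, so in fact $\mu_\ast$ may be $O(1)$ from the slow root — in that case the relevant small parameter in the iteration is $\operatorname{dist}(0,\{\oRe\mu_j\})^{-1}$, which one must check stays bounded away from $0$ uniformly on $\mathfrak D_4$; this uniform spectral gap is guaranteed by Lemma \ref{splitting mu lam weight}(2) away from the origin). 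Next I would verify that the eigenvector $\mathbf v_1$ in \eqref{LRVec_A} and its normalizing left eigenvector $\mathbf w_1$ stay bounded (and bounded away from degeneracy) for $\lambda\in\mathfrak D_4$, again from the explicit formulas \eqref{LRVec_A}--\eqref{eigenvec_A}. Then I would run the fixed-point argument for $\boldsymbol\theta^+ = e^{-\mu_1\xi}\mathbf y^+$ on $C_b([\xi_0,\infty))$: writing $\boldsymbol\theta^+ = \mathbf v_1 + \mathcal R$, the correction $\mathcal R$ obeys $\|\mathcal R\|_\infty \lesssim \big(\int |A-A^\infty|\big)\|\boldsymbol\theta^+\|_\infty$, and since $\int_{\mathbb R}|A-A^\infty|\,dx = O(\veps^{1/2})$ (each entry of the localized part of $A_1,\lambda A_2$ is $O(\veps^{1+j/2}e^{-C\veps^{1/2}|x|})$ by \eqref{pointestimateinX}, whose integral over $x$ is $O(\veps^{1/2})$ even after multiplication by $\lambda A_2$ since $\lambda A_2$ has entries that, in the scaled picture, remain controlled — here one uses $\lambda=\veps^{3/2}\Lambda$ and the boundedness of the rescaled dynamics on $\mathfrak D_4$), one gets $\|\mathcal R\|_\infty = O(\veps^{1/2})$. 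Finally, by the characterization \eqref{CharacEvansD} one reads off $D(\lambda,\veps)=\mathbf w_1\cdot\lim_{\xi\to-\infty}e^{-\mu_1\xi}\mathbf y^+$ and, by the analogous estimate as $\xi\to-\infty$ (or by $D = \mathbf z^-\mathbf y^+$ evaluated at any $x$ with both $\mathbf z^-,\mathbf y^+$ close to their asymptotic forms), concludes $|D(\lambda,\veps)-1|\le C_4\veps^{1/2}$ uniformly in $\lambda\in\mathfrak D_4$ and in $\delta$ — the $\delta$-independence being automatic because the estimate only used $|\lambda|>\delta^{-1}$ to guarantee the spectral gap, which is actually uniform on all of $\Omega^\veps\setminus\{0\}$.

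The main obstacle I anticipate is bookkeeping the size of the perturbation $\lambda A_2$ in the regime where $|\lambda|$ is large but $\veps$ is small simultaneously: naively $\lambda A_2$ is large, so one cannot treat $A - A^\infty$ as small in the original $x$-variable without the rescaling by $|\mu_1|$, and one must be careful that after rescaling the *localized* part of the coefficient still integrates to something $o(1)$ — this is where \eqref{pointestimateinX} (exponential decay on scale $\veps^{-1/2}$) is essential, because the width of the solitary wave is $\veps^{-1/2}$ while $|\mu_1|^{-1}\le \veps^{1/2}|\Lambda|^{-1/2}$ is much shorter, so the wave looks "slowly varying" on the natural length scale and its contribution to the $L^1$ norm of $A-A^\infty$ measured in $\tau$ is $O(\veps^{1/2})$, not $O(1)$. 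A secondary technical point is ensuring the contraction constant and the starting point $\xi_0$ can be chosen uniformly over $\mathfrak D_4$; this follows once the uniform spectral gap and the uniform tail bound $\sup_{\xi\ge\xi_0}\int_{\xi}^\infty|A-A^\infty|\to0$ are in hand, both of which are consequences of Lemma \ref{splitting mu lam weight} and Theorem \ref{MainThm4}.
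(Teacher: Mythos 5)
Your proposal does not close, and the gap is exactly at the point the paper identifies as the main difficulty of the region $\mathfrak{D}_4$. First, the root asymptotics you rely on are incorrect: from the quartic \eqref{dispersEP} (see Proposition \ref{EigenvalueLargelambda}) the two ``slow'' roots satisfy $\mu_1=-1+O(|\lambda|^{-2})$, $\mu_4=1+O(|\lambda|^{-2})$, while the two ``fast'' roots grow \emph{linearly}, $\mu_{2,3}\sim \lambda/(c\mp\sqrt K)$; in particular the root $\mu_1$ that defines $\mathbf{y}^+$ stays bounded and the spectral gap $\mu_\ast-\oRe\mu_1$ is $O(1)$, not $O(|\lambda|^{1/2})$, so the rescaling $\tau=|\mu_1|x$ buys nothing. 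Second, and more seriously, the key quantitative claim that $\int_{\mathbb R}|A-A^\infty|\,dx=O(\veps^{1/2})$ ``even after multiplication by $\lambda A_2$'' is false: the localized part of $\lambda A_2$ has entries of size $|\lambda|\,\veps\, e^{-C\veps^{1/2}|x|}$ by \eqref{pointestimateinX}, hence $L^1$-norm of order $|\lambda|\veps^{1/2}$, which is unbounded on $\mathfrak D_4$ since $|\lambda|>\delta^{-1}$ is arbitrarily large. No change of the independent variable can repair this, because $\int|A-A^\infty|\,dx$ is invariant under such rescalings (the coefficient scales by $1/a$ while the measure scales by $a$). Moreover, after conjugating by the eigenvector matrices the situation is even worse than a factor $|\lambda|$: as the paper computes from \eqref{Gjk} and \eqref{mujlarge}, the entries mixing slow and fast modes grow like $|\lambda|^{2}$ through the ratio $|1-\mu_j^2|/|1-\mu_k^2|$. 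Consequently the fixed-point iteration you propose, with $\mathbf v_1$ as approximate fixed point and smallness of the $L^1$-norm of the perturbation as contraction parameter (i.e.\ Lemma \ref{Ch3PropProofRegD_2} in disguise), simply does not apply on $\mathfrak D_4$; the paper states this explicitly.

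What is missing is a structural, not a smallness, argument for the large part of the perturbation. The paper first tames the $|\lambda|^2$ entries by conjugating with the compensating factors $m_j=(c+(-1)^j\sqrt K)\,|1-\mu_j^2|^{1/2}$ (yielding $\widetilde V,\widetilde W$), and then decomposes the remaining large term as $\lambda\widetilde W R^{(1)}\widetilde V=\frac{\lambda}{2\sqrt K}S_1+\widetilde R^{(1)}$ in \eqref{DecompW0R1V0}, where $S_1$ in \eqref{S1sym} is symmetric and positive semi-definite for small $\veps$ and $\widetilde R^{(1)}$, together with $\widetilde W R^{(2)}\widetilde V$, is $O(\veps e^{-C\veps^{1/2}|x|})$ \emph{uniformly in} $\lambda$, hence has $L^1$-norm $O(\veps^{1/2})$. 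The term $\frac{\lambda}{2\sqrt K}S_1$ is not treated as small at all: it is absorbed into the unperturbed system, and Lemma \ref{Lem_BtildeEq} shows via an energy (monotonicity) estimate, using $\oRe\lambda\geq -\veps^{3/2}\eta(c_0)$ and the non-negativity of $S_1$ together with the splitting of $\oRe(\mu_j-\mu_1)$, that the relevant fundamental matrix is contractive in the appropriate directions. Only then does a fixed-point iteration with the genuinely small remainder yield $|\widetilde{\mathbf y}^+|,|\widetilde{\mathbf z}^-|=O(\veps^{1/2})$ and hence $|D(\lambda,\veps)-1|\leq C_4\veps^{1/2}$. Without some analogue of this sign/structure step your scheme cannot produce a $\lambda$-uniform bound on $\mathfrak D_4$.
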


\textit{On $\mathfrak{D}_{31}$  and $\mathfrak{D}_{32}$}: We note that these regions do not contain $\lambda=0$, which is a singular point of $D(\lambda,\veps)$ in the limit $\veps\to 0$. We recall that  the coefficient matrix of the system \eqref{ODE_LinEP} is independent of $x$ when $\veps=0$.
\begin{lemma}
For any fixed constant $\delta>0$, there hold  
\begin{subequations}\label{EstiD_D30}
\begin{align}
& \sup_{\lambda \in \mathfrak{D}_{31}}|D(\lambda,\veps) -1| \to 0 \quad \text{as}\quad \veps \to 0, \label{EstiD_D3} \\
& \sup_{\lambda \in \mathfrak{D}_{32}}|D(\lambda,\veps) -1| \to 0 \quad \text{as}\quad \veps \to 0. \label{EstiD_D32}
\end{align}
\end{subequations}
\end{lemma}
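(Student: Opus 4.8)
The plan is to obtain both estimates purely from the joint continuity of $D(\lambda,\veps)$ established in Proposition~\ref{Prop_JointConti}, together with the identity $D(\lambda,0)=1$ valid for all $\lambda\in\Omega^0=\{\lambda:\oRe\lambda\ge0,\ \lambda\neq0\}$ recorded in Remark~\ref{Rem_Dom} (which holds because the coefficient matrix of \eqref{ODE_LinEP} is $x$-independent when $\veps=0$). The key structural point is that $\lambda=0$ is the only place where $D(\cdot,\veps)$ behaves singularly as $\veps\to0$, and on both $\mathfrak{D}_{31}$ and $\mathfrak{D}_{32}$ the parameter $\lambda$ stays at a fixed positive distance from the origin; this is exactly what forces the convergence to be uniform.

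For \eqref{EstiD_D3} I would argue as follows. Fix $\veps_1\in(0,\veps_K')$. The set $\mathfrak{D}_{31}$ is a compact subset of $\{\oRe\lambda\ge0\}$ avoiding $\lambda=0$ (since $|\lambda|\ge\delta$), so $\mathfrak{D}_{31}\subset\Omega^\veps$ for every $\veps\in[0,\veps_1]$ and $\mathfrak{D}_{31}\subset\Omega^0$. Then $D$ is jointly continuous on the compact set $\mathfrak{D}_{31}\times[0,\veps_1]$ by Proposition~\ref{Prop_JointConti}, hence uniformly continuous there; applying uniform continuity with the second argument held at $0$ gives $\sup_{\lambda\in\mathfrak{D}_{31}}|D(\lambda,\veps)-D(\lambda,0)|\to0$ as $\veps\to0$, and $D(\lambda,0)=1$ by Remark~\ref{Rem_Dom}, which is \eqref{EstiD_D3}.

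For \eqref{EstiD_D32} the region itself varies with $\veps$, so in place of a fixed $\lambda$-set I would work on the fixed compact set
\[
\mathcal{K}:=\bigl\{(\lambda,\veps):\ 0\le\veps\le\veps_1,\ -\veps^{3/2}\eta(c_0)\le\oRe\lambda\le0,\ \tfrac{\delta}{2}\le|\oIm\lambda|\le\delta^{-1}\bigr\},
\]
with $\veps_1\in(0,\veps_K')$ fixed. By \eqref{ND-eps} every $(\lambda,\veps)\in\mathcal{K}$ satisfies $\lambda\in\Omega^\veps$, so $D$ is jointly continuous on $\mathcal{K}$. I would then argue by contradiction: if \eqref{EstiD_D32} failed, there would be $\veps_k\to0$ and $\lambda_k$ in the corresponding $\mathfrak{D}_{32}$ with $|D(\lambda_k,\veps_k)-1|\ge\eta_1$ for some fixed $\eta_1>0$; by compactness of $\mathcal{K}$ a subsequence of $(\lambda_k,\veps_k)$ converges to some $(\lambda_\infty,0)$, and squeezing $-\veps_k^{3/2}\eta(c_0)\le\oRe\lambda_k\le0$ forces $\oRe\lambda_\infty=0$ while $\tfrac{\delta}{2}\le|\oIm\lambda_\infty|\le\delta^{-1}$, so $\lambda_\infty\in\Omega^0$ and $D(\lambda_\infty,0)=1$; joint continuity then gives $D(\lambda_k,\veps_k)\to1$, a contradiction, and \eqref{EstiD_D32} follows.

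The only mildly delicate point, and the one I would treat with care, is the bookkeeping forced by the $\veps$-dependence of $\mathfrak{D}_{32}$: one must pick a fixed compact neighborhood that still lies inside the set $\{(\lambda,\veps):\lambda\in\Omega^\veps,\ \veps\in[0,\veps_K')\}$ on which Proposition~\ref{Prop_JointConti} provides joint continuity, and whose $\veps=0$ slice remains in $\Omega^0$, i.e.\ avoids the origin; the latter is guaranteed precisely by the condition $|\oIm\lambda|\ge\delta/2$ built into $\mathfrak{D}_{32}$. No genuinely new analysis is required here: all the substance — the construction of $\mathbf{y}^+$ and $\mathbf{z}^-$ and their joint continuity via the contraction-mapping iteration — is already contained in Proposition~\ref{Prop_JointConti}.
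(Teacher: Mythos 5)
Your argument is correct and rests on exactly the same two ingredients as the paper's proof: the joint continuity of $D(\lambda,\veps)$ from Proposition~\ref{Prop_JointConti} and the identity $D(\lambda,0)=1$ on $\Omega^0$ from Remark~\ref{Rem_Dom}; for \eqref{EstiD_D3} your uniform-continuity argument on $\mathfrak{D}_{31}\times[0,\veps_1]$ is the paper's argument verbatim. The only divergence is in how the $\veps$-dependence of $\mathfrak{D}_{32}$ is neutralized: the paper extends $D$ to a function $D^{\veps_K}$ on a fixed compact rectangle by freezing its value along the line $\oRe\lambda=-\veps^{3/2}\eta(c_0)$ and then observes that the supremum is continuous in $\veps$, whereas you work directly on the fixed compact set $\mathcal{K}=\{(\lambda,\veps):0\le\veps\le\veps_1,\ -\veps^{3/2}\eta(c_0)\le\oRe\lambda\le0,\ \delta/2\le|\oIm\lambda|\le\delta^{-1}\}$, which lies inside the continuity domain of Proposition~\ref{Prop_JointConti}, and run a sequential-compactness contradiction whose limit point necessarily has $\oRe\lambda_\infty=0$ and $\lambda_\infty\neq0$, hence $D(\lambda_\infty,0)=1$. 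Your variant avoids the slightly artificial extension at the cost of an extraction-of-subsequences argument; both are equally rigorous, and your restriction $\veps_1<\veps_K'$ correctly keeps you inside the range where $D(\cdot,\veps)$ is defined on all of $\Omega^\veps$.
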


\begin{proof}
From that $D(\lambda,\veps)$ is jointly continuous on a fixed compact set $\{(\lambda,\veps):\lambda \in \mathfrak{D}_{31}, \veps \in [0,\veps_K]\}$ and that $D(\lambda,0)=1$ for $\lambda \in \mathfrak{D}_{31}$,  \eqref{EstiD_D3} can be obtained in a similar manner as in the proof of Lemma \ref{LemEstiD1}.

Even though the region $\mathfrak{D}_{32}$ depends on $\veps$, one may extend $D(\lambda,\veps)$ to a function $D^{\veps_K}(\lambda,\veps)$, jointly continuous on a fixed compact set 
\[
\{(\lambda,\veps): -\veps_K^{3/2}\eta(c_0) \leq \oRe \lambda \leq 0, \, \delta/2\leq |\oIm \lambda| \leq \delta^{-1},\veps\in[0,\veps_K]\}
\]
 by defining
\[
D^{\veps_K}(\lambda,\veps):=
\left\{
\begin{array}{l l}
D(\lambda,\veps) & \text{ on } \{(\lambda,\veps):\lambda \in   \mathcal{D}_{32},\, \veps \in [0,\veps_K]\}, \\
D( -\veps^{3/2}\eta(c_0) + i\oIm \lambda,\veps) & \text{on} \; \left\{ (\lambda,\veps): \begin{array}{l}
   -\veps_K^{3/2}\eta(c_0) \leq \oRe \lambda < -\veps^{3/2}\eta(c_0),  \\
   \delta/2\leq |\oIm \lambda| \leq \delta^{-1},\, \veps\in[0,\veps_K]
  \end{array}\right\}.
\end{array}
\right.
\]
Hence,
\[
 \sup_{\substack{-\veps_K^{3/2}\eta(c_0) \leq \oRe \lambda \leq 0,\\  \delta/2\leq |\oIm \lambda| \leq \delta^{-1}}} |D^{\veps_K}(\lambda,\veps) - 1| = \sup_{\lambda \in \mathfrak{D}_{32}}|D(\lambda,\veps) -1|
\]
is continuous in $\veps\in[0,\veps_K]$, and we obtain \eqref{EstiD_D32}. 
\end{proof}

By using \eqref{EstiD_D1}--\eqref{EstiD_D30}, we now prove  Proposition \ref{ThmEvansConverg}.

%
%
%
%

\begin{proof}[Proof of Proposition \ref{ThmEvansConverg}]
Let $\gamma>0$ is given. From the property of $D_{KdV}$, \eqref{EstiD_D2}, and \eqref{EstiD_D4}, there exist constants $\delta_\gamma,\veps_\gamma>0$ such that for all $\veps\in(0,\veps_\gamma]$ and $\delta \in (0,\delta_\gamma]$, there hold 
\begin{equation}\label{EvansConverg1}
\sup_{|\Lambda|\geq \delta_\gamma^{-1}}|D_{KdV}(\Lambda) - 1| < \frac{\gamma}{2},
\end{equation}
\begin{equation}\label{EvansConverg2}
\sup_{\lambda\in \mathfrak{D}_2}|D(\lambda,\veps)-1| < \frac{\gamma}{2}, \quad  \sup_{\lambda\in \mathfrak{D}_4}|D(\lambda,\veps)-1| < \frac{\gamma}{2}.
\end{equation}
From \eqref{EstiD_D30}, there is a constant $\veps_3>0$ such that for all $\veps\in(0,\veps_3]$,
\begin{equation}\label{EvansConverg3}
\sup_{\lambda \in \Omega^\veps, \delta_\gamma \leq |\lambda| \leq \delta_\gamma^{-1}  }|D(\lambda,\veps) -1| < \frac{\gamma}{2}.
\end{equation}
Since $D(\lambda,\veps)=D_\ast(\Lambda,\veps)$, it follows from \eqref{EvansConverg1}, \eqref{EvansConverg2} and \eqref{EvansConverg3} that 
\begin{equation}\label{EvansConverg4}
\sup_{|\Lambda| \geq \delta_\gamma^{-1}}|D_\ast(\Lambda,\veps) -D_{KdV}(\Lambda)| < \gamma.
\end{equation}
From \eqref{EstiD_D1}, there exists $\veps_1>0$ such that for all $\veps\in (0,\veps_1]$,
\begin{equation}\label{EvansConverg5}
\sup_{|\Lambda| \leq \delta_\gamma^{-1}}|D_\ast(\Lambda,\veps) -D_{KdV}(\Lambda)| < \gamma.
\end{equation}
From \eqref{EvansConverg4} and \eqref{EvansConverg5}, we conclude that there is $\veps_0:=\min\{\veps_\gamma, \veps_1,\veps_3\}$ such that for all $\veps\in(0,\veps_0]$,
\[
\sup_{\Lambda\in\Omega_\ast}|D_\ast(\Lambda,\veps) -D_{KdV}(\Lambda)| < \gamma.
\]
This finishes the proof.
\end{proof}

\section{Splitting of matrix eigenvalues}\label{Sec_Split}
In this section, we prove Lemma \ref{splitting mu lam} and Lemma \ref{splitting mu lam weight}: the splitting properties of the roots of the characteristic polynomial $d(\mu)$.  We first present some preliminary observations.

We recall that $d(\mu)=0$ is equivalent to that $\mu$ satisfies one of the equations (see \eqref{dispersEP}--\eqref{Charact1})
\begin{equation*}
d_{\pm}(\mu)= \mu \left(c \pm \sqrt{\frac{1}{1-\mu^2}+ K}\, \right) = \lambda.
\end{equation*}
Here $d_\pm(\mu)$ are analytic in $\mu\in \mathbb{C}$ except the branch cut $(-\infty,-1]\cup [1,+\infty)$ since we may write
\[
\sqrt{\frac{1}{1-\mu^2}+ K} = \sqrt{K}\frac{\sqrt{\sqrt{1+\frac{1}{K}}+\mu}\cdot\sqrt{\sqrt{1+\frac{1}{K}}-\mu}}{\sqrt{1+\mu}\cdot\sqrt{1-\mu}}.
\]
By inspection, we have
\begin{equation}\label{Diffdpm}
\partial_\mu d_{\pm}(\mu) = c \pm \frac{1+K(1-\mu^2)^2}{(1-\mu^2)^2\sqrt{\frac{1}{1- \mu^2} + K}\,}, \quad \partial_\mu^2d_{\pm}(\mu) = \pm\frac{\mu(-K\mu^4- 2K\mu^2+3K+3)}{(1-\mu^2)^4\left(\frac{1}{1-\mu^2}+K\right)^{3/2}}.
\end{equation}

Plugging $\lambda = -i \omega$ and $\mu = ik$ for $\omega,k \in \mathbb{R}$ into $\lambda=d_{\pm}(\mu)$, we obtain 
\begin{equation}\label{map_omega2}
\omega=\omega_\pm(k):=i\cdot d_\pm(ik)= -k\left( c \pm \sqrt{\frac{1}{1+k^2} + K } \right).
\end{equation}
By inspection, we see that
\begin{equation*}
\partial_k \omega_\pm(k)  =  -\left(c \pm \frac{1+K(1+k^2)^2}{(1+k^2)^2\sqrt{\frac{1}{1+k^2} + K}} \right),  \quad \partial_k^2\omega_-(k) = \frac{k(Kk^4-2Kk^2-3K-3)}{(1+k^2)^4\left(\frac{1}{1+k^2}+K\right)^{3/2}}. 
\end{equation*}
Hence,
\begin{equation*}
\partial_k\omega_\pm |_{k=0} = -(c\pm\sqrt{1+K}), \quad \lim_{k \to \pm \infty}\partial_k\omega_\pm = -(c \pm \sqrt{K}).
\end{equation*}
Furthermore, one can check that $\partial_k\omega_-$ decreases on the interval $(0,\overline{k}_-)$ and increases on the interval $(\overline{k}_-,\infty)$, where $\overline{k}_-:= \sqrt{\tfrac{K+\sqrt{4K^2+3K}}{K}}$.
These observations yield that (see Figure \ref{graphomega})  for $\veps\geq 0$, where $\veps=c-\sqrt{1+K}$,
\begin{equation}\label{map_omega1}
\partial_k\omega_+(k) \leq  -c \quad\text{for } k\in\mathbb{R}, \quad \partial_k\omega_-(k) <0  \quad \text{for } k\in\mathbb{R}\setminus\{0\}, \quad \partial_k\omega_-(k)|_{k=0}=-\veps
\end{equation}
since $\partial_k\omega_\pm$ are symmetric about $k=0$. Lastly, we see that for any positive constant $\delta>0$, there is a constant $C(\delta)>0$ (independent of $\veps$) such that 
\begin{equation}\label{Map_om}
\sup_{|k|\in[\delta,\infty)}\partial_k\omega_-(k) 
  < \max\{\partial_k\omega_-(\delta),-(c-\sqrt{K})\} < -C(\delta).
\end{equation}
\begin{figure}[h]
    \centering
        \includegraphics[scale=0.4]{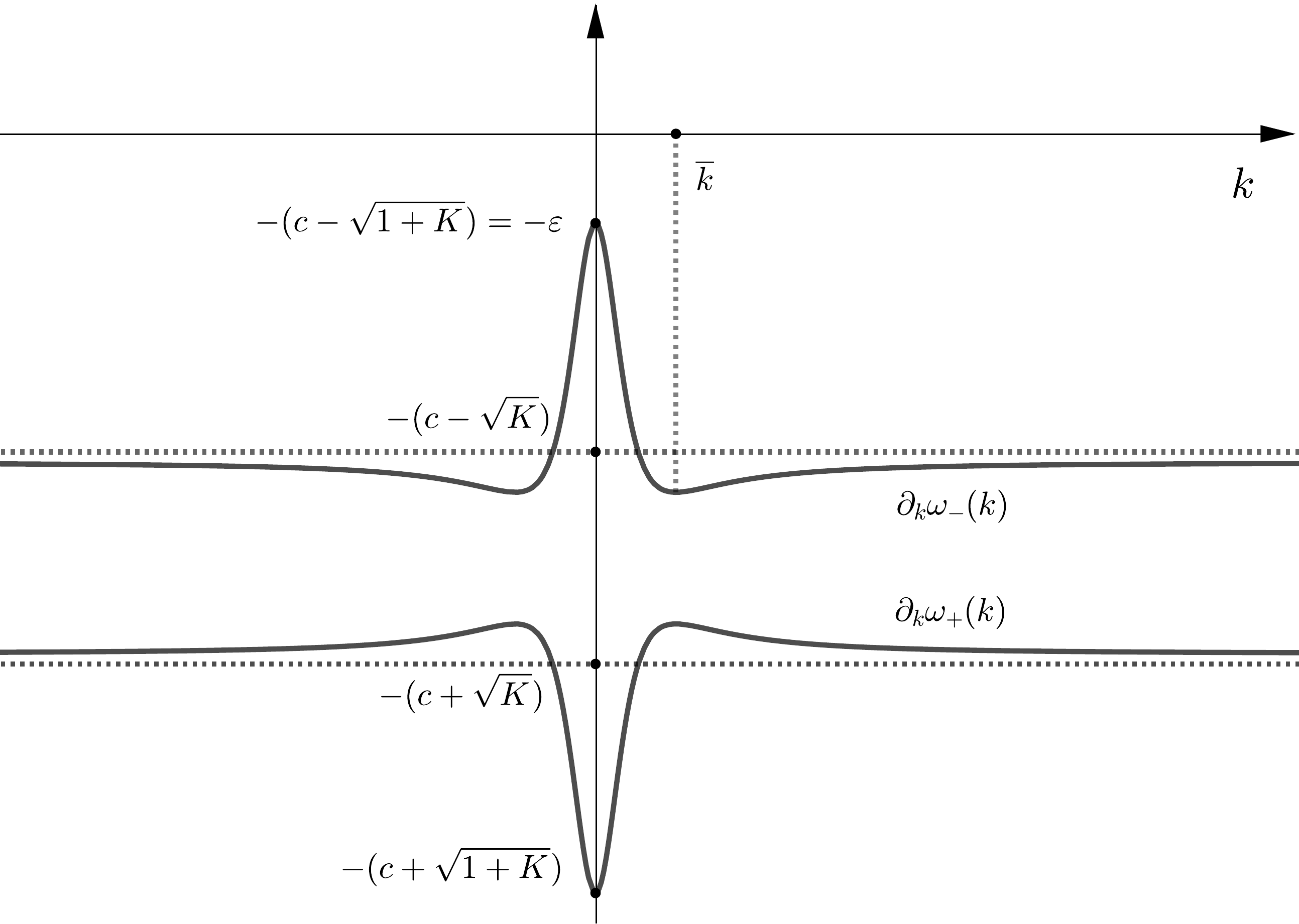}
        \caption{The graphs of $\partial_k\omega_\pm(k)$}
        \label{graphomega}
\end{figure}
\begin{proof}[Proof of Lemma \ref{splitting mu lam}]
We only consider the case $\veps>0$ since the case $\veps=0$ can be checked in a similar manner.   We first prove \eqref{EigenSpliting1_1}: for $\veps>0$, 
\[
\oRe\mu_1 < 0 = \oRe\mu_2 = \oRe\mu_3 < \oRe \mu_4 \quad  \text{when} \;  \oRe\lambda =0.
\]
 By \eqref{map_omega2}--\eqref{map_omega1}, the mappings 
\begin{equation}\label{SplitMuLam1}
k\in \mathbb{R} \mapsto d_{\pm}(ik)= -i\omega_\pm(k)\in i\mathbb{R}
\end{equation}
are one-to-one and onto. Hence for each $\lambda$ with $\oRe \lambda=0$, there exist exactly two zeros $\mu_2=\mu_2(\lambda,\veps)$ and $\mu_3=\mu_3(\lambda,\veps)$ of $d(\mu)$ satisfying $d_+(\mu_2)=\lambda$, $ d_-(\mu_3)=\lambda$, and
\begin{equation}
 \oRe\mu_2=\oRe\mu_3=0.
\end{equation}
We let $\mu_1=\mu_1(\lambda,\veps)$ and $\mu_4=\mu_4(\lambda,\veps)$ be the other two branches of solutions to $d_-(\mu)=\lambda$ satisfying
\begin{equation}\label{SplitMuLam2}
 -\sqrt{\frac{c^2-K-1}{c^2-K}}=\mu_1 < 0< \mu_4 = \sqrt{\frac{c^2-K-1}{c^2-K}} \quad \text{at } \lambda=0.
\end{equation}

Now we claim that $\mu_1$ satisfies $\oRe\mu_1<0$ without crossing the imaginary axis as long as  $\oRe \lambda=0$. Suppose that for some $\lambda_0$ with $\oRe \lambda_0=0$, there is $k_0\in\mathbb{R}$ such that $\mu_1= ik_0$ at $\lambda=\lambda_0$. Then, since the mappings \eqref{SplitMuLam1} are one-to-one and onto,  one must have $\mu_1=\mu_2$ or $\mu_1=\mu_3$ at $\lambda=\lambda_0$. In other words, $\lambda_0=d_+(\mu)=(\mu-\mu_1)^2\widetilde{d}_+(\mu)$ for some $\widetilde{d}_+$ with $\widetilde{d}_+(\mu_1) \neq 0$, or $\lambda_0=d_-(\mu)=(\mu-\mu_1)^2\widetilde{d}_-(\mu)$ for some $\widetilde{d}_-$ with $\widetilde{d}_-(\mu_1) \neq 0$ at $\lambda=\lambda_0$. This contradicts to \eqref{map_omega1} since by the chain rule,
\[
0=\partial_\mu d_\pm(\mu)|_{\mu =\mu_1(\lambda_0)=ik_0} =-i\partial_k d_\pm(ik)|_{k=k_0}= -\partial_k\omega_{\pm}(k)|_{k=k_0} \neq 0.
\]   
 For the same reason, we have $0 < \oRe\mu_4$ as long as $\oRe \lambda=0$.  Hence  \eqref{EigenSpliting1_1} is true.

Next we prove \eqref{EigenSpliting1_2}: for $\veps>0$,
\[
\oRe\mu_1 < 0 < \oRe\mu_j  \quad \text{when} \; \oRe\lambda >0 \quad (j=2,3,4).
\]
From \eqref{SplitMuLam1}, we see that   any solutions $\mu$ of $d_{\pm}(\mu)=\lambda$ with $\oRe \lambda  \neq 0$ cannot lie in the imaginary axis. Combined with the continuity, this further implies \emph{the consistent splitting property}: \textit{as long as} $\oRe\lambda>0$, 
\[
\textit{the number of zeros of }d(\mu) \textit{ lying on the left (resp. right) half-plane  does not change.}
\]
Hence it is enough to check that the inequality  \eqref{EigenSpliting1_2} holds for some sufficiently small $\lambda>0$.

First of all, it is true that $\oRe \mu_1<0<\oRe \mu_4$ for all sufficiently small $\lambda>0$ by \eqref{SplitMuLam2} and the continuity.  Since  $\mu_2=\mu_3=0$ at $\lambda=0$, expanding $d_{\pm}(\mu)$ around $\mu=0$, we have that for sufficiently small $\lambda>0$, 
 \begin{subequations}\label{SplitMuLam411}
\begin{align} 
& 0<\lambda= d_+(\mu_2) = (c+\sqrt{1+K})\mu_2 \cdot  \left( 1  + O(|\mu_2|^2) \right),  \label{SplitMuLam41} \\
& 0<\lambda= d_-(\mu_3) = (c-\sqrt{1+K})\mu_3  \cdot \left( 1  + O(|\mu_3|^2) \right). \label{SplitMuLam42}
\end{align}
\end{subequations}
Since $c-\sqrt{1+K}=\veps>0$, \eqref{SplitMuLam411} implies that $\oRe\mu_2, \oRe \mu_3 >0$ for sufficiently small $\lambda>0$. Hence  \eqref{EigenSpliting1_2} is true.
\end{proof}

\begin{proof}[Proof of Lemma \ref{splitting mu lam weight}]

Since $d_\pm(-\mu)=-d_\pm(\mu)$ for $\mu \in \mathbb{C}\setminus\{(-\infty,-1]\cup [1,\infty)\}$, we obtain 
\begin{equation*}
\small\begin{split}
\oRe\left( d_\pm(ik-\etba)  - (c\pm \sqrt{1+K})(ik-\etba) -  \frac{\pm(ik-\etba)^3}{2\sqrt{1+K}}\right)
& = \oRe  \left(  \sum_{n \geq 5,\; n \text{ is odd}} \partial_\mu^n d_\pm(\mu)|_{\mu=0}(ik-\etba)^n \right) \\
& =:\mathcal{R}_1,
\end{split}
\end{equation*}
for all $\mu=ik-\etba$ close to the origin. Since $\partial_\mu^n d_\pm(0)$ is real-valued for all nonnegative integers $n$, there exists a constant $C_1>0$, uniform in sufficiently small $k,\etba,\veps$, such that 
\begin{equation}\label{SplMuWe}
|\mathcal{R}_1| \leq C_1 \etba|ik-\etba|^4 = C_1 ( k^4\etba + 2k^2\etba^3 + \etba^5).
\end{equation}
For any fixed $0<c_0<\sqrt{2\sqrt{1+K}}$, let  $\etba=c_0\veps^{1/2}$. Then we may choose sufficiently small $\veps_1(c_0)>0$ and $\delta>0$ (independent of $\veps$) such that for all $0<\veps<\veps_1$ and $|k| \leq \delta$, \eqref{SplMuWe} holds and
\begin{equation}
\begin{split}
\oRe d_-(ik-\etba) 
& = -\etba(c-\sqrt{1+K}) +\frac{ \etba^3}{2\sqrt{1+K}} - \frac{3k^2\etba}{2\sqrt{1+K}} + \mathcal{R}_1\\
& = -\veps \etba \left(1 - \frac{c_0^2}{2\sqrt{1+K}} \right) - \frac{3k^2\etba}{2\sqrt{1+K}} +  \mathcal{R}_1 \\
& < - \frac{\veps \etba}{2}\left(1 - \frac{c_0^2}{2\sqrt{1+K}} \right) - \frac{3k^2\etba}{4\sqrt{1+K}} \\
& <- \frac{\veps \etba}{2}\left(1 - \frac{c_0^2}{2\sqrt{1+K}} \right),
\end{split}
\end{equation}
where we have used \eqref{SplMuWe} in the first inequality. Hence, for all $0<\veps<\veps_1(c_0)$, we obtain that
\begin{equation}\label{SplitMuWei2}
\sup_{|k|\leq \delta}{\oRe d_-(ik-\etba) } \leq - \frac{\veps \etba}{2}\left(1 - \frac{c_0^2}{2\sqrt{1+K}} \right).
\end{equation}

We fix $\delta>0$. By the Taylor theorem and the chain rule, for all $k\in\mathbb{R}$ and sufficiently small $\etba>0$, we have that 
\begin{equation}\label{SplitMuWei3}
\begin{split}
\oRe d_\pm(ik-\etba) 
& = \oRe d_\pm(ik) + \partial_\beta \text{Re}\,d_{\pm} (ik-\etba) |_{\etba=0} \etba + \left.\partial_\etba^2 \text{Re}\,d_{\pm} (ik-\etba)\right|_{\etba=\etba_k} \etba^2 \\
& = \partial_k\omega_\pm(k) \etba + \oRe \partial_\mu^2 d_\pm(\mu)|_{\mu=ik-\etba_k} \etba^2
\end{split}
\end{equation}
for some $\etba_k\in(0,\etba)$. From \eqref{Diffdpm}, we have that 
\begin{equation}\label{Diffdpm4}
\sup_{k\in\mathbb{R},\beta\in[0,1/2]}\left| \partial_\mu^2d_{\pm}(\mu)|_{\mu=ik-\beta} \right| \leq C_{2}
\end{equation}
for some constant $C_{2}>0$ (independent of $\veps$).
From \eqref{SplitMuWei3} and \eqref{Diffdpm4},  there exists $\veps_2>0$  such that for all $0<\veps \leq \veps_2$, 
\begin{equation}\label{SplitMuWei4}
\begin{split}
\sup_{|k|\in[\delta,\infty)}\oRe d_-(ik-\etba) 
&  < \sup_{|k|\in[\delta,\infty)}\partial_k\omega_-(k) \beta + C_2\etba^2  \\
& < -C(\delta)\etba + C_2\etba^2 \\
& < -\frac{\veps\beta}{2}\left(1- \frac{c_0^2}{2\sqrt{\mathsf{V}}} \right).
\end{split}
\end{equation}
where we have used \eqref{Map_om} in the second inequality and $\beta=c_0\veps^{1/2}$ in the last inequality.
Combining \eqref{SplitMuWei2} and \eqref{SplitMuWei4}, we obtain that for all sufficiently small $\veps>0$,
\begin{equation}\label{Mu_0}
\sup_{k\in\mathbb{R}}\oRe d_\pm(ik-\etba) < - \frac{\veps\beta}{2}\left(1- \frac{c_0^2}{2\sqrt{\mathsf{V}}} \right),
\end{equation}
where the bound for $\oRe d_+$ easily follows from \eqref{SplitMuWei3} and \eqref{Diffdpm4} using
\eqref{map_omega1}. Therefore, we have
\begin{equation}\label{Mu_00}
\{d_\pm(ik-\beta): k \in \mathbb{R}\} \subset \mathbb{C}\setminus \Omega^\veps=\{\lambda \in \mathbb{C} :  \oRe \lambda < -\veps^{3/2}\eta(c_0)\}
\end{equation}
and finish the proof of the first assertion.

To prove the second assertion, we let $\mu=\mu'-\etba$ and consider the zeros of $d(\mu'-\beta)$. By \eqref{Mu_00}, any solutions $\mu'$ of $d_\pm(\mu'-\etba)=\lambda$ with $\lambda \in \Omega^\veps$ cannot lie in the imaginary axis. This implies that as long as $\lambda \in \Omega^\veps$, the number of zeros of $d(\mu'-\beta)$ lying on the left (resp. right) half-plane does not change. Hence, it suffices to consider the point $\lambda =0\in\Omega^\veps$. 

We choose $\veps_K'>0$ such that for all $\veps\in(0,\veps_K']$, \eqref{Mu_0} holds and the solutions of $d_\pm(\mu'-\beta)=0$ can be labeled such that
\[
\mu_1' = \etba -\sqrt{\frac{c^2-1-K}{c^2-K}}< 0 < \mu'_2 = \mu'_3=\etba <  \mu_4' = \etba +\sqrt{\frac{c^2-1-K}{c^2-K}}
\]
(recall that $\beta=c_0\veps^{1/2}$, $0<c_0<\sqrt{2\mathsf{V}}$, and $c=\sqrt{1+K}+\veps$). Now we conclude that  as long as $\lambda \in \Omega^\veps$, the zeros of $d(\mu)$ can be labeled so that
\[
\oRe \mu_1 +\beta < 0 < \oRe \mu_j+\beta \quad (j=2,3,4).
\]
 This finishes the proof of the second assertion.
\end{proof}

\section{Estimates uniform in eigenvalue parameters}\label{Sec5}
The first goal of this section is to prove Lemma \ref{Lem_Region2} and Lemma \ref{Lem_Region4} (the estimates of $D(\lambda,\veps)$ on the regions $\mathfrak{D}_2$ and $\mathfrak{D}_4$), and these lemmas will be proved in subsection \ref{Sec5.2}--\ref{Sec5.3}, respectively. The second goal is to show Proposition \ref{MainIngred}.\eqref{UnibdRes}, the uniform boundedness of the resolvent operator, which is a crucial ingredient for the asymptotic linear stability result. This will be covered in subsection \ref{Sec5.4}.

As a preliminary step,  we start with studying the asymptotic behaviors of characteristic roots for non-zero small $|\lambda|$ and large $|\lambda|$ in subsection \ref{Sec5.1}.
\subsection{Asymptotic behaviors of characteristic roots}\label{Sec5.1}
By employing a perturbation argument, we investigate the behavior of the roots of the characteristic polynomial $d(\mu)$ on the region $\mathfrak{D}_2$, in which $|\lambda|$ is small, but is non-zero. 
\begin{proposition}\label{prop_EigenApprox}
\begin{enumerate}
\item There exists $\delta_2'>0$ such that as long as $(\lambda,\veps,\delta$) satisfies 
\begin{equation}\label{EigAppC}
\veps^{3/2}\delta^{-1} < |\lambda| < \delta, \quad  0 \leq \veps \leq \veps_K, \quad 0<\delta\leq \delta_2',
\end{equation}
the solutions of $d_-(\mu)=\lambda$ can be labeled so that  they satisfy
\[
\mu_j = (-2\sqrt{1+K}\,\lambda)^{1/3}e^{2\pi i j/3} (1+ O(\delta^{2/3})  ), \quad (j=1,2,3),
\]
as $\delta \to 0$ uniformly in $\veps$.
\item There exists $\delta_2''>0$ such that as long as $(\lambda,\veps,\delta)$ satisfies 
\[
0<|\lambda|<\delta, \quad 0\leq \veps \leq \veps_K, \quad 0<\delta \leq  \delta_2'',
\]
the solution of $d_+(\mu)=\lambda$ (say $\mu_4$) satisfies 
\[
\mu_4 = \frac{\lambda}{c+\sqrt{1+K}}(1+ O(\delta^{2}))
\]
as $\delta \to 0$ uniformly in $\veps$.
\end{enumerate}
\end{proposition}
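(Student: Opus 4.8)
The plan is to analyze the two equations $d_-(\mu)=\lambda$ and $d_+(\mu)=\lambda$ separately near $\mu=0$, since by \eqref{SplitMuLam411} the factor $c-\sqrt{1+K}=\veps$ vanishes in the limit $\veps\to0$ whereas $c+\sqrt{1+K}$ stays bounded away from zero, and this is precisely the source of the asymmetry in the statement. Recall from the Taylor expansion used in the proof of Lemma \ref{splitting mu lam weight} that near $\mu=0$,
\[
d_-(\mu) = \veps\,\mu - \frac{1}{2\sqrt{1+K}}\mu^3 + O(\mu^5), \qquad d_+(\mu) = (c+\sqrt{1+K})\mu + \frac{1}{2\sqrt{1+K}}\mu^3 + O(\mu^5),
\]
the coefficients being real and uniform in small $\veps$.

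For the first part, I would set $\mu = (-2\sqrt{1+K}\,\lambda)^{1/3}\zeta$ and divide $d_-(\mu)=\lambda$ through by $\lambda$. Writing $r:=(-2\sqrt{1+K}\,\lambda)^{1/3}$, the equation becomes
\[
\zeta^3 - 1 + \frac{\veps\,r}{\lambda}\zeta + O\!\left(\frac{r^5}{\lambda}\zeta^5\right) = 0.
\]
Under the constraint $\veps^{3/2}\delta^{-1} < |\lambda| < \delta$ one has $|r|^3 = 2\sqrt{1+K}|\lambda|$, so $|r|\asymp |\lambda|^{1/3}\leq \delta^{1/3}$; moreover $|\veps r/\lambda| = 2^{1/3}(1+K)^{1/6}\veps|\lambda|^{-2/3} \leq 2^{1/3}(1+K)^{1/6}\delta^{-2/3}\veps$, and since $\veps\leq (\,|\lambda|\delta\,)^{2/3}\le \delta^{4/3}$ we get $|\veps r/\lambda| = O(\delta^{2/3})$; likewise $|r^5/\lambda|\asymp |\lambda|^{2/3} = O(\delta^{2/3})$. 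So the equation is an $O(\delta^{2/3})$-perturbation of $\zeta^3=1$, whose roots are $e^{2\pi i j/3}$, $j=1,2,3$, all simple. An application of the implicit function theorem (or Rouché on small circles around each cube root of unity, radius $\sim\delta^{2/3}$) gives three simple roots $\zeta_j = e^{2\pi i j/3}(1+O(\delta^{2/3}))$ uniformly in $\veps$, hence $\mu_j = r\,e^{2\pi i j/3}(1+O(\delta^{2/3}))$ as claimed. One should note the branch of $(-2\sqrt{1+K}\lambda)^{1/3}$ is irrelevant to the stated conclusion since the three roots are permuted among themselves.

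For the second part, write $d_+(\mu)=\lambda$ as $(c+\sqrt{1+K})\mu\bigl(1 + \frac{1}{2\sqrt{1+K}(c+\sqrt{1+K})}\mu^2 + O(\mu^4)\bigr)=\lambda$. Since $c+\sqrt{1+K}$ is bounded below uniformly in $\veps\in[0,\veps_K]$, for $|\lambda|<\delta$ small the root $\mu_4$ near $0$ satisfies $\mu_4 = \frac{\lambda}{c+\sqrt{1+K}}(1+O(\mu_4^2))$; a bootstrap (first $\mu_4 = O(\delta)$, then feeding back) yields $\mu_4^2 = O(\delta^2)$ and hence $\mu_4 = \frac{\lambda}{c+\sqrt{1+K}}(1+O(\delta^2))$, uniformly in $\veps$. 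Formally this is again an implicit function theorem / contraction argument on the map $\mu\mapsto \lambda(c+\sqrt{1+K})^{-1}(1+\tfrac{1}{2\sqrt{1+K}(c+\sqrt{1+K})}\mu^2+O(\mu^4))^{-1}$, which is a contraction on a disk of radius $O(\delta)$. The main obstacle is bookkeeping the uniformity in $\veps$ of all error terms — particularly in part (1), verifying that the scaled perturbation parameters $\veps r/\lambda$ and $r^5/\lambda$ are genuinely $O(\delta^{2/3})$ under the two-sided constraint $\veps^{3/2}\delta^{-1}<|\lambda|<\delta$, and that the remainder in the Taylor expansion of $d_-$ has $\veps$-independent bounds on a fixed neighborhood of $\mu=0$ (which follows from the explicit formula for $\partial_\mu^2 d_\pm$ in \eqref{Diffdpm} and its higher analogues, all of which are analytic and bounded near $\mu=0$ uniformly for $c$ in a bounded interval). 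Choosing $\delta_2'$ small enough that the perturbed cubic's roots stay inside the branch-cut-free region $|\mu|<1$ completes the argument.
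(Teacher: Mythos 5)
Your argument is essentially the paper's proof: both expand $d_\pm$ about $\mu=0$ with $\veps$-independent remainders, rescale by $(-2\sqrt{1+K}\,\lambda)^{1/3}$ so that $d_-(\mu)=\lambda$ becomes an $O(\delta^{2/3})$-perturbation of $\zeta^3=1$ under exactly the estimates $|\veps\widetilde{\mu}_j/\lambda|,\,|\widetilde{\mu}_j^5/\lambda|\lesssim\delta^{2/3}$ (the paper phrases the perturbation step as a fixed-point equation for the correction $\mathcal{R}_j$, you as Rouch\'e/implicit function theorem near the cube roots of unity), and both treat $d_+$ by the trivial inversion $\mu_4=\lambda/\widetilde{\mathcal{R}}_+$ after noting $\mu_4=O(\delta)$. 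One minor slip: your intermediate inequality $\veps|\lambda|^{-2/3}\le \delta^{-2/3}\veps$ uses the wrong side of $|\lambda|<\delta$; the correct and immediate route, already contained in your own constraint, is $\veps|\lambda|^{-2/3}\le(|\lambda|\delta)^{2/3}|\lambda|^{-2/3}=\delta^{2/3}$ from $|\lambda|>\veps^{3/2}\delta^{-1}$, so the stated $O(\delta^{2/3})$ bound and the conclusion stand.
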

\begin{proof}
By expanding $d_-(\mu)$ around $\mu=0$ and using that $c=\sqrt{1+K}+\veps$, we see that $d_-(\mu)=\lambda$ is equivalent to  
\begin{equation}\label{SmallMu2}
\frac{\mu^3}{2\sqrt{1+K}} + \lambda =  \veps\mu +\mu^5\mathcal{R}_-(\mu).
\end{equation}
 Here, $\mathcal{R}_-(\mu)$ is analytic near $\mu=0$ and independent of $\veps$, and it satisfies $\mathcal{R}_-(\mu) =O(1)$ as $\mu$ tends to $0$.  We let $\widetilde{\mu}_j= (-2\sqrt{1+K}\lambda)^{1/3}e^{2\pi i j/3}$ for $j=1,2,3$, and then plug the Ansatz $\mu=\widetilde{\mu}_j(1-\mathcal{R}_j)^{1/3}$ into \eqref{SmallMu2}. Then we obtain 
\begin{equation}\label{SmallMu}
\mathcal{R}_j = \frac{\veps\widetilde{\mu}_j}{\lambda}(1-\mathcal{R}_j)^{1/3} + \frac{\widetilde{\mu}_j^5(1-\mathcal{R}_j)^{5/3}}{\lambda}\mathcal{R}_-\left( \widetilde{\mu}_j(1-\mathcal{R}_j)^{1/3} \right).
\end{equation}
We observe that as long as $\veps^{3/2}\delta^{-1}<|\lambda|<\delta$,
\begin{subequations}\label{SmallMu1}
\begin{align}
\frac{|\veps\widetilde{\mu}_j|}{|\lambda|} & = (2\sqrt{1+K})^{1/3}\veps|\lambda|^{-2/3} \leq (2\sqrt{1+K})^{1/3} \delta^{2/3}, \\
\frac{|\widetilde{\mu}_j^5|}{|\lambda|} & = (2\sqrt{1+K})^{5/3}|\lambda|^{2/3} \leq (2\sqrt{1+K})^{5/3}\delta^{2/3}, \\
|\widetilde{\mu}_j|&  \leq (2\sqrt{1+K})^{1/3}\delta^{1/3}.
\end{align}
\end{subequations}

Hence, by a fixed point argument, there exists small $\delta_2'>0$ such that the solution $\mathcal{R}_j$ to \eqref{SmallMu} exists as long as \eqref{EigAppC} holds. Furthermore, $\mathcal{R}_j = O(\delta^{2/3})$ as $\delta \to 0$ uniformly in $\veps$. This completes the proof of the first assertion.

We prove the second assertion. By expanding $d_+(\mu)$ around $\mu=0$, we obtain that
\begin{equation}\label{SmallMu4}
\lambda= \mu \left( ( c+ \sqrt{1+K} ) + \frac{\mu^2}{2\sqrt{1+K}} + \mu^4\mathcal{R}_+(\mu) \right)=:\mu \widetilde{\mathcal{R}}_+.
\end{equation}
Since $\mu$ is small as long as $|\lambda|<\delta$ for small $\delta$, we see that $\mu=O(\delta)$ as $\delta \to 0$ uniformly in $\veps$. By dividing \eqref{SmallMu4} by $\widetilde{\mathcal{R}}_+$, we finish the proof.
\end{proof}

Next we study the asymptotic behavior of the roots of characteristic polynomial $d(\mu)$ for large $|\lambda|$.  The proof  is based on a perturbation argument using Rouché's theorem in a similar fashion as Lemma 1.20 of \cite{PW}.
\begin{proposition}\label{EigenvalueLargelambda}
The roots of the characteristic polynomial $d(\mu)$  can be labelled so that they satisfy
\begin{subequations}
\begin{align*}
 \mu_1 & = -1 + O(|\lambda|^{-2}), & \mu_4 & = 1 + O(|\lambda|^{-2}),   \\ 
 \mu_2 & = \frac{c\lambda -\sqrt{K\lambda^2 -c^2 + K }}{c^2-K} + O(|\lambda|^{-3}),  & \mu_3  & = \frac{c\lambda + \sqrt{K \lambda^2-c^2 + K }}{c^2-K} + O(|\lambda|^{-3}),
\end{align*}
\end{subequations}
as $|\lambda| \to \infty$ uniformly in $\veps \in [0,\veps_K]$. 
\end{proposition}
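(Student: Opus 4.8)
The plan is to analyze the quartic characteristic polynomial $d(\mu,\lambda,\veps)$, given in \eqref{dispersEP}, in the regime $|\lambda|\to\infty$, and to track its four roots by a perturbation argument. Recall from \eqref{dispersEP} that, up to the nonvanishing factor $(c^2-K)^{-1}$, we have
\[
(c^2-K)\,d(\mu,\lambda,\veps) = (\mu^2-1)\bigl[(\lambda-c\mu)^2 - K\mu^2\bigr] + \mu^2.
\]
The idea is that for $|\lambda|$ large the dominant balance splits the four roots into two groups: two roots near $\mu=\pm1$ (where the factor $\mu^2-1$ is small and compensates the large $(\lambda-c\mu)^2$ term), and two roots near the zeros of the bracket $(\lambda-c\mu)^2 - K\mu^2 = (c^2-K)\mu^2 - 2c\lambda\mu + \lambda^2$, i.e.\ near $\mu = \frac{c\lambda \pm \sqrt{K\lambda^2 - c^2+K}}{c^2-K}$, which themselves grow linearly in $\lambda$.

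First I would treat the roots near $\mu=\pm1$. Write $\mu = \pm1 + \zeta$ with $|\zeta|$ small. Substituting into the equation $(\mu^2-1)\bigl[(\lambda-c\mu)^2 - K\mu^2\bigr] = -\mu^2$ and using $(\mu^2-1) = \pm2\zeta + \zeta^2$, one gets $\zeta\bigl(\pm2 + \zeta\bigr)\bigl[(\lambda - c\mu)^2 - K\mu^2\bigr] = -\mu^2$. Since $(\lambda-c\mu)^2 - K\mu^2 = \lambda^2(1 + O(|\lambda|^{-1}))$ and $\mu^2 = 1 + O(\zeta)$ near $\mu=\pm1$, this forces $\zeta = O(|\lambda|^{-2})$; a contraction-mapping/Rouch\'e argument on a disk of radius $C|\lambda|^{-2}$ centered at $\pm1$ then produces a unique root there and gives the stated expansions $\mu_1 = -1 + O(|\lambda|^{-2})$, $\mu_4 = 1 + O(|\lambda|^{-2})$, with all estimates uniform in $\veps\in[0,\veps_K]$ because the coefficients $c = \sqrt{1+K}+\veps$ range over a compact set. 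For the two large roots, I would rescale by setting $\mu = \lambda\sigma$ (equivalently work with $d_\pm(\mu)=\lambda$ from \eqref{Charact1}, where for $|\mu|$ large $\sqrt{\tfrac{1}{1-\mu^2}+K} = \sqrt{K}\,(1 + O(|\mu|^{-2}))$, so $d_-(\mu) = (c-\sqrt K)\mu\,(1+O(|\mu|^{-2}))$ and $d_+(\mu)=(c+\sqrt K)\mu(1+O(|\mu|^{-2}))$). Then $d_\pm(\mu)=\lambda$ has a solution $\mu$ of size $\sim|\lambda|$, and substituting the refined expansion of the square root shows these two roots solve, to leading order, the quadratic $(c^2-K)\mu^2 - 2c\lambda\mu + \lambda^2 = 0$ — which has the roots displayed in the statement — with a correction of order $O(|\lambda|^{-3})$ coming from the $\frac{1}{1-\mu^2} = O(|\lambda|^{-2})$ term inside the square root, one more factor of $\mu^{-1}\sim|\lambda|^{-1}$ being gained when solving for $\mu$ itself; again Rouch\'e on a disk of radius $C|\lambda|^{-3}$ about each approximate root gives uniqueness and the error bound. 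Finally I would invoke Lemma \ref{splitting mu lam} (or directly the sign of $\oRe\mu_1$ at large $|\lambda|$ from these expansions, noting $c\pm\sqrt K>0$) to confirm that the labelling is consistent with the one used elsewhere.

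The main obstacle I expect is bookkeeping the uniformity in $\veps$ and the precise orders of the error terms — in particular verifying that the correction to the two large roots is genuinely $O(|\lambda|^{-3})$ and not merely $O(|\lambda|^{-2})$, which requires carefully expanding $\sqrt{\tfrac{1}{1-\mu^2}+K}$ to second order and observing the cancellation structure, and that the implied constants do not degenerate as $\veps\to0$ (they do not, since $c^2-K = 1 + O(\veps)$ stays bounded away from $0$). A secondary technical point is making the Rouch\'e comparisons rigorous by exhibiting explicit annuli/disks on which $d(\mu,\lambda,\veps)$ and its approximating polynomial are comparable in modulus, uniformly for $|\lambda|$ beyond some threshold and $\veps\in[0,\veps_K]$; this is routine but must be done with care to avoid the branch cut $(-\infty,-1]\cup[1,\infty)$ when working with $d_\pm$, which is why handling the near-$\pm1$ roots directly from the polynomial $d(\mu,\lambda,\veps)$ (rather than from $d_\pm$) is cleaner.
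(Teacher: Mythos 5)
Your overall strategy (perturbation about explicit approximate roots plus a Rouch\'e/contraction argument, uniformly in $\veps\in[0,\veps_K]$) is the same as the paper's, which carries it out in a single stroke by writing $(c^2-K)d(\mu)=\widetilde d(\mu)+1$ with $\widetilde d(\mu)=(\mu^2-1)\bigl[(\lambda-c\mu)^2-K\mu^2+1\bigr]$ and applying Rouch\'e on disks of radius $\sim|\lambda|^{-2}$ about $\pm1$ and $\sim|\lambda|^{-3}$ about the two large roots, the radii being calibrated by $|\partial_\mu\widetilde d(\widetilde\mu_j)|$. However, there is a concrete error in your treatment of the large roots: you assert that the quadratic $(c^2-K)\mu^2-2c\lambda\mu+\lambda^2=0$ ``has the roots displayed in the statement.'' It does not. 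Its roots are $\lambda/(c\mp\sqrt K)$, whereas the displayed expressions $\frac{c\lambda\mp\sqrt{K\lambda^2-c^2+K}}{c^2-K}$ are the exact roots of $(c^2-K)\mu^2-2c\lambda\mu+\lambda^2+1=0$, i.e.\ of $(\lambda-c\mu)^2-K\mu^2+1=0$; the two sets of roots differ by $O(|\lambda|^{-1})$. Consequently, a Rouch\'e argument on disks of radius $C|\lambda|^{-3}$ centered at the roots of your quadratic (without the $+1$) cannot produce the stated expansions for $\mu_2,\mu_3$ --- as written it would only justify an $O(|\lambda|^{-1})$ approximation by the displayed formulas, and this is exactly the delicate point you yourself flag as ``the main obstacle.''

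The fix is to take the approximating object to be the bracket \emph{with} the $+1$ (equivalently, to expand $\sqrt{\tfrac{1}{1-\mu^2}+K}$ to the order that retains the $-\tfrac{1}{2\sqrt K\,\mu}$ term, as your own error analysis implicitly requires). Dividing the exact characteristic equation by $\mu^2-1$ gives $(\lambda-c\mu)^2-K\mu^2+\tfrac{\mu^2}{\mu^2-1}=0$, and $\tfrac{\mu^2}{\mu^2-1}=1+\tfrac{1}{\mu^2-1}=1+O(|\lambda|^{-2})$ on the relevant disks; since the $\mu$-derivative of the quadratic at its roots is of size $\sim|\lambda|$ (it equals $\mp2\sqrt{K\lambda^2-c^2+K}$), the residual $O(|\lambda|^{-2})$ shifts the roots by only $O(|\lambda|^{-3})$, which is the stated bound. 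With that correction your two-case argument (near-$\pm1$ roots handled directly from the polynomial, large roots from the corrected quadratic or from $d_\pm(\mu)=\lambda$) closes, and it is then essentially equivalent to the paper's single Rouch\'e comparison, whose advantage is precisely that the remainder is the constant $1$ and the displayed expressions are exact zeros of $\widetilde d$, so no expansion of the square root is needed at all.
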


\begin{proof}
We consider the decomposition $(c^2-K)d(\mu) =\widetilde{d}(\mu)  + \widetilde{d}_R(\mu)$, where
\[
\widetilde{d}(\mu)=\widetilde{d}(\mu,\lambda,\veps) =(\mu^2-1)((\lambda - c\mu)^2-K \mu^2 + 1) \quad \textrm{and} \quad \widetilde{d}_R(\mu) = 1.
\]
We note that $\widetilde{d}(\mu)$ has four simple zeros
\begin{equation}\label{Lgmu}
\widetilde{\mu}_1 = -1, \quad \widetilde{\mu}_4 = 1, \quad  \widetilde{\mu}_2 = \frac{c\lambda -\sqrt{K \lambda^2-c^2 + K }}{c^2-K}, \quad \widetilde{\mu}_3 = \frac{c\lambda +\sqrt{K \lambda^2-c^2 + K }}{c^2-K}
\end{equation}
for all $\lambda$ with sufficiently large $|\lambda|$ and $\veps\in[0,\veps_K]$. Since the derivative of $\widetilde{d}(\mu)$ in $\mu$ is
\[
\partial_\mu\widetilde{d}(\mu) = 2\mu((\lambda - c\mu)^2 - K \mu^2 +1) + 2(\mu^2-1)(-c\lambda + (c^2-K)\mu),
\]
we obtain that
\begin{subequations}
\begin{align*}
\partial_\mu\widetilde{d}(\widetilde{\mu}_1) & = -2((\lambda + c)^2 - K + 1), & \partial_\mu\widetilde{d}(\widetilde{\mu}_4)&  = 2((\lambda - c)^2 - K + 1),  \\ 
\partial_\mu\widetilde{d}(\widetilde{\mu}_2) & = -2(\widetilde{\mu}_2^2-1)\sqrt{K \lambda^2 -c^2+K }, & 
\partial_\mu\widetilde{d}(\widetilde{\mu}_3) & = 2(\widetilde{\mu}_3^2-1)\sqrt{K \lambda^2 -c^2+K }.
\end{align*}
\end{subequations}
Combined with \eqref{Lgmu}, this implies that  we may take some constant $\rho_0>1$ (independent of $\veps$ and $\lambda$) and positive functions $\rho_j(\lambda)$ (uniform in $\veps\in[0,\veps_K]$) such that the following hold:
\begin{equation}\label{rho_jAsymp}
\rho_j(\lambda)=O(|\lambda|^{-2}) \;\; \text{for}\; j=1,4, \quad \rho_j(\lambda)=O(|\lambda|^{-3}) \;\; \text{for}\; j=2,3, 
\end{equation}
as $|\lambda| \to \infty$ uniformly in $\veps\in[0,\veps_K]$, and for $j=1,2,3,4$, 
\begin{equation}\label{rho_jLowbound}
\rho_j(\lambda) > \rho_0 \frac{1}{|\partial_\mu\widetilde{d}(\widetilde{\mu}_j)|} 
\end{equation}
for all sufficiently large  $|\lambda|$. 

 By the Taylor theorem, we have that on each circle $|\mu - \widetilde{\mu}_j|=\rho_j$,
\[
\begin{split}
|\widetilde{d}(\mu)| 
& = |\partial_\mu\widetilde{d}(\widetilde{\mu}_j)||\mu-\widetilde{\mu}_j| \left|1+O(|\mu-\widetilde{\mu}_j|) \right| \\
& = \rho_j |\partial_\mu\widetilde{d}(\widetilde{\mu}_j)||1+O(\rho_j)|  \\
& > \rho_0 |1+O(\rho_j)| \\
& > 1 = |\widetilde{d}_R(\mu)|
\end{split}
\]
for all $\lambda$ with sufficiently large $|\lambda|$ and $\veps\in[0,\veps_K]$, where we have used  \eqref{rho_jLowbound} in the first inequality, \eqref{rho_jAsymp} and $\rho_0>1$ in the second inequality.  Now Rouché's theorem implies that for  $j=1,2,3,4$, there is exactly one simple root $\mu_j$ of $\widetilde{d}(\mu)+\widetilde{d}_R(\mu)$ (equivalently, of $d(\mu)$) such that $|\mu_j - \widetilde{\mu}_j| < \rho_j$, which finishes the proof combined with \eqref{Lgmu} and \eqref{rho_jAsymp}.
\end{proof}

\subsection{Estimates of the Evans function for non-zero small eigenvalue parameter}\label{Sec5.2}
In this subsection, we prove Lemma \ref{Lem_Region2}.\\ 

\textit{(Statement of Lemma \ref{Lem_Region2}) There exist constants $C_2,\delta_2,\veps_2>0$ such that for all $\veps \in [0,\veps_2]$ and $\delta \in (0,\delta_2]$,
\begin{equation*}
\sup_{\lambda\in \mathfrak{D}_2}|D(\lambda,\veps)-1| < C_2 \delta^{1/3}.
\end{equation*}
Here $\mathfrak{D}_2=\Omega^\veps \cap \{\lambda: \veps^{3/2}\delta^{-1} < |\lambda| < \delta\}$, and $C_2$ is independent of $\veps$ and $\delta$.}\\

We use the following lemma:
\begin{lemma}\label{Ch3PropProofRegD_2}
Assume that the coefficient matrix of the system  \eqref{ODE_LinEP} satisfies the hypotheses \textbf{H1}--\textbf{H4}, and that $A^\infty(\lambda,\veps)$ is diagonalizable. For the left eigenvector $\mathbf{w}_j$ and the right eigenvector $\mathbf{v}_j$  associated with the eigenvalue $\mu_j$ of $A^\infty$ satisfying $\mathbf{w}_j\mathbf{v}_j=1$, $(j=1,2,3,4)$, let  $V$ and $W$ be the matrices whose $j$-th column is $\mathbf{v}_j$ and $j$-th row is $\mathbf{w}_j$, respectively.  Let $R:= A(x,\lambda,\veps) - A^\infty(\lambda,\veps)$. Then, there exist positive constants  $\delta_0 \in (0,1)$ and $C_0$  such that  if $\int_{-\infty}^\infty|WRV|\,dx \leq \delta_0$, we have
\begin{equation*}
|D(\lambda,\veps)-1| \leq C_0 \int_{-\infty}^\infty|WRV|\,dx.
\end{equation*}
\end{lemma}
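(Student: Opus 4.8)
The plan is to prove Lemma~\ref{Ch3PropProofRegD_2} by setting up the standard integral-equation (variation-of-parameters) representation for the solution $\mathbf{y}^+$ and estimating it via a Neumann-series / contraction argument, then reading off $D(\lambda,\veps)$ from the characterization \eqref{CharacEvansD}. First I would diagonalize: write $A^\infty = V\,\mathrm{diag}(\mu_1,\mu_2,\mu_3,\mu_4)\,W$ with $WV = I$, and pass to the variable $\boldsymbol{\theta}(x) := e^{-\mu_1 x}\mathbf{y}^+(x)$. Then $\boldsymbol{\theta}$ solves the fixed-point equation
\begin{equation*}
\boldsymbol{\theta}(x) = \mathbf{v}_1 - \int_x^\infty e^{(A^\infty-\mu_1 I)(x-s)}R(s)\,\boldsymbol{\theta}(s)\,ds,
\end{equation*}
which is exactly the operator $\mathcal{F}$ appearing in the proof of Proposition~\ref{Prop_JointConti}. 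The key point is the bound on the propagator: under \textbf{H4} (with $\oRe\mu_1<\mu_\ast$) one has $|e^{(A^\infty-\mu_1 I)(x-s)}| = |V\,\mathrm{diag}(e^{(\mu_j-\mu_1)(x-s)})\,W|$ uniformly bounded for $s\ge x$, with the constant controlled by $\|V\|\|W\|$; since we only need $|WRV|$ in the hypothesis, I would insert $I = VW$ to write the $k$-th iterate of the integral in terms of products of $We^{(\Lambda-\mu_1 I)(x-s)}V$-type factors and $WRV$ factors, so that the iterated kernel is dominated by $\bigl(\int_{-\infty}^\infty |WRV|\,dx\bigr)^k$ times a harmless constant.

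The second step is the quantitative estimate. With $\varrho := \int_{-\infty}^\infty |WRV|\,dx$ assumed $\le \delta_0$, the $k$-th term in the Neumann series is bounded by $C^k \varrho^k |\mathbf{v}_1|$ for a constant $C$ depending only on $\sup_{s\ge x}|We^{(A^\infty - \mu_1 I)(x-s)}V|$ (finite by \textbf{H4}) and on $\sup_j(|\mathbf{v}_j|+|\mathbf{w}_j|)$; choosing $\delta_0 < 1/(2C)$ makes the series converge geometrically and gives, after subtracting the zeroth term,
\begin{equation*}
|\boldsymbol{\theta}(x) - \mathbf{v}_1| \le \frac{C\varrho}{1 - C\varrho}|\mathbf{v}_1| \le 2C|\mathbf{v}_1|\,\varrho
\end{equation*}
uniformly in $x$. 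The same construction applied to the transposed system \eqref{TranspoODE_DefEvans} gives $\mathbf{z}^-(x) = e^{\mu_1 x}(\mathbf{w}_1 + O(\varrho))$ as $x\to-\infty$, uniformly. Finally, evaluating $D(\lambda,\veps) = \mathbf{z}^-\mathbf{y}^+$ — which is independent of $x$ — in the limit $x\to -\infty$ via \eqref{CharacEvansD}, i.e.\ $D(\lambda,\veps)\mathbf{v}_1 = \lim_{x\to-\infty} e^{-\mu_1 x}\mathbf{y}^+(x)$, and noting $\mathbf{w}_1\mathbf{v}_1 = 1$, one gets $|D(\lambda,\veps) - 1| = |\mathbf{w}_1\cdot(\lim e^{-\mu_1 x}\mathbf{y}^+ - \mathbf{v}_1)| \le C_0\varrho$. (Alternatively one evaluates $\mathbf{z}^-\mathbf{y}^+$ at any convenient large $x$ where both $\mathbf{y}^+\approx e^{\mu_1 x}\mathbf{v}_1$ and $\mathbf{z}^-\approx e^{-\mu_1 x}\mathbf{w}_1$ up to $O(\varrho)$, and multiplies out.) Either way $C_0$ depends only on the a~priori bounds for the propagator and the eigenvectors, not on $\lambda,\veps,\delta$.

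The main obstacle — and the only place one must be careful — is keeping the estimating constants \emph{uniform} over the relevant range of $(\lambda,\veps)$ while the hypothesis is phrased only in terms of $|WRV|$ rather than $|R|$ itself. On $\mathfrak{D}_2$ the matrix eigenvalues $\mu_j$ collapse toward the origin (by Proposition~\ref{prop_EigenApprox} three of them behave like $(-2\sqrt{1+K}\lambda)^{1/3}e^{2\pi ij/3}$), so $A^\infty$ is \emph{nearly} a Jordan block and $V,W$ are close to singular; the cleanest fix is to observe that Lemma~\ref{Ch3PropProofRegD_2} is stated abstractly with ``$A^\infty$ diagonalizable'' as a hypothesis, so within its proof one simply carries $\|V\|\|W\|$ and $\sup|e^{(A^\infty-\mu_1 I)(x-s)}|_{s\ge x}$ as the constants $C_0$, and the task of verifying they stay bounded on $\mathfrak{D}_2$ (using the perturbation formulas and a rescaling $\mu=\veps^{1/2}\nu$) is deferred to the actual application of the lemma in the proof of Lemma~\ref{Lem_Region2}. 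Thus within the lemma itself there is no real obstacle beyond a careful bookkeeping of the Neumann series; the substantive work is relegated to showing $\int|WRV|\,dx = O(\delta^{1/3})$ with the right uniformity, which belongs to the proof of Lemma~\ref{Lem_Region2}.
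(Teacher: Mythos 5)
Your overall route (variation-of-constants fixed point for $\mathbf{y}^+$, Neumann series, read off $D$ from \eqref{CharacEvansD}) is exactly the Pego--Weinstein argument the paper invokes for this lemma (it omits the proof, citing Lemma 10.1 and Corollary 10.2 of \cite{PW2}), and carried out carefully it works. The genuine gap is in your handling of the constants. You allow $C_0$ and $\delta_0$ to depend on $\|V\|\|W\|$, on $\sup_j(|\mathbf{v}_j|+|\mathbf{w}_j|)$ and on $\sup_{s\ge x}|e^{(A^\infty-\mu_1 I)(x-s)}|$, and you propose to defer the verification that these stay bounded to the application in Lemma \ref{Lem_Region2}. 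That deferral cannot succeed: the lemma is applied on $\mathfrak{D}_2$ precisely where three matrix eigenvalues collapse, $\mu_j\sim(2\mathsf{V}|\lambda|)^{1/3}$, and there $\boldsymbol{\pi}_j\mathbf{v}_j\sim C|\lambda|^{2/3}$ by \eqref{G_jk8}, so $|\mathbf{w}_j|=|\boldsymbol{\pi}_j|/|\boldsymbol{\pi}_j\mathbf{v}_j|\sim|\lambda|^{-2/3}$ blows up as $|\lambda|\to 0$ inside $\mathfrak{D}_2$ (recall $|\lambda|$ there ranges down to $\veps^{3/2}\delta^{-1}$), and with it $\|W\|$ and your propagator bound. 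A $C_0$ carrying these quantities is not uniform in $(\lambda,\veps,\delta)$, so the conclusion $\sup_{\mathfrak{D}_2}|D-1|\le C_2\delta^{1/3}$ would not follow; the entire point of phrasing the hypothesis via $\int|WRV|\,dx$ is that $V$ and $W$ must never appear outside that combination. Your final bound $|D-1|\le|\mathbf{w}_1|\,|\boldsymbol{\theta}-\mathbf{v}_1|$ loses exactly the cancellation $\mathbf{w}_1V=\mathbf{e}_1^T$ that makes the constants absolute.

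The fix is to complete the diagonalization you only sketch: set $\boldsymbol{\zeta}(x):=e^{-\mu_1x}W\mathbf{y}^+(x)$, which satisfies $\boldsymbol{\zeta}(x)=\mathbf{e}_1-\int_x^\infty e^{\mathrm{diag}(\mu_j-\mu_1)(x-s)}\,(WRV)(s)\,\boldsymbol{\zeta}(s)\,ds$. By \textbf{H4} each entry of the diagonal propagator has modulus at most $1$ for $x\le s$, so with $\varrho:=\int_{-\infty}^\infty|WRV|\,dx\le\delta_0<1$ a Neumann/Gronwall bound gives $\sup_x|\boldsymbol{\zeta}(x)|\le(1-\varrho)^{-1}$; since $W\mathbf{v}_1=\mathbf{e}_1$, the characterization \eqref{CharacEvansD} yields $D=\lim_{x\to-\infty}\zeta_1(x)$ and hence $|D-1|\le\varrho\,(1-\varrho)^{-1}\le 2\varrho$ for $\delta_0=1/2$, with $C_0=2$ absolute — which is what the statement asserts and what the application on $\mathfrak{D}_2$ requires. (Your use of the transposed solution $\mathbf{z}^-$ is unnecessary for this lemma, though harmless; the paper's Section \ref{Sec5.3} variation does use both sides, but again only through quantities in which $V,W$ enter via $WRV$-type combinations.)
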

We omit the proof of Lemma \ref{Ch3PropProofRegD_2} since it can be easily shown following the proofs of Lemma 10.1 and Corollary 10.2 of \cite{PW2} (or the proofs of Proposition 1.17 and Corollary 1.18 of \cite{PW}). Instead, we will present a variation of the proof of Lemma \ref{Ch3PropProofRegD_2} in the next subsection.

\begin{proof}[Proof of Lemma \ref{Lem_Region2}]
We note that Proposition \ref{prop_EigenApprox} implies in particular  that  $\mu_j(\lambda,\veps)$ are all distinct for $\lambda \in \mathfrak{D}_2$. Hence the matrix $A_\infty(\lambda,\veps)$ is diagonalizable. For $\mathbf{v}_j$ and $\mathbf{w}_j$, given in  \eqref{LRVec_A} and \eqref{eigenvec_A}, we let $V$ and $W$ be the matrices whose $j$-th column is $\mathbf{v}_j$ and $j$-th row is $\mathbf{w}_j$, respectively, where $j=1,2,3,4$. We let $R = A(x,\lambda,\veps) - A^\infty(\lambda,\veps)$ (see \eqref{A_Decompose} and \eqref{A_Asymptotic}). We may apply Lemma \ref{Ch3PropProofRegD_2} provided that the desired estimate for $\textstyle\int W RV \, dx$ holds true. 

Let $R_{jk}$ be the $(j,k)$-entry of the matrix $R$. Using \eqref{pointestimateinX}, it is straightforward to check that
\begin{equation}\label{EstiRegD2_3}
|R_{jk}| \leq C \veps e^{-C\veps^{1/2}|x|}E_{jk}, \quad (j,k=1,2,3,4),
\end{equation}
where $C$ is some positive constant uniform in $\veps$, and $E_{jk}$ is the $(j,k)$-entry of the matrix 
\begin{equation}\label{E_Matrix}
E := \begin{pmatrix}
\veps^{1/2} + |\lambda| & \veps^{1/2} + |\lambda| & 0 & 1\\
\veps^{1/2} + |\lambda| & \veps^{1/2} + |\lambda| & 0 & 1 \\
0 & 0 & 0 & 0 \\
0 & 0 & 1 & 0
\end{pmatrix}.
\end{equation}
Let $v_{jl}$ and $w_{jl}$ be the $l$-th component of $\mathbf{v}_j$ and $\mathbf{w}_j$, respectively. Then, the $(j,k)$-entry of the matrix $WRV$ is given by
\begin{equation}\label{EstiRegD2_2}
(W RV)_{jk} = \sum_{l=1,2}w_{jl}(R_{l1}v_{k1} + R_{l2}v_{k2} + R_{l4}v_{k4}) + w_{j4}R_{43}v_{k3}, \quad (j,k=1,2,3,4).
\end{equation}
Combining \eqref{EstiRegD2_3}--\eqref{EstiRegD2_2}, one can obtain from  \eqref{LRVec_A} and  \eqref{eigenvec_A} that
\begin{equation}\label{EstiRegD2_1}
|(W RV)_{jk}| \leq C\veps e^{-C\veps^{1/2}|x|} G_{jk},  \quad (j,k=1,2,3,4),
\end{equation}
where 
\begin{equation}\label{Gjk}
\begin{split}
 & G_{jk}   := \\
 &   \frac{\left[(\veps^{1/2} + |\lambda|)\left(1 + \frac{|c\mu_k-\lambda|}{|\mu_k|}  \right) + \frac{|\mu_k|}{|1-\mu_k^2|} \right] \left[(|c|+1)\frac{|\lambda|}{|\mu_j|}+ |c^2-K| \right]|1-\mu_j^2|  + \frac{|\mu_j|}{|1-\mu_k^2|}}{|\boldsymbol{\pi}_j\mathbf{v}_j|}.
\end{split}
\end{equation}
Since 
\begin{equation}\label{Gjkint}
\veps^{1/2}\int_{-\infty}^\infty e^{-C\veps^{1/2}|x|}\,dx =\frac{2}{C} \text{ for any } \veps>0,
\end{equation}
it is enough to show that there are positive constants $C_2'$, $\veps_2$ and $\delta_2$ such that for all $\veps\in(0,\veps_2]$ and $\delta \in (0,\delta_2]$, there holds that as long as  $\lambda \in \mathfrak{D}_2$,
\begin{equation}\label{GjkConc}
\veps^{1/2}G_{jk} \leq C_2' \delta^{1/3}, \quad (j,k=1,2,3,4), 
\end{equation}
where $C_2'$ is uniform in $\delta$, $\veps$, and $\lambda$. Then Lemma \ref{Lem_Region2}  will follow from \eqref{EstiRegD2_1}, \eqref{Gjkint} and \eqref{GjkConc}, together with Lemma \ref{Ch3PropProofRegD_2}.
 
It is clear that as long as $\lambda \in \mathfrak{D}_2=\Omega^\veps \cap \{\lambda: \veps^{3/2}\delta^{-1} < |\lambda| < \delta\}$, the following holds:
\begin{subequations}\label{Reldellam}
\begin{align}
\veps^{1/2} & < |\lambda|^{1/3}\delta^{1/3}, \label{Reldellam1} \\
|\lambda| & < \delta. \label{Reldellam2}
\end{align}
\end{subequations}

From Proposition \ref{prop_EigenApprox}, we have
\begin{subequations}\label{mujsmal}
\begin{align}
 \text{for } j& =1,2,3, & \text{for } j& =4, \nonumber \\
 |\mu_j| & = (2\mathsf{V}|\lambda|)^{1/3}\left( 1+ o(1) \right), &  \mu_4 & = \lambda \left( \frac{1}{c+\mathsf{V}} + o(1) \right), \label{mujsmal1}  \\ 
|1-\mu_j^2| & = 1+o(1) , &  |1-\mu_4^2| & = 1+o(1), \\
\frac{|c\mu_j-\lambda|}{|\mu_j|} & = c \left( 1+o(1)\right),  &  \frac{|c\mu_4-\lambda|}{|\mu_4|} & = \mathsf{V} + o(1),
\end{align}
\end{subequations}
as $\delta \to 0$ uniformly in $\veps$. Applying \eqref{Reldellam1} and \eqref{mujsmal},  we obtain that for all $j,k=1,2,3,4$, 
\begin{equation}\label{G_jk3}
G_{jk} \leq \frac{C|\lambda|^{1/3}\left(1+o(1) \right)}{|\boldsymbol{\pi}_j\mathbf{v}_j|} \quad \text{as } \delta \to 0 \text{ uniformly in } \veps.
\end{equation} 

To estimate $|\boldsymbol{\pi}_j \mathbf{v}_j|^{-1}$, we recall that (see \eqref{eigenvec_A2})
\begin{equation*}
\boldsymbol{\pi}_j \mathbf{v}_j  =  \frac{\lambda^2(1-\mu_j^2)}{\mu_j^2} - (c^2-K)(1-\mu_j^2) +\frac{1+\mu_j^2}{1-\mu_j^2}, \quad (j=1,2,3,4).
\end{equation*}
Using \eqref{mujsmal1} for $j=4$, it is easy to check that
\begin{equation}\label{G_jk4}
\boldsymbol{\pi}_4 \mathbf{v}_4 = 2\mathsf{V}^2 + 2c\mathsf{V}  + o(1) \quad \text{as } \delta \to 0 \text{ uniformly in } \veps.
\end{equation}
On the other hand, estimating $|\boldsymbol{\pi}_j \mathbf{v}_j|^{-1}$ for $j=1,2,3$ is not so trivial. We first observe that 
\begin{equation}\label{G_jk6}
\begin{split}
\textstyle \boldsymbol{\pi}_j \mathbf{v}_j  
& = \textstyle \frac{\mu_j^2}{1-\mu_j^2} \left(\frac{\lambda^2(1-\mu_j^2)^2}{\mu_j^4} - (c^2-K)(1-\mu_j^2)^2\frac{1}{\mu_j^2} + \frac{1}{\mu_j^2} + 1 \right) \\
& = \textstyle \frac{\mu_j^2}{1-\mu_j^2} \left(\frac{\lambda^2(1-\mu_j^2)^2}{\mu_j^4} - \frac{1}{\mu_j^2}\left[ c^2-K-1 - 2(c^2-K)\mu_j^2 + (c^2-K)\mu_j^4 \right] + 1 \right) \\
& = \textstyle \frac{\mu_j^2}{1-\mu_j^2} \left(\frac{\lambda^2(1-\mu_j^2)^2}{\mu_j^4} - \frac{1}{\mu_j^2}\left[2\veps\sqrt{1+K} + \veps^2 - 2(c^2-K)\mu_j^2 + (c^2-K)\mu_j^4 \right] + 1 \right),
\end{split}
\end{equation}
where we have used $c=\sqrt{1+K} + \veps$ in the last equality. From \eqref{Reldellam1} and \eqref{mujsmal1}, we see that for $j=1,2,3$, 
\begin{equation}\label{G_jk7}
\frac{\veps}{|\mu_j^2|} < \frac{|\lambda|^{2/3}\delta^{2/3}}{|\mu_j^2|} = \frac{\delta^{2/3}}{(2\mathsf{V})^{2/3}(1+o(1))} = o(1) \quad \text{as } \delta \to 0 \text{ uniformly in } \veps.
\end{equation}
Applying \eqref{mujsmal} and \eqref{G_jk7}, we obtain from \eqref{G_jk6} that for $j=1,2,3$,
\begin{equation}\label{G_jk8}
|\boldsymbol{\pi}_j \mathbf{v}_j| = (2(c^2-K)+1)(2\mathsf{V}|\lambda|)^{2/3}(1+o(1))  \quad \text{as } \delta \to 0 \text{ uniformly in } \veps.
\end{equation}

Now combining \eqref{G_jk3} and \eqref{G_jk4}, and using \eqref{Reldellam2}, we have that  for $k=1,2,3,4$,
\begin{equation}\label{G_jk5}
\veps^{1/2}G_{4k} \leq \veps^{1/2}\frac{C \delta^{1/3}(1+o(1))}{2\mathsf{V}^2 + 2cV + o(1)} \quad \text{as } \delta \to 0 \text{ uniformly in } \veps.
\end{equation}
Combining \eqref{G_jk3}, \eqref{G_jk8}, and using \eqref{Reldellam1}, we have that  for $j=1,2,3,$ and $k=1,2,3,4$,
\begin{equation}\label{G_jk9}
\begin{split}
\veps^{1/2}G_{jk} 
& \leq  \frac{C\veps^{1/2}|\lambda|^{1/3}(1+o(1))}{(2(c^2-K)+1)(2\mathsf{V}|\lambda|)^{2/3}(1+o(1))} \\
&  \leq \frac{C\delta^{1/3}(1+o(1))}{(2(c^2-K)+1)(2\mathsf{V})^{2/3}(1+o(1))} \quad \text{as } \delta \to 0 \text{ uniformly in } \veps.
\end{split}
\end{equation}
 
From \eqref{G_jk5} and \eqref{G_jk9}, we may choose sufficiently small $\delta_2$ and $\veps_2$ such that \eqref{GjkConc} holds for some positive constant $C_2'$, which is uniform in $\veps$, $\delta$, and $\lambda$. This finishes the proof.
\end{proof}

\subsection{Estimates of the Evans function for large eigenvalue parameter}\label{Sec5.3}
The goal of this subsection is to prove Lemma \ref{Lem_Region4}.\\

\textit{(Statement of Lemma \ref{Lem_Region4}) There exist constants $C_4,\delta_4,\veps_4>0$ such that for all $\veps \in [0,\veps_4]$ and $\delta \in (0,\delta_4]$,
\begin{equation*}
\sup_{\lambda\in \mathfrak{D}_4}|D(\lambda,\veps)-1| < C_4 \veps^{1/2}.
\end{equation*}
Here $\mathfrak{D}_4=\Omega^\veps \cap \{\lambda: \delta^{-1} < |\lambda|\}$, and $C_4$ is independent of $\veps$ and $\delta$. }\\

From Proposition \ref{EigenvalueLargelambda}, we have 
\begin{subequations}\label{mujlarge}
\begin{alignts}
 \text{for } j& =1,4, & \text{for } j& =2,3, \nonumber \\
\mu_j & = (-1)^{j} + O(|\lambda|^{-2}), &  \mu_j & = \frac{\lambda}{c+ (-1)^j\sqrt{K}}\left(1+ O(|\lambda|^{-2})\right),  \label{mujlarge1} \\
 \frac{c\mu_j-\lambda}{\mu_j} & = (-1)^{j+1}\lambda\left(1+O(|\lambda|^{-1})\right),  &   \frac{c\mu_j-\lambda}{\mu_j} & = (-1)^{j+1}\sqrt{K}\left(1+O(|\lambda|^{-2})\right), \label{mujlarge2}\\
1-\mu_j^2 & = \lambda^{-2}\left( 1+O(|\lambda|^{-1}) \right) , &  1-\mu_j^2 & = \frac{-\lambda^2}{(c+(-1)^j\sqrt{K})^2}\left(1+O(|\lambda|^{-2})\right),  \label{mujlarge3} \\
 \boldsymbol{\pi}_j  \mathbf{v}_j  & =2\lambda^2\left(1+O(|\lambda|^{-1}) \right), & \boldsymbol{\pi}_j  \mathbf{v}_j & =  -\lambda^2 \left( \frac{2(-1)^{j}\sqrt{K}}{(c+(-1)^{j}\sqrt{K})}  + O(|\lambda|^{-1}) \right) \label{muj large4}
\end{alignts}
\end{subequations}
as $|\lambda| \to \infty$ uniformly in $\veps$. Here to obtain \eqref{mujlarge3} for $j=1,4$, we have used that $d(\mu_j)=0$, that is, 
\[
\frac{1}{1-\mu_j^2} = \frac{(\lambda-c\mu_j)^2-K\mu_j^2}{\mu_j^2}
\] 
rather than use \eqref{mujlarge1} directly. 

Although it is true from \eqref{mujlarge1} that $\mu_j$ are all distinct as long as $\lambda\in\mathfrak{D}_4$ for all sufficiently small $\delta>0$ and $\veps>0$, Lemma \ref{Ch3PropProofRegD_2} cannot be directly applied as the previous analysis on the domain $\mathfrak{D}_2$. Indeed, by applying \eqref{mujlarge} to \eqref{Gjk}, we have for large $|\lambda|$,
\begin{subequations}
\begin{align*} 
|G_{jk}| & \leq C,  & (j=1,4,\, k=1,4), \quad\quad\quad\quad  & |G_{jk}|  \leq C|\lambda|^{-2}, &  (j=1,4,\, k=2,3), \\
|G_{jk}| &\leq C|\lambda|, &  (j=2,3, \, k=2,3),   \quad\quad\quad\quad &  |G_{jk}|  \leq C|\lambda|^2, &  (j=2,3,\, k=1,4).
\end{align*}
\end{subequations}
Hence, we need a more delicate approach to prove Lemma \ref{Lem_Region4}. We remark that the linear growth bound of $G_{jk}$ for $(j=2,3,\, k=2,3)$ is due to $\lambda$, the coefficient of $A_2$ (see \eqref{A_Decompose}). The main contribution of the bounds for $(j=1,4, \, k=2,3)$ and $(j=2,3,\, k=1,4)$ comes from the term $\textstyle\frac{|1-\mu_j^2|}{|1-\mu_k^2|}$ (see \eqref{Gjk} and \eqref{mujlarge3}). These terms arise due to the transport term and the Poisson equation of the linearized Euler-Poisson system. Taking these into account, we will consider a suitable compensating factor so that the uniform bounds corresponding to the cases of $(j=1,4,\, k=2,3)$ and $(j=2,3,\, k=1,4)$ are obtained, and at the same time, the term which leads the linear growth in $\lambda$ can be controlled.

We first present some preliminary steps, and then finish the proof of Lemma \ref{Lem_Region4} by employing a variation of the proof of Lemma \ref{Ch3PropProofRegD_2} as promised in the last subsection. \\

\textit{Step 1: Decomposition and compensating factor.} We write
\[
A(x,\lambda,\veps) - A^\infty(\lambda,\veps) = \lambda R^{(1)}(x,\veps) + R^{(2)}(x,\veps),
\]
where (see \eqref{A_Decompose})
\begin{equation}\label{R12}
\begin{split}
 R^{(1)} =   R^{(1)}(x,\veps)
& :=  A_2(x,\veps) - \lim_{|x|\to \infty} A_2(x,\veps), \\
R^{(2)}=R^{(2)}(x,\veps)& :=A_1(x,\veps)-\lim_{|x|\to \infty} A_1(x,\veps). 
\end{split}
\end{equation}
 Then using \eqref{pointestimateinX}, we see that the $(j,k)$-entry of $R^{(2)}$ satisfies
\begin{equation}\label{R2Esti}
|R^{(2)}_{jk}| \leq C \veps e^{-C\veps^{1/2}|x|}E^{(2)}_{jk},
\end{equation}
where $E^{(2)}_{jk}$ is the $(j,k)$-entry of $E^{(2)}$ defined by  (compare \eqref{E2_Matrix} with \eqref{E_Matrix})
\begin{equation}\label{E2_Matrix}
E^{(2)} := \begin{pmatrix}
\veps^{1/2}  & \veps^{1/2}   & 0 & 1\\
\veps^{1/2} & \veps^{1/2}  & 0 & 1 \\
0 & 0 & 0 & 0 \\
0 & 0 & 1 & 0
\end{pmatrix}.
\end{equation}

 We define the factor
\begin{equation}\label{mj}
m_j=m_j(\lambda,\veps) := (c+(-1)^j\sqrt{K}) |1-\mu_j^2|^{\frac{1}{2}}, \quad (j=1,2,3,4).
\end{equation}
Using \eqref{mujlarge3}, it is straightforward to check that 
\begin{subequations}\label{Factor}
\begin{align} 
\frac{m_k}{m_j}  & =O\left(1\right) ,  \; (j,k=1,4), &    \frac{m_k}{m_j} & = O\left(|\lambda|^2\right), \; (j=1,4,\, k=2,3), \\
\frac{m_k}{m_j} & = 1+O\left(|\lambda|^{-1}\right),  \;  (j,k=2,3), &      \frac{m_k}{m_j} & =O\left(|\lambda|^{-2}\right), \; (j=2,3,\, k=1,4),
\end{align}
\end{subequations}
as $|\lambda| \to \infty$ uniformly in $\veps$. We note that the leading order term of $m_k/m_j$ for $(j=2,3, \, k=2,3)$ is 1.\footnote{This is why $(c+(-1)^j\sqrt{K})$ is considered in \eqref{mj}. Its purpose is to obtain the decomposition \eqref{DecompW0R1V0}, where the matrix $S_1$ is symmetric.} For $V$ and $W$, the matrices whose $j$-th column is $\mathbf{v}_j$ and $j$-th row is $\mathbf{w}_j$, respectively, we define the matrices $\widetilde{V}$ and $\widetilde{W}$ as follows: 
\begin{equation}\label{V0W0}
\widetilde{V}  := V \mathrm{diag}(m_1,m_2,m_3,m_4), \quad
 \widetilde{W}  := \mathrm{diag}(m_1,m_2,m_3,m_4)^{-1}W.
\end{equation}
It is clear that
\begin{equation}\label{W0V0}
\widetilde{W}\widetilde{V}=I, \quad \widetilde{W} A^\infty  \widetilde{V} = \text{diag}(\mu_1,\mu_2,\mu_3,\mu_4),
\end{equation}
and 
\begin{equation}\label{DecompW0(A-Ainf)V0}
\widetilde{W}\left(A - A^\infty \right)\widetilde{V} = \lambda \widetilde{W} R^{(1)}\widetilde{V} + \widetilde{W}R^{(2)}\widetilde{V}.
\end{equation}

\textit{Step2: Estimate of $\widetilde{W} R^{(2)} \widetilde{V}$.} Let $v_{jl}$ and $w_{jl}$ be the $l$-th component of $\mathbf{v}_j$ and $\mathbf{w}_j$, respectively. From  \eqref{V0W0}, the $(j,k)$-entry of the matrix $\widetilde{W} R^{(2)} \widetilde{V}$ is given by
\[
\begin{split}
(\widetilde{W} R^{(2)} \widetilde{V})_{jk} 
& = \frac{m_k}{m_j}  \left(  \sum_{l=1,2}w_{jl}\left(R^{(2)}_{l1}v_{k1} + R^{(2)}_{l2}v_{k2} + R^{(2)}_{l4}v_{k4} \right) + w_{j4}R^{(2)}_{43}v_{k3} \right).
\end{split}
\]
From \eqref{LRVec_A}, \eqref{eigenvec_A}, \eqref{R2Esti} and \eqref{E2_Matrix}, we obtain 
\begin{equation}\label{BdW0R2V0_1}
|(\widetilde{W} R^{(2)} \widetilde{V})_{jk}| \leq C\veps e^{-C\veps^{1/2}|x|} G^{(2)}_{jk},
\end{equation}
where (compare $G^{(2)}_{jk}$ with $G_{jk}$ in \eqref{Gjk})
\[
\begin{split}
G^{(2)}_{jk}
& :=  \frac{m_k}{m_j}\frac{1}{|\boldsymbol{\pi}_j \mathbf{v}_j|} \times  \\
& \left\lbrace\left[\veps^{1/2}\left(1 + \frac{|c\mu_k-\lambda|}{|\mu_k|}  \right) + \frac{|\mu_k|}{|1-\mu_k^2|} \right] \left[(|c|+1)\frac{|\lambda|}{|\mu_j|}+ |c^2-K| \right]|1-\mu_j^2| 
 + \frac{|\mu_j|}{|1-\mu_k^2|} \right\rbrace.
\end{split}
\]
Using \eqref{mujlarge} and \eqref{Factor}, it is straightforward to check that 
\begin{equation}\label{bdG2jk}
|G^{(2)}_{jk}| \leq C \quad \text{for} \; j,k=1,2,3,4,
\end{equation}
uniformly in $\veps$ and $|\lambda| \geq \delta^{-1}$ for sufficiently small $\delta$. From \eqref{BdW0R2V0_1} and \eqref{bdG2jk}, we obtain the bound 
\begin{equation}\label{bdW0R2V0}
|\widetilde{W} R^{(2)}(x) \widetilde{V}| \leq C \veps e^{-C\veps^{1/2}|x|},
\end{equation}
where the constant $C$ is uniform in $\veps$ and $\lambda$ with $|\lambda| \geq \delta^{-1}$ for sufficiently small $\delta$.\\

\textit{Step 3: Estimate of $\lambda \widetilde{W} R^{(1)} \widetilde{V}$.} The $(j,k)$-entry of the matrix $\lambda \widetilde{W} R^{(1)} \widetilde{V}$ is given by 
\[
\begin{split}
(\lambda \widetilde{W} R^{(1)} \widetilde{V})_{jk} 
& = \frac{m_k}{m_j} \left(  \sum_{l=1,2}\lambda w_{jl} \left[ R^{(1)}_{l1}v_{k1} + R^{(1)}_{l2}v_{k2}\right] \right) \\
& = \frac{m_k}{m_j} \frac{\lambda}{\boldsymbol{\pi}_j\mathbf{v}_j}\left(\frac{1-\mu_j^2}{\mu_j} \right)\\
& \times \left[(c\lambda-\mu_j(c^2-K))\left(R^{(1)}_{11} + \frac{c\mu_k-\lambda}{\mu_k}R^{(1)}_{12}\right)  -\lambda \left( R^{(1)}_{21} + \frac{c\mu_k-\lambda}{\mu_k}R^{(1)}_{22} \right) \right],
\end{split}
\]
where $R^{(1)}_{jk}$ is the $(j,k)$-entry of $R^{(1)}$. On the other hand,  we have from \eqref{mujlarge} that
\begin{subequations}\label{mujlarge9}
\begin{alignts}  \text{for } j& =1,4, & \text{for } j& =2,3, \nonumber \\
\frac{\lambda(1-\mu_j^2)}{\boldsymbol{\pi}_j \mathbf{v}_j\mu_j}  & = O\left( |\lambda|^{-3} \right) , &   \frac{\lambda(1-\mu_j^2)}{\boldsymbol{\pi}_j \mathbf{v}_j\mu_j} & = \frac{1}{2(-1)^{j}\sqrt{K}}\left(1+O\left(|\lambda|^{-1}\right)\right), \\
c\lambda-\mu_j(c^2-K) & = O(|\lambda|),   &   c\lambda-\mu_j(c^2-K) & = (-1)^{j} \sqrt{K}\lambda\left( 1+O\left(|\lambda|^{-1}\right)\right).
\end{alignts}
\end{subequations}
Using \eqref{mujlarge3}, \eqref{Factor} and \eqref{mujlarge9}, a direct calculation yields a decomposition
\begin{equation}\label{DecompW0R1V0}
\lambda \widetilde{W}R^{(1)} \widetilde{V} = \frac{\lambda}{2\sqrt{K}} S_1 + \widetilde{R}^{(1)},
\end{equation}
where $S_1=S_1(x,\veps)$ is a symmetric matrix defined by 
\begin{equation}\label{S1sym}
S_1:=
\begin{pmatrix}
0 & 0 & 0 & 0 \\
0 & 2\sqrt{K} R^{(1)}_{11}-K R^{(1)}_{12}-R^{(1)}_{21}  & K R^{(1)}_{12} - R^{(1)}_{21} & 0  \\
0 & K R^{(1)}_{12} - R^{(1)}_{21}  & 2\sqrt{K} R^{(1)}_{11}+K R^{(1)}_{12}+ R^{(1)}_{21} & 0 \\
0 & 0 & 0 & 0 
\end{pmatrix}
\end{equation}
and $\widetilde{R}^{(1)}$ is a matrix whose the $(j,k)$-entry satisfies 
\begin{equation}\label{bdR1jk}
|(\widetilde{R}^{(1)})_{jk}|\leq C \veps e^{-C\veps^{1/2}|x|},
\end{equation}
where the constant $C>0$ is uniform in small $\veps>0$ and $|\lambda| \geq \delta^{-1}$ for sufficiently small $\delta$.  The symmetric matrix $S_1$ is positive semi-definite (or non-negative) for all sufficiently small $\veps$ (see Appendix \ref{NonnegaS}). Now we are ready to prove Lemma \ref{Lem_Region4}.

\begin{proof}[Proof of Lemma \ref{Lem_Region4}] From   \eqref{W0V0}, \eqref{DecompW0(A-Ainf)V0}, and \eqref{DecompW0R1V0}, we have 
\[
\begin{split}
\widetilde{W}(A - \mu_1I) \widetilde{V}
& = \widetilde{W}(A^\infty - \mu_1I) \widetilde{V} + \widetilde{W}(A -A^\infty) \widetilde{V}  \\
& = \widetilde{B} + \widetilde{F},
\end{split}
\]
where $\widetilde{F}(x,\lambda,\veps):=\widetilde{R}^{(1)} + \widetilde{W}R^{(2)}\widetilde{V}$ and 
\begin{equation}\label{Btilde}
\widetilde{B}(x,\lambda,\veps):= \text{diag}(0,\mu_2-\mu_1,\mu_3-\mu_1,\mu_4-\mu_1) + \frac{\lambda}{2\sqrt{K}}S_1.
\end{equation}
Using \eqref{bdW0R2V0} and \eqref{bdR1jk}, we see that 
\begin{equation}\label{IntFtilde}
\int_{-\infty}^\infty|\widetilde{F}(x,\lambda,\veps)|\,dx \leq C\veps^{1/2},
\end{equation}
where the constant $C$ is uniform in $\veps$ and $|\lambda| \geq \delta^{-1}$.

We let 
\[
\widetilde{\mathbf{e}}_1 :=\left( m_1^{-1}, 0, 0 , 0\right)^T, \quad \widetilde{\mathbf{e}}_1^\ast :=\left(m_1, 0, 0 , 0\right).
\]
Changing variables $\widetilde{\mathbf{y}}(x)=e^{-\mu_1x}\widetilde{W}\mathbf{y}(x) - \widetilde{\mathbf{e}}_1$, we have 
\begin{equation}\label{ytildeEqdiag}
\frac{d\widetilde{\mathbf{y}}}{dx} = \widetilde{B}(x;)\widetilde{\mathbf{y}} + \widetilde{F}(x;)(\widetilde{\mathbf{e}}_1 + \widetilde{\mathbf{y}}).
\end{equation}
With a particular choice of $\mathbf{y}^+$, we know that $\widetilde{\mathbf{y}}^+(x):= e^{-\mu_1x}\widetilde{W}\mathbf{y}^+(x) - \widetilde{\mathbf{e}}_1$ is a solution of \eqref{ytildeEqdiag} satisfying $\lim_{x \to +\infty} \widetilde{\mathbf{y}}^+(x) = 0$ from the definition of $\widetilde{W}$ and \eqref{ODEsol1}. 

Let $\Xi(x;s) $ be the fundamental matrix of the ODE system with the coefficient matrix  \eqref{Btilde}. In Lemma \ref{Lem_BtildeEq}, we will show that  $|\Xi(x;s)|\leq 1$ for $x\leq s$. Using this fact and \eqref{IntFtilde}, one may apply an iteration argument to show that there is a solution $\widetilde{\mathbf{y}}_\sharp^+$ of \eqref{ytildeEqdiag}    satisfying $\lim_{x \to +\infty}\widetilde{\mathbf{y}}_\sharp^+(x)=0$ as a fixed point of the bounded linear operator $\widetilde{\mathcal{T}}$ on $C_b([0,\infty))$  defined by
\[
\big(\widetilde{\mathcal{T}}\,\widetilde{\mathbf{y}}\big)(x) := -\int_x^{\infty} \Xi(x;s) \big[ \widetilde{F}(s)(\widetilde{\mathbf{e}}_1 + \widetilde{\mathbf{y}}(s)) \big] \,ds.
\]
Since $\widetilde{\mathbf{y}}_\sharp^+$ and $\widetilde{\mathbf{y}}^+$ tend to $0$ as $x\to +\infty$, we have $\widetilde{\mathbf{y}}_\sharp^+=\widetilde{\mathbf{y}}^+$. To see this, one may directly use Proposition \ref{Prop_ResultPW} by considering the asymptotic behavior of $\mathbf{y}_\sharp^+$ and $\mathbf{y}^+$ as $x\to+\infty$, where $\mathbf{y}_\sharp^+$, defined by $\widetilde{\mathbf{y}}_\sharp^+=e^{-\mu_1x}\widetilde{W}\mathbf{y}_\sharp^+ - \widetilde{\mathbf{e}}_1$, is a solution of the ODE \eqref{ODE_LinEP}. Indeed, we have $e^{-\mu_1 x}\mathbf{y}_\sharp^+(x) \to \widetilde{V}\widetilde{\mathbf{e}}_1 = \mathbf{v}_1$ and $ e^{-\mu_1 x}\mathbf{y}^+(x) \to \mathbf{v}_1$ as $x\to+\infty$. Hence, from the definition of $\widetilde{\mathcal{T}}$ and \eqref{IntFtilde},  we obtain
\[
\sup_{x\in[0,\infty)}|\widetilde{\mathbf{y}}^+(x)| \leq C \veps^{1/2}.
\]

In a similar fashion, one can obtain that
\[
\sup_{x\in(-\infty,0]}|\widetilde{\mathbf{z}}^-(x)| \leq C \veps^{1/2},
\]
where $\widetilde{\mathbf{z}}^-(x):=\mathbf{z}^-(x)e^{\mu_1x}\widetilde{V} -\widetilde{\mathbf{e}}_1^\ast$.
Since $D(\lambda,\veps) = \mathbf{z}^-\mathbf{y}^+ = (\widetilde{\mathbf{z}}^- + \widetilde{\mathbf{e}}_1^\ast)(\widetilde{\mathbf{y}}^+ + \widetilde{\mathbf{e}}_1) $, these two estimates yield that 
\[
|D(\lambda,\veps) -1| \leq C\veps^{1/2}.
\]
This finishes the proof.
\end{proof}

\subsection{Uniform Resolvent estimates}\label{Sec5.4}
We aim to prove the uniform boundedness of the resolvent operator, Proposition \ref{MainIngred}.\eqref{UnibdRes}. The following proposition is its restatement.
\begin{proposition}\label{Uniresol}
Consider the operator $\mathcal{L}: (L_\etba^2)^2 \to (L_\etba^2)^2$ with dense domain $(H_\etba^1)^2$. For any fixed $c_0\in(0, \sqrt{ 2{\sqrt{1+K}}/3} )$, let $\etba=c_0\veps^{1/2}$. Then there exists $\veps_0>0$ such that for each $\veps \in (0,\veps_0]$, the resolvent operator $(\lambda-\mc{L})^{-1}$ is uniformly bounded on $\oRe \lambda \geq 0$, outside any small neighborhood of the origin.
\end{proposition}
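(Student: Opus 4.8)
The plan is to reduce the resolvent bound to a Green's function estimate for the first-order ODE system \eqref{ODE_LinEP}, and then control that Green's function uniformly in $\lambda$ on $\oRe\lambda\ge0$ away from the origin. First I would recall that, by the isomorphism in subsection~\ref{SS_RefEP}, solving $(\lambda-\mc{L})(\dot n,\dot u)^T=(f_1,f_2)^T$ in $(L^2_\beta)^2$ is equivalent to solving an inhomogeneous version $\frac{d\mathbf{y}}{dx}=A(x,\lambda,\veps)\mathbf{y}+\mathbf{F}(x)$ of \eqref{ODE_LinEP} in $(L^2_\beta)^4$, where $\mathbf{F}$ is built from $f_1,f_2$ together with the (bounded, invertible) Poisson solve; conjugating by $e^{\beta x}$ turns this into an $(L^2)^4$ problem with coefficient matrix $A-\beta I$. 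The resolvent is then represented by $\dot{\mathbf y}(x)=\int_{\mathbb R}\mathbf G(x,s;\lambda,\veps)\mathbf F(s)\,ds$, and it suffices to show that the operator norm of this integral kernel on $L^2$ is bounded uniformly for $\lambda$ in $\{\oRe\lambda\ge0\}\setminus B_r(0)$, for any fixed $r>0$.

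The construction of $\mathbf G$ uses the exponential dichotomy of \eqref{ODE_LinEP} guaranteed by Lemma~\ref{splitting mu lam weight}: on $\Omega^\veps$ the shifted spectral splitting \eqref{split mu weight} holds, so $A^\infty-\beta I$ is hyperbolic with a one-dimensional stable direction (the $\mu_1$ mode) and a three-dimensional unstable direction. By the roughness of exponential dichotomies (\cite{Coppel2}), the full $x$-dependent system inherits a dichotomy on $\mathbb R_+$ and on $\mathbb R_-$, and one glues them using the solutions $\mathbf y^+$ (decaying as $x\to+\infty$, after the $\beta$-shift) and the transposed solution $\mathbf z^-$; the gluing is nondegenerate precisely when $D(\lambda,\veps)\ne0$, i.e. away from the origin by Proposition~\ref{Pro_Ev_Ch}. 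Thus $\mathbf G$ splits into pieces of the form (projection onto stable/unstable subspace)$\times e^{(\mu_j-\beta)(x-s)}$, each decaying exponentially in $|x-s|$ with a rate bounded below by $\mathrm{dist}(\oRe\mu_j,\{\beta\}$-threshold$)$; a Schur-test / Young's-inequality argument then bounds the $L^2\to L^2$ norm by a constant times the reciprocal of that rate, plus a contribution $O(|D(\lambda,\veps)|^{-1})$ from the gluing.

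The main obstacle is uniformity in $\lambda$ as $|\lambda|\to\infty$: one must show the dichotomy rates do not degenerate and the gluing constant stays bounded. This is exactly where the asymptotic root analysis of Section~\ref{Sec5.1} enters. For large $|\lambda|$, Proposition~\ref{EigenvalueLargelambda} gives $\mu_1=-1+O(|\lambda|^{-2})$ and $\mu_4=1+O(|\lambda|^{-2})$, so those two spectral gaps stay uniformly away from $-\beta$; but $\mu_2,\mu_3\sim c\lambda/(c^2-K)\pm\cdots$ drift off to infinity along directions whose real parts could in principle approach $-\beta$ only if $\oRe\lambda<0$, which is excluded here since we work on $\oRe\lambda\ge0$, so on that half-plane $\oRe\mu_2,\oRe\mu_3$ are bounded below by a positive constant independent of $\lambda$. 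The delicate point is that the spectral \emph{projections} associated with $\mu_2,\mu_3$ need not be uniformly bounded as $|\lambda|\to\infty$ (the eigenvectors \eqref{LRVec_A} and the normalizer $\boldsymbol\pi_j\mathbf v_j$ in \eqref{eigenvec_A2} degenerate), so a naive kernel bound fails; this is precisely the difficulty flagged in the introduction about the case $K>0$. I would handle it by the same compensating-factor device used in Section~\ref{Sec5.3}: conjugate the system by $\mathrm{diag}(m_1,m_2,m_3,m_4)$ with $m_j$ as in \eqref{mj}, which renders the transformed projections and the transformed perturbation $\widetilde F$ uniformly bounded (with $\int|\widetilde F|\,dx\le C\veps^{1/2}$ by \eqref{IntFtilde}), and note that the change of variables $\mathbf y\mapsto\widetilde W\mathbf y$ is an $L^2$-isomorphism with uniformly bounded constant because $m_k/m_j$ for $j,k\in\{2,3\}$ has leading term $1$ (and the off-diagonal blocks of $\mathbf G$ carry the factor $m_k/m_j\cdot e^{(\mu_j-\mu_k)(x-s)}$, whose potentially large prefactor $O(|\lambda|^{\pm2})$ is killed by the exponential for $x-s$ of the appropriate sign). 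Finally $|D(\lambda,\veps)|^{-1}$ is uniformly bounded for large $|\lambda|$ because $D(\lambda,\veps)\to1$ on $\mathfrak D_4$ by Lemma~\ref{Lem_Region4}, and for $|\lambda|$ in a compact annulus away from $0$ it is bounded by continuity (Proposition~\ref{Prop_JointConti}) together with the fact that $D$ has no zeros there (Proposition~\ref{Pro_Ev_Ch}); combining the large-$|\lambda|$, moderate-$|\lambda|$, and the already-established dichotomy estimates yields a single bound $\|(\lambda-\mc L)^{-1}\|\le C$ on $\{\oRe\lambda\ge0\}\setminus B_r(0)$, which is the assertion.
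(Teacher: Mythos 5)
Your overall strategy (reduce to a Green's function bound for the weighted first--order system, handle compact sets in $\lambda$ by analyticity/continuity and absence of zeros of $D$, and fight the degeneration of the spectral projections for large $|\lambda|$ with the compensating factors $m_j$ of \eqref{mj}) is the same as the paper's, but there is a genuine gap at the decisive step. After conjugating by $\widetilde V,\widetilde W$ the deviation from the constant--coefficient system is \emph{not} the quantity controlled by \eqref{IntFtilde}: by \eqref{DecompW0(A-Ainf)V0} and \eqref{DecompW0R1V0} one has
\begin{equation*}
\widetilde W\bigl(A-A^\infty\bigr)\widetilde V \;=\; \frac{\lambda}{2\sqrt{K}}\,S_1 \;+\; \widetilde R^{(1)} + \widetilde W R^{(2)}\widetilde V ,
\end{equation*}
and only the last two terms satisfy the $O(\veps^{1/2})$ integral bound you cite. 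The first term is of size $|\lambda|\,\veps\, e^{-C\veps^{1/2}|x|}$ pointwise, hence of order $|\lambda|\veps^{1/2}$ in $L^1_x$, which is unbounded on $\mathfrak D_4=\{|\lambda|>\delta^{-1}\}$. Consequently the roughness-of-dichotomies argument you invoke cannot treat it as a perturbation of the diagonal system $\mathrm{diag}(\mu_j+\beta)$, and your claim that the transformed perturbation is uniformly small is exactly the point that fails; this is the reason the naive kernel bound breaks for $K>0$, as flagged in the introduction. The paper resolves it non-perturbatively: it keeps $\mathrm{diag}(\mu_j+\beta)+\frac{\lambda}{2\sqrt K}S_1$ as the principal system \eqref{BtweiEq}, proves (Lemma \ref{Lem_BtildeEq}) that this system has an exponential dichotomy on $\mathbb R$ with constants uniform in $\lambda$ by an energy estimate exploiting that $S_1$ is symmetric and positive semi-definite (Appendix \ref{NonnegaS}) together with the sign condition $\oRe\lambda\ge 0$, and only then applies roughness with the genuinely small remainder $\widetilde R^{(1)}+\widetilde W R^{(2)}\widetilde V$. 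Note that $\oRe\lambda\ge0$ enters precisely here, through $\frac{\oRe\lambda}{2\sqrt K}\,\overline{\bm a}^T S_1\bm a\ge 0$; without this structural input your argument has no mechanism to control the large-$|\lambda|$ regime.

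A secondary inaccuracy: on $\oRe\lambda\ge0$ the roots $\mu_2,\mu_3$ are \emph{not} bounded away from the imaginary axis uniformly in $\lambda$ — by Lemma \ref{splitting mu lam} they are purely imaginary whenever $\oRe\lambda=0$. What the proof actually uses is $\oRe\mu_{2,3}\ge0$ combined with the weight shift $\beta=c_0\veps^{1/2}$ (giving a rate of order $\beta$, which may depend on $\veps$ but not on $\lambda$), again in conjunction with the $S_1$ energy estimate \eqref{BtweiEq2}. With the $\lambda S_1$ term handled as in the paper, the rest of your outline (identification of the dichotomy projection with the one built from $\mathbf y^+$, $\mathbf z^-$ and $D(\lambda,\veps)$, generalized Young's inequality, and boundedness of $|D|^{-1}$ on $\mathfrak D_4$ via Lemma \ref{Lem_Region4}) matches the paper's Steps 1--2.
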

\begin{proof}
It is enough to show that 
\[
\sup_{\lambda \in \mathfrak{D}_4,\oRe \lambda  \geq 0}\|(\lambda - \mc{L})^{-1}\|_{(L_\beta^2)^2} \leq M
\]
for each small $\veps$ since  the resolvent operator is analytic in $\lambda$ on the resolvent set. (Recall that $\Omega^\veps\setminus\{0\}$  is a subset of the resolvent set. See Proposition \ref{Clas_Spec}.)

For given $e^{{\etba} x}(f_1,f_2)^T \in (L^2)^2$, we consider the solution $\mathbf{y}^\etba=e^{{\etba} x}(\dt{n},\dt{u},\dt{\phi},\dt{\psi})^T$ to the inhomogeneous ODE system 
\begin{equation}\label{inhomoWei}
(\partial_x-\etba) \mathbf{y}^{\etba} = A(x,\lambda,\veps)\mathbf{y}^{\etba} + \mathrm{diag}(L^{-1},1,1)\mathbf{f}^\etba, \quad \mathbf{f}^{\etba} := e^{{\etba} x}(f_1, f_2,0,0)^T
\end{equation}
on the region $\mathfrak{D}_4\cap \{\lambda : \oRe \lambda \geq 0\}$. Indeed, the solution $\mathbf{y}^\beta \in (L^2)^2$ exists. First of all, we see that 
\[
P(\lambda,\veps):=\frac{1}{D(\lambda,\veps)}\mathbf{y}^+(x,\lambda,\veps)\mathbf{z}^-(x,\lambda,\veps)|_{x=0}
\]
is the projection onto the space of initial conditions at $x=0$ of solutions to the system \eqref{ODE_LinEP} satisfying $O(e^{\mu_1 x})$ as $x \to +\infty$. The range of the complementary projection $I-P$ is the space of initial conditions at $x=0$ of solutions to \eqref{ODE_LinEP} satisfying $O(e^{(\mu_\ast-\theta) x})$ as $x \to -\infty$. We let $\Phi(x)=\Phi(x,\lambda,\veps)$ be the fundamental matrix solution of \eqref{ODE_LinEP} satisfying $\Phi(0)=I$. Then the Green function $G^\etba(x,x')=G^\etba(x,x',\lambda,\veps)$ of the system \eqref{inhomoWei} with $\mathbf{f}^\beta=0$
is given by 
\[
G^\etba(x,x') = \left\{ \begin{array}{l l}
e^{{\etba} (x-x')}\Phi(x)P\Phi(x')^{-1} & x>x', \\
-e^{{\etba} (x-x')}\Phi(x)(I-P)\Phi(x')^{-1} & x'>x,
\end{array}
\right.
\]
It is easy to check that $G^{\etba}$ satisfies
\[
(\partial_x-\etba) G^{\etba} = A(x,\lambda,\veps)G^{\etba} \quad \text{for } x \neq x', \quad G^\etba(x'+0,x') - G^\etba(x'-0,x')=I.
\]

Our goal is to  show that there is a constant $M'>0$, \emph{independent of} $\lambda \in \mathfrak{D}_4\cap \{\lambda : \oRe \lambda  \geq 0\}$, such that for $j,k=1,2$,
\begin{equation}\label{Expod2}
\sup_{x\in\mathbb{R}}\int_{-\infty}^\infty|(G^\etba)_{jk}(x,x')|\,dx' + \sup_{x'\in\mathbb{R}}\int_{-\infty}^\infty|(G^\etba)_{jk}(x,x')|\,dx < M'.
\end{equation}
Then, by the generalized Young's inequality, we  obtain the desired uniform (in $\lambda$) bound
\[
\|(\dt{n},\dt{u})^T\|_{(L_\etba^2)^2}=\|e^{{\etba} x}(\dt{n},\dt{u})^T\|_{(L^2)^2} \leq 4M'\|e^{{\etba} x}L^{-1}(f_1,f_2)^T\|_{(L^2)^2} \leq M\|(f_1,f_2)^T\|_{(L_\etba^2)^2}.
\]


\textit{Step 1:} To make the situation simpler, we first consider some diagonalization. We let  $\widetilde{\mathbf{y}}_\etba:=\widetilde{V}^{-1}\mathbf{y}_\beta$, where $\widetilde{V}$ is defined in \eqref{V0W0}. Then the system  \eqref{inhomoWei} with $\mathbf{f}_\beta=0$ becomes  
\begin{equation}\label{Ch3ODEDiagonal}
\begin{split}
\partial_x \widetilde{\mathbf{y}}_\etba =  \text{diag}({\etba} + \mu_i)\widetilde{\mathbf{y}}_\etba + \widetilde{W}(A-A^\infty)\widetilde{V} \widetilde{\mathbf{y}}_\etba.
\end{split}
\end{equation}
The Green function $ \widetilde{G^\etba}(x,x')$ of \eqref{Ch3ODEDiagonal} is then given by 
\begin{equation}\label{Expod3}
\begin{split}
\widetilde{G^\etba}(x,x') 
& =\widetilde{W}G^\etba(x,x')\widetilde{V}  \\
& = \left\{ \begin{array}{l l}
\widetilde{\Phi}^\etba(x) P^\etba(\widetilde{\Phi}^\etba)^{-1}(x') & x>x', \\
-\widetilde{\Phi}^\etba(x) (I-P^\etba)(\widetilde{\Phi}^\etba)^{-1}(x') & x'>x,
\end{array}
\right.
\end{split}
\end{equation}
where $\widetilde{\Phi}^\etba(x):=\widetilde{W}e^{{\etba} x}\Phi(x)\widetilde{V}$ is the fundamental solution of \eqref{Ch3ODEDiagonal} with $\Phi^\etba(0)=I$ and $P^\etba:=\widetilde{W}P\widetilde{V}=(P^\etba)^2$ is a projection. 

On the other hand, one may check that by term by term computation using \eqref{mujlarge} and \eqref{Factor},
\[
|(\widetilde{V})_{jl}| |(\widetilde{W})_{mk}| \leq C \quad \text{for }j,k=1,2 \text{ and } l,m=1,2,3,4
\]
uniformly in $\lambda\in\mathfrak{D}_4$.  Hence, for $j,k=1,2$, we obtain that
\begin{equation}\label{Expod1}
|(G^\etba)_{jk}| = | (\widetilde{V}\widetilde{G^\etba}\widetilde{W})_{jk} | \leq 64C \sum_{l,m=1,2,3,4} | (\widetilde{G^\etba})_{lm}|.
\end{equation}
In the next step, we show that there exists constants $\veps_0, C_0, \alpha_0>0$, independent of $\lambda$, such that for all $(\veps,\lambda)\in (0,\veps_0]\times\mathfrak{D}_4$ with $\oRe \lambda \geq 0$, there holds that
\begin{subequations}\label{Ch3_Expodichoeta}
\begin{align}
 |\widetilde{\Phi}^\etba(x) P^\etba(\widetilde{\Phi}^\etba)^{-1}(x')| & \leq C_0 e^{-\alpha_0(x-x')}, \quad x > x', \\
 |\widetilde{\Phi}^\etba(x) (I-P^\etba)(\widetilde{\Phi}^\etba)^{-1}(x')| & \leq C_0 e^{-\alpha_0(x'-x)},  \quad x' > x.
\end{align}
\end{subequations}
Then \eqref{Expod2} will follow from \eqref{Expod3}--\eqref{Ch3_Expodichoeta}.

\textit{Step 2:} We recall that from \eqref{DecompW0(A-Ainf)V0} and \eqref{DecompW0R1V0},
\[
\widetilde{W}(A-A^\infty)\widetilde{V} = \frac{\lambda}{2\sqrt{K}}S_1 + \widetilde{R}^{(1)} + \widetilde{W} R^{(2)}\widetilde{V}.
\]
Here, from \eqref{bdW0R2V0} and \eqref{bdR1jk}, there holds that
\begin{equation}\label{Expodi5}
|\widetilde{R}^{(1)} + \widetilde{W} R^{(2)}\widetilde{V}| \leq C \veps e^{-C\veps^{1/2}|x|}
\end{equation}
uniformly in $\lambda \in \mathfrak{D}_4$. 

Now we consider the simpler equation 
\begin{equation}\label{BtweiEq}
\frac{d\bm{a}}{dx} = \mathrm{diag}(\mu_1+\beta,\mu_2+\beta,\mu_3+\beta,\mu_4+\beta)\bm{a} + \frac{\lambda}{2\sqrt{K}}S_1\bm{a}
\end{equation}
and show that \eqref{BtweiEq} has an exponential dichotomy on $\mb{R}$ with some uniform constants.  Then, together with the estimate \eqref{Expodi5}, the roughness of exponential dichotomies (Proposition 1, Chapter 4 of \cite{Coppel2}) implies that  the system \eqref{Ch3ODEDiagonal} has an exponential dichotomy \eqref{Ch3_Expodichoeta} on $\mb{R}$ with some uniform constants $C_0$ and $\alpha_0$ for all sufficiently small $\veps>0$. We remark that the projection of the exponential dichotomy \eqref{Ch3_Expodichoeta} on $\mathbb{R}$ is unique (see the last paragraph of p.19, \cite{Coppel2}).

Let $\Theta(x)$ be the fundamental solution  of \eqref{BtweiEq} satisfying $\Theta(0) = I$.  Let $P_1:=\mathbf{e}_1\mathbf{e}_1^T$, where $\mathbf{e}_1:=(1,0,0,0)^T$. Then, the Green function of \eqref{BtweiEq} is given by
\begin{equation}\label{Expodicho} 
\begin{split}
\widetilde{G}(x,x') 
& := \left\{ 
\begin{array}{l l}
\Theta(x)P_1\Theta^{-1}(x') &  \text{for} \quad x>x',  \\
-\Theta(x)(I-P_1)\Theta^{-1}(x') & \text{for} \quad x'>x,
\end{array}
\right. \\
& = \left\{
\begin{array}{ll}
\text{diag}(e^{(\oRe \mu_1 + \etba)(x-x')},0,0,0) & \text{for} \;\; x>x', \\
\text{diag}(0,\widetilde{\Theta}(x)\widetilde{\Theta}^{-1}(x'),e^{(\oRe \mu_4 + \etba)(x-x')}) & \text{for} \;\; x'>x,
\end{array}
\right.
\end{split}
\end{equation}
where we have used \eqref{BtweiEq24} of Lemma \ref{Lem_BtildeEq} in the second line. On the other hand, we have from \eqref{mujlarge1} that for all sufficiently small $\delta>0$ and $\veps>0$,
\begin{equation}\label{Expodicho1}
\oRe \mu_1 + {\etba} < -1/2  \text { and } \oRe\mu_4 + \beta >1/2 \quad \text{for } \lambda \in \mathfrak{D}_4.
\end{equation}

Combining \eqref{Expodicho} and \eqref{Expodicho1} together with \eqref{BtweiEq2} of Lemma \ref{Lem_BtildeEq}, we obtain that there exist positive constants $\delta_0$ and  $\veps_0$ such that for all $(\veps,\lambda) \in (0,\veps_0]\times \mathfrak{D}_4$ with $\oRe \lambda \geq 0$, there holds that 
\begin{equation}
\left\{ 
\begin{array}{l l}
|\Theta(x)P_1\Theta^{-1}(x')| \leq e^{-\frac{1}{2}(x-x')} &  \text{for} \quad x>x',  \\
|\Theta(x)(I-P_1)\Theta^{-1}(x')| \leq e^{-\frac{1}{2}(x'-x)} & \text{for} \quad x'>x.
\end{array}
\right.
\end{equation}
Hence, the system \eqref{BtweiEq} possesses an exponential dichotomy on $\mathbb{R}$ with the projection $P_1$ and uniform constants. This completes the proof.
\end{proof}

\subsection{Fundamental solutions for large eigenvalue parameter}
\begin{lemma}\label{Lem_BtildeEq}
For the symmetric matrix $S_1$ defined in \eqref{S1sym}, which is non-negative for all small $\veps>0$, the following holds true.
\begin{enumerate}
\item Let $\Xi(x;x_0)\in\mathbb{C}^{4\times 4}$ be the fundamental matrix of 
\begin{equation}\label{BtildeEq}
\frac{d\bm{a}}{dx} = \mathrm{diag}\,(0,\mu_2-\mu_1,\mu_3-\mu_1,\mu_4-\mu_1)\bm{a} + \frac{\lambda}{2\sqrt{K}}S_1\bm{a}
\end{equation}
satisfying $\Xi(x_0;x_0)=I$. Then, if  $\veps>0$ and $\delta>0$ are sufficiently small, we have that for all $\lambda \in \mathfrak{D}_4$,
\begin{equation}\label{BtildeEq2}
|\Xi(x;x_0)\bm{a}_0| \leq |\bm{a}_0| \quad \text{for all} \; x\leq x_0 \; \text{and}\; \bm{a}_0\in \mathbb{C}^4.
\end{equation}

\item Let $\widetilde{\Theta}(x;x_0)\in\mathbb{C}^{2\times 2}$ be the fundamental matrix of 
\begin{equation}\label{BtweiEq23}
\frac{d\widetilde{\bm{a}}}{dx} = \mathrm{diag}(\mu_2+\beta,\mu_3+\beta)\widetilde{\bm{a}} + \frac{\lambda}{2\sqrt{K}}\widetilde{S}_1\widetilde{\bm{a}}
\end{equation}
satisfying $\widetilde{\Theta}(x;x_0)=I$, where $\widetilde{S}_1$ is the largest submatrix of $S_1$ with non-zero entries given by
\begin{equation}\label{Ch3S1tilde}
\widetilde{S}_1:=
\begin{pmatrix}
2\sqrt{K}R^{(1)}_{11}-K R^{(1)}_{12}-R^{(1)}_{21}  & K R^{(1)}_{12} - R^{(1)}_{21} \\
K R^{(1)}_{12} - R^{(1)}_{21}  & 2\sqrt{K} R^{(1)}_{11}+K R^{(1)}_{12}+R^{(1)}_{21} 
\end{pmatrix}.
\end{equation}
Then for all sufficiently small $\veps>0$, we have that on the region $\oRe \lambda \geq 0$,
\begin{equation}\label{BtweiEq2}
|\widetilde{\Theta}(x;x_0)\widetilde{\bm{a}}_0| \leq e^{2\beta(x-x_0)} |\widetilde{\bm{a}}_0| \quad \text{for all} \; x\leq x_0 \; \text{and}\; \widetilde{\bm{a}}_0\in \mathbb{C}^2.
\end{equation}
Furthermore, $\Theta(x;x_0)\in\mathbb{C}^{4\times 4}$, the fundamental matrix of \eqref{BtweiEq} satisfying $\Theta(x;x_0)=I$, is given by 
\begin{equation}\label{BtweiEq24}
\Theta(x;x_0) = \mathrm{diag}(e^{(\oRe \mu_1 + \etba)(x-x_0)}, \widetilde{\Theta}(x;x_0), e^{(\oRe \mu_4 + \etba)(x-x_0)}).
\end{equation}
\end{enumerate}
\end{lemma}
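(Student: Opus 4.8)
The plan is to prove both assertions by an energy (quadratic‑form) argument, the decisive point being to \emph{use} the positive semi‑definiteness of $S_1$ (established in Appendix~\ref{NonnegaS}) rather than to estimate the term $\tfrac{\lambda}{2\sqrt K}S_1$ in norm --- which is hopeless on $\mathfrak{D}_4$, where $\|\lambda S_1\|$ is unbounded. For any solution $\bm a(x)$ of a linear system $\bm a'=M\bm a$ one has $\tfrac{d}{dx}|\bm a|^2=\bm a^{*}(M+M^{*})\bm a$, and the Hermitian part of $\tfrac{\lambda}{2\sqrt K}S_1$ is $\tfrac{\oRe\lambda}{\sqrt K}S_1$: this involves only $\oRe\lambda$, and since $S_1\ge 0$ it is itself $\ge 0$ whenever $\oRe\lambda\ge 0$, while on the thin strip $-\veps^{3/2}\eta(c_0)\le\oRe\lambda<0$ contained in $\mathfrak{D}_4$ it is bounded below by $-C\veps^{3/2}I$ (the entries of $S_1$ are $O(\veps e^{-C\veps^{1/2}|x|})$ by \eqref{pointestimateinX}). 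Hence the large, $\lambda$‑dependent piece is in every case either harmless or negligible, and the sign of $M+M^{*}$ is governed by its diagonal, $x$‑independent part.

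For assertion (1) put $\widetilde B=\mathrm{diag}(0,\mu_2-\mu_1,\mu_3-\mu_1,\mu_4-\mu_1)+\tfrac{\lambda}{2\sqrt K}S_1$, so that $\widetilde B+\widetilde B^{*}=2\,\mathrm{diag}\big(0,\oRe(\mu_2-\mu_1),\oRe(\mu_3-\mu_1),\oRe(\mu_4-\mu_1)\big)+\tfrac{\oRe\lambda}{\sqrt K}S_1$. Because the first and fourth rows and columns of $S_1$ vanish (see \eqref{S1sym}), the first row and column of $\widetilde B+\widetilde B^{*}$ vanish identically, so $\widetilde B+\widetilde B^{*}\ge 0$ reduces to non‑negativity of the $3\times3$ block on the coordinates $2,3,4$. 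On $\mathfrak{D}_4$, Proposition~\ref{EigenvalueLargelambda} gives $\oRe(\mu_4-\mu_1)=2+O(\delta^2)$, $\oRe\mu_1=-1+O(\delta^2)$ and $\oRe\mu_j\ge -C\veps^{3/2}$ for $j=2,3$ (whence $\oRe(\mu_j-\mu_1)\ge\tfrac12$ for $\delta$ small); together with $\widetilde S_1\ge 0$ and the lower bound on $\tfrac{\oRe\lambda}{\sqrt K}S_1$ from the previous paragraph, this yields $\widetilde B+\widetilde B^{*}\ge 0$ for all sufficiently small $\veps,\delta$. Consequently $x\mapsto|\Xi(x;x_0)\bm a_0|^2$ is non‑decreasing, and since it equals $|\bm a_0|^2$ at $x=x_0$, the bound \eqref{BtildeEq2} follows.

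For assertion (2), the same vanishing of the first and fourth rows and columns of $S_1$ decouples the system \eqref{BtweiEq}: the components $a_1,a_4$ of a solution satisfy the scalar equations $a_j'=(\mu_j+\beta)a_j$, and $(a_2,a_3)$ satisfies exactly \eqref{BtweiEq23}, which gives the block form \eqref{BtweiEq24}. For the bound on $\widetilde\Theta$, apply the energy identity to $\widetilde M=\mathrm{diag}(\mu_2+\beta,\mu_3+\beta)+\tfrac{\lambda}{2\sqrt K}\widetilde S_1$: on $\{\oRe\lambda\ge 0\}$ one has $\oRe\mu_2,\oRe\mu_3\ge 0$ by \eqref{EigenSpliting1} and $\tfrac{\oRe\lambda}{\sqrt K}\widetilde S_1\ge 0$ (as $\widetilde S_1\ge 0$, being a principal submatrix of $S_1$), so $\widetilde M+\widetilde M^{*}=2\,\mathrm{diag}(\oRe\mu_2+\beta,\oRe\mu_3+\beta)+\tfrac{\oRe\lambda}{\sqrt K}\widetilde S_1\ge 2\beta I$. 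Integrating $\tfrac{d}{dx}|\widetilde\Theta(x;x_0)\widetilde{\bm a}_0|^2\ge 2\beta|\widetilde\Theta(x;x_0)\widetilde{\bm a}_0|^2$ backward from $x_0$ then gives the exponential decay bound \eqref{BtweiEq2} for $x\le x_0$ (up to the value of the constant in the exponent, which is immaterial for the dichotomy argument in Proposition~\ref{Uniresol} that uses this lemma).

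I expect the main obstacle to be the uniform‑in‑$\lambda$ positivity $\oRe(\mu_j-\mu_1)\ge c>0$ for $j=2,3,4$ over the \emph{unbounded} region $\mathfrak{D}_4$ --- including the strip $-\veps^{3/2}\eta(c_0)\le\oRe\lambda<0$, where $\oRe\mu_2,\oRe\mu_3$ are no longer non‑negative --- which is precisely what Proposition~\ref{EigenvalueLargelambda} is designed to supply, and, conceptually, the recognition that the $\lambda S_1$ term must be retained and its sign exploited rather than absorbed into an error term. Once this and the non‑negativity of $S_1$ (Appendix~\ref{NonnegaS}) are in hand, the two energy identities and the decoupling leading to \eqref{BtweiEq24} are routine.
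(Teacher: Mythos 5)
Your proof is correct and takes essentially the same route as the paper's: an energy (quadratic-form) argument that retains the $\lambda S_1$ term and exploits $S_1\ge 0$ together with the large-$|\lambda|$ behavior of the $\mu_j$ (the paper's \eqref{FunSol3}), plus the decoupling coming from the vanishing first and fourth rows and columns of $S_1$. The only deviations are cosmetic: the paper solves the scalar equations for $a_1,a_4$ explicitly and runs the energy estimate on the $(a_2,a_3)$ block rather than on the full $4\times4$ Hermitian part, and your exponent $e^{\etba(x-x_0)}$ in place of the stated $e^{2\etba(x-x_0)}$ in \eqref{BtweiEq2} is exactly what the paper's own differential inequality yields and, as you note, is immaterial for the dichotomy argument.
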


\begin{proof}
We suppress the $(\lambda,\veps)$-dependence of the fundamental solutions. We first prove \eqref{BtildeEq2}.  We let $(a_1,a_2,a_3,a_4)^T(x):=\Xi(x;x_0)\bm{a}_0$. From the structure of the equation \eqref{BtildeEq}, we observe that 
\[
\frac{da_1}{dx} = 0, \quad \frac{da_4}{dx} = (\mu_4-\mu_1)a_4.
\]
From \eqref{mujlarge1}, we have that $\mu_4-\mu_1 = 2 + O(|\lambda|^{-2})$ as $|\lambda| \to \infty$ uniformly in $\veps$. Hence, for all sufficiently small $\veps>0$ and $\delta>0$, we have that for all $\lambda \in \mathfrak{D}_4$,
\begin{equation}\label{FunSol6}
a_1(x) = a_1(x_0) \quad \text{and} \quad |a_4(x)| = |e^{(\mu_4-\mu_1)(x-x_0)}a_4(x)| \leq  |a_4(x_0)| \quad \text{for}\; x \leq x_0.
\end{equation}
On the other hand, $(a_2,a_3)^T(x)$ satisfies 
\begin{equation}\label{FunSol1}
\frac{d}{dx}\begin{pmatrix}
a_2 \\
a_3
\end{pmatrix} = \begin{pmatrix}
\mu_2-\mu_1 & 0 \\
0 & \mu_3 - \mu_1
\end{pmatrix}
\begin{pmatrix}
a_2 \\
a_3
\end{pmatrix}
+ \frac{\lambda}{2\sqrt{K}}\widetilde{S}_1
\begin{pmatrix}
a_2 \\
a_3
\end{pmatrix},
\end{equation}
where $\widetilde{S}_1$ is given in \eqref{Ch3S1tilde}. Multiplying \eqref{FunSol1} by the complex conjugate $\overline{(a_2,a_3)}$, we get that
\begin{equation}\label{FunSol2}
\frac{1}{2}\frac{d}{dx}|(a_2,a_3)|^2(x) \geq \min_{j=2,3} \oRe(\mu_j-\mu_1)|(a_2,a_3)|^2 + \frac{\oRe \lambda}{2\sqrt{K}} \overline{(a_2,a_3)}\widetilde{S}_1\begin{pmatrix}
a_2 \\
a_3
\end{pmatrix}.
\end{equation}
From \eqref{mujlarge1}, we have that for all sufficiently large $|\lambda|$ and small $\veps$,
\begin{equation}\label{FunSol3}
\min_{j=2,3}\oRe\, (\mu_j-\mu_1)  
\left\{
\begin{array}{l l}
>0 & \text{if} \; \oRe \lambda \geq 0,\\
=\frac{\oRe \lambda}{c-\sqrt{K}} + 1 +O(|\lambda|^{-1}) & \text{if} \; \oRe\lambda<0.
\end{array}
\right.
\end{equation}
Since $\widetilde{S}_1$ is non-negative, we obtain from \eqref{FunSol2} and \eqref{FunSol3} that for all sufficiently large $|\lambda|$ and small $\veps$,
\begin{equation}\label{FunSol4}
\frac{1}{2}\frac{d}{dx}|(a_2,a_3)|^2 >
\left\{
\begin{array}{l l}
0 & \text{if} \; \oRe \lambda \geq 0,\\
\frac{1}{2}|(a_2,a_3)|^2 & \text{if} \;  -\veps^{3/2}\eta(c_0) \leq \oRe \lambda <0,
\end{array}
\right.
\end{equation}
where $-\veps^{3/2}\eta(c_0)$ is the real part of the (left)  boundary of the domain $\Omega^\veps$ (see \eqref{ND-eps}). Integrating \eqref{FunSol4} over $[x,x_0]$, we have that for all sufficiently small $\veps>0$ and $\delta>0$, 
\begin{equation}\label{FunSol5}
|(a_2,a_3)|(x) \leq |(a_2,a_3)|(x_0) \quad \text{for } x \leq x_0
\end{equation}
holds for $\lambda \in \mathfrak{D}_4$. Now \eqref{BtildeEq2} follows from \eqref{FunSol6} and \eqref{FunSol4}.

Now we prove \eqref{BtweiEq2}. Multiplying \eqref{BtweiEq23} by the complex conjugate of $\widetilde{\bm{a}}^T$, 
\[
\frac{1}{2}\frac{d}{dx}|\widetilde{\bm{a}}|^2(x) \geq \min_{j=2,3} \oRe(\mu_j+\beta)|\widetilde{\bm{a}}|^2 + \frac{\oRe \lambda}{2\sqrt{K}} \overline{\widetilde{\bm{a}}^T}\widetilde{S}_1\widetilde{\bm{a}}
\]
Since $\oRe \mu_2,\oRe \mu_3 \geq 0$ when $\oRe \lambda \geq 0$ (see \eqref{EigenSpliting1}) and $\widetilde{S}_1$ is non-negative, we obtain 
\[
\frac{d}{dx}|\widetilde{\bm{a}}|^2(x) \geq 2\beta|\widetilde{\bm{a}}|^2(x),
\]
which leads \eqref{BtweiEq2}.

We omit the proof of \eqref{BtweiEq24} since it easily follows from the structure of the equation \eqref{BtweiEq}. 
\end{proof}

\section{Linear asymptotic stability in weighted spaces}\label{Sec_LinConvec}

\subsection{Semigroup generation}
In the following lemma, we show that $\mathcal{L}$ generates a $C_0$-semigroup on $(L_\beta^2)^2$, which is the assertion of \eqref{Semig} in Proposition \ref{MainIngred}.
\begin{lemma}
Consider the operator $\mathcal{L}: (L_\etba^2)^2 \to (L_\etba^2)^2$ with dense domain $(H_\etba^1)^2$. For each $\veps \in (0,\veps_K)$, $\mc{L}$ generates a $C_0$-semigroup on $(L_\etba^2)^2$.
\end{lemma}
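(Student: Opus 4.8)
The plan is to invoke a standard generation theorem — most conveniently the Lumer–Phillips theorem after a suitable rescaling, or equivalently the Hille–Yosida theorem via resolvent estimates — once we have reduced $\mathcal{L}$ to a perturbation of a manifestly dissipative constant-coefficient operator. First I would pass to the unweighted space: conjugating by $e^{\beta x}$, the operator $\mathcal{L}$ on $(L^2_\beta)^2$ is unitarily equivalent to $\mathcal{L}_\beta := e^{\beta x}\mathcal{L}e^{-\beta x}$ on $(L^2)^2$, so it suffices to show $\mathcal{L}_\beta$ generates a $C_0$-semigroup on $(L^2)^2$ with domain $(H^1)^2$. Writing $\mathcal{L}_\beta(\dot n,\dot u)^T = -(\partial_x-\beta)\big[L(\dot n,\dot u)^T + (0,(-\partial_x^2+e^{\phi_c})^{-1}\dot n)^T\big]$, I would split it as $\mathcal{L}_\beta = \mathcal{L}_\beta^{(0)} + \mathcal{B}$, where $\mathcal{L}_\beta^{(0)} := -(\partial_x-\beta)\big(L^\infty\,\cdot\big)$ is the constant-coefficient principal part built from the asymptotic matrix $L^\infty = \mathrm{diag}$-type limit of $L$ (recall $L\to L^\infty$ exponentially by \eqref{pointestimateinX}), and $\mathcal{B}$ collects the remainder: the $x$-dependent part $-( \partial_x-\beta)\big((L-L^\infty)\cdot\big)$ together with the zeroth-order term $(\partial_xL)(\dot n,\dot u)^T$ and the nonlocal operator $-(\partial_x-\beta)(0,(-\partial_x^2+e^{\phi_c})^{-1}\dot n)^T$.

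The key point is that $\mathcal{L}_\beta^{(0)}$ generates a $C_0$-group on $(L^2)^2$: it is a first-order constant-coefficient system, diagonalizable (since $L^\infty$ has distinct real eigenvalues $c\mp\sqrt{K+1}$, both nonzero), so after a fixed linear change of the unknowns it decouples into two scalar transport equations $\partial_t v_\pm = -(\partial_x - \beta)\big((c\mp\sqrt{K+1})v_\pm\big) = -(c\mp\sqrt{K+1})\partial_x v_\pm + \beta(c\mp\sqrt{K+1})v_\pm$. Each of these generates a $C_0$-group on $L^2$ (translation semigroup composed with a scalar exponential), with growth bound $\beta|c\mp\sqrt{K+1}|$; this can be read off directly or verified by Fourier transform. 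Next I would check that $\mathcal{B}$ is bounded from $(H^1)^2$ to $(L^2)^2$ and in fact \emph{relatively bounded} with respect to $\mathcal{L}_\beta^{(0)}$: the coefficients $L-L^\infty$ and $\partial_xL$ are bounded (indeed exponentially decaying, by \eqref{pointestimateinX}), so $-( \partial_x-\beta)\big((L-L^\infty)\cdot\big)$ and $(\partial_xL)\cdot$ map $(H^1)^2\to(L^2)^2$ boundedly, and the nonlocal term is smoothing: $(-\partial_x^2+e^{\phi_c})^{-1}: L^2\to H^2$ is bounded (since $0 < e^{\phi_c}\in L^\infty$, as noted right after \eqref{Def_WeightNorm}), so $-( \partial_x-\beta)(0,(-\partial_x^2+e^{\phi_c})^{-1}\dot n)^T$ is in fact bounded $L^2\to H^1\hookrightarrow L^2$. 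Hence $\mathcal{B}$ is $\mathcal{L}_\beta^{(0)}$-bounded with relative bound $0$ (one can even make it genuinely bounded on $(L^2)^2$ for the nonlocal piece, and for the differential pieces use an interpolation/Young inequality to absorb the $H^1$-norm into $\varepsilon\|\mathcal{L}_\beta^{(0)}u\| + C_\varepsilon\|u\|$).

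The conclusion then follows from the bounded/relatively-bounded perturbation theorem for $C_0$-semigroup generators (e.g. \cite{Nagel}, \cite{Pruss}): if $\mathcal{L}_\beta^{(0)}$ generates a $C_0$-semigroup and $\mathcal{B}$ is relatively bounded with relative bound strictly less than $1$, then $\mathcal{L}_\beta^{(0)}+\mathcal{B}$ with domain $D(\mathcal{L}_\beta^{(0)}) = (H^1)^2$ generates a $C_0$-semigroup. Undoing the conjugation by $e^{\beta x}$ gives that $\mathcal{L}$ generates a $C_0$-semigroup on $(L^2_\beta)^2$ with dense domain $(H^1_\beta)^2$, proving the lemma. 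I expect the main (minor) obstacle to be the bookkeeping that verifies the relative bound is $<1$ for the first-order part $-( \partial_x-\beta)\big((L-L^\infty)\cdot\big)$ — one must be slightly careful because it contains a genuine derivative — but this is handled by the standard estimate $\|(L-L^\infty)\partial_x u\|_{L^2} \le \|L-L^\infty\|_{L^\infty}\|\partial_x u\|_{L^2}$ combined with the elementary interpolation inequality $\|\partial_x u\|_{L^2}\le \varepsilon\|\partial_x^2 u\|_{L^2} + C_\varepsilon\|u\|_{L^2}$ applied after diagonalization (where $\mathcal{L}_\beta^{(0)}$ controls $\partial_x$ of each decoupled component), or, more simply, by noting that $L^\infty$ is invertible so that $\|\mathcal{L}_\beta^{(0)}u\|$ is equivalent to $\|\partial_x u\| + \|u\|$ up to constants, making $\|(L-L^\infty)\partial_x u\| \le \theta\|\mathcal{L}_\beta^{(0)}u\| + C\|u\|$ with $\theta = \|L^{\infty -1}\|\,\|L-L^\infty\|_{L^\infty}$, which can be taken $<1$ for small $\varepsilon$ (since $\|L-L^\infty\|_{L^\infty} = O(\varepsilon)$ by \eqref{pointestimateinX}); alternatively, for fixed $\varepsilon\in(0,\varepsilon_K)$ one uses the interpolation trick to get relative bound $0$ regardless of the size of the coefficients. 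Either route closes the argument.
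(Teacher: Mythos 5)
Your reduction to $\mathcal{L}_\beta=e^{\beta x}\mathcal{L}e^{-\beta x}$ on $(L^2)^2$ is the same first step as the paper's, but the perturbation argument you build on it has a genuine gap. The theorem you invoke at the end --- ``if $\mathcal{L}_\beta^{(0)}$ generates a $C_0$-semigroup and $\mathcal{B}$ is relatively bounded with relative bound $<1$, then the sum generates'' --- is not true for general $C_0$-semigroups: relative boundedness with small bound suffices only under extra structure (an analytic semigroup, or a dissipative perturbation of an m-dissipative operator, or Miyadera--Voigt/Desch--Schappacher-type conditions). Your $\mathcal{L}_\beta^{(0)}$ generates a non-analytic transport group and your $\mathcal{B}$ contains a genuine first-order term $-(L-L^\infty)\partial_x$ that is neither bounded nor dissipative, so none of the standard perturbation theorems applies as stated. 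The fallback you offer, getting relative bound $0$ via $\|\partial_x u\|\le \varepsilon\|\partial_x^2u\|+C_\varepsilon\|u\|$, cannot work: $\|\mathcal{L}_\beta^{(0)}u\|$ controls only one derivative, and a first-order perturbation of a first-order operator never has relative bound $0$. That leaves only the smallness route $\theta\sim\|L-L^\infty\|_{L^\infty}=O(\veps)$, which (even if the perturbation theorem were available) covers only small $\veps$, whereas the lemma is claimed for every $\veps\in(0,\veps_K)$; for large-amplitude waves $L-L^\infty$ is not small, and \eqref{pointestimateinX} is anyway only stated for $\veps\le\veps_\ast$. (Two minor slips as well: the eigenvalues of $L^\infty$ are $-c\pm\sqrt{K}$, not $c\mp\sqrt{K+1}$, and conjugation also changes the nonlocal operator to $\big(-(\partial_x-\beta)^2+e^{\phi_c}\big)^{-1}$ as in \eqref{Ch3_Def_mcL_eta}.)

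The paper avoids both problems by keeping the full variable-coefficient principal part $-L\partial_x$ rather than splitting off $L-L^\infty$: everything else in $\mathcal{L}_\beta$ (the zeroth-order terms, the $\beta$-terms, and the conjugated nonlocal term, which maps $L^2$ boundedly into $H^1$) is a genuinely \emph{bounded} operator on $(L^2)^2$, so only the bounded-perturbation theorem is ever needed. Generation for $-L\partial_x$ itself is then obtained by symmetrization: with $L_0^{1/2}=\mathrm{diag}\big(\sqrt{K}/\sqrt{1+n_c},\sqrt{1+n_c}\big)$ the transformed operator has a symmetric first-order coefficient $S_0$, integration by parts shows $S_0\partial_x-aI$ is dissipative for large $a$ (the only cost being the bounded term $\tfrac12\partial_xS_0$), and Lumer--Phillips applies. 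This works for every $\veps\in(0,\veps_K)$ with no smallness assumption. If you want to salvage your decomposition, you would have to verify a Miyadera--Voigt-type condition for the first-order part of $\mathcal{B}$, or simply adopt the symmetrization/energy-estimate route.
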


\begin{proof}
For notational simplicity, we let $\mathbf{u}=(\mathbf{u}_1,\mathbf{u}_2):=(\dot{n},\dot{u})^T$.
Considering the change of variable $\mathbf{u}^\etba:=e^{{\etba} x}\mathbf{u}$, we show that the operator $\mc{L}_\etba:= e^{{\etba} x}\mc{L}e^{-{\etba} x}$, given by
\begin{equation}\label{Ch3_Def_mcL_eta}
\mathcal{L}_{\etba} \mathbf{u}^\etba
= - (L(\partial_x-\etba) + (\partial_xL)) \mathbf{u}^\etba
 - \begin{pmatrix}
 0 \\
 (\partial_x-\etba)(-(\partial_x-\etba)^2 +e^{\phi_c})^{-1}(\mathbf{u}_1^\etba)
 \end{pmatrix},
\end{equation}
generates a $C_0$-semigroup $e^{\mc{L}_{\etba} t}$ on $(L^2)^2$. This implies that $e^{\mc{L}t}:= e^{-{\etba} x}e^{\mc{L}_{\etba} t}e^{{\etba} x}$ is a $C_0$-semigroup generated by $\mc{L}$ on $(L_\etba^2)^2$.  Since $\mathcal{L}_{\etba}$ can be seen as a bounded perturbation of $-L\partial_x$, it is enough to check that $-L\partial_x$ generates a $C_0$-semigroup on $(L^2)^2$ (see \cite{Nagel}). 

We observe that the matrix $L$ is symmetrizable. In the variable $\widetilde{\mathbf{u}}:=L_0^{1/2}\mathbf{u}^\beta$, where  $
L_0^{1/2}:=\mathrm{diag}(\frac{\sqrt{K}}{\sqrt{1+n_c}}, \sqrt{1+n_c})$ is the positive square root of the symmetrizer, the coefficient of the operator $\partial_x$ in the transformed operator 
$\widetilde{\mathcal{L}}:=L_0^{1/2}(-L\partial_x)L_0^{-1/2}$, given by 
\[
\widetilde{\mathcal{L}}\widetilde{\mathbf{u}}
 = L_0^{1/2}(-L)L_0^{-1/2}\partial_x \widetilde{\mathbf{u}} + L_0^{1/2}(-L)\partial_x(L_0^{-1/2}) \widetilde{\mathbf{u}},
\]
is a real-valued symmetric matrix.  Note that if $\widetilde{\mathcal{L}}$ generates a $C_0$-semigroup $e^{\widetilde{\mathcal{L}}t}$, then $e^{-L\partial_xt}:=L_0^{-1/2}e^{\widetilde{\mathcal{L}}t}L_0^{1/2}$ is a $C_0$-semigroup with generator $-L\partial_x$. 
Therefore, it suffices to show that $\widetilde{\mathcal{L}}$ generates a $C_0$-semigroup $e^{\widetilde{\mathcal{L}}t}$. To this end, 
we let  $S_0:=L_0^{1/2}(-L)L_0^{-1/2}$.
For sufficiently large $a>0$, integration by parts yields that
\[
\begin{split}
\oRe \langle (S_0\partial_x -aI)\widetilde{\mathbf{u}},\widetilde{\mathbf{u}} \rangle
& = -\frac{1}{2}\oRe \langle (\partial_xS_0)\widetilde{\mathbf{u}}, \widetilde{\mathbf{u}} \rangle - a\|\widetilde{\mathbf{u}}\|_{L^2}^2 <0,
\end{split}
\] 
that is, $S_0\partial_x - aI$ is dissipative.  By the Lumer-Phillips Generation Theorem (see \cite{Nagel}), $S_0\partial_x - aI$ generates a $C_0$-(contraction)semigroup, and hence  $\widetilde{\mathcal{L}}$ also  generates a $C_0$-semigroup as a boundedly perturbed operator of  $S_0\partial_x - aI$. This completes the proof. 
\end{proof}
 
\subsection{Linear asymptotic stability}

From Proposition \ref{MainIngred}, we deduce our main result on  the asymptotic linear stability (Theorem \ref{LinStab}).
\begin{proof}[Proof of Theorem \ref{LinStab}]
Since $\lambda=0$ is an isolated eigenvalue, we can define the spectral projection 
\[
\mathcal{P}_0=\frac{1}{2\pi i }\int_{\Gamma_0}(\lambda-\mathcal{L})^{-1}\,d\lambda,
\]
where $\Gamma_0$ is a positively oriented circle with sufficiently small radius enclosing no other spectrum except $\lambda=0$. The range of $\mathcal{P}_0$, denoted by $\textrm{Ran}\mathcal{P}_0$, is a two-dimensional subspace of $(L_\beta^2)^2$. 
Note that $\mathrm{Ran }\mathcal{P}_0$ and $\mathrm{Ran }(I-\mathcal{P}_0)=\mathrm{Ker }\mathcal{P}_0$ are invariant subspaces under $\mathcal{L}$, thus  they are also invariant under the $C_0$-semigroups generated by the restricted operators $\mathcal{L}|_{\mathrm{Ran }\mathcal{P}_0}$  and $\mathcal{L}|_{\mathrm{Ran }(I-\mathcal{P}_0)}$, respectively. We have the spectral decomposition of the operator $\mathcal{L}$ as follows (see \cite{Kato}):
\[
\sigma\left( \mathcal{L}|_{\mathrm{Ran}\mathcal{P}_0} \right) = \{0\} = \sigma_{\mathrm{pt}}(\mathcal{L}), \quad \sigma\left( \mathcal{L}|_{\mathrm{Ran}(I-\mathcal{P}_0)} \right) = \sigma_{\mathrm{ess}}\left( \mathcal{L} \right) \subset \mathbb{C}\setminus \Omega^\veps.
\]

The resolvent of $\mathcal{L}|_{\mathrm{Ran}(I-\mathcal{P}_0)}$ is analytic on $ \Omega^\veps$, and thus it is uniformly (in $\lambda$) bounded on the region $\oRe \lambda \geq 0$. By the Pr\"uss theorem (\cite{Pruss}), we conclude that the $C_0$-semigroup generated by $\mathcal{L}|_{\mathrm{Ran}(I-\mathcal{P}_0)}$, which coincides with $e^{\mathcal{L}t}$ restricted on  $\textrm{Ran}(I-\mathcal{P}_0)$, satisfies the estimate \eqref{LinStabEq}.
\end{proof}

\section{Instability criterion for large amplitude solitary waves}\label{unstable}
In this section, we derive an instability criterion for \emph{large amplitude} solitary wave solutions to the isothermal Euler-Poisson system \eqref{EP}.

First of all, the Evans function $D(\lambda,\veps)$ defined in \eqref{Evans-Def} is also real whenever $\lambda$ is real-valued (see \cite{PW}, Proposition 1.10). From Proposition \ref{EvansZero}, we have that $D(\lambda,\veps)|_{\lambda=0} = \partial_\lambda D(\lambda,\veps)|_{\lambda=0} = 0$. Now we claim that if 
\[
\partial_\lambda^2 D(0,\veps_u) <0
\]
for some $\veps_u \in(0, \veps_K)$, then $\mathcal
{L}$ has a positive $L^2$-eigenvalue for some $\veps_u'\in(0,\veps_u]$.

Since $D(\lambda,\veps_u)$ is concave down near $\lambda=0$, we have the following scenarios: for $\delta_4>0$ in Lemma \ref{Lem_Region4}, either 
\begin{enumerate}[{(i)}]
\item\label{1}   $D(\lambda_u,\veps_u) = 0$ for some $\lambda_u \in (0, \delta_4^{-1}]$, or
\item\label{2} $D(\lambda,\veps_u) <0$ for all $\lambda \in (0,\delta_4^{-1}]$. In this case, either
\begin{enumerate}
\item   $D(\lambda_u,\veps_u) = 0$ for some $\lambda_u \in (\delta_4^{-1},+\infty)$, or
\item $D(\lambda,\veps_u) <0$ for all $\lambda \in (\delta_4^{-1},+\infty)$.
\end{enumerate}
\end{enumerate}

In the cases of \eqref{1} and (\ref{2})(a), $\mathcal{L}$ has  a positive $L^2$-eigenvalue $\lambda_u$. On the other hand, the case (\ref{2})(b) implies by Lemma \ref{Lem_Region4} that there must exist $\veps_u'\in (0,\veps_u)$ and $\lambda_u' \in (\delta_4^{-1},+\infty)$ such that $D(\lambda_u',\veps_u')=0$ since $D(\lambda,\veps)>1/2$ for all  $\lambda \in (\delta_4^{-1},+\infty)$ and sufficiently small $\veps$.

From this observation, together with Proposition \ref{EvansZero}.\eqref{EvansZero2nd}, we obtain the following instability criterion. 
\begin{proposition}\label{Instacrite}
Let 
\begin{equation}\label{Def_Q}
Q(c):=\int_{-\infty}^\infty (n_cu_c)(x)\,dx.
\end{equation}
If $\partial_c Q(c)<0$ for some $\veps_u \in (0,\veps_K)$, the operator $\mathcal{L}$ on $(L^2)^2$ has a positive eigenvalue  for some $\veps_u' \in (0,\veps_u]$. Furthermore, for all $\veps\in(0,\veps_K)$ there holds that
\begin{equation}\label{Eq_InstaForm}
Q(c) = \int_0^{n_c^\ast}\frac{\sqrt{2}cn^2\partial_nH(n,c)}{(1+n)\sqrt{g(n,c)}}\,dn,
\end{equation}
where $H$ and $g$ are the functions defined in \eqref{gnc} and \eqref{TravelEqB2}, respectively, and $n_c^\ast$ is the maximum value of $n_c(x)$.
\end{proposition}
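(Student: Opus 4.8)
\textbf{Proof proposal for Proposition \ref{Instacrite}.}

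The plan is to split the statement into two independent pieces: (a) the instability conclusion — which is really just a restatement of the scenario analysis already carried out in the paragraphs immediately preceding the proposition, combined with Proposition \ref{EvansZero}.\eqref{EvansZero2nd}; and (b) the closed integral formula \eqref{Eq_InstaForm} for $Q(c)$, which is a direct computation using the reduction of the traveling-wave ODE to quadrature from Section \ref{S2-1}. I would present (a) first as essentially a one-line deduction, then devote the bulk of the proof to (b).

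For (a): Since the Evans function $D(\lambda,\veps)$ is real for real $\lambda$ and, by Proposition \ref{EvansZero}.\eqref{EvansZero1st}, $D(0,\veps_u)=\partial_\lambda D(0,\veps_u)=0$, the sign hypothesis $\partial_c Q(c)|_{c=c_u}<0$ combined with Proposition \ref{EvansZero}.\eqref{EvansZero2nd} gives $\partial_\lambda^2 D(0,\veps_u)<0$, so $D(\cdot,\veps_u)$ is concave down near $\lambda=0$ and hence negative for small $\lambda>0$. One then runs exactly the case distinction \eqref{1}--\eqref{2} stated above: in cases \eqref{1} and (\ref{2})(a) the Evans function has a real zero $\lambda_u>0$, which by Proposition \ref{Prop_EvaEquiUnW}(a) is a positive $(L^2)^2$-eigenvalue of $\mathcal{L}$; in case (\ref{2})(b) one uses Lemma \ref{Lem_Region4} (which forces $D(\lambda,\veps)>1/2$ on $\mathfrak{D}_4$ for small $\veps$) together with continuity of $D$ in $\veps$ and the intermediate value theorem to produce $\veps_u'\in(0,\veps_u)$ and $\lambda_u'>\delta_4^{-1}$ with $D(\lambda_u',\veps_u')=0$, again a positive eigenvalue by Proposition \ref{Prop_EvaEquiUnW}(a). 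This gives the first assertion.

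For (b): I would start from \eqref{Def_Q}, use the symmetry $(n_c,u_c)(x)=(n_c,u_c)(-x)$ and $\partial_x n_c>0,\ \partial_x u_c>0$ on $(-\infty,0)$ to write $Q(c)=2\int_{-\infty}^{0}(n_cu_c)(x)\,dx$, then change variables from $x$ to $n=n_c(x)$ on $(-\infty,0)$, where $n$ runs from $0$ to $n_c^\ast$. The key substitutions are: from \eqref{TravelEqB1}, $u_c=c\,n/(1+n)$, which expresses $u_c$ purely in terms of $n$; from \eqref{ODE_n_E1}, $\partial_x\phi=\partial_nH(n,c)\,\partial_x n$, hence $dx = (\partial_x n)^{-1}\,dn$ with $\partial_x n>0$ on $(-\infty,0)$; and from the first integral \eqref{gnc}, $\tfrac12(\partial_x\phi)^2=g(n,c)$, so $\partial_x\phi=\sqrt{2g(n,c)}$ (positive on $(-\infty,0)$ since $\partial_x\phi_c>0$ there), and therefore $\partial_x n = \sqrt{2g(n,c)}/\partial_nH(n,c)$. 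Assembling these, $(n_cu_c)\,dx = n\cdot\frac{cn}{1+n}\cdot\frac{\partial_nH(n,c)}{\sqrt{2g(n,c)}}\,dn$, and multiplying by the factor $2$ from the symmetry and simplifying $2/\sqrt{2}=\sqrt{2}$ yields exactly \eqref{Eq_InstaForm}. I would also note that $\partial_nH(n,c)>0$ on $(0,n_s)\supset(0,n_c^\ast)$ and $g(n,c)>0$ on $(0,n_c^\ast)$ (with $g(n_c^\ast,c)=0$), so the integrand is positive and the improper integral at the endpoint $n=n_c^\ast$ converges because $g$ has a simple zero there while the numerator stays bounded — this makes the quadrature legitimate.

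The main obstacle is purely bookkeeping in part (b): one must be careful that all the square roots are taken with the correct (positive) sign on the half-line $(-\infty,0)$, that the change of variables $x\mapsto n_c(x)$ is a valid diffeomorphism there (guaranteed by $\partial_x n_c>0$ on $(-\infty,0)$, which in turn rests on the phase-plane description from \cite{Cor} recalled in Section \ref{S2-1}), and that the endpoint behavior at $n=n_c^\ast$ is integrable. None of these is deep, but they are exactly the points where a sign error or a missing factor of $2$ would creep in, so I would state them explicitly rather than suppress them.
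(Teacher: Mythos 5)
Your proposal is correct and follows essentially the same route as the paper: part (a) is the case analysis preceding the proposition combined with Proposition \ref{EvansZero}.\eqref{EvansZero2nd}, Proposition \ref{Prop_EvaEquiUnW} and Lemma \ref{Lem_Region4}, and part (b) is the same quadrature computation — symmetry giving the factor $2$, $u_c=cn_c/(1+n_c)$ from \eqref{TravelEqB1}, $\partial_x\phi_c=\sqrt{2g}$ from \eqref{gnc}, and the change of variables $dn=\partial_xn_c\,dx$ via \eqref{ODE_n_E1}. Your explicit remarks on the sign of the square roots, the monotonicity of $n_c$ on $(-\infty,0)$, and integrability at $n=n_c^\ast$ are sound additions but do not change the argument.
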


Notice that only the maximum value of $n_c(x)$, $n_c^\ast$, is involved in the different form of $Q(c)$ given in  \eqref{Eq_InstaForm}. Some numerical
evaluations for the integral \eqref{Eq_InstaForm}  are presented in the next section. 
\begin{proof}
It suffices to check that  \eqref{Eq_InstaForm} holds true. Since $n_c$ and $u_c$ are symmetric about $x=0$ and are strictly increasing on $(-\infty,0)$, we have
\[
\begin{split}
\int_{-\infty}^\infty n_cu_c\,dx 
& = 2 \int_{-\infty}^0 \frac{cn_c^2}{1+n_c}\,dx  \\
& = 2 \int_{-\infty}^0 \frac{cn_c^2}{1+n_c}\frac{\partial_nH(n_c,c)\partial_xn_c}{\partial_x\phi_c}\,dx   \\
& = \sqrt{2} \int_{-\infty}^0 \frac{cn_c^2}{1+n_c}\frac{\partial_nH(n_c,c)\partial_xn_c}{\sqrt{g(n_c,c)}}\,dx  \\
& = \sqrt{2} \int_{0}^{n_c^\ast} \frac{cn^2}{1+n}\frac{\partial_nH(n,c)}{\sqrt{g(n,c)}}\,dn,
\end{split}
\]
where we have used \eqref{TravelEqB1}, \eqref{ODE_n_E1} and \eqref{gnc} for the first, second and third lines, respectively, and we have used $dn = \partial_xn_c(x)\,dx$ for the last equality. We are done.
\end{proof}

\section{Some remarks and open questions}\label{RO}
 For the sake of simplicity, in some  literatures of plasma physics, the Euler-Poisson system \eqref{EP} is often assumed  to have no pressure term (i.e., \eqref{EP} with $K=0$) in accordance with the physical assumption of \emph{cold ion}. 
In a mathematical point of view, the absence of the pressure term makes the system weakly coupled, so it enables  to analyze certain properties of the system significantly easier. These include for instance,  the finite time singularity formation and the existence of solitary waves. 
However, we remark that the pressureless Euler-Poisson system exhibits qualitatively different behaviors in the solutions, from the ones with the pressure. 
To be more specific, we shall discuss some  properties (shape of profiles and stability) of the solitary wave solutions to the isothermal and the pressureless models.

For further discussions, we remark that a smooth non-trivial solitary wave solution to \eqref{EP2Travel2} with $K=0$ satisfying \eqref{bdCon x2} exists if and only if  $1<c<\zeta_0\approx 1.5852$, where $\zeta_0 $ is a unique positive solution to 
\[
z^2 + 1 = \text{exp}(z^2/2)
\]
(see \cite{Sag} and \cite{Satt}).  Indeed, the equations \eqref{EP2Travel2} with $K=0$ can be reduced to a second-order ODE
\[
\partial_x^2\phi = e^\phi - \frac{c}{\sqrt{c^2-2\phi}},
\]
for which the associated first integral of the ODE is given by
\begin{equation}\label{Uphi}
\frac{1}{2}(\partial_x\phi)^2 = e^\phi + c\sqrt{c^2-2\phi} - 1-c^2 =: U(\phi).
\end{equation}
Here for later uses, we let $\veps_0^\ast:=\zeta_0-1$, and  denote the peak value of $n_c$, $u_c$ and $\phi_c$ by $n_c^\ast$, $u_c^\ast$ and $\phi_c^\ast$, respectively. Let $c=\sqrt{1+K}+\veps$ for $K\geq 0$.

\subsection{Numerical computations of instability criterion for  large amplitude solitary waves
}
A criterion for the instability of the \emph{large} amplitude solitary waves has been derived in Proposition~\ref{Instacrite}.  
Seeking \emph{unstable} solitary waves of large amplitude in accordance with this criterion, 
we numerically evaluate the integral $Q(c)$ defined in \eqref{Def_Q}, and the results are presented in Figure \ref{GraphIns} for the case $K>0$. We have used the form in \eqref{Eq_InstaForm} and the numerical integration has been taken over the interval $ [10^{-4}, n_c^\ast- 10^{-10}]$. In fact, it turns out that the instability criterion (Proposition \ref{Instacrite}) is inconclusive. More precisely, our numerical data show that $Q(c)$ is strictly increasing,  so one cannot conclude that there is a positive eigenvalue.
This is contrary to the numerical result found in \cite{Sche} for the pressureless Euler-Poisson system. 

\begin{figure}[h]
\begin{tabular}{cc}
\resizebox{60mm}{!}{\includegraphics{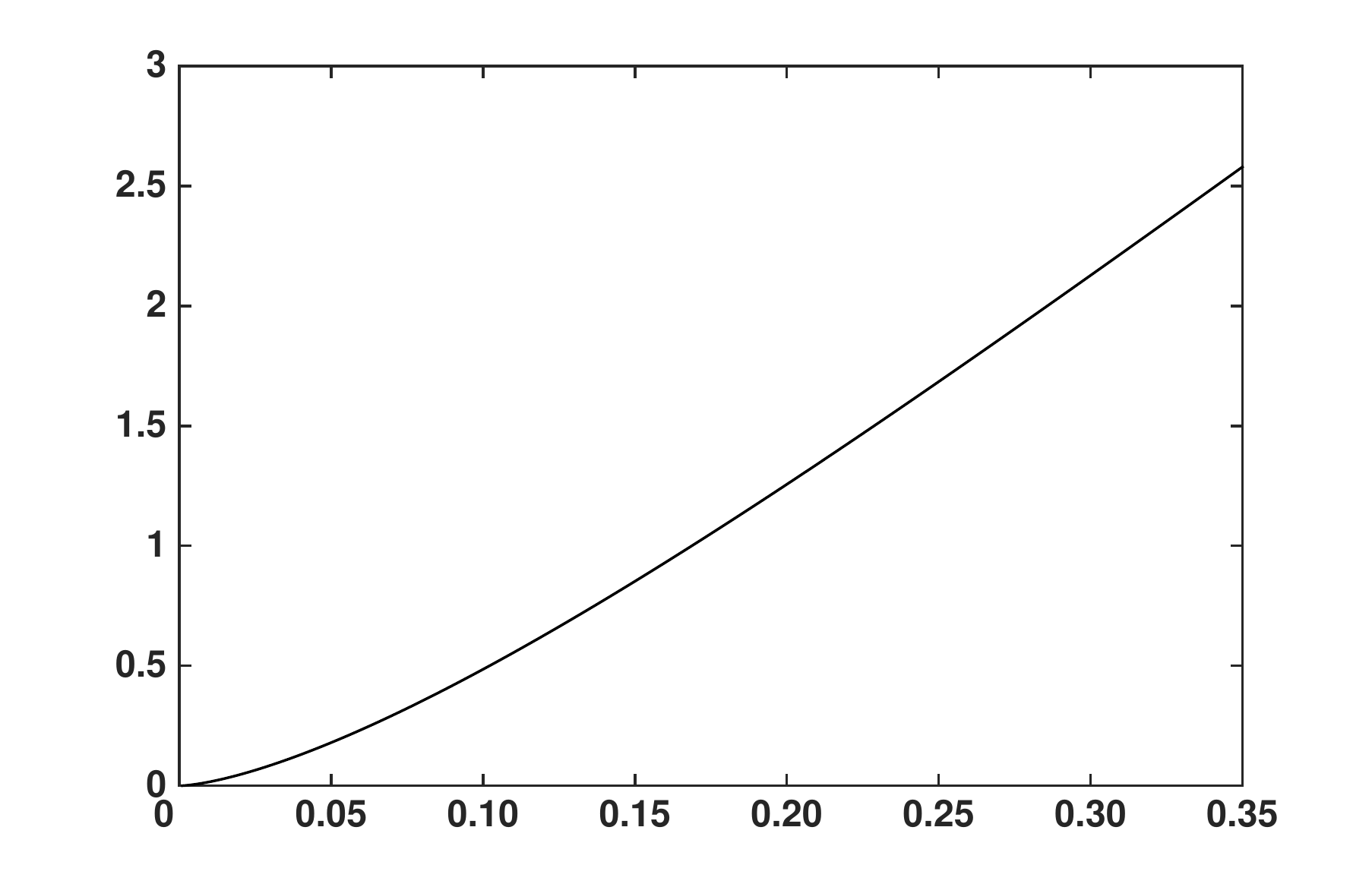}}  & \resizebox{60mm}{!}{\includegraphics{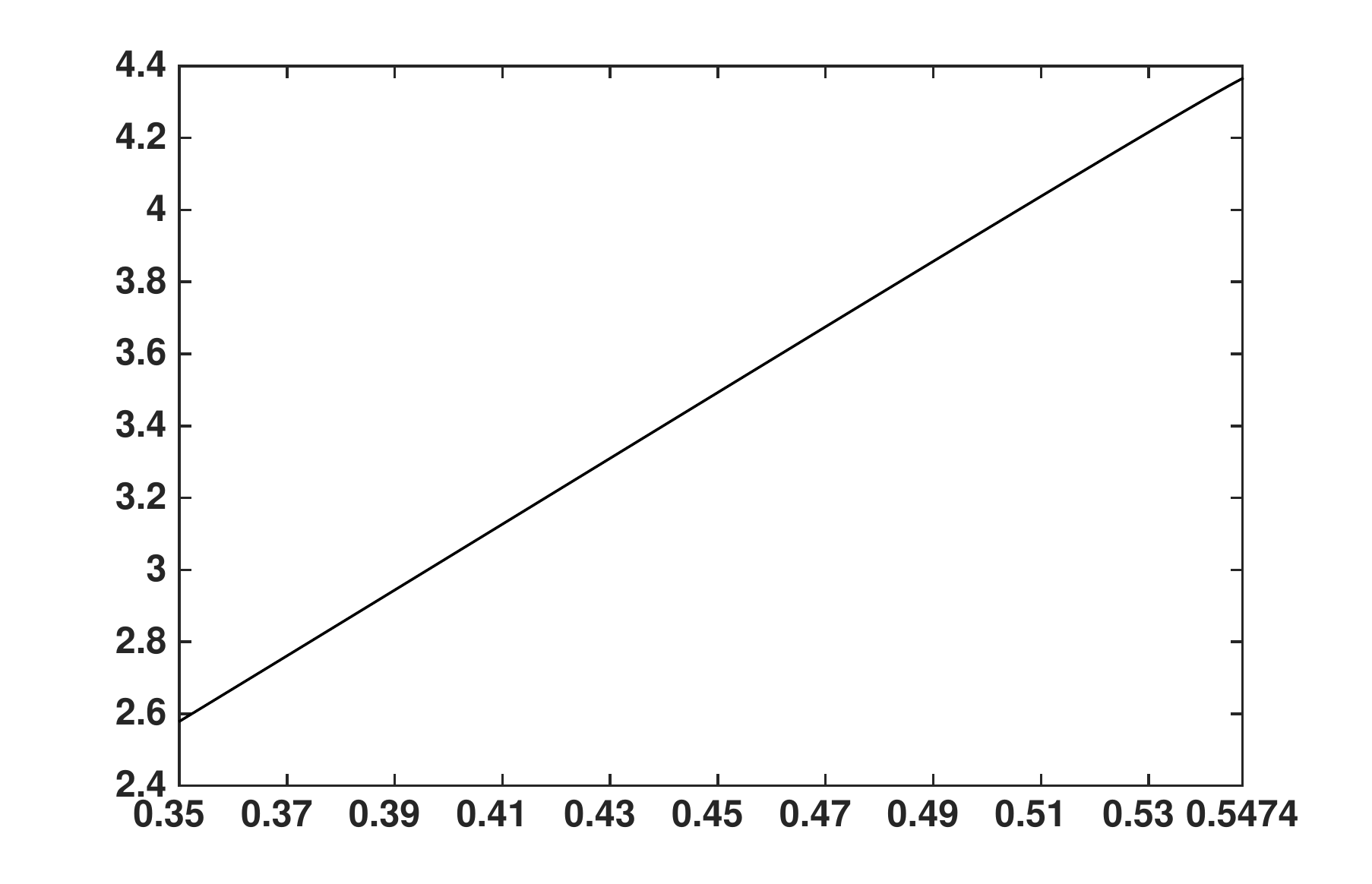}} \\
(a) $K=0.001$, $\veps_K\approx 0.5474$   & (b) $K=0.001$, $\veps_K\approx 0.5474$  \\
\resizebox{60mm}{!}{\includegraphics{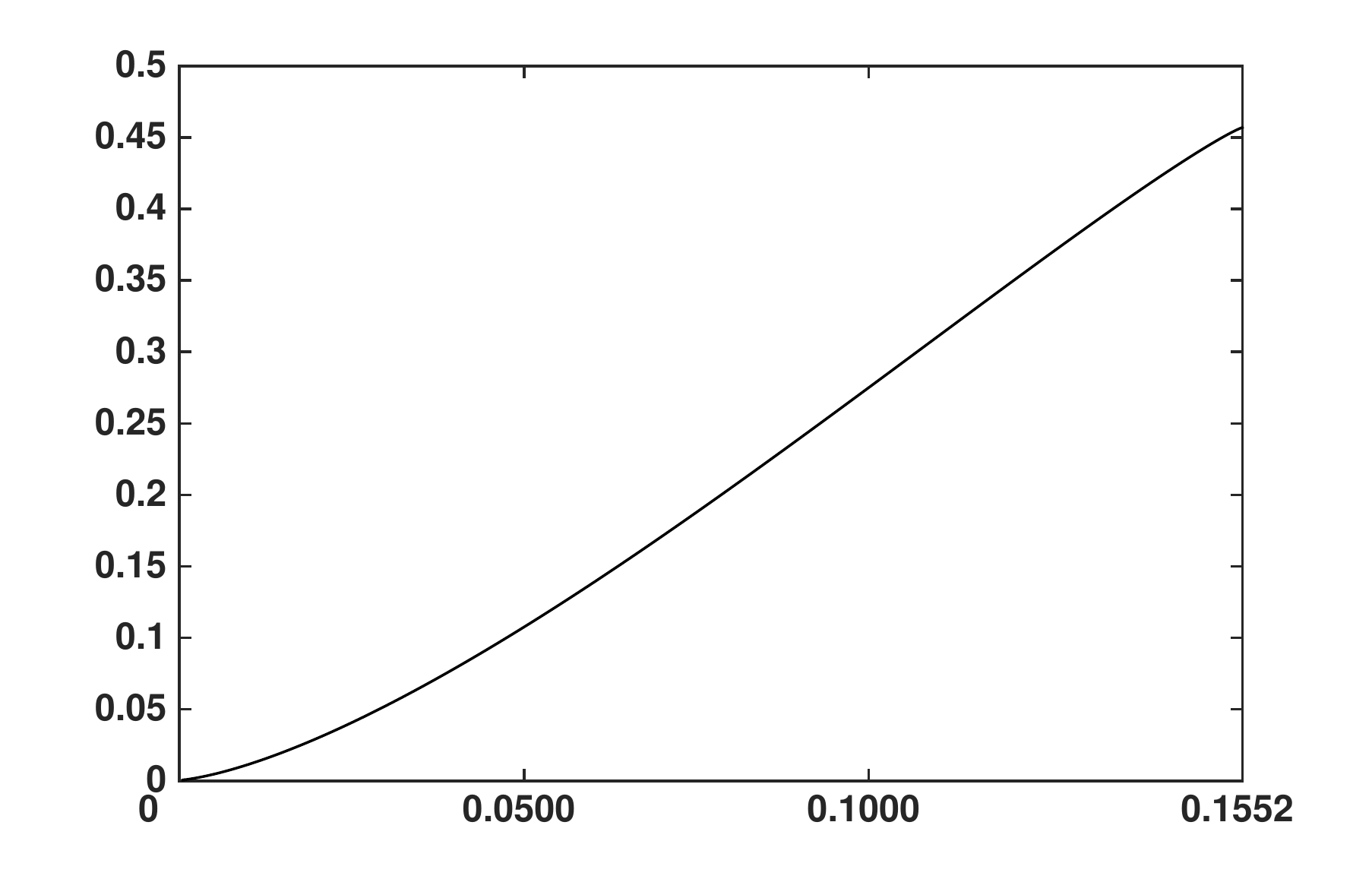}}  & \resizebox{60mm}{!}{\includegraphics{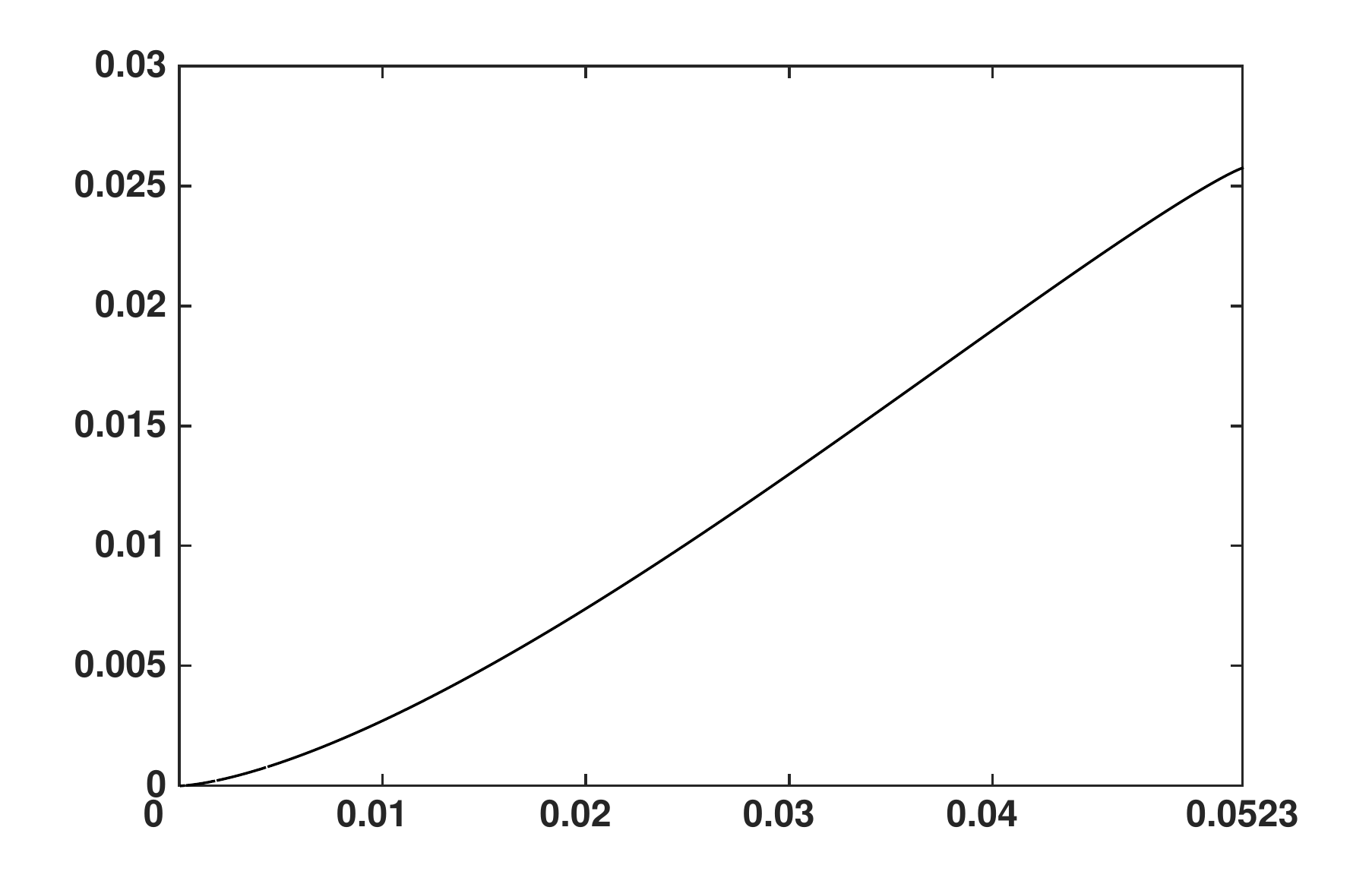}} \\
(c) $K=1$, $\veps_K \approx 0.1553$  & (d) $K=10$, $\veps_K \approx 0.0524$
\end{tabular}
\caption{The numerical plots of $Q(c)$. The horizontal axis represents $\veps=c-\sqrt{1+K}$.}
\label{GraphIns}
\end{figure}

More specifically,  it was numerically found in  \cite{Sche} that there is a critical value $c_{\text{crit}}\approx 1.52603$ such that the solitary waves are unstable for $c_{\text{crit}} <c<\zeta_0\approx 1.5852$. Making use of an instability criterion similar to Proposition \ref{Instacrite}, the authors numerically compute the integral
\begin{equation}\label{q-int}
Q_0(c):= \int_0^{\phi_c^\ast}\frac{\sqrt{2}(c-\sqrt{c^2-2\phi})^2}{\sqrt{c^2-2\phi}\sqrt{U(\phi)}}\,d\phi,
\end{equation}
where $U(\phi)$ is defined in \eqref{Uphi}. According to our further investigation, however, we suspect that their result is due to the lack of numerical accuracy and that the inaccuracy is caused by an immature round-off of the integration interval near the singularity of the integrand at $\phi_c^\ast$.  In fact,  $\phi_c^\ast$ is a singular point of the integrand of \eqref{q-int} since $\phi_c^\ast$ satisfies $U(\phi_c^\ast)=0$.  


%

Our numerical evaluations of $Q_0(c)$ are carried out with two different  integration intervals   
$I_a:= [10^{-4},\phi_c^\ast-10^{-4}]$ (Figure \ref{GraphIns2}.(a)) and $I_b:= [10^{-4},\phi_c^\ast-10^{-10}]$ (Figure \ref{GraphIns2}.(b)). The numerical evaluation of \eqref{q-int} with $I_a$ gives a consistent result with the one in \cite{Sche}, i.e.,  $\partial_cQ_0<0$ for some large $\veps < 0.585$, whereas the one with $I_b$ gives a completely different result, i.e., $\partial_cQ_0>0$ for all $\veps< 0.585$. We believe that the latter is correct since  $I_a\subset I_b \subset [0,\phi_c^\ast]$, and hence one may  conclude that the instability criterion does not give a definite conclusion even for the pressureless case. The questions regarding instability will be further investigated in the future study.


\begin{figure}[h]
\begin{tabular}{cc}
\resizebox{60mm}{!}{\includegraphics{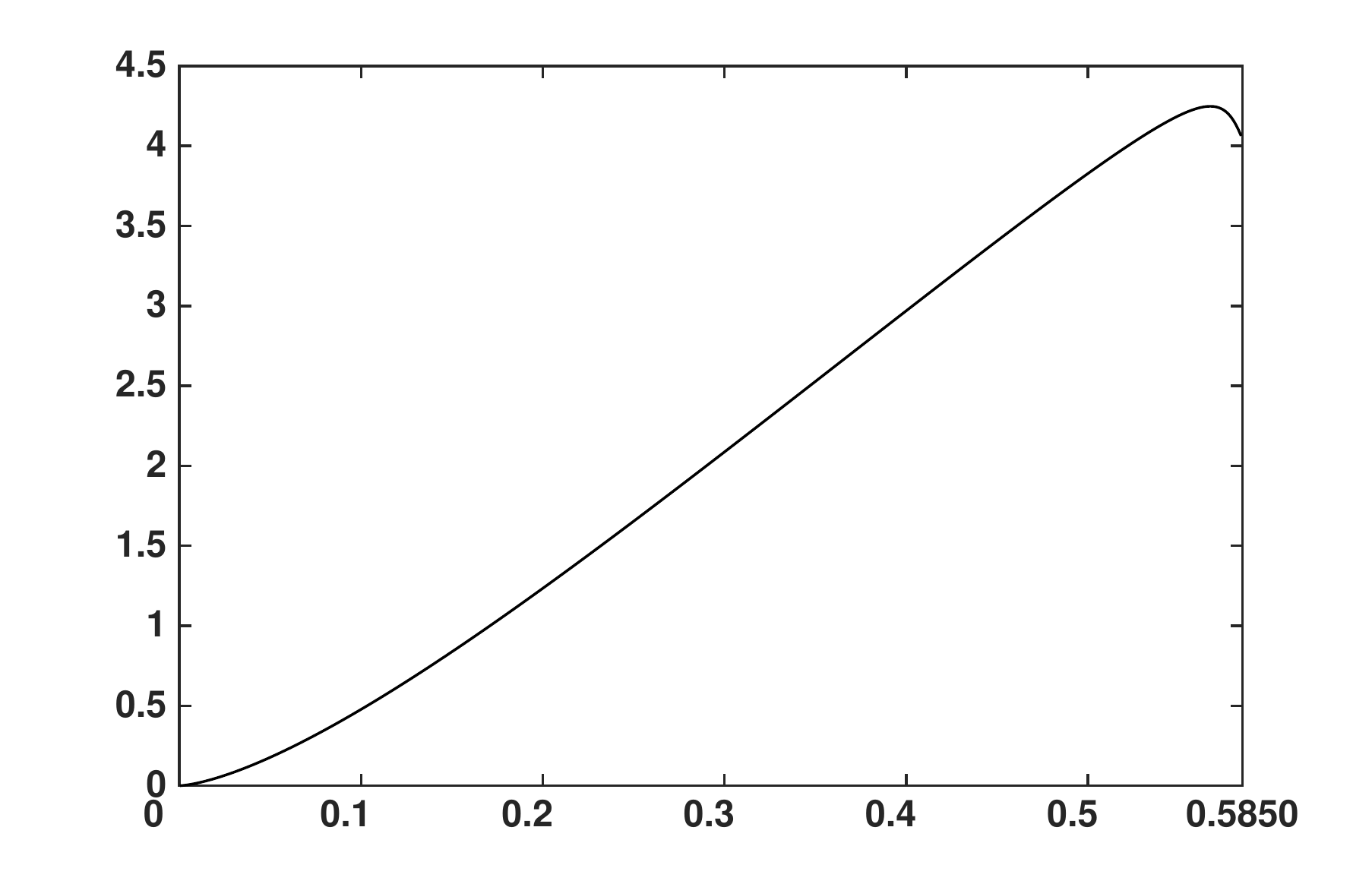}}  & \resizebox{60mm}{!}{\includegraphics{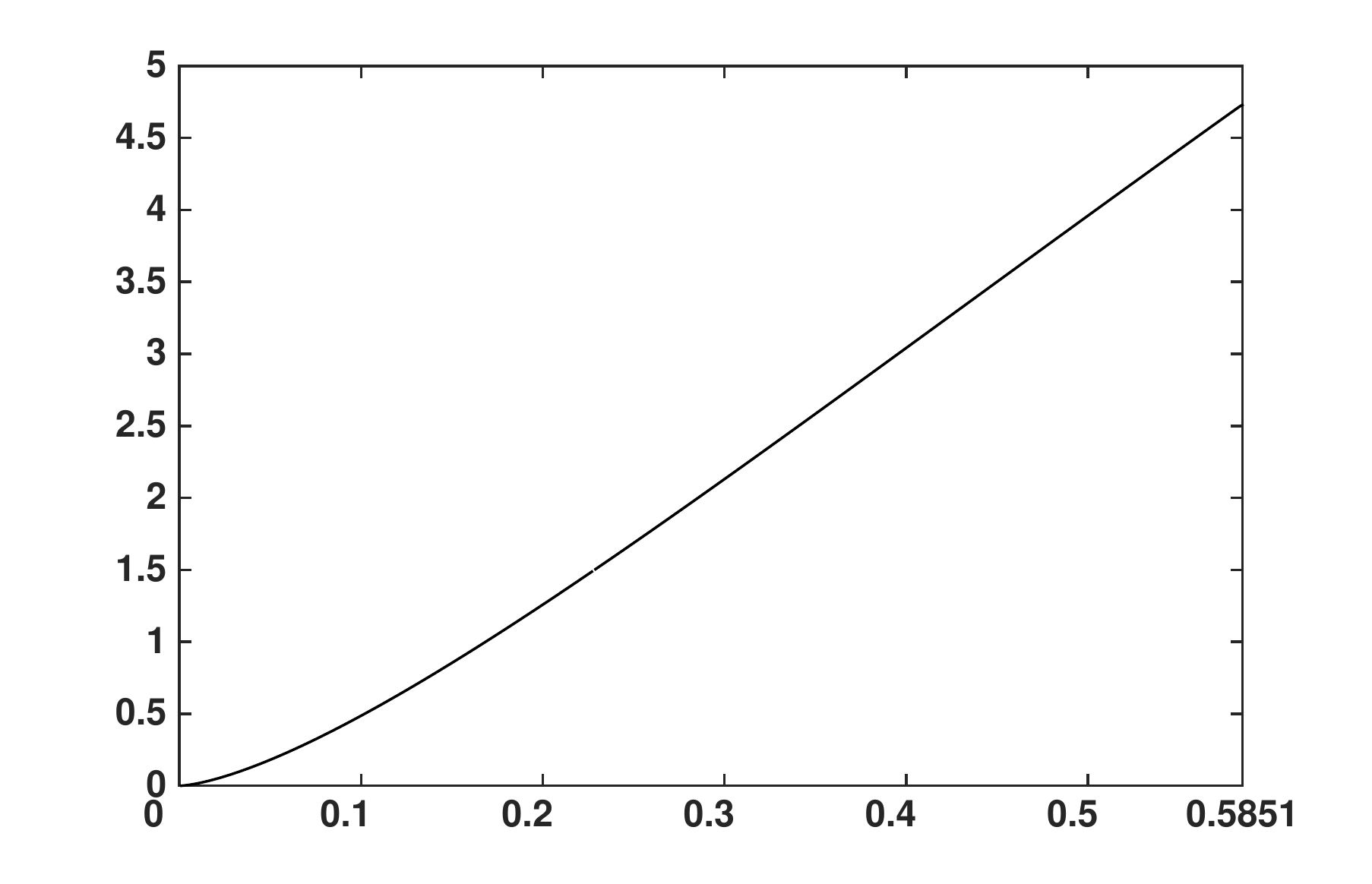}} \\
(a)  & (b)
\end{tabular}
\caption{The numerical plots of $Q_0(c)$ in \eqref{q-int} with the integration intervals $I_a$ and $I_b$, respectively. The horizontal axis represents $\veps=c-1$.}
\label{GraphIns2}
\end{figure}

\subsection{Large amplitude solitary wave profiles of the Euler-Poisson system}\label{Subsec8.2}
One of the remarkable differences between the pressureless and the isothermal Euler-Poisson systems lies in their density profiles of large amplitude solitary wave solutions. 

For the case $K=0$, $n_c^\ast$ approaches to infinity as $\veps \nearrow  \veps_0^\ast$.  In contrast, for the case $K>0$, $n_c^\ast$ remains bounded above. More precisely,  from  the proof of existence of the solitary waves in \cite{BK}, we obtain  
\[
n_c^\ast < n_s:=\frac{c}{\sqrt{K}}-1 < \frac{\sqrt{1+K}+\veps_K-\sqrt{K}}{\sqrt{K}},
\] 
where $n_s$ is a unique positive zero of \eqref{deriHnc}. 
Several numerical experiments for the large amplitude solitary waves 
 are presented in Figure \ref{GraphLargeAmpl2}, Figure \ref{PlottingN}, and Table \ref{TableMax}. 
 %
 %
For $K=0$,   the feature of $L^\infty$-\emph{blow-up} of density profile can be closely related to the fact that the pressureless Euler-Poisson system can develop the \emph{delta shock}
 in finite time when the initial data $u_0$ has a \emph{steep} gradient at some point.
%
%
%
Specifically, if the initial data $(n_0,u_0)(x)$  satisfies
\begin{equation*}\label{conj30}
\partial_x u_0(x) \le  -\sqrt{2(1+n_0(x))}
\end{equation*}
at some point $x \in \mathbb{R}$, then the maximal existence time of the smooth solution is finite, see \cite{Liu}. 
For the singularity formation at finite time $T_*$,  
one can further show, by a simple comparison technique for  ODE, that the gradient of velocity blows up 
 at a non-integrable order in time, i.e.,   
\begin{equation}\label{tstar}
-\partial_x u \gtrsim (T_*-t)^{-1} \text{ as } t\nearrow T_*.
\end{equation}
 From this together with the continuum equation,  
$$ n(x(t),t) = n_0(x_0) \exp\left\{-\int_0^t \partial_x u(x(\tau), \tau) d\tau \right\},$$
we see that $L^\infty$ norm of density becomes unbounded as $t \nearrow  T_*$. 
Here $x(t)$ is the characteristic curve issuing from $x_0$ at which
the infimum of $\partial_x u_0(x)/\sqrt{1+n_0(x) }$ is attained,
i.e., $\partial_x u_0(x_0)/\sqrt{1+n_0(x_0) }= \inf_x \partial_x u_0(x)/\sqrt{1+n_0(x) }<  -\sqrt{2}$. 
This non-physical singular behavior emerges since the pressure term is artificially ignored. 
As we discussed earlier,  this is not the case in the presence of the pressure, in general. 
We suspect that it would be due to this singular behavior if \emph{large amplitude} solitary waves are unstable for the pressureless case.



\begin{figure}[h]
\begin{tabular}{cc}
\resizebox{60mm}{!}{\includegraphics{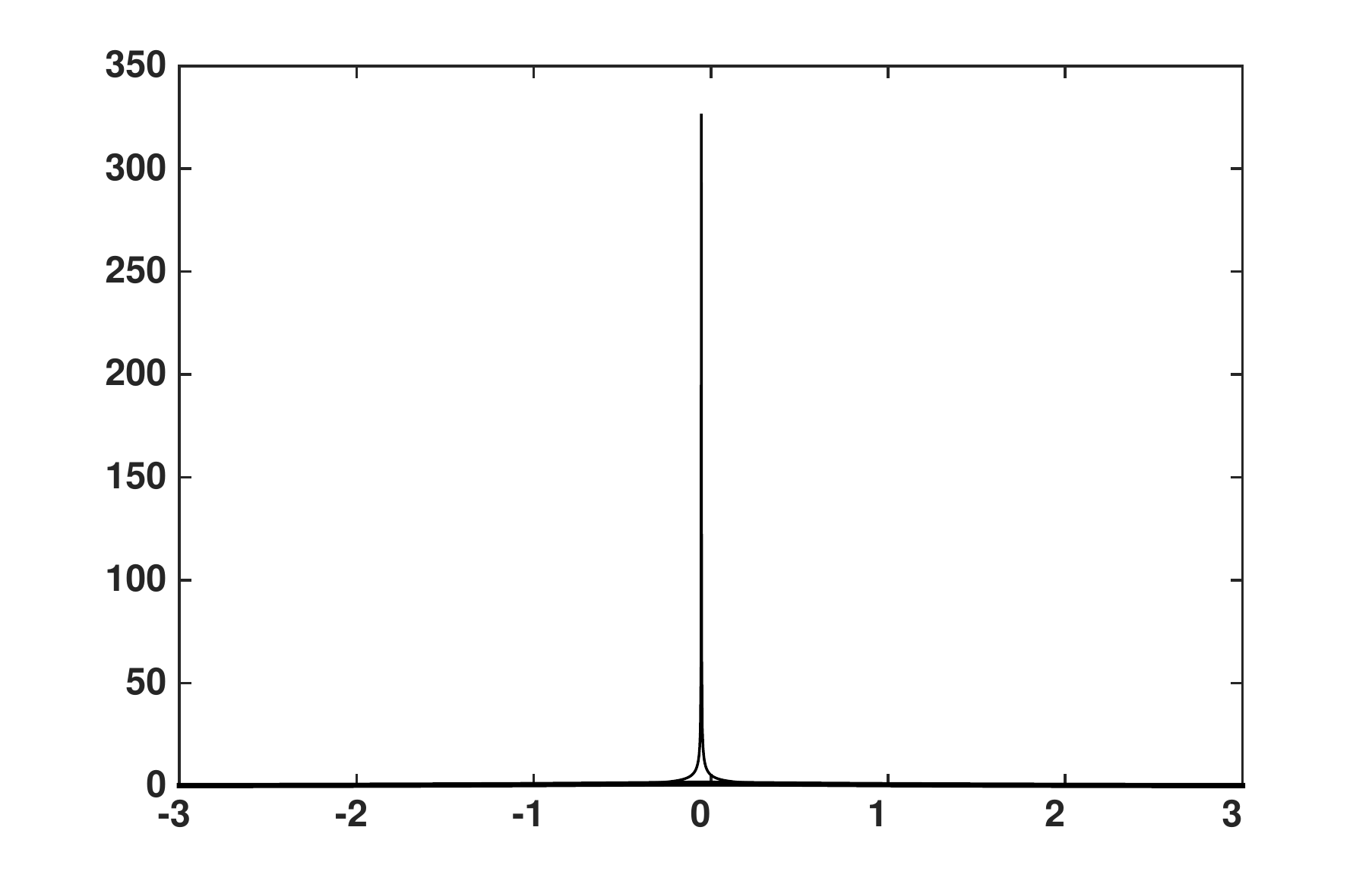}}  & \resizebox{60mm}{!}{\includegraphics{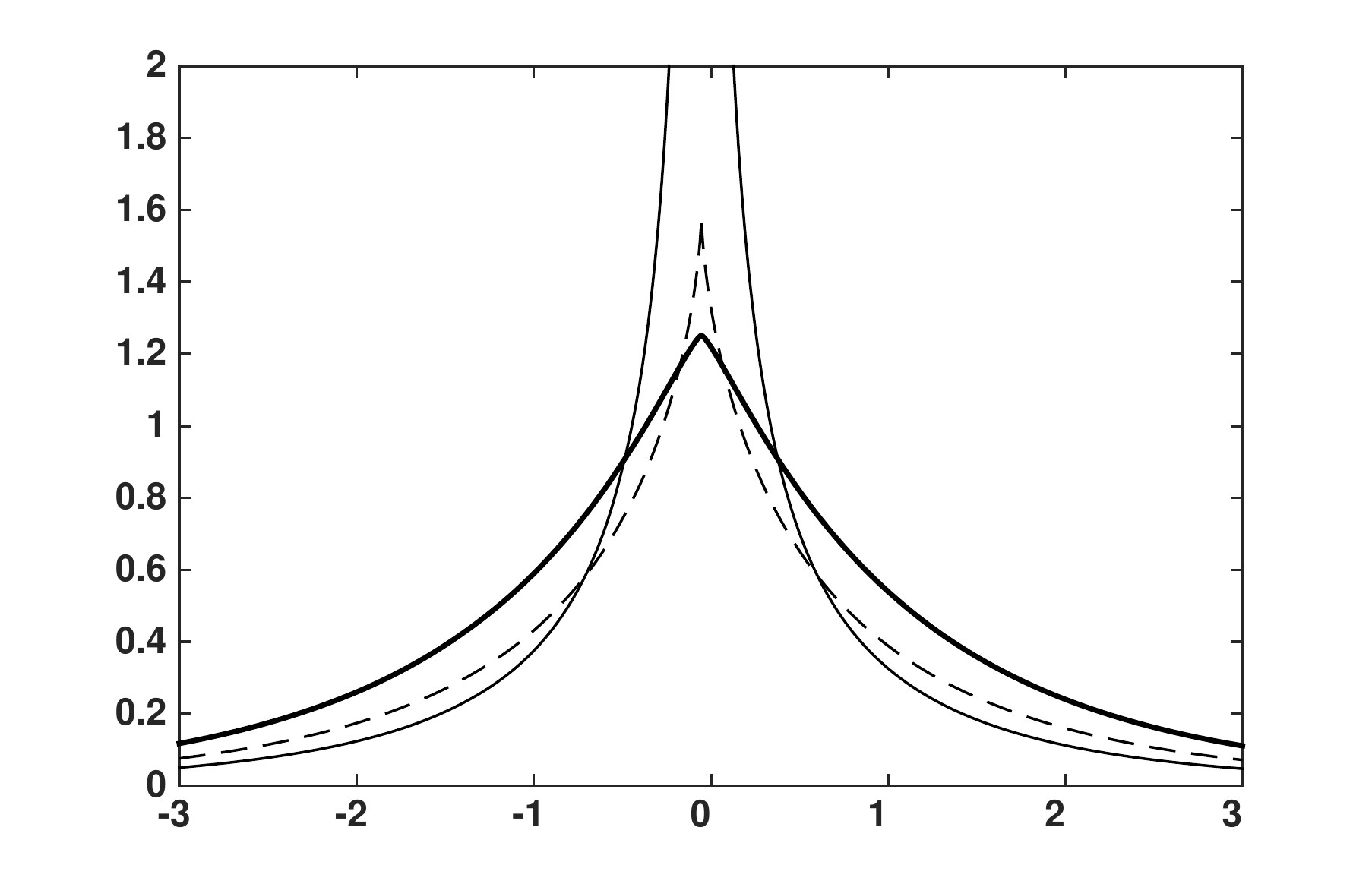}} \\
(a) $K=0$, $\veps=0.5852$ ($\veps_0^\ast \approx 0.5820$)  & (b) $K=0$,  $\veps=0.5852$ ($\veps_0^\ast \approx 0.5820$) \\
\resizebox{60mm}{!}{\includegraphics{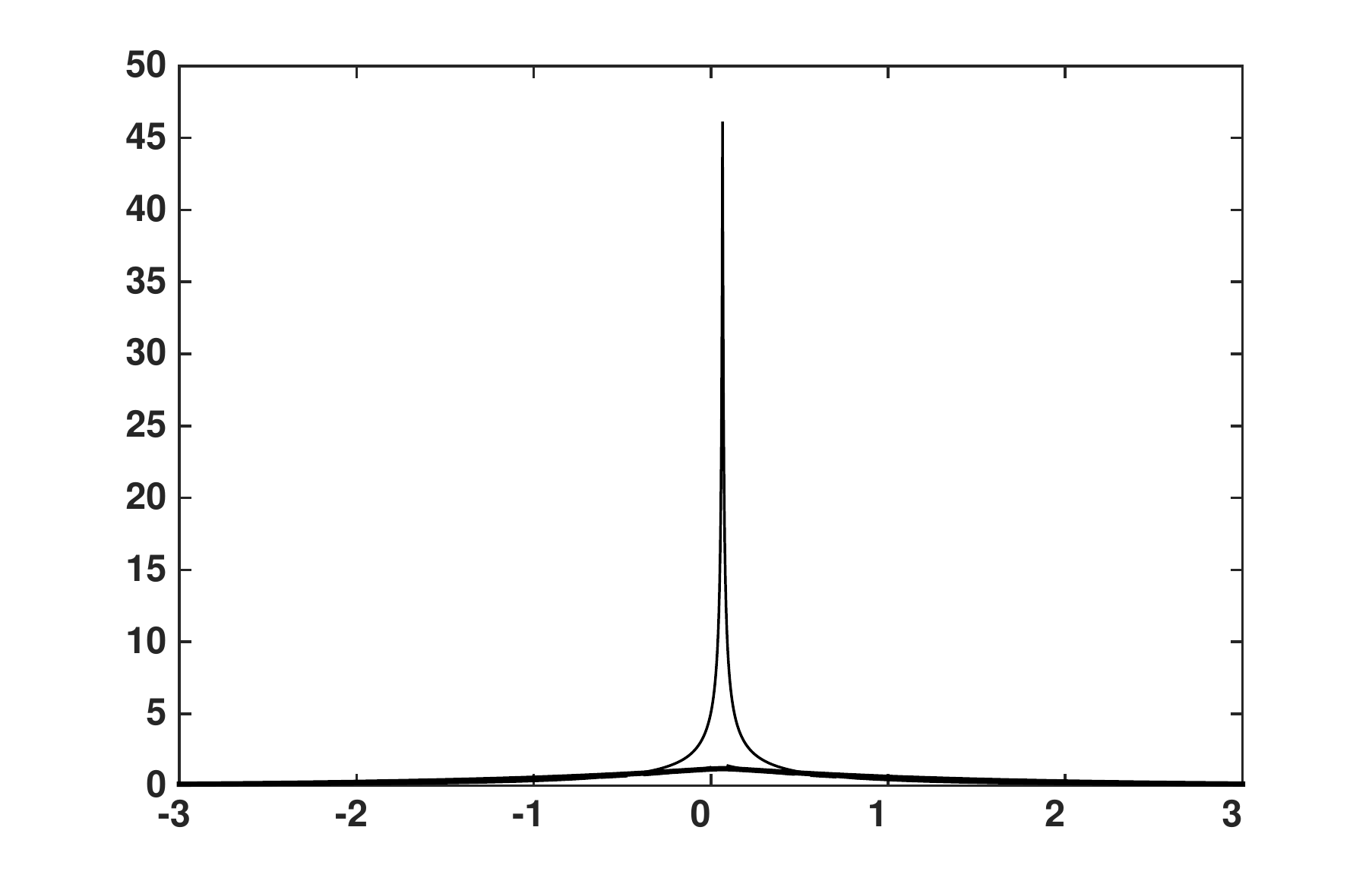}}  & \resizebox{60mm}{!}{\includegraphics{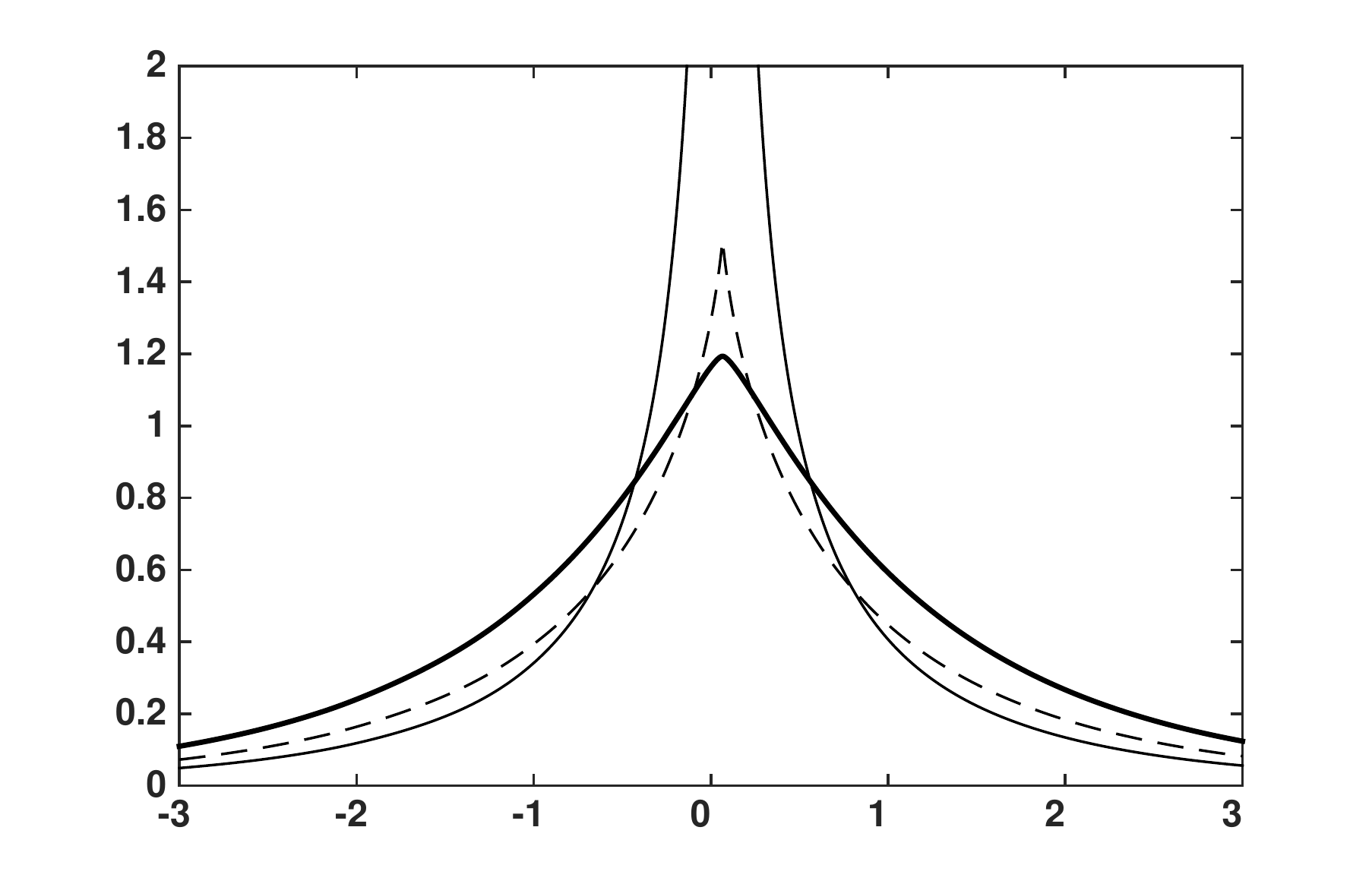}} \\
(c) $K=0.001$,  $\veps=0.5474$  ($\veps_K \approx 0.5475$) & (d) $K=0.001$,  $\veps=0.5474$ ($\veps_K \approx 0.5475$) \\
\resizebox{60mm}{!}{\includegraphics{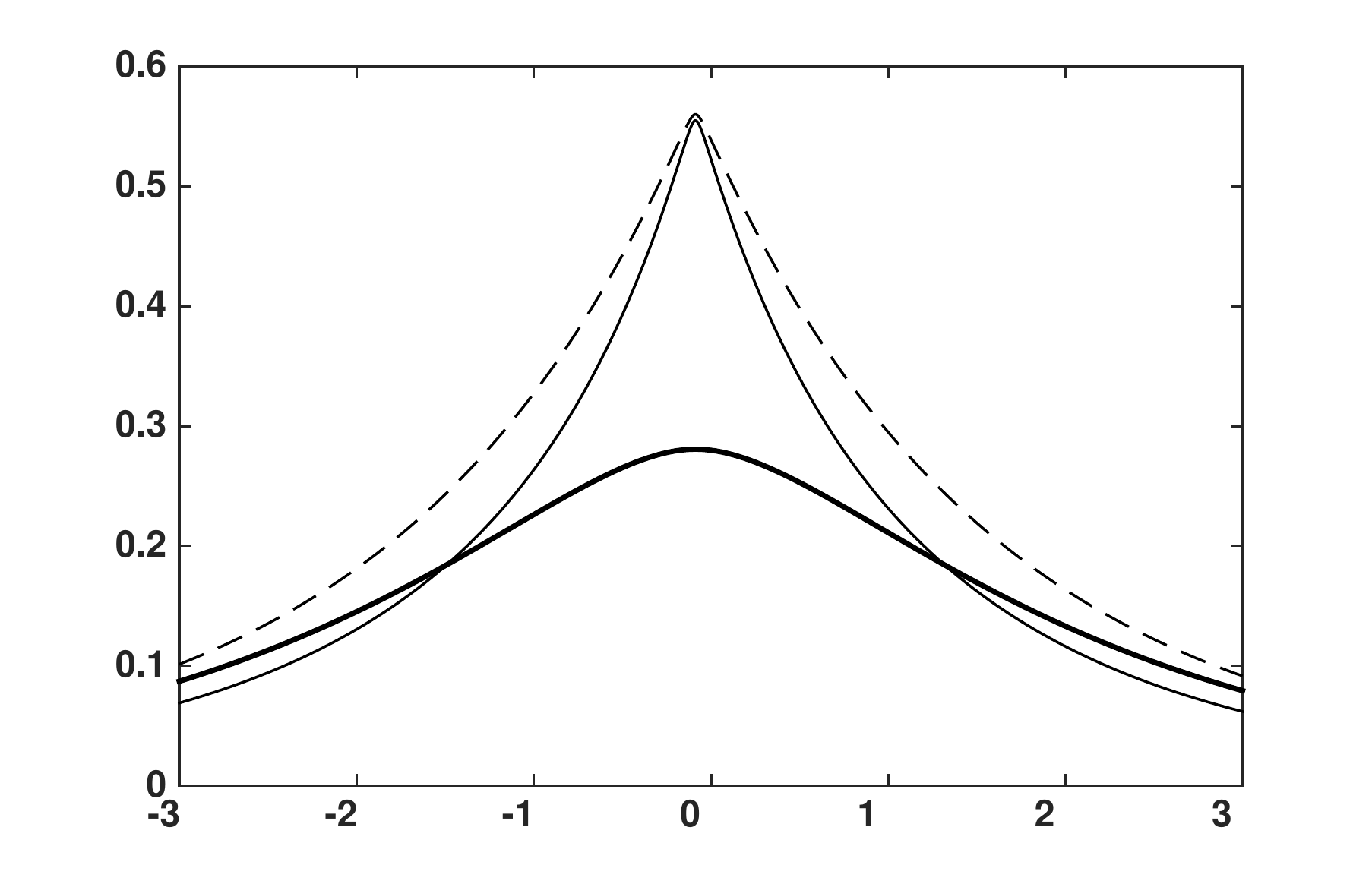}} & \resizebox{60mm}{!}{\includegraphics{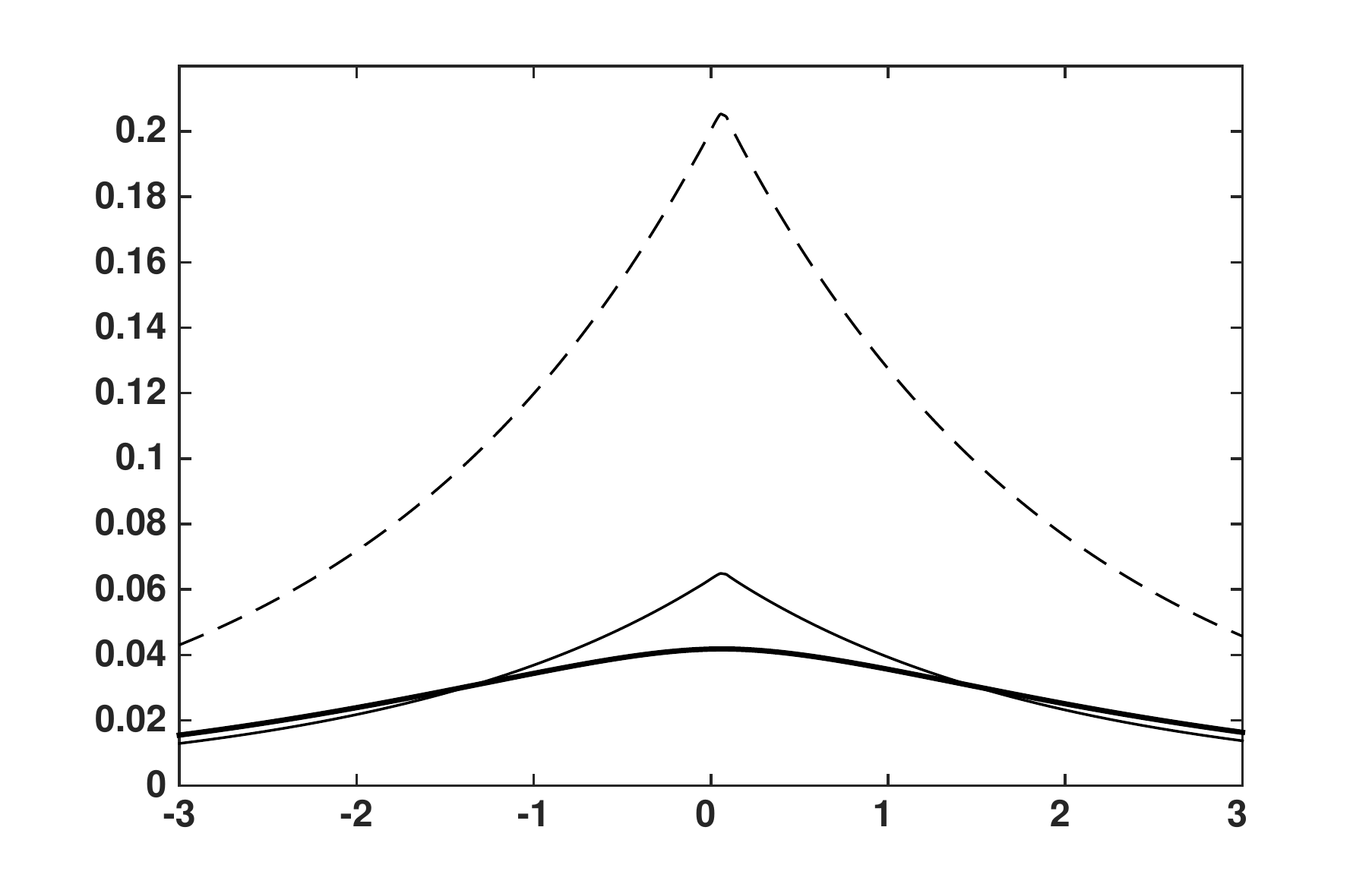}} \\
(e) $K=1$,  $\veps=0.1550$ ($\veps_K \approx 0.1553$) & (f) $K=10$,  $\veps=0.05234$ ($\veps_K \approx 0.0524$)
\end{tabular}
\caption{The graphs of $n_c$ (solid), $u_c$ (dashed), and $\phi_c$ (thick solid).}
\label{GraphLargeAmpl2}
\end{figure}

\begin{figure}[h]
\centering
\resizebox{120mm}{!}{\includegraphics{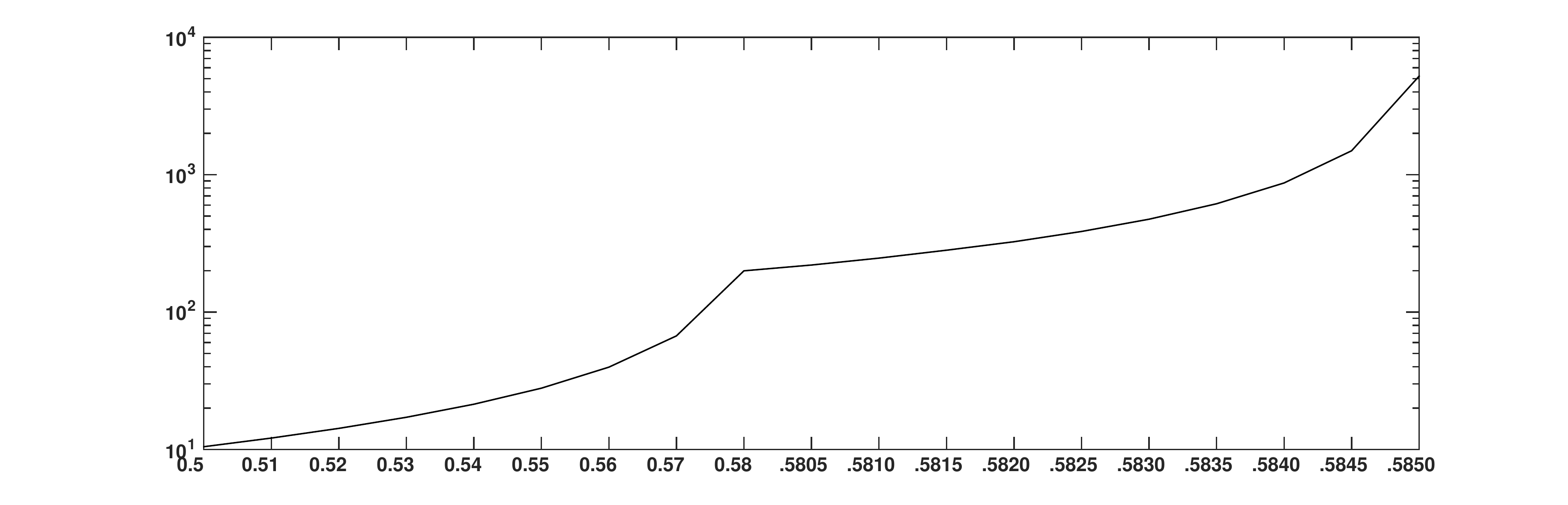}}
\caption{Plotting for $n_c^\ast$ when $K=0$.}
\label{PlottingN}
\end{figure}

\clearpage

\begin{table}\scriptsize
\centering
\begin{subtable}[t]{.45\linewidth}
      \centering
        \begin{tabular}[t]{|c||c|c|c|}
\hline
$\veps$ & $n_c^\ast$  & $u_c^\ast$ & $\phi_c^\ast$ \\
\hline
\hline
0.01 & 0.0305  & 0.0299 & 0.0298 \\
0.03 &  0.0950 & 0.0893 & 0.0880 \\
$\vdots$ & $\vdots$ & $\vdots$ & $\vdots$  \\
0.40 & 3.8463  & 1.1111 &  0.9383 \\
$\vdots$ & $\vdots$ & $\vdots$ & $\vdots$  \\
0.584 & 870.5859   & 1.5822 & 1.2545 \\
0.5845 & 1.492e+03 & 1.5834 & 1.2553 \\
0.585 & 5.209e+03  & 1.5847 & 1.2561 \\
\hline
\end{tabular}
\medskip
\caption{$K=0$, $\veps_0^\ast\approx 0.5852$}
    \end{subtable}
 \begin{subtable}[t]{.45\linewidth}
      \centering
        \begin{tabular}[t]{|c||c||c|c|c|}
\hline
$\veps$ & $n_s$ & $n_c^\ast$  & $u_c^\ast$ & $\phi_c^\ast$ \\
\hline
\hline
.01 & 30.95 & 0.0305 & 0.0299 &  0.0298 \\
.03 & 31.58 & 0.0949 &  0.0893 &  0.0880 \\
$\vdots$ & $\vdots$ & $\vdots$ & $\vdots$ & $\vdots$  \\
.40 & 43.28 & 3.8569 & 1.1121 & 0.9375 \\
$\vdots$ & $\vdots$ & $\vdots$ & $\vdots$ & $\vdots$  \\
.5465 & 47.92 & 38.6476 & 1.5080 &  1.1922 \\
.5470 & 47.93  & 41.2877  &   1.5109
 &  1.1930 \\
.5474 & 47.94  & 45.3119 & 1.5145 & 1.1936 \\
\hline
\end{tabular}
\medskip
\caption{$K=0.001$, $\veps_K\approx 0.5475$}
    \end{subtable}
   \medskip
\begin{subtable}[t]{.45\linewidth}
      \centering
        \begin{tabular}[t]{|c||c||c|c|c|}
\hline
$\veps$ & $n_s$ & $n_c^\ast$  & $u_c^\ast$ & $\phi_c^\ast$ \\
\hline
\hline
.002 & .4162 & .0043 & .0060 & .0042 \\
$\vdots$ & $\vdots$ & $\vdots$ & $\vdots$ & $\vdots$  \\
.070 & .4842 & .1690 & .2146 & .1393 \\
$\vdots$ & $\vdots$ & $\vdots$ & $\vdots$ & $\vdots$  \\
.1550 & .5692 &  .5529 &  .5587 & .2805 \\
.1552 & .5694 &  .5611 &  .5641 & .2808 \\
\hline
\end{tabular}
\medskip
\caption{$K=1$, $\veps_K\approx 0.1553$}
    \end{subtable}
 \begin{subtable}[t]{.45\linewidth}
      \centering
        \begin{tabular}[t]{|c||c||c|c|c|}
\hline
$\veps$ & $n_s$ & $n_c^\ast$  & $u_c^\ast$ & $\phi_c^\ast$ \\
\hline
\hline
.002 & .0494 & .0018 & .0060 &  .0018 \\
$\vdots$ & $\vdots$ & $\vdots$ & $\vdots$ & $\vdots$  \\
.030 & .0583 & .0295 & .0958 & .0256 \\
$\vdots$ & $\vdots$ & $\vdots$ & $\vdots$ & $\vdots$  \\
.0522 & .0653  & .0634  &   .2009
 &  .0417 \\
.0523 & .0653  & .0642 & .2033 & .0418 \\
\hline
\end{tabular}
\medskip
\caption{$K=10$, $\veps_K\approx 0.0524$}
    \end{subtable}
\caption{Peak values of $n_c$, $u_c$, and $\phi_c$.}\label{TableMax}
\end{table}

\subsection{Global existence of the pressureless Euler-Poisson system near solitary waves}
We discuss some issues regarding the global existence of the pressureless Euler-Poisson system near the solitary wave solutions. 

A sufficient condition for the initial data which leads to the finite-time singularity formation for the pressureless Euler-Poisson system (\eqref{EP}--\eqref{bdCon x2} with $K=0$) is studied in \cite{Liu}: if the initial data $(n_0,u_0)(x)$   satisfies
\begin{equation}\label{conj3}
\partial_x u_0(x) \le  -\sqrt{2(1+n_0(x))}
\end{equation}
at some point $x \in \mathbb{R}$, then the maximal existence time of the smooth solution is finite.   Interestingly, our numerical experiments demonstrate that
\begin{equation}\label{Conj2}
\inf_{x \in \mathbb{R}}(\partial_xu_c/\sqrt{1+n_c}) \searrow -\sqrt{2} \quad \text{ as }\veps \nearrow \veps_0^\ast\approx 0.5852.
\end{equation}
See Figure \ref{FigFinSing} for the numerical plot of $\partial_xu_c/\sqrt{1+n_c}$ with $\veps=0.585<\veps_0^\ast$. From this numerical experiment together with the above mentioned study of \cite{Liu}, we expect that there may be a certain \emph{critical threshold phenomena} in the pressureless Euler-Poisson system. More precicely, our conjecture is as follows:\\

\textit{The pressureless Euler-Poisson system \eqref{EP}--\eqref{bdCon x2} with  $K=0$ admits a global smooth solution if and only if the initial data satisfies
\begin{equation}\label{conj3-1}
\partial_x u_0(x) >  -\sqrt{2(1+n_0(x))}
\end{equation}
for all $x\in\mathbb{R}.$}\\

We note that as pointed out in  \cite{Liu}, the condition \eqref{conj3} is the same as the critical threshold (see \cite{ELT}) for a different type of the pressureless Euler-Poisson system with repulsive force.

We also remark that if our conjecture is proved, the global existence of smooth solutions  \emph{near the  solitary waves} is guaranteed, which will be a first step to study the nonlinear stability. 
 For the isothermal case $K>0$, the related questions such as global existence of smooth solutions and finite-time singularity formation are widely open.

\begin{figure}[h]
\centering
\resizebox{80mm}{!}{\includegraphics{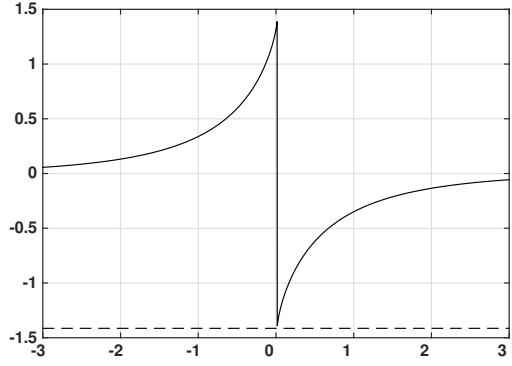}}
\caption{The gragh (solid) of $\partial_xu_c(x)/\sqrt{1+n_c(x)}$ for $\veps=0.585<\veps_0^\ast$. The horizontal (dashed) line represents $-\sqrt{2}$.}\label{FigFinSing}
\end{figure}

\section{Appendix}\label{Sec_LS_Apx}

\subsection{Eigenvalue problems of the Euler-Poisson system and the KdV equation}\label{Appendix1}
We present a formal reduction of the eigenvalue problem of the Euler-Poisson system \eqref{EigenEP} to the eigenvalue problem of the KdV equation. By intoducing the scaling
\begin{equation*}
\xi := \veps^{1/2}x, \quad \lambda := \veps^{3/2}\Lambda, \quad (n_\ast,u_\ast,\phi_\ast)(\xi):=\veps^{-1}(n_c,u_c,\phi_c)(x), \quad (\dot{n}_\ast, \dot{u}_\ast, \dot{\phi}_\ast)(\xi):=(\dot{n},\dot{u},\dot{\phi})(x),
\end{equation*}
\eqref{EigenEP} becomes (letting $(\dot{n}_\ast,\dot{u}_\ast,\dot{\phi}_\ast)=(n,u,\phi)$ for notational simplicity)
\begin{subequations}\label{FormDerivEigEPtoKdV1}
\begin{align}[left = \empheqlbrace\,]
& \veps\Lambda n  -c\partial_\xi n + \partial_\xi u + \partial_\xi(\veps n_\ast u + \veps u_\ast n) = 0, \label{FormDerivEigEPtoKdV11} \\ 
& \veps\Lambda u -c \partial_\xi u   + K\partial_\xi\left(\dfrac{n}{1+\veps n_\ast}\right) +\partial_\xi (\veps u_\ast u)  = -\partial_\xi \phi, \label{FormDerivEigEPtoKdV12} \\ 
& -\veps\partial_\xi^2 \phi = n - e^{\veps \phi_\ast}\phi. \label{FormDerivEigEPtoKdV13}
\end{align}
\end{subequations}
By integrating \eqref{FormDerivEigEPtoKdV11}--\eqref{FormDerivEigEPtoKdV12} in $\xi$, we formally obtain that (recall that $c=\sqrt{1+K}+\veps$) 
\begin{subequations}\label{FormDerivEigEPtoKdV2}
\begin{align}[left = \empheqlbrace\,]
& -\sqrt{1+K} \, n +  u = O(\veps), \label{FormDerivEigEPtoKdV21} \\ 
& K n - \sqrt{1+K} \, u + \phi =O(\veps), \label{FormDerivEigEPtoKdV22} \\ 
& n-\phi=O(\veps). \label{FormDerivEigEPtoKdV23}
\end{align}
\end{subequations}
Taking derivative of \eqref{FormDerivEigEPtoKdV13} in $\xi$, and then subtracting the resulting equation from \eqref{FormDerivEigEPtoKdV12}, $-\partial_\xi\phi$ term in the RHS of \eqref{FormDerivEigEPtoKdV12} is canceled. Then, by applying the Taylor expansion, we obtain
\begin{multline}\label{FormDerivEigEPtoKdV3}
\veps\Lambda u  -(\sqrt{1+K}+\veps) \partial_\xi u +  (K + 1 )\partial_\xi n   - K \partial_\xi(\veps n_\ast n) + \partial_\xi(\veps u_\ast u)
 + \veps\partial_\xi^3\phi - \partial_\xi(\veps \phi_\ast  \phi)\\
  =O(\veps^2).
\end{multline}
Multiplying \eqref{FormDerivEigEPtoKdV11} by $\mathsf{V}=\sqrt{1+K}$ and then adding the resulting equation to \eqref{FormDerivEigEPtoKdV3}, we see that the terms $-\sqrt{1+K}\,\partial_\xi u$ and $(1+K)\partial_\xi n$  in \eqref{FormDerivEigEPtoKdV3} are canceled, and we have
\begin{equation}\label{FormDerivEigEPtoKdV9}
\begin{split}
& \mathsf{V}\Lambda n  -\mathsf{V}\partial_\xi n  + \mathsf{V}\partial_\xi(n_\ast u+u_\ast n) \\
& + \Lambda u  - \partial_\xi u  - K \partial_\xi(n_\ast n) + \partial_\xi(u_\ast u) + \partial_\xi^3\phi - \partial_\xi(\phi_\ast\phi) =O(\veps).
\end{split}
\end{equation}
Using the relation \eqref{FormDerivEigEPtoKdV2} and Theorem \ref{MainThm4}, we obtain from \eqref{FormDerivEigEPtoKdV9} that
\begin{equation}
\Lambda n - \partial_\xi n  + \mathsf{V}\partial_\xi(\Psi_\textrm{K} n) + \frac{1}{2\mathsf{V}}\partial_\xi^3 n = O(\veps).
\end{equation}

\subsection{Specific form of $A_\ast(\xi,\Lambda,\veps)$}\label{SFA}
We denote $\partial_\xi$ by $'$ for simplicity. We let ${A_1}_\ast(\xi,\veps):=A_1(x,\veps)$, ${A_2}_\ast(\xi,\veps):=A_2(x,\veps)$, and $(n_\ast,u_\ast,\phi_\ast)(\xi):=\veps^{-1}(n_c,u_c,\phi_c)(x)$ for $\xi=\veps^{1/2}x$.  Then from \eqref{A_Decompose}  and \eqref{pointesti2}, we have
\[
{A_1}_\ast  =  \small\begin{pmatrix}
\frac{(c-\veps u_\ast)\veps^{3/2} u_\ast'}{J} - \frac{K\veps^{3/2}n_\ast'}{J(1+\veps n_\ast)} & \frac{(c-\veps u_\ast)\veps^{3/2}n_\ast'}{J} + \frac{(1+\veps n_\ast)\veps^{3/2} u_\ast'}{J} & 0 & \frac{1+\veps n_\ast}{J} \\
\frac{K\veps^{3/2} u_\ast'}{J(1+\veps n_\ast)} -\frac{K(c-\veps  u_\ast)\veps^{3/2}n_\ast'}{J(1+\veps n_\ast)^2} & \frac{K\veps^{3/2} n_\ast'}{J(1+\veps n_\ast)} + \frac{(c-\veps u_\ast)\veps^{3/2} u_\ast'}{J} & 0 & \frac{(c-\veps u_\ast)}{J} \\
0 & 0 & 0 & 1 \\
-1 & 0 & e^{\veps \phi_\ast} & 0
\end{pmatrix},
\]
\[
\frac{1}{\sqrt{\veps}}S^{-1}{A_1}_\ast S 
=
\small\begin{pmatrix}
\dfrac{a_{22}-\mathsf{V}a_{12}}{\veps^{1/2}} & \frac{a_{21} + \mathsf{V}a_{22} -\mathsf{V}a_{11} - \mathsf{V}^2a_{12} }{\veps^{3/2}} & \dfrac{a_{24}-\mathsf{V}a_{14} }{\veps} & 0 \\
\veps^{1/2} \, a_{12} & \dfrac{a_{11} + \mathsf{V}a_{12}}{\veps^{1/2}} & a_{14} & 0 \\
0 & \dfrac{e^{\veps \phi_\ast}-1}{\veps} & 0 & e^{\veps \phi_\ast} \\
\dfrac{-a_{12}}{\veps^{1/2}} & -\dfrac{a_{11} + \mathsf{V}a_{12}}{\veps^{3/2}} & \dfrac{1-a_{14}}{\veps} & 0
\end{pmatrix},
\]
\[
\frac{\veps^{3/2}\Lambda}{\sqrt{\veps}}S^{-1}{A_2}_\ast S= \frac{\Lambda}{J}
\small\begin{pmatrix}
\veps ((c-\veps  u_\ast) - \mathsf{V}(1+\veps n_\ast))  & \frac{K}{1+\veps n_\ast} - \mathsf{V}^2(1+\veps n_\ast) & 0 & 0 \\
\veps^2 (1+\veps n_\ast) & \veps(c-\veps  u_\ast)+\veps\mathsf{V}(1+\veps n_\ast) & 0 & 0 \\
0 & 0 & 0 & 0 \\
-\veps(1+\veps n_\ast) & -(c-\veps  u_\ast)-\mathsf{V}(1+\veps n_\ast) & 0 & 0
\end{pmatrix},
\]
where $a_{jk}$ is the $(j,k)$-entry of the matrix ${A_1}_\ast$.  Here, we note that 
\[
\frac{1-a_{14}}{\veps} = \frac{(c-\veps  u_\ast)^2- K - (1+ \veps n_\ast)}{\veps J}  = \frac{2\sqrt{1+K} + \veps - 2c u_\ast + \veps  u_\ast - n_\ast}{J}.
\]
Hence, we may define $A_\ast(\xi,\Lambda,\veps)$ for all $\veps\in [0,\veps_\ast]$, and using \eqref{pointesti2}, it is straightforward to check that  we have \eqref{ODEscaled2} at $\veps=0$.

\subsection{Proof of Lemma \ref{Fredequiv}}\label{Lem39}
Let $\mc{A}(\lambda):=d/dx - A(x,\lambda,\veps)$. For a closed subspace $\textrm{Ran}(\lambda-\mc{L})$ of a Hilbert space $\mc{H}$, we have 
\[
\mc{H}=\textrm{Ran}(\lambda-\mc{L}) \oplus \textrm{Ran}(\lambda-\mc{L})^\perp = \textrm{Ran}(\lambda-\mc{L}) \oplus \textrm{Ker}\left(\lambda - \mc{L} \right)^\ast \quad ( \,^\ast:= \textrm{Hermitian adjoint})
\]
and a similar decomposition holds for a closed subspace $\textrm{Ran}\mc{A}(\lambda)$. We claim that
\begin{enumerate}
\item[C1.] $\textrm{Ran}(\lambda -\mc{L})$ is closed if and only if $\textrm{Ran}(\mc{A}(\lambda))$  is closed;
\item[C2.] $\textrm{Ker}(\lambda -\mc{L})$ is isomorphic to $\textrm{Ker}(\mc{A}(\lambda))$ ;
\item[C3.] $\textrm{Ker}\left(\lambda - \mc{L} \right)^\ast$ is isomorphic to $\textrm{Ker}(\mc{A}(\lambda)^\ast)$. 
\end{enumerate}
Then, Lemma \ref{Fredequiv} follows by the definition of the Fredholm index of an operator.

C2 and C3 are already checked in subsection \ref{SS_RefEP}.\footnote{Note that $\mathcal{A}(\lambda)^\ast=-d/dx - \overline{A(x,\lambda,\veps)}^T = -d/dx - A^T(x,\overline{\lambda},\veps)$ from the form of the matrix $A$.}
We only prove the right direction of C1 since  the converse is easier to check. Also,  we only consider  the case of the unweighted $L^2$-space for simplicity.

 We suppose that $\textrm{Ran}(\lambda -\mc{L})$ is closed. For a sequence $\mathbf{f}_j=(f_j^1,f_j^2,f_j^3,f_j^4)^T \in \textrm{Ran}(\mc{A}(\lambda))$ such that $\mathbf{f}_j \to \mathbf{f}=(f^1,f^2,f^3,f^4)^T$ in $(L^2)^4$ as $j \to \infty$, let $\mathbf{y}_j:=(n_j,u_j,\phi_j,\psi_j)^T\in (H^1)^4$ be a solution of $\mc{A}(\lambda)\mathbf{y}_j=\mathbf{f}_j$ for each $j\in \mathbb{Z}$.

 We may decompose the last two components of $\mc{A}(\lambda)\mathbf{y}_j=\mathbf{f}_j$ (corresponding to the Poisson equation \eqref{Poisson_ODE}) into two parts:
\begin{equation}\label{CH_3PoissonDecomposition}
\left\{
\begin{array}{l l}
\partial_x\phi_j^f - \psi_j^f = f_j^3, \\ 
\partial_x\psi_j^f - e^{\phi_c}\phi_j^f = f_j^4,
\end{array} 
\right.
\quad
\left\{
\begin{array}{l l}
\partial_x(\phi_j-\phi_j^f) - (\psi_j - \psi_j^f) = 0, \\ 
\partial_x(\psi_j - \psi_j^f) - e^{\phi_c}(\phi_j-\phi_j^f) + n_j = 0.
\end{array} 
\right.
\end{equation}
Using the energy estimate, one can check that the solution $(\phi_j^f,\psi_j^f) \in (H^1)^2$ to the LHS of \eqref{CH_3PoissonDecomposition} exists for all $(f_j^3,f_j^4) \in (L^2)^2$, and $(\phi_j^f,\psi_j^f)$ converges to $(\phi^f,\psi^f)$ in $(H^1)^2$, where $(\phi^f,\psi^f)$ $\in$ $(H^1)^2$  satisfies
\begin{equation}\label{CH3PosDec}
\partial_x\phi^f - \psi = f^3, \quad 
\partial_x\psi^f - e^{\phi_c}\phi = f^4.
\end{equation}

We rewrite the first system of \eqref{CH_3PoissonDecomposition} and the system \eqref{CH3PosDec} as
\begin{equation}\label{fjfj}
\mathcal{A}(\lambda)\begin{pmatrix}
0 \\
0 \\
\phi_j^f \\
\psi_j^f
\end{pmatrix}
=
\begin{pmatrix}
L^{-1}\begin{pmatrix}
0 \\
\psi_j^f
\end{pmatrix} \\
f_j^3 \\
f_j^4
\end{pmatrix}
\quad \textrm{and}
\quad 
\mathcal{A}(\lambda)\begin{pmatrix}
0 \\
0 \\
\phi^f \\
\psi^f
\end{pmatrix}
=
\begin{pmatrix}
L^{-1}\begin{pmatrix}
0 \\
\psi^f
\end{pmatrix} \\
f^3 \\
f^4
\end{pmatrix},
\end{equation}
respectively.
Subtracting the LHS of \eqref{fjfj} from $\mc{A}(\lambda)\mathbf{y}_j=\mathbf{f}_j$, we have 
\[
\mc{A}(\lambda)\begin{pmatrix}
n_j \\
u_j \\
\phi_j-\phi_j^f \\
\psi_j-\psi_j^f
\end{pmatrix} = \begin{pmatrix}
\begin{pmatrix}
f_j^1 \\
f_j^2
\end{pmatrix}
-L^{-1}\begin{pmatrix}
0 \\
\psi_j^f
\end{pmatrix} \\
0 \\
0
\end{pmatrix},
\]
which implies that 
\[
(\lambda -\mc{L})\begin{pmatrix}
n_j\\
u_j
\end{pmatrix}
=L\left( \begin{pmatrix}
f_j^1 \\
f_j^2
\end{pmatrix}
-L^{-1}\begin{pmatrix}
0 \\
\psi_j^f
\end{pmatrix} \right) .
\]
Since  $\textrm{Ran}(\lambda -\mc{L})$ is closed, there is $(n,u)\in (H^1)^2$ such that 
\[
(\lambda -\mc{L})\begin{pmatrix}
n \\
u
\end{pmatrix} =L\left( \begin{pmatrix}
f^1 \\
f^2
\end{pmatrix}
-L^{-1}\begin{pmatrix}
0 \\
\psi^f
\end{pmatrix} \right),
\]
and this implies that 
\begin{equation}\label{fjfj2}
\mc{A}(\lambda)\begin{pmatrix}
n \\
u \\
\tilde{\phi}  \\
\tilde{\psi}
\end{pmatrix} = \begin{pmatrix}
\begin{pmatrix}
f^1 \\
f^2
\end{pmatrix}
-L^{-1}\begin{pmatrix}
0 \\
\psi^f
\end{pmatrix}  \\
0 \\
0
\end{pmatrix},
\end{equation}
where $(\widetilde{\phi},\widetilde{\psi})$ satisfies $\partial_x \widetilde{\phi} = \widetilde{\psi}$ and $ \partial_x\widetilde{\psi}  =  e^{\phi_c}\widetilde{\phi} - n$. Now adding \eqref{fjfj2} and the RHS of \eqref{fjfj}, we conclude that $\textrm{Ran}(\mc{A}(\lambda))$  is closed.

\subsection{Non-negativity of the matrix $S_1$} \label{NonnegaS}
We show that the symmetric matrix $S_1(x,\veps)$ defined in \eqref{S1sym} is non-negative for all sufficiently small $\veps>0$. 

Since $S_1$ is symmetric, it is enough to show that the eigenvalues of $S_1$,
\[
0, 0, 2\sqrt{K} R^{(1)}_{11} \pm \sqrt{2K^2( R^{(1)}_{12})^2 + 2( R^{(1)}_{21})^2},
\]
are non-negative.  Recalling the definition of the matrix $R^{(1)}$ (see  \eqref{R12}), $R_{11}^{(1)}$ is positive since
\[
\begin{split}
R_{11}^{(1)}
& = \frac{c-u_c}{J} - \frac{c}{c^2-K} \\
& = \frac{(c-u_c)(c^2-K) - c[(u_c-c)^2-K]}{J(c^2-K)} \\
& = \frac{u_c[c(c-u_c)+K]}{J(c^2-K)} > 0,
\end{split}
\]
where we have used the solitary wave identity \eqref{TravelEqB1} for the inequality.

 We check that $2\sqrt{K} R^{(1)}_{11} - \sqrt{2K^2( R^{(1)}_{12})^2 + 2( R^{(1)}_{21})^2}$ is positive. Since $R^{(1)}_{11}>0$, it is enough to show that 
\[
\begin{split}
  &  4K(R^{(1)}_{11})^2 - 2K^2(R^{(1)}_{12})^2 - 2(R^{(1)}_{21})^2  \\
  & =  \frac{2K}{J_1}\left[\underbrace{2\left((c-u_c)(c^2-K)-cJ \right)^2(1+n_c)^2}_{=:I_1=2(R^{(1)}_{11})^2J_1} \underbrace{- K\left((c^2-K)(1+n_c)-J \right)^2(1+n_c)^2}_{=:I_2}  \right. \\
& \left.\underbrace{-K\left(c^2-K-(1+n_c)J\right)^2}_{=:I_3}\right]
\end{split}
\]
is positive, where $J_1:=J^2(c^2-K)^2(1+n_c)^2>0$. Using the solitary wave identity \eqref{TravelEqB1} and the definition of $J=(c-u_c)^2-K$ (see \eqref{Useful_Id3}), we have
\begin{align*} 
 I_1 & = 2\left((c-u_c)(c^2-K)-c\left((c-u_c)^2-K \right) \right)^2(1+n_c)^2 \\
	 & = 2\left(c(c^2-K)-c\left(c(c-u_c)-K(1+n_c) \right) \right)^2 \\
     & = 2c^2 ( c u_c+K n_c  )^2, \\
      \\
 I_2 & = - K\Big((c^2-K)(1+n_c)-\left((c-u_c)^2-K \right) \Big)^2(1+n_c)^2 \\
     & = - K\Big((c^2-K)(1+n_c)^2-\big( c(c-u_c)-K (1+n_c) \big) \Big)^2 \\  
     & = - K\left(n_c(c^2-K)(2+ n_c)+\left(cu_c+K n_c\right) \right)^2 \\ 
     & = -K\left(cu_c+K n_c\right)^2 -Kn_c^2(c^2-K)^2(2+n_c)^2 - 2Kn_c(c^2-K)(2+n_c)\left(cu_c+K n_c\right) \\ 
     & = -K\left(cu_c+K n_c\right)^2 -4Kn_c^2(c^2-K)^2 - 4Kn_c(c^2-K)\left(cu_c+K n_c\right) \\
      & \quad  +  O\big(|n_c|^3 + |n_c|^2|u_c|\big) , \\
       \\
 I_3 & = -K\left(c^2-K-(1+n_c)\left((c-u_c)^2-K \right)\right)^2 \\
     & = -K\left(cu_c+K n_c\right)^2.
\end{align*}
Hence, 
\[
\begin{split}
I_1+I_2+I_3
&  =   2(c^2-K) ( c u_c+K n_c  )^2 -4Kn_c^2(c^2-K)^2 -  4Kn_c(c^2-K)\left(cu_c+K n_c\right)\\
& \quad  + O\big(|n_c|^3 + |n_c|^2|u_c|\big) \\
& =  2(c^2-K)( c u_c+K n_c  )\left( c u_c -K n_c \right) -4Kn_c^2(c^2-K)^2  \\
& \quad  + O\big(|n_c|^3 + |n_c|^2|u_c|\big) \\
& =   2(c^2-K)( c^2 u_c^2 + K^2 n_c^2 - 2K c^2 n_c^2  ) +O\big(|n_c|^3 + |n_c|^2|u_c|\big).
\end{split}
\]
Since $\frac{cn_c}{1+n_c}=u_c$ from \eqref{TravelEqB1}, we obtain that
\[
c^2 u_c^2 + K^2 n_c^2 - 2K c^2 n_c^2   = (c^2-K)^2n_c^2 \big( 1+O(|n_c|) \big).
\]
Therefore, we conclude that  $I_1+I_2+I_3>0$ for all sufficiently small $\veps>0$.

%

  \section*{Acknowledgments.}
B.K. was supported by Basic Science Research Program through the National Research Foundation of Korea (NRF) funded by the Ministry of science, ICT and future planning (NRF-2020R1A2C1A01009184)


\begin{thebibliography}{10}


\bibitem{AGJ} Alexander, J.; Gardner, R.; Jones, C. A topological invariant arising in the stability analysis of travelling waves. J. Reine Angew. Math. 410 (1990), 167–212.

\bibitem{BK} Bae, J., Kwon, B.: Small amplitude limit of solitary waves for the Euler-Poisson system. J. Differential Equations \textbf{266}, 3450-3478 (2019) 

\bibitem{Benjamin} Benjamin, T.B.: The stability of solitary waves. Proc. R. Soc. Lond. A, \textbf{328}, 153-183 (1972)

\bibitem{Bona} Bona, J. L.: On the stability of solitary waves. Proc. Roy. Soc. London Ser. A \textbf{344}, 363-374 (1975)
%

\bibitem{Ch} Chen, F.F.: Introduction to plasma physics and controlled fusion. 2nd edition, Springer (1984)

\bibitem{Coppel} Coppel, W.A. : Stability and asymptotic behavior of differential equations. Boston: D.C. Heath and company. (1965)

\bibitem{Coppel2} Coppel, W.A. : Dichotomies in stability theory. Lecture Notes in Math., Vol. 629, Springer, Berlin (1978)

\bibitem{Cor} Cordier, S., Degond, P., Markowich, P, Schmeiser, C.: Travelling wave analysis of an isothermal Euler-Poisson model. Ann. Fac. Sci. Toulouse Math. \textbf{5}, 599-643 (1996)

\bibitem{Dav} Davidson, R.C.: Methods in nonlinear plasma theory. 1st edition, Academic Press (1972)

\bibitem{Nagel}   Engel, K., Nagel, R.: One-Parameter Semigroups for Linear Evolution Equations. Springer-Verlag, New York (2000)

\bibitem{ELT} Engelberg, S., Liu, H., Tadmor, E.:  Critical threshold in Euler-Poisson equations, Indiana Univ. Math. J. 50 (2001), 109–157.

\bibitem{Evans1} Evans, J.W.: Nerve axon equations. I. Linear approximation. Indiana Univ. Math. J. \textbf{21}, 877-955 (1972)

\bibitem{Evans2} Evans, J.W.: Nerve axon equations. II. Stability at rest. Indiana Univ. Math. J. \textbf{22}, 75-90 (1972)

\bibitem{Evans3} Evans, J.W.: Nerve axon equations. III. Stability of the nerve impulse. Indiana Univ. Math. J. \textbf{22}, 577-594 (1972)

\bibitem{Evans4} Evans, J.W.: Nerve axon equations. IV. The stable and unstable impulse. Indiana Univ. Math. J. \textbf{24}, 1169-1190 (1975)

\bibitem{GGPS} Grenier, E., Guo, Y., Pausader, B. and Suzuki, M.:  
Derivation of the ion equation
Quart. Appl. Math. (2019) 

\bibitem{Gar} Gardner, C. S., Morikawa, G. K.: Similarity in the asymptotic behavior of collision-free hydromagnetic waves and water waves. New York Univ., Courant Inst. Math. Sci., Res. Rep. NYO-9082 (1960)

\bibitem{GGKM1} Gardner, C. S., Greene, J. M., Kruskal. M. D., Miura, R. M.: Method for solving the Korteweg-de Vries equation. Phys. Rev. Lett. 19, 1967, pp. 1095-1097.

\bibitem{GGKM2} Gardner, C. S., Greene, J. M., Kruskal, M. D., Miura, R. M.: Korteweg-de Vries equation and generalizations, VI: Methods for exact solution, Comm. Pure Appl. Math. 27, 1974, pp. 97-133.

\bibitem{GP} Guo, Y.; Pausader, B.: Global Smooth Ion Dynamics in the Euler-Poisson System, Commun. Math. Phys. (2011) 303: 89. 


\bibitem{Guo} Guo, Y., Pu, X.: KdV limit of the Euler–Poisson system. Arch. Ration. Mech. Anal. \textbf{211}, 673-710 (2014)

\bibitem{HNS} Haragus, M., Nicholls, D. P., Sattinger, D. H.: Solitary wave interactions of the EulerPoisson equations, J. Math. Fluid Mech., 5 (2003), pp. 92–118.

\bibitem{Sche} Haragus, M., Scheel, A. : Linear stability and instability of ion-acoustic plasma solitary waves. Physica D \textbf{170}, 13-30 (2002)

\bibitem{HS1} Haragus, M.,  Scheel, A. :  Finite-wavelength stability of capillary-gravity solitary waves. Comm. Math. Phys., \textbf{225}, 487-521 (2002)

\bibitem{Hartman} Hartman, P.: Ordinary Differential Equations. John Wiley, New York (1964)

\bibitem{ikezi} Ikezi, H., Taylor, R.J., Baker, D.R.: Formation and interaction of ion-acoustic solitons. Phys. Rev. Lett. \textbf{25}(1), 11-14 (1970)

\bibitem{Jones} Jones, C.K.R.T. :  Stability of the travelling wave solutions of the Fitzhugh–Nagumo system. Trans. AMS, 286(2):431–469, 1984.

\bibitem{JKS19} Jung, C.-Y., Kwon, B., Suzuki, M.: Quasi-neutral limit for the Euler-Poisson system in the vicinity of plasma sheaths, (2019) preprint

\bibitem{Kato} Kato, T.: Perturbation theory for linear operators. Springer-Verlag, Berlin, (1980)


\bibitem{Lax1} P. D. Lax, Integrals of nonlinear equations of evolution and solitary waves,
Comm. Pure Appl. Math. 21 (1968), 467–490

\bibitem{Satt} Li, Y., Sattinger, D.H. : Soliton Collisions in the ion acoustic plasma equations. J. Math. Fluid Mech. \textbf{1}, 117-130 (1999)

\bibitem{Li} Li, Y. A. : Linear stability of solitary waves of the Green-Naghdi equations. Comm. Pure Appl. Math., \textbf{54}, 501-536 (2001)

\bibitem{Liu} Liu, H. : Wave breaking in a class of nonlocal dispersive wave equations. Journal of Nonlinear Mathematical Physics Volume 13, Number 3 (2006), 441-466


\bibitem{MM1} Martel, Y., Merle, F.: Asymptotic stability of solitons for subcritical generalized KdV equations. Arch. Ration. Mech. Anal. 157(3), 219–254 (2001).

\bibitem{MM2} Martel, Y., Merle, F.: Asymptotic stability of solitons of the subcritical gKdV equations revisited. Nonlinearity 18(1), 55–80 (2005).

\bibitem{MMT} Martel, Y., Merle, F., Tsai, T.P.: Stability and asymptotic stability
in the energy space of the sum of N solitons for subcritical gKdV equations,
Comm. Math. Phys. 231 (2002),

\bibitem{MW} Miller, J.R., Weinstein, M.I.:  Asymptotic stability of solitary waves for the
regularized long wave equation. Comm. Pure Appl. Math., \textbf{49}, 399-441 (1996) 



\bibitem{Palmer} Palmer, K.J.: Exponential dichotomies and transversal homoclinic points. J. Differential Equations \textbf{55}, 225-256 (1984)

\bibitem{Palmer2} Palmer, K.J.: Exponential dichotomies and Fredholm operators. Proc. Amer. Math. Soc. \textbf{104}, 149-156 (1988)

\bibitem{Pazy} Pazy, A.: Semigroups of Linear Operators and Applications to Partial Differential Equations. Springer-Verlag, New York (1983)

\bibitem{PW} Pego, R.L., Weinstein, M.I.: Eigenvalues, and instabilities of solitary waves. Philos. Trans. Roy. Soc. London Ser. A \textbf{340}, 47-94 (1992)

\bibitem{PW1} Pego, R.L., Weinstein, M.I.: Asymptotic stability of solitary waves. Commun. Math. Phys. \textbf{164}, 305-349 (1994)

\bibitem{PW2} Pego, R.L., Weinstein, M.I.: Convective linear stability of solitary waves for Boussinesq equations. Stud. Appl. Math. \textbf{99}, 311-375 (1997) 

\bibitem{Pruss} Prüss, J.: On the spectrum of $C_0$-semigroups. Trans. Am. Math. Soc. \textbf{284}(2), 847–857 (1984).


\bibitem{Sag} Sagdeev, R.Z.: Cooperative phenomena and shock waves in collisionless plasmas. ``Reviews of Plasma Physics" (M. A. Leontoich, ed.), Vol.IV, Consultants Bureau, New York, 23-91 (1966)

\bibitem{Sand} Sandstede, B.: Stability of travelling waves, in Handbook of Dynamical Systems. Vol. 2, B. Fielder, ed., North-Holland, Amsterdam, 983–1055 (2002)


\bibitem{Sattinger} Sattinger, D.H.: On the stability of waves of nonlinear parabolic systems. Adv. in Math, \textbf{22}, 312-355 (1976)


\bibitem{S12}  Nishibata, S., Ohnawa, M., Suzuki, M.: 
 Asymptotic stability of boundary layers 
to the Euler-Poisson equations arising in plasma physics,
 {SIAM J. Math. Anal.} \textbf{44} (2012), 761--790. 
 
\bibitem{Wa} Washimi, H., Taniuti, T.: Propagation of ion-acoustic waves of small amplitude. Phys. Rev. Lett. \textbf{17}(9), 996–998 (1966)

\bibitem{Kapi} Kapitula, T., Promislow, K.: Spectral and Dynamical Stability of Nonlinear waves. Springer, New York (2013)


\bibitem{KapiSand} Kapitula, T., Sandstede, B.:  Stability of bright solitary wave solutions to perturbed nonlinear Schr\"odinger equations. Physica D, 124(1–3):58–103, 1998.


\bibitem{ZK} Zabusky, N.J., Kruskal, M.D.:  Interaction of “solitons” in a collisionless
plasma and recurrence of initial states, Phys. Rev. Lett. 15 (1965), 240–243.

\bibitem{ZH} Zumbrun, K., Howard, P.: Pointwise semigroup methods and stability of viscous shock waves, Indiana Univ. Math. J. V47
(1998) 741--871

\end{thebibliography}
 \end{document}